\newtheorem{theorem}{Theorem}[section]
\newtheorem{claim}[theorem]{Claim}
\newtheorem{lemma}[theorem]{Lemma}
\newtheorem{proposition}[theorem]{Proposition}
\newtheorem{definition}[theorem]{Definition}
\newtheorem{example}[theorem]{Example}
\newtheorem{remark}[theorem]{Remark}
\newtheorem{corollary}[theorem]{Corollary}
\newtheorem{assumption}[theorem]{Assumption}
\numberwithin{equation}{section}
\newcommand{\dif}{\mathrm{d}}
\newcommand{\E}{\mathbb{E}}
\newcommand{\R}{\mathbb{R}}
\newcommand{\C}{\mathbb{C}}
\newcommand{\N}{\mathbf{N}}
\newcommand{\supp}{\mathrm{supp}}
\newcommand{\ii}{\mathrm{i}}
\newcommand{\ee}{\mathrm{e}}
\newcommand{\rd}{\mathrm{d}}
\DeclareMathOperator{\dist}{dist}
\DeclareMathOperator{\sign}{sign}
\DeclareMathOperator{\spec}{spec}
\newcommand{\other}[1]{\widetilde{#1}}
\newcommand{\Brwn}{\mathfrak{B}}
\newcommand{\Tr}{\mathrm{Tr}}
\newcommand{\norm}[1]{\left\lVert#1\right\rVert}
\newcommand{\vertiii}[1]{{\left\vert\kern-0.3ex\left\vert\kern-0.3ex\left\vert #1 
		\right\vert\kern-0.3ex\right\vert\kern-0.3ex\right\vert}}
\newcommand{\tin}{t_{\mathrm{init}}}
\newcommand{\tfin}{t_{\mathrm{final}}}
\newcommand{\etaf}{\eta_{\mathfrak{f}}}
\newcommand{\kapd}{\varkappa}
\newcommand{\subD}{\mathcal{D}^\mathrm{sub}}
\newcommand{\abvD}{\mathcal{D}^\mathrm{abv}}
\newcommand{\globD}{\mathcal{D}^{\mathrm{glob}}}
\newcommand{\outD}{\mathcal{D}^{\mathrm{out}}}
\newcommand{\bddD}{\mathcal{D}^\mathrm{bdd}}
\newcommand{\smallsymbol}[1]{\scalebox{0.65}{#1}}
\newcommand{\dift}{{\text{\smallsymbol{$\Delta$}} t}}
\newcommand{\arb}{\nu} 
\newcommand{\tar}{\xi} 
\newcommand{\step}{\delta} 
\newcommand{\scl}{\varepsilon} 
\newcommand{\sscl}{\zeta} 
\newcommand{\excl}{\theta} 
\newcommand{\deloc}{\omega} 
\newcommand{\rig}{\chi} 
\title{Cusp universality for correlated random matrices}
\date{\today}
\space \ifhyperref{
		\href{https://arxiv.org/abs/#1}{arXiv\addcolon#1} 
	}{
		arXiv\addcolon\nolinkurl{#1}
	} 
\space \ifhyperref{
		\href{https://doi.org/#1}{DOI\addcolon\addnbspace#1}
	}{
		DOI\addcolon\addnbspace\nolinkurl{#1}
	} 
\begin{document}
	\begin{minipage}{0.85\textwidth}	\vspace{0.5cm}
	\end{minipage}
	\begin{center}
		\large\bf Cusp Universality for Correlated Random Matrices
	\end{center}
	\vspace{0.75cm}
	
	\renewcommand{\thefootnote}{\fnsymbol{footnote}}

	\noindent
	\mbox{}%
	\hfill%
	\begin{minipage}{0.25\textwidth}
		\centering
		{L\'aszl\'o Erd\H{o}s}\footnotemark[1]~\orcidlink{0000-0001-5366-9603}\\
		\footnotesize{\textit{lerdos@ist.ac.at}}
	\end{minipage}
	\hfill%
	\begin{minipage}{0.25\textwidth}
		\centering
		{Joscha Henheik}\footnotemark[1]~\orcidlink{0000-0003-1106-327X}\\
		\footnotesize{\textit{joscha.henheik@ist.ac.at}}
	\end{minipage}
	\hfill%
	\begin{minipage}{0.25\textwidth}
		\centering
		{Volodymyr Riabov}\footnotemark[1]~\orcidlink{0009-0007-4989-7524}\\
		\footnotesize{\textit{vriabov@ist.ac.at}}
	\end{minipage}
	\hfill%
	\mbox{}%
	\footnotetext[1]{Institute of Science and Technology Austria, Am Campus 1, 3400 Klosterneuburg, Austria.  Supported by the ERC Advanced Grant "RMTBeyond" No.~101020331.
	}
	
	\renewcommand*{\thefootnote}{\arabic{footnote}}
	\vspace{0.25cm}
	
	\begin{center}
		\begin{minipage}{0.91\textwidth}\footnotesize{ {\bf Abstract.}} 
			For correlated real symmetric or complex Hermitian random matrices, we prove that the local eigenvalue statistics at any cusp singularity are universal. Since the density of states typically exhibits only square root edge or cubic root cusp singularities, our result completes the proof of the Wigner-Dyson-Mehta universality conjecture in all  spectral  regimes for a very general class of random matrices. 
			Previously only the bulk and  the  edge universality were established in this generality \cite{edgelocallaw},  while cusp universality was proven only for Wigner-type matrices with independent entries \cite{Cusp1, Cusp2}.	
			As our main technical input, we prove an optimal local law at the cusp  using the \emph{Zigzag strategy}, a recursive tandem of the characteristic flow method and a Green function comparison argument. 
			Moreover, our proof of the optimal local law holds uniformly in the spectrum, thus we also provide a significantly simplified alternative proof of the local eigenvalue universality in the previously studied bulk \cite{slowcorr} and edge \cite{edgelocallaw} regimes.
		\end{minipage}
	\end{center}

	\vspace{3mm}
	
	{\small
		\footnotesize{\noindent\textit{Date}: \today}\\
		\footnotesize{\noindent\textit{Keywords and phrases}:  Universality, cusp singularity, local law, correlated random matrix\\
		\footnotesize{\noindent\textit{2020 Mathematics Subject Classification}: 60B20, 15B52}
	}
	
	\vspace{3mm}
	
	\thispagestyle{headings} 
	\normalsize

	\section{Introduction} \label{sec:intro}
	The celebrated Wigner-Dyson-Mehta (WDM) conjecture asserts that the local eigenvalue statistics of large random matrices become \emph{universal}: they depend only on the symmetry class of the matrix and not on the precise details of its distribution. This remarkable effect is extremely  robust and manifests in all spectral regimes. 
	The correlation functions of the eigenvalues are governed by one of three universal determinantal processes, whose kernel functions depend on the local shape of the eigenvalue density. 
	As  proven by Dyson, Gaudin and Mehta \cite{mehta1967random} for the Gaussian GOE/GUE ensembles, the local statistics of the eigenvalues in the \emph{bulk} of the spectrum
	are driven by the \emph{sine kernel}. 
	At the spectral edges, where the density of states vanishes like a square root, Tracy and Widom \cite{tracy1994level, tracy1996orthogonal} computed  that the correlation functions for GOE/GUE are given by  the \emph{Airy kernel}.  
	As was first observed by Wigner \cite{wigner1955characteristic}, and formalized as a  conjecture for standard Wigner matrices by Dyson and Mehta in the 1960’s, these statistics hold well beyond the Gaussian ensembles.
	After  the first proofs  for standard Wigner matrices \cite{bourgade2014edge, erdHos2010bulk, erdHos2011universality, soshnikov1999universality, TVacta, tao2010random}, these universality results in the bulk and at the edge saw rapid development and were gradually extended\footnote{
		In another direction of generalization, sparse matrices \cite{adlam2015spectral, erdHos2012spectral, huang2015bulk, lee18}, adjacency matrices of regular graphs \cite{bauerschmidt2017bulk}, band matrices \cite{bourgade2017universality, bourgade2020random, sodin2010spectral}, and dynamically defined matrices \cite{adhikari2023universal} have also been considered. In parallel to that, universal statistics in the bulk and at the edge have been established for invariant $\beta$-ensembles (see, e.g., \cite{anderson1958absence, bekerman2015transport, bourgade2014edge, bourgade2014universality, deift1999uniform, deift2007universality, krishnapur2016universality, pastur2003edge, pastur2008bulk, shcherbina2009edge, shcherbina2014change, valko2009continuum}) and their discrete analogs \cite{baik2007discrete, borodin2000asymptotics, guionnet2019rigidity, johansson2001discrete}, although  often  using   very different methods.
	} to ensembles of ever greater generality: for Wigner matrices with diagonal \cite{lee2015edge, lee2016bulk} and non-diagonal deformations \cite{knowles2017anisotropic}, Wigner-type ensembles with not necessarily identically distributed but still independent entries \cite{univWigtype}, and even to random matrices allowing for substantial correlations among the entries \cite{ajanki2019stability, slowcorr, edgelocallaw}.

	The third and final class of universal local statistics emerges at the \emph{cusp-like} singularities of the density with cubic-root behavior.  
	There, the eigenvalues form a \emph{Pearcey process}, which was first identified by Br\'{e}zin and Hikami for a Gaussian unitary (GUE) matrix with a special deterministic deformation \cite{brezin1998level, brezin1998universal}.
Compared to the bulk and edge, the cusp regime is  less understood and universality in this most delicate spectral regime was established only recently in \cite{Cusp1, Cusp2}, however only for a special class of random matrices. More precisely, these proofs were restricted to Wigner-type ensembles with  independent entries and  diagonal deformations, and did not cover the  broadest class of correlated ensembles, for which bulk and edge universality had already been proven.

	\bigskip 
	Our main result completes the picture by proving the universality of the local eigenvalues statistics at the cusp for random matrices with correlated entries and  an   arbitrary deformation,  as stated in our main result,    Theorem~\ref{thm:univ}.
	The proof follows the \emph{three-step strategy}, a general method for proving universality of local spectral statistics, summarized in \cite{erdHos2017dynamical}. 
	The first step in this strategy is the \emph{local law},  which asserts   that the resolvent $G(z) = (H-z)^{-1}$ at $z = E + \ii \eta \in \mathbb{H}$ of the random matrix $H$ concentrates around a deterministic matrix $M(z)$ as the dimension of the matrix tends to infinity.
	This concentration estimate holds for $\eta$ just above the local eigenvalue spacing at $E$, resolving   the empirical distribution of eigenvalues  at   this scale. 
	The second step is to establish universality for ensembles with a tiny Gaussian component, and the third step is a perturbative argument that removes the Gaussian component. 
	Crucially, the optimal local law is used as a key input for both the second and third steps. 
	These latter two steps have proven to be extremely robust and essentially model-independent tools \cite{slowcorr, edgelocallaw, Cusp1, Cusp2}. 
	Nevertheless, the critical first step, the proof of the \emph{local law}, remains highly model-dependent.   
	
	As our main technical result, Theorem~\ref{thm:main}, we prove the \emph{optimal average and isotropic local laws} for correlated random matrices. 
	These local laws assert that for any fixed $\tar > 0$, any deterministic matrix $B$ and test vectors $\bm x, \bm y$, the bounds 
	\begin{equation} \label{eq:LLintro}
		\big| \big\langle \big(G(z) - M(z)\big)B \big\rangle\big| \lesssim N^\tar\frac{\Vert B \Vert_{\rm hs}}{N \eta} \quad \text{and} \quad \big|  \big(G(z) - M(z)\big)_{\bm x \bm y}\big| \lesssim N^\tar\sqrt{\frac{\rho(z)}{N \eta}} \Vert \bm x \Vert \, \Vert \bm y \Vert
	\end{equation}
	hold with very high probability.	
	Here $N$ is the dimension of the random matrix $H$, $\langle \cdot \rangle := N^{-1}\Tr[\cdot]$ denotes the normalized trace, and $\rho(z): = \pi^{-1} \langle \Im M(z) \rangle > 0$ is the \emph{self consistent density of states.} 
	Moreover, Theorem~\ref{thm:main} provides further optimal improvements to the right-hand sides of \eqref{eq:LLintro} for spectral parameters $z = E + \ii \eta$ with energy $E$ outside of the self-consistent spectrum.  
	We point out that the local laws in \eqref{eq:LLintro} are optimal in terms of their dependence on $\rho(z)$ and the (normalized) Hilbert--Schmidt norm of the observable matrix $B$. 
	In many cases, such as for low-rank observables, the Hilbert--Schmidt norm $\Vert B \Vert_{\rm hs}:= \langle BB^*\rangle^{1/2}$ is much smaller than the operator norm $ \Vert B \Vert$, which has  traditionally been used in previous single-resolvent local laws  \cite{slowcorr, edgelocallaw, Cusp1}.
	Thus, our local law \eqref{eq:LLintro} unifies and improves upon the previous local laws, even in the Wigner-type case.

	\bigskip
	Traditional  proofs of the local laws relied on solving an approximate self-consistent equation for the difference $G-M$.
	They consisted of two parts: a stability analysis of the underlying deterministic \emph{Dyson equation} and a probabilistic estimate on the fluctuations. 
	Both steps become quite cumbersome beyond the simple Wigner matrices. 
	In particular, for  general Wigner-type \cite{univWigtype, Cusp1} and correlated random matrices \cite{slowcorr, edgelocallaw}, the stability analysis became intricate \cite{ajanki2019quadratic, AEK2020}, and 
	the probabilistic part relied on sophisticated Feynman graph expansions.
	Recently, a completely new approach, the \emph{Zigzag strategy}  \cite{mesoCLT, Cipolloni2023Edge, cipolloni2023universality, OTOC, WigTypeETH, edgeiid}, has been developed.
	This approach consists of an  iterated application of two steps in tandem (cf.~Figure \ref{fig:zigzag_fig} below): the \emph{characteristic flow method} \cite{bourgade2021extreme, adhikari20, huang2019rigidity, landon2024single, adhikari2023local, landon2022almost, aggarwal2023edge}, coined the \emph{zig-step}, and a Green function comparison (GFT) argument driven by   an Ornstein-Uhlenbeck flow, called the \emph{zag-step}.
	Remarkably,   the Zigzag strategy circumvents many of the  difficulties that arise along  the more traditional local law proofs.  
	It even removes the   key obstacles that previously hindered the proof of the optimal local law at the cusp for the most general correlated matrices. 
	We now explain this crucial aspect in more detail.

	For traditional proofs of the local laws, the bulk regime is the easiest since the  underlying  Dyson equation is stable when $\rho(z)$ is separated away from zero. 
	In the regime where the density $\rho(z)$ vanishes, this stability deteriorates -- specifically, the corresponding stability factor behaves like $\rho(z)^{-1}$ at a square-root edge and as $\rho(z)^{-2}$ at a cubic-root cusp.
	This blow-up had to be compensated by a fine control on the error term in the approximate Dyson equation. 
On the probabilistic side, obtaining the optimal very-high-probability estimate on the fluctuation error required a high moment calculation that exploited various \emph{fluctuation averaging} mechanisms, even in the simplest bulk regime.  
	In the edge regime, an additional factor $\rho(z)$ needed to be extracted, which essentially relied on the emergence of the imaginary part of the resolvent via the \emph{Ward identity}, $GG^* = \Im G/\eta$. 
	However, for cusp singularities, an additional \emph{second order} cancellation  effect was necessary.
	This delicate effect, coined the \emph{cusp fluctuation averaging} \cite{Cusp1}, arises from a finite set of critical Feynman subdiagrams, called  the   \emph{$\sigma$-cells}. 
	Roughly speaking, a $\sigma$-cell consists of four resolvents interconnected through the deterministic approximation $M$   and the correlation four-tensor of the matrix elements. 
	In the case of Wigner-type matrices with diagonal deformations, $M$ becomes a \emph{diagonal} matrix, leading to a simplification of the original \emph{matrix} Dyson equation into a \emph{vector} equation.
	Moreover,  since the entries of a Wigner-type matrix are independent, the correlation tensor  is reduced to a matrix acting on the diagonal.   
	These substantial simplifications facilitated the intricate extraction of $\sigma$-cells,  effectively capturing the second order cancellation effect.
	Identifying the analog of the $\sigma$-cells for correlated matrices, when $M$ is no longer diagonal and the correlation is a full-fledged four-tensor remains out of reach.

	In this paper, we leverage   the \emph{Zigzag strategy} to conveniently avoid the complicated graphical expansions and, more importantly, circumvent the extraction of $\sigma$-cells. The only stability input required is a trivial bound of the form $\rho(z)/\eta$, that is precisely tracked by the Ward identity. 
	The characteristic flow at   the heart of the Zigzag strategy has previously proved itself to be effective in dealing with a \emph{first order} blow-up of the stability factor, such as at the edge of Wigner matrices \cite{Cipolloni2023Edge}, and in capturing the $z_1 - z_2$ decorrelation effect for the Hermitizations of non-Hermitian i.i.d.~matrices \cite{cipolloni2023universality, nonHermdecay}. 
	The current work demonstrates that the Zigzag strategy is even capable of circumnavigating general \emph{second order} instabilities arising at the cusp. 
	Evidence of this feature of the characteristic flow has already been observed for unitary Brownian motion \cite{adhikari2023universal} and in a special non-Hermitian setting \cite{edgeiid}, where an additional symmetry was available.
	
	Besides unraveling this remarkable power of the Zigzag approach in full generality, our paper is the first to implement the method in a correlated setting,
	 which requires adjustments to the Zigzag dynamics. 
	The GFT argument at the core of the zag step requires an a-priori bound on the resolvent as an input, which typically stems from a single resolvent local law. This, however, would render our argument circular. Hence, to remedy the situation, we augment the zag step with an internal induction\footnote{This argument is reminiscent of \cite{knowles2017anisotropic} and we also refer to \cite{taovuGFT} for an alternative approach. }  (\emph{bootstrap}) in $\eta$.
	Furthermore, our result has two additional features: (i) for the averaged law in \eqref{eq:LLintro}, we obtain the optimal estimate on the observable $B$  in terms of its Hilbert--Schmidt norm, and (ii) we extend the Zigzag approach beyond the typical \emph{above the scale}  regime of  $N \eta \rho(z) \ge  N^{\varepsilon} $ (see Section~\ref{sec:outside}).
	We emphasize that, in addition to covering the missing cusp regime, our proof also provides a unified approach to optimal local laws for the most general class of random matrices with correlated entries, completely eliminating any dependence of the proof on the specific spectral regime.
	The price we pay for our simple and self-contained Zigzag proof of the local law is assuming \emph{fullness} of the correlated random matrix (cf.~Assumption~\ref{ass:full}), rather than the slightly weaker \emph{flatness} condition (cf.~\cite[Assumption~(E)]{slowcorr}). 
	However, this stronger assumption is justified because fullness is necessary for deducing universality using the three-step strategy, regardless of how the local law is proven.

	\subsection*{Notations and conventions}
	
	We use the notation $[N]$ to represent the index set $\{1,\dots, N\}$. The letters $a$, $b$, $j$, and $k$ are used to denote integer indices, while $\alpha$ (with various subscripts) denotes elements of $[N]^2$. All unrestricted summations of the form $\sum_a$ and $\sum_{\alpha}$ are understood to run over $a \in [N]$ and $\alpha \in [N]^2$, respectively. 
	
	We denote vectors in $\mathbb{C}^{N\times N}$ using boldface letters, e.g., $\bm x$. The scalar product on $\mathbb{C}^N$ is defined by $\langle \bm x, \bm y\rangle := \sum_{j=1}^N \overline{x_j}y_j$, and the corresponding Euclidean norm is denoted by $\norm{\bm x} := \langle \bm x, \bm x\rangle^{1/2}$.
	
	Matrices are denoted by capital letters. Unless explicitly stated otherwise, all matrices we consider are $N\times N$. For a matrix $A \in \C^{N \times N}$, the angle brackets $\langle A\rangle := N^{-1}\Tr[A]$ denote its normalized trace. We use the following notations for the matrix norms:
	\begin{equation*}
		\norm{A}_{\max} := \max_{a,b} |A_{ab}|, \quad \norm{A} := \sup_{\norm{\bm x}=1} \norm{A\bm x}, \quad \norm{A}_{\mathrm{hs}} := \bigl\langle |A|^2 \bigr\rangle^{1/2},
	\end{equation*}  
	where $|A|^2 := AA^*$. Furthermore, for any $a\in [N]$ and vectors $\bm x$ and $\bm y$, we use the following notation:
	\begin{equation*}
		A_{\bm x \bm y} := \langle \bm x, A\bm y\rangle, \quad A_{\bm x a} := \langle \bm x, A \bm{e}_a\rangle, \quad A_{a \bm y} := \langle\bm{e}_a , A \bm y\rangle,
	\end{equation*}
	where $\bm{e}_a$ is the standard $a$-th basis vector of $\mathbb{C}^N$.
	We denote the complex upper half-plane by $\mathbb{H}$, that is, $\mathbb{H} := \{z \in \mathbb{C} : \Im z > 0\}$, and its closure by $\overline{\mathbb{H}} := \mathbb{H}\cup\mathbb{R}$.  For a complex number $z \in \mathbb{C}$, we use the notation $\langle z \rangle := 1 +|z|$.

	We use $c$ and $C$ to denote unspecified, positive constants—small and large, respectively—that are independent of $N$ and may change from line to line. Various tolerance exponents are denoted by Greek letters such as $\scl, \tar, \step, \sscl, \mu, \arb$.
	The notation $\tar \ll \scl$ means that there exists a small absolute constant $c > 0$ such that $\tar \le c \scl$.  We use $\arb > 0$ to denote arbitrary small tolerance exponents.
	
	For two positive quantities $\mathcal{X}$ and $\mathcal{Y}$, we write $\mathcal{X} \lesssim \mathcal{Y}$ if there exists a constant $C > 0$ that depends only on the \emph{model parameters} in Assumptions~{\ref{ass:boundedexp}--\ref{ass:Mbdd}} (unless explicitly stated otherwise), such that $\mathcal{X} \le C \mathcal{Y}$. We use the notation $\mathcal{X} \sim \mathcal{Y}$ if both $\mathcal{X} \lesssim \mathcal{Y}$ and $\mathcal{Y} \lesssim \mathcal{X}$ hold. For an arbitrary quantity $\mathcal{X}$ and a positive quantity $\mathcal{Y}$, we use the notation $\mathcal{X} = \mathcal{O}(\mathcal{Y})$ to indicate that $|\mathcal{X}|\lesssim \mathcal{Y}$.
	
	Let $\Omega := \{\Omega^{(N)}(u) \, \lvert\, N\in\mathbb{N},\, u\in\mathcal{U}^{(N)} \}$  be a family of events depending on $N$ and possibly on a parameter $u$ that varies over some parameter set $\mathcal{U}^{(N)}$. We say that $\Omega$ holds \textit{with very high probability} (w.v.h.p.) uniformly in $u \in \mathcal{U}^{(N)}$ if, for any $D > 0$,
	\begin{equation*}
		\sup\limits_{u \in \mathcal{U}^{(N)}} \mathbb{P}\bigl[\Omega^{(N)}(u)\bigr] \ge 1 - N^{-D},
	\end{equation*}
	for any $N \ge N_0(D)$. We often discard the explicit dependence of $\Omega^{(N)}$ and $\mathcal{U}^{(N)}$ on $N$, and simply refer to $\Omega$ as a very-high-probability event. A bound is said to hold w.v.h.p. if it holds on a very-high-probability event.

	\section{Main results} \label{sec:main}
	We consider real symmetric or complex Hermitian random matrices $H$ of the form 
	\begin{equation} \label{eq:HAW}
		H = A + W\,, \qquad \E W = 0\,, 
	\end{equation}
	where $A \in \C^{N \times N}$ is a bounded deterministic matrix (cf.~Assumption~\ref{ass:boundedexp} below) and $W$ has sufficiently fast decaying correlations between its matrix elements (cf.~Assumption~\ref{ass:cumulants} below). 
	
	For any random matrix $H$, we define the \textit{self-energy operator} $\mathcal{S}_H$ corresponding to $H$  by its action on any deterministic matrix $X \in \mathbb{C}^{N\times N}$,
	\begin{equation} \label{eq:self_energy_def}
		\mathcal{S}_H[X] := \E \bigl[(H-\E H)X(H-\E H)\bigr].
	\end{equation}
	
	The Matrix Dyson Equation (MDE) with a {\it data pair} $(A, \mathcal{S})$ is given by
	\begin{equation} \label{eq:MDE}
		- M(z)^{-1} = z - A + \mathcal{S}\bigl[M(z)\bigr] \,
	\end{equation}
	for the unknown matrix valued function $M(z)$, $z\in \C\setminus \R$.
	It is well known (Theorem  2.1 \cite{ajanki2019quadratic})  that the MDE
	has a unique solution under the constraint that $( \Im z)\Im M(z) > 0$, where $\Im M = \tfrac{1}{2\ii}(M-M^*)$.
	The corresponding \textit{self-consistent density of states} (scDOS) $\rho$ is a probability density function on the real line defined via the Stieltjes inversion formula, 
	\begin{equation} \label{eq:scDOS}
		\rho(x) := \lim_{\eta\to+0} \frac{1}{\pi}\bigl\langle \Im M(x+\ii\eta) \bigr\rangle.
	\end{equation} 
	We define $\rho(z) := \pi^{-1} \langle \Im M(z) \rangle$ to be the harmonic extension of the scDOS to the complex upper-half plane. With a slight abuse of notation, we also refer to $\rho(z)$ as scDOS. As shown in \cite{AEK2020}, under suitable assumptions (which are formulated precisely in Section~\ref{subsec:assump} below) on the data pair $(A, \mathcal{S})$ and the solution $M$ of the MDE \eqref{eq:MDE}, the scDOS $\rho$ is $1/3$-Hölder continuous. Furthermore, the set where the scDOS is positive, $  \{x \in \R : \rho(x) > 0\}$, splits into finitely many connected components, that are called \emph{bands}. Inside the bands, the density is real-analytic with a square root growth behavior at the \emph{edges}. If two bands touch, however, a cubic root \emph{cusp} emerges. These are the only two possible types of singularities. Precise universal asymptotic formulas in the \emph{almost cusp regime} are given, e.g., in \cite[Eqs.~(2.4a)--(2.4e)]{Cusp1}.  
	
	As the main result of this paper, Theorem~\ref{thm:univ}, we show the universality of the local eigenvalue statistics of correlated real symmetric and complex Hermitian random matrices at cusp-like singularities. As mentioned in the introduction, the proof of cusp universality follows the \emph{three-step strategy} \cite{erdHos2017dynamical}, the first step of which is a \emph{local law} (see Theorem~\ref{thm:main}) identifying the empirical eigenvalue distribution on a scale slightly above the typical eigenvalue spacing, with very high probability. After precisely formulating the assumptions that we impose on the random matrix \eqref{eq:HAW} in Section~\ref{subsec:assump}, we present our novel local law in Section~\ref{subsec:locallaw}. Afterwards, in Section~\ref{subsec:corollaries}, we formulate our main result on cusp universality and other consequences of the local law, such as eigenvector delocalization and eigenvalue rigidity. 
	
	\subsection{Assumptions} \label{subsec:assump}
	In this section, we precisely formulate the assumptions, under which our main result, Theorem~\ref{thm:main}, holds, and comment on them. 
	
	\begin{assumption}[Bounded expectation] \label{ass:boundedexp}
		There exists a constant $C_A > 0$ such that $\Vert A \Vert \le C_A$, uniformly in $N$. 
	\end{assumption}
	\begin{assumption}[Finite moments] \label{ass:moments}
		For every $p \in \N$, there exists a constant $\mu_p$ such that $\E |\sqrt{N} w_\alpha|^p \le \mu_p$ for all $\alpha \in [N]^2$.
	\end{assumption}
	
	Before formulating our assumption on the correlation structure of the random matrix $W$, we introduce some custom notation to keep the definition of the norms of the (normalized) \emph{cumulants}\footnote{Let $\bm{w} = (w_1, ..., w_k)$ be a random vector. Recall that its joint cumulants, $\kappa_{\bm m}$ with $\bm m \in \N_0^k$, are traditionally given as the coefficients of the $\log$-characteristic function
		\begin{equation*}
			\log \E \ee^{\ii \bm{w} \cdot \bm t} = \sum_{\bm{m}} \kappa_{\bm m}  \frac{(\ii \bm t)^{\bm m}}{\bm m!} \,. 
		\end{equation*}
		For $\bm w = (\sqrt{N}w_{\alpha_1}, ... , \sqrt{N}w_{\alpha_k})$ we use the notation $\kappa(\alpha_1, ..., \alpha_k) \equiv \kappa(\sqrt{N}w_{\alpha_1}, ... , \sqrt{N}w_{\alpha_k}) := \kappa_{(1,...,1)}$ %
		and note that, by construction, $\kappa(\alpha_1, ... , \alpha_k)$ is invariant under permutations of its arguments. For example, for $k=2$, $\kappa(\alpha_1, \alpha_2) = N \E [w_{\alpha_1} w_{\alpha_2}]$.
	},
	\begin{equation} \label{eq:cumulants}
		\kappa(\alpha_1, ... , \alpha_k) \equiv \kappa(\sqrt{N}w_{\alpha_1}, ... , \sqrt{N}w_{\alpha_k})\,,
	\end{equation}
	relatively compact. If, instead of an index $a \in [N]$, we write a dot $(\cdot)$ in a scalar quantity, then we consider it as an $N$-vector indexed by the coordinate in place of the dot. As an example, $\kappa(a_1 \cdot, a_2 b_2)$ is an $N$-vector, whose $i$-entry is $\kappa(a_1 i, a_2 b_2)$ and $\Vert \kappa(a_1 \cdot, a_2 b_2)\Vert$ is its Euclidean (vector) norm. Similarly, $\Vert X(*,*)\Vert $ refers to the operator norm of the $N^2\times N^2$ matrix with entries $X(\alpha_1,\alpha_2)$. We also introduce a combination of these conventions. In particular, $\big\Vert \Vert \kappa(\bm x *, \cdot *)\Vert \big\Vert $ denotes the operator norm $\Vert Y \Vert$ of the matrix $Y$ with entries $Y(i,j) = \Vert \kappa(\bm x i, \cdot j) \Vert = \Vert \sum_a x_a \kappa(ai, \cdot j)\Vert $. Since the operator norm is invariant under transposition of the matrix, this does not lead to ambiguity regarding the order of $i$ and $j$. Note that we use dot $(\cdot)$ as a placeholder for the variable related to the inner norm, and star $(*)$ for the outer norm. 
	
	The following assumption on the correlation structure of $W$ is formulated in the real symmetric case. For complex Hermitian matrices, we require the cumulant norms introduced below to be bounded for all choices of real and imaginary in each of the arguments of a cumulant, i.e.~for $\kappa(\alpha_1^{\mathfrak{X}_1}, ... ,  \alpha_k^{\mathfrak{X}_k}) = \kappa(\sqrt{N} \mathfrak{X}_1 w_{\alpha_1}, ... , \sqrt{N} \mathfrak{X}_k w_{\alpha_k})$ and all choices of $\mathfrak{X}_i \in \{\Re , \Im \}$ (see~\cite[Appendix C]{slowcorr} for a more detailed discussion). %
	\begin{assumption}[Correlation structure] \label{ass:cumulants}
		The correlations among the matrix entries $(w_\alpha)_\alpha$ of $W$ satisfy the following. 
		\begin{itemize}
			\item[(i)] The {cumulants} $\kappa(\alpha_1, ... , \alpha_k)$ have bounded  %
			matrix norms (viewed as an $N^2 \times N^2$ matrix), 
			i.e.~for all $k \ge 2$ there exists a constant $C_k > 0$ such that\footnote{ We remark that the constants $C_k$ in the bounds \eqref{eq:summcum}--\eqref{eq:kappa_3_av_norm} could also be replaced by $C_{k, \arb} N^\arb$ for any $\arb > 0$, where $C_{k,\arb}$ is a positive constant. All our proofs hold under this more general condition, but we omit it for simplicity.}
			\begin{equation} \label{eq:summcum}
				\vertiii{\kappa}_k := \bigg\Vert \sum_{\alpha_1, ... , \alpha_{k-2}} |\kappa(\alpha_1, ... , \alpha_{k-2}, *, *) | \bigg\Vert \le C_k \,. 
			\end{equation}
			Moreover, we suppose that 
			\begin{equation} \label{eq:summcum3c}
				\vertiii{\kappa}_{2}^\mathrm{iso} := \inf_{\kappa = \kappa_{\mathrm{c}} + \kappa_{\mathrm{d}}} \bigl( \vertiii{\kappa_{\mathrm{c}}}_c + \vertiii{\kappa_{\mathrm{d}}}_d\bigr) \le C_2 \,, 	%
			\end{equation}
			where the infimum is taken over all decompositions of $\kappa$ in two functions $\kappa_{\rm c}, \kappa_{\rm d}$,  %
			where the subscripts stand for ``direct" and ``cross" (see~\cite[Remark~2.8]{slowcorr} for an explanation of this terminology) and the corresponding norms are defined as  %
			\begin{equation*}
				\vertiii{\kappa}_d := \sup_{\norm{\bm x} \le 1} \bigl\lVert \, \lVert \kappa(\bm x* ,\cdot *) \rVert  \, \bigr\rVert, \quad \text{and} \quad   \vertiii{\kappa}_c := \sup_{\norm{\bm x} \le 1} \bigl\lVert \, \lVert \kappa(\bm x* , * \cdot) \rVert  \, \bigr\rVert \,. 
			\end{equation*}
		Finally, we assume that
		\begin{equation} \label{eq:kappa_3_av_norm}
			\vertiii{\kappa}_3^\mathrm{av} :=  N^{-3/2} \sup\limits_{\substack{X,Y,Z\in\C^{N\times N} : \\ \Vert X \Vert, \Vert Y \Vert \le 1\,, \ \Vert Z \Vert_{\rm hs} \le 1}}  \ \sum\limits_{ab}\sum\limits_{a_1b_1}\sum\limits_{a_2b_2} \bigl\lvert \kappa(ab,a_1b_1,a_2b_2) \bigr\rvert |X_{b_1a_2}|\, |Y_{b_2a_3}|\, |Z_{b_3a_1}| \le C_3\,. 
		\end{equation}
		
		\item[(ii)] There exists a positive $\mu > 0$, such that for every $\alpha$ there exists an index set $\mathcal{N}(\alpha)$ of cardinality $|\mathcal{N}(\alpha)| \le N^{1/2 - \mu}$ with the property that $w_\alpha \perp w_\beta$ for all $\beta \notin \mathcal{N}(\alpha)$. That is, every element is correlated with at most $N^{1/2-\mu}$ other matrix elements and is independent of the rest.
	\end{itemize}
\end{assumption}
The first part of Assumption~\ref{ass:cumulants} is needed to control every finite order term in a cumulant expansion in Proposition~\ref{prop:cumex}, analogously to Assumption (C) in \cite{slowcorr}. The condition in \eqref{eq:kappa_3_av_norm} is needed only since we are dealing with Hilbert--Schmidt norm error terms and thus did not appear in \cite{slowcorr}, where the observables were bounded in terms of their operator norm. In Example~\ref{ex:tree} below, we present a prototypical class of models with a polynomially decaying metric correlation structure satisfying Assumption~\ref{ass:cumulants}~(i). Complementary to Assumption~\ref{ass:cumulants}~(i), the only purpose of the second part of Assumption~\ref{ass:cumulants} is to ensure that the cumulant expansion can be truncated. In \cite{slowcorr}, this was guaranteed by a more complicated and slightly more general condition on the correlation decay (cf.~\cite[Assumption (D)]{slowcorr}). We believe, however, that our proof of Theorem~\ref{thm:main} works under this condition as well, but we refrain from doing so for brevity. 

\begin{assumption}[Fullness] \label{ass:full}
	We say that a random matrix $H$ satisfies the \textit{fullness} condition with a constant $c > 0$ if
	\begin{equation} \label{eq:fullness}
		N\, \E  \bigl[|\Tr [(H - \E H)X]|^2\bigr] \ge c \, \Tr [X^2],
	\end{equation}
	for any deterministic matrix $X$ of the same symmetry class as $H$ (real symmetric or complex Hermitian). 	
	
	We assume that there exists a constant $c_{\rm full} > 0$ such that the random matrix $H$ satisfies the fullness condition as in \eqref{eq:fullness} with the constant $c := c_{\rm full}$.
\end{assumption}

\begin{assumption}[Bounded self-consistent Green function] \label{ass:Mbdd} Fix $C_M, c_M > 0$ and define the set of \emph{admissible energies} as 
	\begin{equation} \label{eq:admE}
		\mathcal{I} \equiv \mathcal{I}_{C_M, c_M} := \{ e \in \R  :  \Vert M(z) \Vert \le C_M\langle z \rangle^{-1} \quad \text{for all} \quad z \in \C \quad \text{with} \quad \Re z \in [e- c_M, e+ c_M] \} \,. 
	\end{equation}
	We assume that $\mathcal{I} \neq \emptyset$. 
\end{assumption}

Recall that we refer to the constants in Assumptions~\ref{ass:boundedexp}--\ref{ass:Mbdd} as \textit{model parameters}.

\begin{example}[Polynomially Decaying Metric Correlation Structure] \label{ex:tree} 
	A prime example of correlated random matrix satisfying the Assumption~\ref{ass:cumulants} (i) is the polynomially decaying model. For second order cumulants, we assume that 
	\begin{subequations}
		\begin{equation} \label{eq:tree_kappa2}
			\bigl\lvert \kappa(a_1b_1, a_2b_2) \bigr\rvert \le \frac{C_2}{1 + d(a_1b_1, a_2b_2)^s},
		\end{equation}
		for some $s > 2$, where we define the distance $d$ on the set of labels $[N]^2$ as
		\begin{equation} \label{eq:label_dist}
			d (a_1b_1, a_2b_2 ) := \min\bigl\{|a_1-a_2| + |b_1-b_2|, |a_1-b_2| + |b_1-a_2|\bigr\}.
		\end{equation}
		For cumulants of order $k\ge 3$, we assume the following decay condition
		\begin{equation} \label{eq:tree_kappak}
			\bigl\lvert \kappa(\alpha_1, \dots, \alpha_k) \bigr\rvert \le C_k \prod_{e \in \mathfrak{T}_{\mathrm{min}}}\frac{1}{1+d(e)^s},
		\end{equation}
		where $\mathfrak{T}_{\mathrm{min}}$ is a minimal spanning tree, i.e., a spanning tree for which the sum of the edge weights is minimal,
		in a complete graph with vertices $\alpha_1, \alpha_2, \dots, \alpha_k$ and edge weights induced by the distance $d$, defined in \eqref{eq:label_dist}.
	\end{subequations}
	The validity of \eqref{eq:summcum}--\eqref{eq:summcum3c} was asserted in Example 2.10 of \cite{slowcorr}, and we verify the new condition \eqref{eq:kappa_3_av_norm} in Appendix~\ref{app:tree}.  
\end{example}

\subsection{Local law} \label{subsec:locallaw}
In this section, we formulate our main technical result, the optimal local laws in Theorem~\ref{thm:main}. These show that $G(z) = (H-z)^{-1}$ is very well approximated by $M(z)$ in the $N \to \infty$ limit, with optimal convergence rate even at all singular points of the scDOS down to the typical eigenvalue spacing. We now define the scale on which the eigenvalues are predicted to fluctuate around a given energy $e_0$.   
\begin{definition}[Local fluctuation scale]
	Let $e_0 \in \mathcal{I}$ be an admissible energy.  We define the self-consistent \emph{fluctuation scale} $\etaf = \etaf(e_0) > 0$ (indicated by subscript $\mathfrak{f}$) at energy $e_0$ via
	\begin{equation} \label{eq:etafdef}
		\int_{-\etaf}^{\etaf} \rho(e_0 + x) \dif x = \frac{1}{N}\,,
	\end{equation}
	if $e_0 \in \supp \rho$. In case that $e_0 \notin \supp \rho$, we define $\etaf$ as the fluctuation scale at a nearby edge. More precisely, let $I$ be the largest interval with $e_0 \in I \subset \R\setminus \supp \rho$ and set $\Delta := \min\{ |I|,1 \}$. Then, $\etaf$   satisfies the scaling relation  
	\begin{equation} \label{eq:etafdefoutside}
		\etaf \sim \begin{cases}
			N^{-2/3}\Delta^{1/9} \quad &\text{if} \quad \Delta > N^{-3/4} \\
			N^{-3/4} \quad &\text{if} \quad \Delta \le N^{-3/4}  \,. 
		\end{cases} 
	\end{equation}
\end{definition}
While for $e_0$ in the \emph{bulk}, where the scDOS satisfies $\rho \sim 1$, we have $\etaf \sim N^{-1}$, it holds that $\etaf \sim N^{-2/3}$ at a regular \emph{edge} and $\etaf \sim N^{-3/4}$ at an exact \emph{cusp}. 

\begin{theorem}[Optimal Local Laws] \label{thm:main}
	Fix small $N$-independent constants $\scl_0, \tar_0 > 0$.
	Let $H \in \C^{N \times N}$ be a real symmetric or complex Hermitian correlated random matrix.
	Suppose that Assumptions~\ref{ass:boundedexp}--\ref{ass:Mbdd} are satisfied, and let $\mathcal{I}$ be the set of admissible energies from \eqref{eq:admE}. 
	Then, uniformly for all $z \in \mathbb{H}$ with $\Re z \in \mathcal{I}$
	and $\dist(z, \supp \rho) \in [N^{\scl_0} \etaf(\Re z), N^{D}]$, the resolvent $G(z) := (H-z)^{-1}$ satisfies the \emph{optimal isotropic local law},
	\begin{subequations}
		\begin{equation} \label{eq:ISOLL}
			\bigl\lvert \bigl(G(z) - M(z)\bigr)_{\bm x \bm y}\bigr\rvert  \le N^{\tar_0} \sqrt{\frac{\rho(z)}{\langle z \rangle^2 N \eta}} \Vert \bm x \Vert \, \Vert \bm y \Vert \,,
		\end{equation}
		for any deterministic vectors $\bm x, \bm y \in \C^N$, and the \emph{optimal average local law},   
		\begin{equation} \label{eq:AVLL}
			\big| \big\langle \big(G(z) - M(z)\big)B\big\rangle\big| \le \frac{N^{\tar_0}}{\langle z \rangle N \dist\bigl(z, \supp \rho\bigr)}\Vert B \Vert_{\rm hs}\,,
		\end{equation}
		for any deterministic matrix $B \in \C^{N \times N}$, both with very high probability.
	\end{subequations}
\end{theorem}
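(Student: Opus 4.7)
The plan is to implement the \emph{Zigzag strategy} for the correlated setting by iterating, in tandem, a \emph{zig}-step based on the characteristic flow method and a \emph{zag}-step based on a Green function comparison (GFT) argument driven by an Ornstein--Uhlenbeck (OU) flow. The induction starts at the trivial baseline $\eta \sim 1$, where both \eqref{eq:ISOLL} and \eqref{eq:AVLL} follow directly from Assumptions~\ref{ass:moments} and \ref{ass:Mbdd}. Each zigzag cycle decreases $\eta$ by a factor $N^{-\boot}$ for some small $\boot > 0$, while simultaneously removing the Gaussian perturbation that the zig-step has introduced. After $\mathcal{O}(\log N)$ cycles, we reach the target scale $\eta \gtrsim \etaf(\Re z)$.

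For the \emph{zig-step}, we couple the matrix flow $\dif H_t = \mathcal{S}^{1/2}[\dif B_t]$ with a deterministic \emph{characteristic curve} $z_t \in \mathbb{H}$ designed so that $M(z_t)$, viewed as a function of the augmented ensemble at time $t$, stays constant. An application of Itô's formula then causes the deterministic part in the evolution of $G_t(z_t) - M(z_t)$ to cancel, leaving only a martingale term whose quadratic variation is bounded via the \emph{Ward identity} $G_t G_t^* = \Im G_t/\eta_t$. This yields a trivial stability input of size $\rho(z_t)/\eta_t$, entirely bypassing the delicate $\sigma$-cell cancellation mechanism that was indispensable in traditional proofs at the cusp. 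In the isotropic case, the Ward bound directly reproduces the $\sqrt{\rho/(N\eta)}$ scaling of \eqref{eq:ISOLL}, while for the averaged law the same Ward identity applied to $\langle G_t B G_t^* B^*\rangle$ yields the optimal Hilbert--Schmidt dependence on $B$ in \eqref{eq:AVLL}.

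For the \emph{zag-step}, we reverse the OU flow $\dif H_s = -\tfrac12 (H_s - A)\, \dif s + \mathcal{S}^{1/2}[\dif B_s]$ in order to remove the Gaussian component added by the zig. A cumulant expansion based on Assumption~\ref{ass:cumulants} shows that both $\langle (G - M)B\rangle$ and $(G - M)_{\bm x \bm y}$ are approximately invariant along this OU flow, up to a negligible error---\emph{provided} we have an a-priori bound on $G$. Herein lies the main obstacle specific to the correlated setting: the required single-resolvent input is itself the local law we are trying to prove, and the argument is a priori circular. We break the circle by embedding the zag-step into an internal \emph{bootstrap induction} in $\eta$ in the spirit of \cite{knowles2017anisotropic}: knowing the local law at the larger scale $\eta' = N^{\boot}\eta$ gives, through the trivial monotonicity $\eta \mapsto \eta \Vert \Im G\Vert$, a rough a-priori bound at scale $\eta$ that the GFT then upgrades to the optimal estimate. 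The bootstrap is iterated with multiplicative steps of size $N^{\boot}$, interleaved with the zig-steps.

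The last ingredient is the \emph{outside-of-spectrum regime} $\dist(z, \supp \rho) \gg \etaf$, in which the averaged bound \eqref{eq:AVLL}, of order $(N \dist(z, \supp \rho))^{-1}$, is much stronger than the generic Ward estimate $\sqrt{\rho/(N\eta)}$. Here, we design the characteristic curve so that $\dist(z_t, \supp \rho)$ stays comparable to $\dist(z, \supp \rho)$ along the entire trajectory, which lets us extract the extra factor $1/\dist(z, \supp \rho)$ from the improved decay of $\Im M$ off the spectrum. The hardest part of the scheme is verifying that the coupled zig-zag induction is self-consistent uniformly in $z$ as $\eta$ approaches the cusp scale $\etaf \sim N^{-3/4}$, precisely the regime in which the classical Dyson-equation stability collapses. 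The Zigzag scheme, however, only ever uses the trivial Ward input $\rho/\eta$, and the induction therefore proceeds without any cusp-specific $\sigma$-cell analysis.
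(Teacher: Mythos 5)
Your high-level road map is aligned with the paper (zig = characteristic flow, zag = GFT with an internal $\eta$-bootstrap to break circularity; Ward identity as the only stability input; Hilbert--Schmidt control of $B$ via $\langle G_t B G_t^* B^* \rangle$). Several key ingredients, however, are stated incorrectly or are missing, and they would derail the argument as proposed.

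\emph{The zig flow is wrong.} You write the zig evolution as $\rd H_t = \mathcal{S}^{1/2}[\rd \Brwn_t]$. That noise preserves the self-energy $\mathcal{S}$, so the ensemble never moves toward GOE/GUE and the subsequent zag (which you state correctly, matching \eqref{eq:zag_flow}) would have nothing to undo. The zig flow must be the standard Ornstein--Uhlenbeck flow $\rd H_t = -\tfrac12 H_t\,\rd t + \rd \Brwn_t/\sqrt N$ as in \eqref{eq:OU_flow}: this both \emph{adds} a GOE/GUE component (pushing the covariance tensor toward $\Sigma_{\mathrm G}$ via \eqref{eq:Sigma_flow}) and produces the simple quadratic-variation structure $\langle (\Im G)^2\rangle/\eta^2$ that the Ward identity handles. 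The drift $-\tfrac12 H_t\,\rd t$ is also what makes $M_t(z_t)$ evolve by the tractable law $\rd M_t(z_t)=\tfrac12 M_t(z_t)\,\rd t$ along the characteristics \eqref{eq:m_flow}; your noise-only flow neither evolves $M$ nor $\mathcal S$ in the right way.

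\emph{The base case is not trivial.} You claim the induction starts from a ``trivial baseline $\eta\sim 1$.'' In fact the characteristic flow can only be run for a terminal time $T\sim N^{-\tar/4}$ before the zag's GFT error (which grows with the Gaussian component to be removed) is no longer negligible, and $T$ also controls how much the domain $\abvD_t$ can shrink per full sweep. Consequently the starting domain $\abvD_0$ must already be deep into the spectral window where $\rho^{-1}\eta\gtrsim N^{-\tar/4}$ and $\eta$ can be as small as $N^{-1+\varepsilon}$. This requires a genuine global law (Proposition~\ref{prop:global}, proved in Section~\ref{sec:stable}) obtained from a minimalistic cumulant expansion together with the stability bound \eqref{eq:stability_factor}; it is not implied by Assumptions~\ref{ass:moments} and \ref{ass:Mbdd} at $\eta\sim1$.

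\emph{The outside-of-spectrum regime is not handled.} The bound \eqref{eq:AVLL} with the factor $\bigl(N\,\dist(z,\supp\rho)\bigr)^{-1}$ rather than $(N\eta)^{-1}$ cannot be obtained by ``keeping $\dist(z_t,\supp\rho)$ comparable along the flow'': the characteristic flow argument only yields $(N\eta_t)^{-1}$. The paper first establishes the absence of eigenvalues outside $\supp\rho$ (Theorem~\ref{thm:noeig}) by running a modified zigzag in the sub-scale domain $\subD_t$ defined in \eqref{eq:below_Dt}, where the exclusion estimate \eqref{eq:excl_template} is propagated; only then is $G-M$ analytic in a $\kapd$-neighborhood of $z$, and the improved bound follows from the Cauchy-integral representation \eqref{eq:contour_int_Gamma} combined with the monotonicity estimates of Remark~\ref{rem:monot_below}. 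Your proposal does not mention the eigenvalue-exclusion step at all, and without it the claimed extraction of the extra $1/\dist$ factor has no mechanism.
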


\subsection{Delocalization, rigidity, and universality} \label{subsec:corollaries} 
The local law in Theorem~\ref{thm:main} is the main input for eigenvector delocalization, eigenvalue rigidity, and universality, as stated below. While Corollaries~\ref{cor:deloc}--\ref{cor:rig} and Theorem~\ref{thm:univ} are proven as corollaries to Theorem~\ref{thm:main} in Section~\ref{subsec:proofcor}, the exclusion of eigenvalues outside the support of the scDOS in Theorem~\ref{thm:noeig} is obtained alongside the proof of Theorem~\ref{thm:main} and presented in Section~\ref{sec:outside}. 
\begin{theorem}[No eigenvalues outside the support of the scDOS] \label{thm:noeig}
	Under the assumptions of Theorem~\ref{thm:main} we have the following: Let $e_0 \in \mathcal{I} \setminus \supp \rho$. There exists a constant $c > 0$ such that for any fixed small $N$-independent constant $\excl_0 > 0$
	\begin{equation} \label{eq:noeig}
		\dist\big(\spec H \cap [e_0 - c, e_0 + c], \supp  \rho \big) \le N^{\excl_0}\etaf(e_0),
	\end{equation}
	with very high probability. Here we use the convention that $\dist(\emptyset, ... ) = 0$. 
\end{theorem}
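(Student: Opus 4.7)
The plan is to combine the averaged local law \eqref{eq:AVLL} applied with test observable $B = I$ (noting $\Vert I\Vert_{\rm hs} = 1$) with the elementary counting inequality
\begin{equation*}
\#\bigl\{i : |\lambda_i - e| < \eta\bigr\} \leq 2N\eta\,\mathrm{Im}\langle G(e+\ii\eta)\rangle,
\end{equation*}
which follows at once from $N\,\mathrm{Im}\langle G(e+\ii\eta)\rangle = \sum_i \eta/((\lambda_i-e)^2+\eta^2)$ and the pointwise lower bound $\eta/((\lambda_i-e)^2+\eta^2) \geq 1/(2\eta)$ whenever $|\lambda_i-e|<\eta$. Thus it is sufficient to produce, for every $e$ in the exclusion set $\mathcal{E} := \{e \in [e_0-c, e_0+c]: \dist(e, \supp\rho) > N^{\excl_0}\etaf(e_0)\}$, a scale $\eta = \eta(e)$ at which $2N\eta\,\mathrm{Im}\langle G(e+\ii\eta)\rangle < 1$ with very high probability, and then to promote this pointwise bound to a uniform one on $\mathcal{E}$.

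Given $e\in\mathcal{E}$ with $\Delta := \dist(e, \supp\rho)$, I would select $\eta \sim \Delta^{1/3}N^{-1/2-\arb}$ for a small $\arb > 0$; this choice optimally balances the two error contributions produced by the local law and, in view of $\Delta \ge N^{\excl_0}\etaf(e_0) \ge N^{\excl_0 - 3/4}$ together with $\etaf(e_0) \sim N^{-2/3}\Delta^{1/9}$ from \eqref{eq:etafdefoutside}, lies comfortably above $N^{\scl_0}\etaf(e_0)$ provided $\excl_0$ exceeds a small multiple of $\scl_0$ (harmless since $\excl_0$ is a free parameter). The averaged local law at $z = e+\ii\eta$ (where $\dist(z,\supp\rho)\ge\Delta$), combined with the sharp cusp-regime bound
\begin{equation*}
\rho(e+\ii\eta) \lesssim \eta/\Delta^{2/3}
\end{equation*}
(available from the universal scDOS profiles near (almost) cusps, cf.~\cite[Eqs.~(2.4a)--(2.4e)]{Cusp1}), yields
\begin{equation*}
2N\eta\,\mathrm{Im}\langle G(z)\rangle \ \lesssim\  \frac{N\eta^2}{\Delta^{2/3}} + \frac{N^{\tar_0}\eta}{\Delta}\ \ll\ 1
\end{equation*}
for $N$ large, as a direct computation confirms. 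A standard grid plus union bound argument, exploiting the deterministic Lipschitz estimate $|\partial_e\,\mathrm{Im}\langle G(e+\ii\eta)\rangle|\leq \eta^{-2}$ to interpolate between polynomially many test energies in $\mathcal{E}$, then delivers \eqref{eq:noeig}.

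The main obstacle is not the deduction sketched above but the inputs that feed into it. In particular, the averaged local law \eqref{eq:AVLL} must be established on the entire admissible region outside $\supp\rho$, down to spectral scales $\eta \sim N^{\scl_0}\etaf$; this is the main work of Section~\ref{sec:outside}, and for this reason Theorem~\ref{thm:noeig} is proved in tandem with the outside-regime local law via a single bootstrap, since the local law requires an a-priori norm bound on $G$ (which the exclusion of eigenvalues in Theorem~\ref{thm:noeig} provides near the spectrum boundary) while the exclusion itself requires the local law down to the scale $N^{\scl_0}\etaf$. The scale bookkeeping is most delicate in the almost-cusp regime $\Delta \sim N^{-3/4}$, where $\etaf$ and $\Delta$ are comparable and the precise cubic-root-cusp asymptotics of $\rho$ are essential for closing the estimate; this is precisely the regime where the $N^{\excl_0}$ tolerance in \eqref{eq:noeig} becomes both natural and essentially optimal.
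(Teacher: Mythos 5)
Your trace-estimate deduction -- detecting eigenvalues via the lower bound $\Im\langle G(\lambda+\ii\eta)\rangle \ge (2N\eta)^{-1}$ and then killing them at a carefully tuned scale -- is essentially the paper's Lemma~\ref{lemma:exclusion}. The scale choice, the bound $\rho(e+\ii\eta)\lesssim \eta\,\kapd^{-2/3}$ from the comparison relation \eqref{eq:rho_comp}, and the dominant error terms are all computed correctly.

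However, there is a genuine logical gap: you take the averaged local law \eqref{eq:AVLL} as your input, but in the paper \eqref{eq:AVLL} outside the support of $\rho$ is a \emph{consequence} of Theorem~\ref{thm:noeig}, not a prerequisite for it. Specifically, the proof of \eqref{eq:AVLL} in the regime $\dist(z,\supp\rho)\ge 2\eta$ (Section~\ref{sec:main_proof}) runs a Cauchy contour integral around a rectangle inside the spectral gap, and the analyticity of $G-M$ on that contour is exactly what Theorem~\ref{thm:noeig} provides. Using \eqref{eq:AVLL} to derive Theorem~\ref{thm:noeig} is therefore circular. The above-the-scale laws of Theorem~\ref{th:Neta_local_laws} would be a legitimate input, but your choice $\eta\sim\kapd^{1/3}N^{-1/2-\arb}$ puts $\rho(z)N\eta \sim N^{-\arb}$, which is below the threshold $N^{\scl}$ of $\abvD$, so that input is also unavailable.

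You correctly sense that a bootstrap is needed, but you mischaracterize what it produces. The paper does not bootstrap the full averaged law \eqref{eq:AVLL} below the scale. Instead, it proves a much weaker \emph{exclusion estimate} $|\langle G-M\rangle|\le N^{-\sscl}/(N\eta)$ (note the negative exponent and $B=I$ only) on the sub-scale domain $\subD_t$ of \eqref{eq:below_Dt}, via a dedicated Zigzag induction in Propositions~\ref{prop:zig_below} and \ref{prop:zag_below}, where the key a-priori input for the zig step is the deterministic norm bound $\|G_t\|\le \Im z/\bigl((\kapd_t-f(t))^2+(\Im z)^2\bigr)$ that follows from the exclusion statement at earlier times (Lemma~\ref{lemma:exclusion}), keeping the bootstrap self-consistent. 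The weak estimate then suffices for the contradiction: $\Im\langle G\rangle \le \pi\rho + N^{-\sscl}/(N\eta)$, and the scale is chosen implicitly so that $\rho N\eta = N^{-\sscl/2}$, which already makes $N\eta\,\Im\langle G\rangle\lesssim N^{-\sscl/2}\ll 1$. This sub-scale Zigzag, together with the flow analysis of the gap endpoints (Lemma~\ref{lemma:rho_t_props}, which controls how $\Delta_t$, $\kapd_t$ evolve so the sub-scale domains nest correctly), constitutes the bulk of the proof and is entirely absent from your proposal. As written, your argument reduces Theorem~\ref{thm:noeig} to a statement that is proved downstream of it.
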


\begin{corollary}[Eigenvector delocalization] \label{cor:deloc}
	Let $\bm u_i \in \C^N$ with $\Vert \bm u_i \Vert = 1$ be a normalized eigenvector of $H$ corresponding to the eigenvalue $\lambda_i$.  Then, under the assumptions of Theorem~\ref{thm:main}, for any small $N$-independent constant $\deloc_0 > 0$, the estimate
	\begin{equation}
		\max_{\substack{i \in [N] : \\ \lambda_i \in \mathcal{I}}} \big| \langle \bm x, \bm u_i \rangle \big| \le \frac{N^{\deloc_0}}{\sqrt{N}}
	\end{equation}
	holds with very high probability, uniformly in deterministic vectors $\bm x \in \C^N$ with $\Vert \bm x \Vert = 1$. 
\end{corollary}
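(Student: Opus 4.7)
The plan is the standard derivation of eigenvector delocalization from the optimal isotropic local law \eqref{eq:ISOLL}. The starting point is the spectral decomposition
\begin{equation*}
    \Im G(z)_{\bm x \bm x} \;=\; \sum_{j=1}^N \frac{\eta\, |\langle \bm x, \bm u_j \rangle|^2}{(\lambda_j - E)^2 + \eta^2}, \qquad z = E + \ii \eta \in \mathbb{H}.
\end{equation*}
For a normalized eigenvector $\bm u_i$ with eigenvalue $\lambda_i \in \mathcal{I}$, setting $E = \lambda_i$ and discarding all but the $j=i$ term in this non-negative sum gives $|\langle \bm x, \bm u_i \rangle|^2 \le \eta\, \Im G(\lambda_i + \ii\eta)_{\bm x \bm x}$, so the task reduces to estimating $\Im G$ at a spectral parameter just above the local eigenvalue spacing.

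Next, I would pick $\eta := N^{\step}\etaf(\lambda_i)$ with $\scl_0 \ll \step$ and $\tar_0, \step \ll \deloc_0$, and verify admissibility of $z = \lambda_i + \ii\eta$ for \eqref{eq:ISOLL}. If $\lambda_i \in \supp \rho$, then $\dist(z, \supp \rho) = \eta \ge N^{\scl_0}\etaf$ is automatic; otherwise Theorem \ref{thm:noeig} (applied with an exponent $\excl_0 \ll \step$) places $\lambda_i$ within $N^{\excl_0}\etaf$ of $\supp \rho$, which still gives $\dist(z, \supp \rho) \gtrsim \eta \ge N^{\scl_0}\etaf$. A routine stochastic-continuity argument in $z$, based on the deterministic Lipschitz bound $\|\del_z G\| \lesssim \eta^{-2}$, allows the random point $\lambda_i + \ii\eta$ to be substituted into the isotropic local law, producing
\begin{equation*}
    \Im G(z)_{\bm x \bm x} \;\le\; \Im M(z)_{\bm x \bm x} + N^{\tar_0}\sqrt{\frac{\rho(z)}{N \eta}}\,.
\end{equation*}

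The deterministic core of the argument is the \emph{flatness bound} $\|\Im M(z)\| \lesssim \rho(z)$, which combined with $\|\bm x\| = 1$ gives $\Im M(z)_{\bm x \bm x} \lesssim \rho(z)$. This is a standard output of MDE theory under fullness: $\Im M(z) \ge 0$ has normalized trace $\rho(z)$, and the coercivity of $\mathcal{S}$ induced by Assumption \ref{ass:full} forces the spectrum of $\Im M(z)$ to concentrate around $\rho(z)$. Assembling the pieces,
\begin{equation*}
    |\langle \bm x, \bm u_i \rangle|^2 \;\lesssim\; \eta\, \rho(z) + N^{\tar_0}\sqrt{\frac{\eta\, \rho(z)}{N}}\,.
\end{equation*}
The definition \eqref{eq:etafdef} of $\etaf$ together with the $1/3$-Hölder continuity and the square-root/cubic-root asymptotics of $\rho$ at edges and cusps implies $N \eta \rho(z) \lesssim N^{C \step}$ for an absolute constant $C$, uniformly across the three spectral regimes. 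Therefore $|\langle \bm x, \bm u_i \rangle|^2 \lesssim N^{C\step + \tar_0 - 1} \le N^{2\deloc_0 - 1}$, and extracting the square root yields the claimed delocalization bound.

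The only nontrivial ingredient---and hence the main (minor) obstacle---is the flatness estimate $\|\Im M\| \lesssim \rho$, particularly at the cusp where both sides degenerate. This bound is not contained in Assumption \ref{ass:Mbdd}, but it is a well-known deterministic consequence of the MDE under the fullness condition and is in any case an unavoidable input elsewhere in the proof of Theorem \ref{thm:main}.
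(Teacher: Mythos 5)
Your argument is correct and is exactly the standard derivation that the paper invokes by citing the analogous proofs in \cite{slowcorr} and \cite{univWigtype}: spectral decomposition of $\Im G$, evaluation at $z=\lambda_i+\ii\eta$ with $\eta$ slightly above $\etaf(\lambda_i)$, the isotropic law \eqref{eq:ISOLL}, and the flatness bound $\Im M \lesssim \rho$ (which in this paper is provided by Lemma~\ref{lemma:Mt}). Your use of Theorem~\ref{thm:noeig} to handle $\lambda_i\notin\supp\rho$ and the scaling check $N\eta\rho(z)\lesssim N^{C\step}$ across bulk/edge/cusp are both sound, so there is no gap.
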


\begin{corollary}[Band rigidity and eigenvalue rigidity] \label{cor:rig}
	Assume the conditions of Theorem~\ref{thm:main} with $\mathcal{I} = \R$ in Assumption~\ref{ass:Mbdd}. Then, the following holds. 
	\begin{itemize}
		\item[(a)] %
		For any $\excl > 0$, whenever $e_0 \in \R \setminus \supp \rho$ with $\dist(e_0, \supp \rho) \ge  N^{\excl} \etaf(e_0)$, the number of eigenvalues less than $e_0$ is deterministic with high probability. 
		More precisely, 
		\begin{equation} \label{eq:intmass}
			\big|\spec H \cap (- \infty, e_0) \big| = N \int_{- \infty}^{e_0} \rho(x) \dif x, \quad \text{ w.v.h.p}.
		\end{equation}
		\item[(b)] Let $\lambda_1 \le ... \le \lambda_N$ denote the ordered eigenvalues of $H$ and assume that $e_0 \in \mathrm{int} (\supp \rho)$. Then, for any small $N$-independent constant $\rig_0 > 0$, it holds that
		\begin{equation}
			\big| \lambda_{k(e_0)}  - e_0\big| \le N^{\rig_0}\etaf(e_0)\,,
		\end{equation}
		with very high probability, where we defined the (self-consistent) \emph{eigenvalue index} as $k(e_0) := \lceil N \int_{- \infty}^{e_0} \rho(x) \dif x \rceil$. 
	\end{itemize}
\end{corollary}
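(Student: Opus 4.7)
Both parts are consequences of the optimal averaged local law \eqref{eq:AVLL} combined with the no-outside-eigenvalues bound in Theorem~\ref{thm:noeig}, converted via the standard Helffer--Sj\"ostrand route into sharp control of the eigenvalue counting functions
\begin{equation*}
\mathcal{N}_H(E) := \bigl|\spec H \cap (-\infty, E]\bigr|, \qquad \mathcal{N}_{\rm sc}(E) := N \int_{-\infty}^{E} \rho(x)\, \dif x.
\end{equation*}
The plan is to pick a smooth approximation $\chi_E$ of $\mathbf{1}_{(-\infty, E]}$ on scale $\etaf$, express $\Tr \chi_E(H) - \int \chi_E(x)\, N\rho(x)\, \dif x$ as a two-dimensional contour integral of $N\langle G(z) - M(z)\rangle$ against $\overline{\partial}\widetilde{\chi}_E$, and split the resulting $\eta = \Im z$ integration at the threshold $\eta \sim \dist(z, \supp \rho)$. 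Applying \eqref{eq:AVLL} with $B = I$ in each regime yields the comparison
\begin{equation*}
\sup_E \bigl|\mathcal{N}_H(E) - \mathcal{N}_{\rm sc}(E)\bigr| \le N^{\tar_0}
\end{equation*}
with very high probability.

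For part~(a), I would apply Theorem~\ref{thm:noeig} with a tolerance $\excl_0 \ll \excl$ to deduce that the gap containing $e_0$ is eigenvalue-free on a subinterval $(a', b') \ni e_0$ obtained by excising $N^{\excl_0}\etaf$-neighborhoods of its endpoints. Then $\mathcal{N}_H$ is constant on $(a', b')$, while $\mathcal{N}_{\rm sc}$ is constant throughout the entire gap because $\rho$ vanishes there. The integer-valuedness of $\mathcal{N}_H$, the band-mass quantization rendering $\mathcal{N}_{\rm sc}(e_0)$ an integer under Assumption~\ref{ass:Mbdd} with $\mathcal{I}=\R$, and the comparison bound above (with any $\tar_0 < 1$) together force the exact identity $\mathcal{N}_H(e_0) = \mathcal{N}_{\rm sc}(e_0)$.

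For part~(b), evaluating the comparison at $E = \lambda_{k(e_0)}$ and using $\mathcal{N}_H(\lambda_{k(e_0)}) = k(e_0) = \lceil \mathcal{N}_{\rm sc}(e_0)\rceil$ gives $|\mathcal{N}_{\rm sc}(\lambda_{k(e_0)}) - \mathcal{N}_{\rm sc}(e_0)| \lesssim N^{\tar_0}$. Inverting $\mathcal{N}_{\rm sc}$ via $\partial_E \mathcal{N}_{\rm sc} = N\rho$, and using that the defining relation \eqref{eq:etafdef} encodes the local scaling $N\rho\cdot\etaf \sim 1$ adapted to the H\"older behaviour of $\rho$ near $e_0$ (linear in the bulk, square-root at an edge, cubic-root at a cusp, as well as the outside-support regime via \eqref{eq:etafdefoutside}), translates this counting-function estimate into the eigenvalue-rigidity bound $|\lambda_{k(e_0)} - e_0| \le N^{\rig_0}\etaf(e_0)$.

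The main obstacle is keeping the Helffer--Sj\"ostrand integral under control as $\eta \to 0$ at edges and cusps, where $\rho$ degenerates and the ``inside-the-spectrum'' form of \eqref{eq:AVLL} alone would produce a divergent $\eta$-integral. The improved outside-spectrum version of \eqref{eq:AVLL}, with $N\dist(z, \supp \rho)$ in the denominator in place of $N\eta \rho(z)$, is exactly what salvages the argument: it yields a counting-function comparison tight enough to pin down the integer count in part~(a) and to deliver optimal rigidity at the fluctuation scale $\etaf$ in part~(b), uniformly across all spectral regimes including cusps.
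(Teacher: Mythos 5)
Your overall framework — Helffer--Sj\"ostrand estimate for the eigenvalue counting function fed by the averaged local law \eqref{eq:AVLL} and the no-outliers statement of Theorem~\ref{thm:noeig} — is the right toolbox, and part~(b) (modulo relying on part~(a) to locate $\lambda_{k(e_0)}$ in the correct band) is essentially the same argument as in the works the paper cites. However, your proof of part~(a) has a genuine gap, and it is the key step.

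The claim that ``the comparison bound above (with any $\tar_0 < 1$) together force the exact identity $\mathcal{N}_H(e_0) = \mathcal{N}_{\rm sc}(e_0)$'' is false: $N^{\tar_0}$ is not less than $1$ for any fixed $\tar_0>0$, it is a polynomially large quantity, so $|\mathcal{N}_H(e_0) - \mathcal{N}_{\rm sc}(e_0)| \le N^{\tar_0}$ says nothing about equality of two integers. Indeed, the Helffer--Sj\"ostrand bound is essentially optimal at order $N^{\tar_0}$ (the counting function genuinely fluctuates on a $\log N$ scale), so no refinement of it can deliver an error below $1$. There is a second problem: you invoke ``band-mass quantization'' — that $\mathcal{N}_{\rm sc}(e_0)$ is an integer — as an \emph{input}. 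But as Remark~\ref{rmk:intmass} explicitly points out, the integer-valuedness of $N\int_{-\infty}^{e_0}\rho$ is a nontrivial \emph{consequence} of \eqref{eq:intmass}, not a known fact about the MDE. Using it as an input makes the argument circular.

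The correct route — the one implemented in \cite[Cor.~2.5]{edgelocallaw} and \cite[Cor.~2.6]{Cusp1}, to which the paper defers — is a continuity (topological) argument rather than a size comparison. One constructs a continuous one-parameter family of ensembles interpolating between $H$ and a reference ensemble for which the identity \eqref{eq:intmass} is explicitly verifiable, arranges that the gap containing $e_0$ remains open along the path, and applies the no-eigenvalues-in-the-gap statement \emph{uniformly} along the path. Then $\mathcal{N}_{H_t}(e_0)$ is an integer that cannot jump (no eigenvalue crosses $e_0$), hence constant in $t$, and $t \mapsto N\int_{-\infty}^{e_0}\rho_t$ is continuous and shown to be constant as well; matching the two at the reference endpoint gives the exact identity. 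This deformation step is what replaces your (non-workable) ``close integers must coincide'' step and simultaneously \emph{proves}, rather than assumes, the integrality of $\mathcal{N}_{\rm sc}(e_0)$.
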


\begin{remark}[Integer mass] \label{rmk:intmass}
	We point out that  \eqref{eq:intmass} entails the nontrivial fact that, whenever $e_0 \notin \supp \rho$ satisfies  $\dist(e_0, \supp \rho) \ge N^{\excl} \etaf(e_0)$ for some $\excl > 0$, the integral $N \int_{- \infty}^{e_0} \rho(x) \dif x$ is always an integer. An immediate consequence is that, for each connected component $[a,b]$ of $\supp \rho$, it holds that $N\int_{a}^b \rho(x) \dif x$ is an integer. That is, each \emph{spectral band} contains that number of eigenvalues with very high probability. For spectral bands which are separated by a distance of order one, this was previously shown in \cite[Corollary 2.9]{edgelocallaw}. Our Corollary~\ref{cor:rig} improves this to the optimal minimal distance $N^\epsilon \etaf(e_0)$. %
\end{remark}

As our last consequence to the optimal local laws in Theorem~\ref{thm:main}, we prove cusp universality in Theorem~\ref{thm:univ} below. Since universality is already known in the bulk \cite{slowcorr} as well as the edge regime \cite{edgelocallaw}, we will henceforth focus on the (approximate) cubic-root cusp. However, the optimal local laws of Theorem \ref{thm:main} can be used as an input for the three-step strategy to yield bulk and edge universality as well. 
From the in-depth analysis of the MDE \eqref{eq:MDE} and its solution in \cite{AEK2020}, we know that the scDOS $\rho$ is described by explicit universal shape functions in the vicinity of local minima with a small value of $\rho$ and near small gaps in the support of $\rho$; see, e.g., \cite[Eqs.~(2.4a)--(2.4e)]{Cusp1} for precise formulas. 

Whenever the local length scale of such an almost cusp shape around a point $\mathfrak{b}$ matches (or is smaller than) the local eigenvalue spacing, i.e.~if  $\mathfrak{b}$ is a small local minimum, satisfying $\rho(\mathfrak{b}) \lesssim N^{-1/4}$, %
or   a midpoint of a gap with width $\Delta \lesssim N^{-3/4}$,   %
then we call the local shape   around $\mathfrak{b}$   a \emph{physical cusp} -- reflecting the fact that   it   %
becomes indistinguishable from an exact cusp when resolved with a precision (slightly) above the local eigenvalue spacing $\sim N^{-3/4}$. In this case, $\mathfrak{b}$ is called a \emph{physical cusp point}.
Besides the local length scale of a physical cusp point   $\mathfrak{b}$,   the specific shape of the scDOS   around $\mathfrak{b}$   is characterized by a single additional parameter $\gamma > 0$, called the \textit{slope parameter}. 

In order to formulate our result on cusp universality in Theorem~\ref{thm:univ}, it is natural to consider the rescaled $k$-point function $p_k^{(N)}$, which is implicitly defined as
\begin{equation} \label{eq:kptfunct}
	\E \, \binom{N}{k}^{-1} \sum_{\{j_1, ... , j_k\} \subset [N]} f(\lambda_{j_1}, ... , \lambda_{j_k}) =: \int_{\R^k} f(\bm x) p_k^{(N)}(\bm x) \, \dif \bm x \,,
\end{equation}
for any test function $f$. Here, the summation is over all distinct subsets of $k$ integers from $[N]$.

\begin{theorem}[Cusp universality for correlated random matrices] \label{thm:univ} Let $H \in \C^{N\times N}$ be a real symmetric or complex Hermitian correlated random matrix as in \eqref{eq:HAW}. Suppose that Assumptions~\ref{ass:boundedexp}--\ref{ass:Mbdd} are satisfied, assume that a \emph{physical cusp point} $\mathfrak{b} \in \mathcal{I}$ lies in the set of admissible energies \eqref{eq:admE}, and let $\gamma > 0$ be the   appropriate   slope parameter at $\mathfrak{b}$. %
	Then, the local $k$-point correlation function at $\mathfrak{b}$ is universal. That is, for every $k \in \N$ there exists a $k$-point correlation function $p_{k, \alpha}^{\rm GOE/GUE}$ such that for any test function $F \in C_c^1(\overline{\Omega})$ on a bounded open set $\Omega \subset \R^k$, it holds that\footnote{Here, $\mathfrak{b}$ is identified with the vector $(\mathfrak{b}, ... , \mathfrak{b}) \in \R^k$. }
	\begin{equation} \label{eq:universal}
		\int_{\R^k} F(\bm x) \left[ \frac{N^{k/4}}{\gamma^k} p_k^{(N)}\left(\mathfrak{b} + \frac{\bm x}{\gamma N^{3/4}}\right) - p_{k, \alpha}^{\rm GOE/GUE}(\bm x)\right] \, \dif \bm x = \mathcal{O}_{k, \Omega}(N^{-c(k)} \Vert F \Vert_{C^1}) \,, 
	\end{equation}
	where 
	the parameter $\alpha$ depends on  $\gamma$, the local length scale and the specific shape  %
	of the scDOS   around $\mathfrak{b}$, %
	i.e., whether %
	it is   an exact cusp, a small gap, or a small minimum (see \cite[Eq.~(2.6)]{Cusp1} or \cite[Eq.~(2.5)]{Cusp2}). 
			The constant $c(k) > 0$ in \eqref{eq:universal} depends only on $k$, and the implicit constant in the error term depends on $k$ and the diameter of the set $\Omega$.
		\end{theorem}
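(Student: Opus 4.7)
The proof follows the \emph{three-step strategy}, with Theorem~\ref{thm:main} supplying the first step. Thanks to the fullness assumption (Assumption~\ref{ass:full}), $H$ admits a decomposition $H = \tilde H + \sqrt{c_{\rm full}}\, U$ in which $U$ is an independent GOE/GUE matrix and $\tilde H$ is a correlated matrix still satisfying Assumptions~\ref{ass:boundedexp}--\ref{ass:Mbdd} with only slightly modified model parameters. I would then introduce an OU interpolation $H_t$ that ages the Gaussian component $U$ via Ornstein--Uhlenbeck dynamics while keeping $\tilde H$ fixed; this produces an interpolation with $H_0 = H$ and the \emph{same} deterministic equivalent $M$ along the flow, so in particular the physical cusp point $\mathfrak{b}$ and slope $\gamma$ are preserved and the local laws of Theorem~\ref{thm:main} hold uniformly for $t \in [0, N^{-c}]$.

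For the short-time universality step, at an intermediate time $t = N^{-1/2+\omega}$ I would apply the DBM coupling at the cusp developed in \cite{Cusp1, Cusp2} to the eigenvalues of $H_t$ near $\mathfrak{b}$. That analysis is essentially model-independent and only uses (i) optimal rigidity at the cusp scale $\etaf \sim N^{-3/4}$, supplied uniformly in $t$ by Corollary~\ref{cor:rig} applied to $H_t$, and (ii) the precise Pearcey-type shape of the scDOS at $\mathfrak{b}$, which is unchanged from the Wigner-type setting. It therefore yields that the local $k$-point correlation function of $H_t$ at $\mathfrak{b}$ matches $p_{k,\alpha}^{\rm GOE/GUE}$ up to an error of order $N^{-c(k)}$.

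For the third step, I would remove the aged Gaussian component via a Green function comparison (GFT) argument: differentiating $\E\big[\prod_{j=1}^k \Im G_t(z_j)\, B_j\big]$ along the OU flow and expanding via cumulants, the first two cumulant contributions cancel thanks to the drift-diffusion balance of the OU evolution, and the higher-order terms are controlled by the optimal local laws \eqref{eq:ISOLL}--\eqref{eq:AVLL}. Here the Hilbert--Schmidt version of the averaged bound is particularly convenient since the smoothed $k$-point function generates observables $B_j$ of small Hilbert--Schmidt norm. Integrating over $t \in [0, N^{-1/2+\omega}]$ yields that the rescaled $k$-point functions of $H$ and $H_{N^{-1/2+\omega}}$ differ by $\mathcal{O}(N^{-c'(k)})$, which combined with the previous step completes the proof of \eqref{eq:universal}.

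The main obstacle is the delicate window for the intermediate time $t$: it must be large enough for DBM to reach local equilibrium on the cusp scale $\etaf \sim N^{-3/4}$ -- considerably shorter than in the bulk or at the edge -- yet small enough for GFT to close with error below $N^{-c(k)}$. In the correlated setting one must additionally verify that the decomposition $H = \tilde H + \sqrt{c_{\rm full}}\, U$ preserves Assumption~\ref{ass:cumulants} uniformly and that the physical cusp point does not drift beyond $\etaf$ along the flow. These technical issues have all been addressed in \cite{Cusp1, Cusp2} within a framework that only requires an optimal local law as input -- precisely what Theorem~\ref{thm:main} supplies in the correlated case.
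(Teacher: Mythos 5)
Your outline is correct and takes essentially the same route as the paper, which simply invokes the three-step strategy with Theorem~\ref{thm:main} as the sole new ingredient and cites \cite[Section~3]{Cusp2} together with \cite{Cusp1} for the DBM-coupling and GFT steps, which were already carried out there in a model-independent form that only requires an optimal local law as input. Two small imprecisions are worth flagging: if the OU flow acts only on the Gaussian factor $U$ while $\tilde H$ is held fixed, then $H_t \overset{d}{=} H_0$ for all $t$, so the concluding GFT comparison you describe is vacuous (the standard setup evolves the entire matrix, preserving the first two moments while damping higher cumulants, which is exactly what makes GFT nontrivial); and the Hilbert--Schmidt refinement of the averaged law plays no distinguished role in the universality GFT, where the relevant observable is the normalized trace $\langle \Im G\rangle$, i.e.\ $B = I$, for which $\lVert B\rVert_{\mathrm{hs}} = \lVert B\rVert$.
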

		
		\begin{remark}[On $p_{k, \alpha}^{\rm GUE/GOE}$] \label{rmk:pearcey}
			For the universal $k$-point correlation function $p_{k, \alpha}^{\rm GOE/GUE}$, we have the following. 
			\begin{itemize}
				\item[(i)] In the \emph{complex Hermitian} symmetry class, the $k$-point function takes the determinantal form
				\begin{equation}
					p_{k, \alpha}^{\rm GUE}(\bm x) = \det\big(K_\alpha(x_i, x_j)\big)_{i,j=1}^k \,, 
				\end{equation}
				where the \emph{extended Pearcey kernel} with parameter $\alpha \in \R$ is given by
				\begin{equation}
					K_\alpha(x,y) = \frac{1}{(2 \pi \ii)^2} \int_{\Xi} \dif z \, \int_\Phi \dif w \, \frac{\exp\big(-w^4/4 + \alpha w^2/2 - yw + z^4/4 - \alpha z^2/2 + xz\big)}{w-z} \,. 
				\end{equation} 
				Here, $\Xi$ is a contour consisting of rays from $\pm \ee^{\ii \pi/4}$ to $0$ and rays from $0$ to $\pm \ee^{-\ii \pi/4}$, and $\Phi$ is the ray from $- \ii \infty$ to $ \ii \infty$. See \cite{adler2010airy, brezin1998universal, tracy2006pearcey} and the references in \cite{Cusp1} for more details.
				\item[(ii)] In the \emph{real symmetric} case, the $k$-point correlation function $p_{k, \alpha}^{\rm GOE}$ (possibly only a distribution) is not known explicitly, not even if it is Pfaffian. However, $p_{k, \alpha}^{\rm GOE}$ exists in the dual of $C^1$ as the limit of correlation functions of a suitable one-parameter family of Gaussian comparison models (see Sec.~3 and in particular Eq.~(3.5) of \cite{Cusp2}). 
			\end{itemize}
		\end{remark}

		\section{Zigzag strategy: Proof of the main results} \label{sec:zigzag}
		To streamline the presentation, we assume that the set of admissible energies $\mathcal{I}$, defined in \eqref{eq:admE} of Assumption~\ref{ass:Mbdd}, is the entire real line, that is, $\mathcal{I}=\mathbb{R}$. We discuss the straightforward modifications for general $\mathcal{I}$ in Remark~\ref{rem:non-local}.
		\begin{definition} [Local Laws] \label{def:ll_template}
			Let $H_u$ be a random matrix depending on a parameter\footnote{In applications, the parameter $u$ will typically be  time and the set $ \mathcal{U}$ will be a 
				bounded subinterval of $\R$.} $u \in \mathcal{U}$, and let $M_u$ be the solution to the MDE \eqref{eq:MDE} with the data pair $(\E H_u, \mathcal{S}_{H_u})$, where $\mathcal{S}_{H_u}$ is defined in \eqref{eq:self_energy_def}.
			For all $u \in \mathcal{U}$, let $\mathcal{D}_u\subset \mathbb{H}$ and let $\tar > 0$.
			We say that the resolvent $G_u(z) := (H_u-z)^{-1}$ satisfies the averaged local law and the isotropic local law, respectively, with data $(\mathcal{D}_u, \tar)$ uniformly in $u \in \mathcal{U}$, if and only if the bounds
			\begin{equation} \label{eq:ll_template}
				\biggl\lvert \bigl\langle \bigl(G_u(z) - M_{u}(z)\bigr) B \bigr\rangle  \biggr\rvert \le \frac{N^{3\tar}}{N \eta}, \quad \text{and} \quad \biggl\lvert \bigl(G_u(z) - M_{u}(z)\bigr)_{\bm x \bm y} \biggr\rvert \le N^{\tar} \biggl(\sqrt{\frac{\rho_{u}(z)}{N \eta} } + \frac{1}{N\eta}\biggr),
			\end{equation}
			hold uniformly in $z := E+\ii\eta \in \mathcal{D}_u$   and in $u \in \mathcal{U}$, with very high probability,  for any deterministic vectors $\bm x, \bm y \in \mathbb{C}^N$ with   $\norm{\bm x} = \norm{\bm y} = 1$, and any deterministic matrices $B$ with $\norm{B}_{\mathrm{hs}} = 1$. Here $\rho_u(z) := \tfrac{1}{\pi}\langle \Im M_u(z)\rangle$.
		\end{definition}
		
		The goal of the present section is to prove the local laws in the \textit{above the scale} regime, where $\rho(z)N|\Im z|$ is large.
		Fix a (small) $N$-independent constant $\scl > 0$, a large constant $C_L > 0$, and define the spectral domain $\abvD$ as
		\begin{equation} \label{eq:abvD}
			\abvD \equiv \abvD(\scl, C_L) := \bigl\{ z := E + \ii \eta \in \mathbb{H} \,:\, \rho(z)N\eta \ge N^{\scl}, \, |E| \le C_L, \, \eta \le C_L \bigr\}.
		\end{equation}
		The regime $\rho(z)N \eta \ge N^\varepsilon$ is natural for studying the local laws, since $\rho(E+\ii\eta)N\eta$ is the typical number of eigenvalues in the interval of size $\eta$ around the energy $E$. 
		\begin{theorem} [Local Laws above the Scale] \label{th:Neta_local_laws}
			Fix a (small) $N$-independent constant $\scl > 0$, a large constant $C_L > 0$.
			Let $H$ be a random matrix satisfying the Assumptions~\ref{ass:boundedexp}--\ref{ass:Mbdd}, then the resolvent $G(z) := (H-z)^{-1}$ satisfies the local laws \eqref{eq:ll_template} with data $(\abvD, 2\tar)$, for any fixed tolerance exponent $0 < \tar \le \tfrac{1}{100}\scl$, where $\abvD=\abvD(\scl, C_L)$.
		\end{theorem}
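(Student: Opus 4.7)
The plan is to prove the local laws \eqref{eq:ll_template} on $\abvD$ via the Zigzag strategy: an iterated tandem of the characteristic flow method (the \emph{zig}) and a Green function comparison driven by an Ornstein--Uhlenbeck flow (the \emph{zag}), organized as an outer induction in $\eta$. I would start from the trivial regime $\eta \sim 1$, where the local law follows from a Combes--Thomas bound combined with standard norm concentration, and decrease $\eta$ by a factor $N^{-\step}$ at each zig-zag pair for some small $\step > 0$, until $\abvD$ is covered in finitely many steps.

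For the zig-step, I would interpolate $H_t := H + \sqrt{t}\, U$, where $U$ is an independent Gaussian random matrix with the same self-energy operator $\mathcal{S}_H$ as $H$ (such a $U$ exists by fullness, Assumption~\ref{ass:full}). Along the characteristic flow $\partial_t z_t = -\langle M_t(z_t)\rangle$, the deterministic approximation is preserved, $M_t(z_t) = M_0(z_0)$, while $\Im z_t$ strictly decreases in $t$. Applying Ito's formula and using the Ward identity $GG^* = \Im G/\eta$, the martingale part of the resulting SDE for $\langle (G_t - M_t)B\rangle$ and $(G_t - M_t)_{\bm x\bm y}$ has quadratic variation of the optimal orders $\rho/(N\eta)^2$ and $\rho/(N\eta)$ respectively. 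This is precisely the mechanism by which Zigzag sidesteps any fine cusp fluctuation averaging (the $\sigma$-cells of \cite{Cusp1}): only the trivial stability bound $\rho/\eta$ coming from the Ward identity is ever invoked, so the cusp regime becomes no harder than the bulk one. The output is a local law for $H_t$ at a scale $\eta_t < \eta_0$.

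For the zag-step, I would then remove the Gaussian component accumulated in the zig-step by running the corresponding Ornstein--Uhlenbeck flow backwards and comparing resolvent observables via a cumulant expansion. The core obstacle, and the part I expect to be hardest, is that the GFT argument requires an a priori pointwise bound on $G$ at the target scale $\eta_t$---precisely the bound one is trying to establish. To break this circularity I would nest a secondary bootstrap induction in $\eta$ inside the zag-step: assuming the local law at scale $\eta$, extend it to $\eta\, N^{-\step}$ using only the coarser a priori bound at scale $\eta$ to dominate the GFT error terms, and iterate $O(1)$ times to reach $\eta_t$. The Hilbert--Schmidt norm of $B$ in the averaged law (in place of the operator norm used in \cite{slowcorr,edgelocallaw,Cusp1}) forces subtler cancellations in this cumulant expansion; this is where the new third-cumulant bound \eqref{eq:kappa_3_av_norm} enters critically. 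The remaining pieces---Gronwall estimates along the characteristic flow, stopping-time arguments to promote high-probability bounds into deterministic ones along the flow, and the verification that finitely many zig-zag iterations suffice to cover $\abvD$---follow routine patterns once the internal zag bootstrap has been set up.
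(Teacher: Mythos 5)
Your high-level blueprint matches the paper: a tandem of a characteristic-flow \emph{zig} and a GFT \emph{zag}, an internal bootstrap in $\eta$ to break the circularity of the GFT a priori bound, Ward-identity bookkeeping in place of $\sigma$-cell extraction, and the third-cumulant bound \eqref{eq:kappa_3_av_norm} to pay for the Hilbert--Schmidt norm of $B$. However, there are two genuine gaps in the details.

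First, the initialization is not ``trivial'' and a Combes--Thomas bound plus norm concentration at $\eta\sim1$ do not suffice. The Zigzag iteration starts from Proposition~\ref{prop:global}, a global law that must hold on a domain $\globD$ reaching all the way down to $\eta \ge N^{-1+\scl}$ away from the support of $\rho$ and to $\eta\gtrsim N^{-\tar/4}$ in the bulk, with error $N^{\tar}\Psi(z)\sim N^{\tar-1/2}$; a Combes--Thomas bound controls only off-diagonal decay, not the difference $G-M$ at this precision, and norm concentration gives $\lVert G\rVert\lesssim 1$, not a quantitative estimate on $(G-M)_{\bm x\bm y}$. The characteristic flow contracts $\rho^{-1}\eta$ only by a factor comparable to the terminal time $T\sim N^{-\tar/4}$, which is necessarily small so that the Gronwall factors in the zag step (cf.~\eqref{eq:Gronwalliso}--\eqref{eq:avint}) stay $O(1)$; hence the initial law must already hold on this stretched domain and requires the full self-consistent equation argument via cumulant expansion and the stability operator of Lemma~\ref{lemma:stab_op} (Section 7 of the paper).

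Second, your zig interpolation $H_t := H + \sqrt{t}\, U$ with $\mathcal{S}_U = \mathcal{S}_H$ does not preserve $M_t(z_t)$ along the characteristic $\partial_t z_t = -\langle M_t(z_t)\rangle$. With this choice $\mathcal{S}_{H_t} = (1+t)\mathcal{S}_H$, and differentiating the MDE along the characteristic gives $\partial_t M_t(z_t) = M_t\bigl(\langle M_t\rangle - \mathcal{S}[M_t]\bigr)M_t$, which vanishes only if $\mathcal{S}[X]=\langle X\rangle\,(+X^{\mathfrak t})$, i.e.\ when the increment is GOE/GUE. The paper accordingly uses the standard OU flow \eqref{eq:OU_flow} whose covariance interpolates toward $\Sigma_{\mathrm G}$, and tracks $M_t$ via the rescaling $\rd M_t(z_t)=\tfrac12 M_t(z_t)\rd t$ under the full characteristic $\rd z_t = -\tfrac12 z_t\rd t - \langle M_t(z_t)\rangle\rd t$. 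Moreover, fullness (Assumption~\ref{ass:full}) is not what makes your $U$ exist; any positive semi-definite covariance yields a Gaussian. Fullness is instead needed in the \emph{zag} direction, in Lemma~\ref{lemma:OUsurj}, to split $\Sigma_{H}^{1/2}[W_{\mathrm G}]\overset{d}{=}\widehat W + \sqrt{c}\,\other{W}_{\mathrm G}$ so that the zag-flow can be reversed to a pre-image $\mathfrak{H}_{c,t}(H)$ for the next zig step. The conceptual core of your proposal is sound, but both the starting point and the precise flows need to be reworked along these lines.
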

		
		To prove Theorem~\ref{thm:main} in the {\it below the scale} regime, that is, to handle the case when $\rho(z)N|\Im z|$ is small, we proceed in two steps. In the key first step  we use
		the local laws above the scale of Theorem~\ref{th:Neta_local_laws} to prove Theorem~\ref{thm:noeig} that asserts the absence of spectrum outside of the support of the scDOS $\rho$.
		Then the second step  is a routine derivation of \eqref{eq:AVLL} and \eqref{eq:ISOLL} from \eqref{eq:noeig} and \eqref{eq:ll_template}. Both steps are presented in Section~\ref{sec:outside}. In the main part of the proof, we only consider spectral parameters $z$ satisfying $\dist(z, \supp\rho )\lesssim 1$. The easy extension to the regime $\dist(z, \supp\rho ) \gtrsim 1$ and the resulting $\langle z \rangle^{-2}$-decay are briefly addressed in Remark~\ref{rem:far_away}. 
		
		In the sequel, we treat the constants $\scl, C_L$ in \eqref{eq:abvD} as additional model parameters and omit them from the arguments of $\abvD$.

		Throughout the paper, we consistently use the notation $\scl, \tar, \sscl, \step$ to represent positive $N$-independent tolerance exponents, each playing a particular role in the proof.  Specifically, $\scl$ denotes the tolerance exponent from the definition of the domain $\abvD$ (see \eqref{eq:abvD} and \eqref{eq:abvD_t} below); $\tar$ and its multiples represent the target tolerance exponents for the local laws above the scale in \eqref{eq:ll_template}. The exponent $\sscl$ appears in the below-the-scale part of the proof (Section~\ref{sec:outside}). Multiples of $-\sscl$ are used in the exclusion estimate \eqref{eq:excl_template} and in the lower bound on $\rho N \eta$ in \eqref{eq:below_Dt}. The exponent $\step$ refers to the step size used in various inductive arguments. In the sequel, we adhere to the following conventions:
		\begin{equation} \label{eq:exponents}
			\step \ll \tar \ll \scl, \quad \sscl \ll \tar, \quad \step < \mu,
		\end{equation} 
		where $\mu > 0$ is the constant from Assumption~\ref{ass:cumulants} (ii). We also assume that the arbitrary exponent $\arb > 0$ is much smaller than the other tolerance exponents, that is, $\arb \ll \step$ and $\arb \ll \sscl$.

		\subsection{Input: Global Laws}
		Let $\rho(z)$ be the harmonic extension to $\mathbb{H}$ of the scDOS corresponding to a solution of \eqref{eq:MDE}. Given small positive constants  $\scl, \tar > 0$, and a large constant $D>0$, we define the global domain as 
		\begin{equation} \label{eq:globD_def}
			\globD\equiv \globD(D, \scl, \tar, \rho) := \bigl\{ z := E+\ii\eta\in \mathbb{H}\,:\, |E| \le N^D,\, N^{-1+\scl} \le \eta \le N^D,\, \rho(z)^{-1}\eta \ge N^{-\tar/4} \bigr\}.
		\end{equation}
		Effectively, the function $\rho(z)^{-1} \eta$ in \eqref{eq:globD_def} controls the proximity of the spectral parameter $z$ to the support $\rho$.
		
		\begin{proposition} \label{prop:global}
			Let $H$ be a random matrix satisfying the Assumptions~\ref{ass:boundedexp}--\ref{ass:Mbdd}, and let $\rho(z)$ be the scDOS arising from the solution to the MDE \eqref{eq:MDE} corresponding to $H$. Let $\bddD := \mathcal{I}+[-c_M,c_M]+\ii\mathbb{R} \subset \mathbb{C}$, where $\mathcal{I}$ is defined in \eqref{eq:admE}.
			Fix a large constant $D > 0$ and a tolerance exponent $0 < \tar < \tfrac{1}{10}\scl$.
			Then the resolvent $G(z) := (H-z)^{-1}$ satisfies 
			\begin{subequations}
				\begin{equation} \label{eq:global_iso}
					\biggl\lvert \bigl(G(z) - M(z)\bigr)_{\bm x \bm y} \biggr\rvert \le N^{\tar} \Psi(z)\lVert\bm x \rVert  \lVert\bm y \rVert,
				\end{equation} 
				\begin{equation} \label{eq:global_av}
					\biggl\lvert \bigl\langle \bigl(G(z) - M(z)\bigr)B \bigr\rangle \biggr\rvert \le N^{3\tar}\Psi(z)\sqrt{\frac{\langle z\rangle}{N\eta}}\lVert B\rVert_{\mathrm{hs}},
				\end{equation}
			\end{subequations}  
			with very high probability, uniformly in $z := E +\ii\eta \in \globD(D,\varepsilon,\xi,\rho)\cap\bddD$, for any deterministic vectors $\bm x$, $\bm y$ and matrices $B$. Here the control parameter $\Psi(z)$ is defined as
			\begin{equation} \label{eq:iso_psi_def}
				\Psi(z) %
				:= \sqrt{\frac{\rho(z)}{\langle z \rangle^2 N\eta}} + \frac{1}{\langle z \rangle^2 N\eta}, \quad \eta:= \Im z.
			\end{equation}
		\end{proposition}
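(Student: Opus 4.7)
The plan is to establish the global laws by a bootstrap in the spectral parameter $\eta = \Im z$, following the scheme of \cite{slowcorr, edgelocallaw} but adapted to the Hilbert--Schmidt observables required here. The essential point is that on $\globD$ the ratio $\eta/\rho(z)$ is bounded below by $N^{-\tar/4}$, so the MDE stability factor---which can blow up like $\rho(z)^{-1}$ near edges and cusps---is harmless at the cost of a small power $N^\tar$.

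First I would set up the bootstrap at $\eta = N^D$, where the bounds \eqref{eq:global_iso}--\eqref{eq:global_av} follow trivially from the Neumann expansion $G = -z^{-1}\sum_{k\ge 0}(H/z)^k$ combined with Assumption~\ref{ass:moments}, and the analogous bound on $M$. I would then discretize $\eta$ on a fine $N^{-K}$-grid for some large $K$, use Lipschitz continuity in $\eta$ (via $\|G\| \le 1/\eta$ and the analytic regularity of $M$) to transfer the bound between neighboring grid points, and induct downward in $\eta$ until reaching the boundary of $\globD$.

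The inductive step is the heart of the argument. Starting from $(H-z)G = I$ and the MDE \eqref{eq:MDE}, one derives a perturbed Dyson equation of the schematic form
\begin{equation*}
\mathcal{L}_M[G - M] \,=\, M\,\underline{WG} \,+\, (\text{higher-order in } G-M),
\end{equation*}
where $\mathcal{L}_M$ is the linearized MDE stability operator at $M$ and $\underline{WG} := WG - \E[WG]$ is the centered fluctuation. The term $M\underline{WG}$ is estimated probabilistically by a high-moment cumulant expansion, via Proposition~\ref{prop:cumex} and the bounds \eqref{eq:summcum}--\eqref{eq:summcum3c} of Assumption~\ref{ass:cumulants}. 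The quadratic-in-$(G-M)$ remainder is absorbed by the inductive a priori input and the smallness of the grid step. Inverting $\mathcal{L}_M$ using the MDE stability theory of \cite{AEK2020} produces the claimed $\Psi$-size bound, with a stability loss of at most $\rho(z)^{-1}$ that is absorbed into the tolerance $N^\tar$ by virtue of the definition of $\globD$.

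The main obstacle, relative to the operator-norm global law of \cite{slowcorr}, is controlling the averaged error against a Hilbert--Schmidt observable $B$. In the cumulant expansion of $\langle (G-M)B\rangle$, the third-order term involves a triple sum of $|\kappa(\alpha_1,\alpha_2,\alpha_3)|$ against three resolvent entries with one slot paired to $B$; bounding this by $\|B\|_{\mathrm{hs}}$ rather than $\|B\|$ requires precisely the trilinear operator-norm estimate \eqref{eq:kappa_3_av_norm} on the three-point cumulant kernel. This is the only new cumulant input beyond \cite{slowcorr}, and it is responsible for the extra $\sqrt{\langle z\rangle/(N\eta)}$ factor in \eqref{eq:global_av}. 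The $\langle z\rangle^{-2}$ decay in $\Psi$ for large $|z|$ is transparent from $\|M(z)\|\le C_M\langle z\rangle^{-1}$ in Assumption~\ref{ass:Mbdd} and carries through the estimates routinely.
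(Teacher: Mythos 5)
Your outline matches the paper's strategy in its broad strokes: a descending $\eta$-bootstrap initialized at large $|z|$, the self-consistent equation $G-M = -\mathcal{B}^{-1}[M\underline{WG}] + \mathcal{B}^{-1}[M\mathcal{S}[G-M](G-M)]$, a minimalistic cumulant expansion for $M\underline{WG}$, inversion of the stability operator with loss controlled on $\globD$, and the trilinear bound \eqref{eq:kappa_3_av_norm} as the new ingredient needed to land the averaged error on $\|B\|_{\mathrm{hs}}$. The identification of \eqref{eq:kappa_3_av_norm} as the precise additional cumulant input is exactly right.

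However, your plan has a genuine gap in the isotropic step: you say the quadratic-in-$(G-M)$ remainder is "absorbed by the inductive a priori input," but a pointwise a priori bound on a fixed pair $(G-M)_{\bm x \bm y}$ does not self-improve. The cumulant expansion of $(M\underline{WG})_{\bm u\bm v}$ generates new resolvent matrix elements evaluated at \emph{derived} vectors such as $M\bm u$ and $\kappa_{\mathrm{c}}((M\bm u)a, \cdot\, b)$, which are not controlled by a bound on $\bm x, \bm y$ alone. The paper closes this via the nested isotropic norm $\|\cdot\|_*$ from \eqref{eq:star_norm}, built from a hierarchy $\mathcal{V}_0 \subset \mathcal{V}_1 \subset \dots \subset \mathcal{V}_J$ of vector sets enriched under the action of $M$ and the cumulant kernels, together with Lemma~\ref{lemma:Gap} (a dichotomy on the values of $\|G-M\|_*$ rather than a direct downward induction). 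Without an analogous hierarchical norm, your induction does not close. A second, smaller imprecision: the stability loss is $\|\mathcal{B}^{-1}\| \lesssim 1+\beta(z)^{-1}$ with $\beta = \rho^2 + \rho|\sigma| + \rho^{-1}\eta$, which is of order $\rho^{-2}$ near a cusp, not $\rho^{-1}$; what saves the argument on $\globD$ is the term $\rho^{-1}\eta \ge N^{-\tar/4}$ in $\beta$, exactly as you cite, so your conclusion is fine but the stated mechanism is misattributed. Finally, $\underline{WG}$ is the renormalization $WG + \mathcal{S}[G]G$, not the centering $WG - \E[WG]$; since $G$ is itself random, these differ at leading order, and the renormalized form is what makes the second-order cumulant term cancel.
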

		We prove Proposition~\ref{prop:global} in Section~\ref{sec:stable}.
		
		\subsection{Local law via Zigzag strategy: Proof of Theorem~\ref{th:Neta_local_laws}}
		
		\subsubsection{ Preliminaries: Two Random Matrix Flows}  
		For any random matrix $H$, we define the covariance tensor $\Sigma_{H}$ corresponding to $H$ by its action on any deterministic matrix $X\in \mathbb{C}^{N\times N}$,
		\begin{equation} \label{eq:corr_def}
			\Sigma_{H}[X] := \E \bigl[ \Tr \bigl[ (H-\E H)X \bigr] (H-\E H)\bigr].
		\end{equation}
		Note that $\Sigma_{H}$ is different from the self-energy operator \eqref{eq:self_energy_def}, but they both carry equivalent information. Moreover,
		it is positive definite on the space of matrices equipped with the usual scalar product $(X, Y)= \langle X^*Y\rangle$
		and we will denote by $\Sigma^{1/2}$ its square root.
		
		Along the proof, we use two distinct flows in the space of $N\times N$ random matrices: the \textit{zig-flow} (standard Ornstein–Uhlenbeck process), defined as
		\begin{equation} \label{eq:OU_flow}
			\rd H_t = -\frac{1}{2}H_t \rd t + \frac{\rd \Brwn_t}{\sqrt{N}},  \qquad t\ge 0;
		\end{equation}
		and the \textit{zag-flow} (modified Ornstein–Uhlenbeck process), distinguished by the superscript $t$, 
		\begin{equation} \label{eq:zag_flow}
			\rd H^t = -\frac{1}{2} \bigl(H^t - \E H^t\bigr) + \Sigma_{H^0}^{1/2}\bigl[\rd \Brwn_t\bigr],    \qquad t \ge 0,
		\end{equation}
		where $\Sigma_{H^0}$ is the covariance tensor of $H^0$, defined according to \eqref{eq:corr_def}.
		In both \eqref{eq:OU_flow} and \eqref{eq:zag_flow}, $\Brwn_t$
		denotes the real symmetric or complex Hermitian Brownian motion, depending on the symmetry class of $H$.

		Note that along the zig-flow \eqref{eq:OU_flow}, the covariance tensor $\Sigma_t := \Sigma_{H_t}$, corresponding to $H_t$ via \eqref{eq:corr_def}, satisfies the ordinary differential equation
		\begin{equation} \label{eq:Sigma_flow}
			\rd \Sigma_t = \bigl(-\Sigma_t + \Sigma_{\mathrm{G}}\bigr)\rd t,
		\end{equation}
		where $\Sigma_{\mathrm{G}}$ is the covariance tensor of a GOE/GUE matrix in the same symmetry class as $H$. That is $\Sigma_{\mathrm{G}}[X] = N^{-1}X$ in the complex Hermitian case, and $\Sigma_{\mathrm{G}}[X] = N^{-1}(X+X^\mathfrak{t})$ in the real-symmetric case, where $X^\mathfrak{t}$ denotes the transpose of $X$. On the other hand, along the zag-flow \eqref{eq:zag_flow}, the expectation and the covariance tensor of $H_t$ (and hence the self-energy $\mathcal{S}_{H_t}$) are preserved. Therefore, the deterministic approximation $M$ remains unchanged along the zag-flow.

		For any $t \ge 0$,  we define the flow maps $\mathfrak{F}_{\mathrm{zig}}^t$ and $\mathfrak{F}_{\mathrm{zag}}^t$ on the space of probability distribution $\mathcal{P}(\mathbb{C}^{N\times N})$ by
		\begin{equation} \label{eq:zig_operator}
			\mathfrak{F}_{\mathrm{zig}}^t\bigl[H \bigr] := H_t, \quad \text{ where } H_t \text{ solves (\ref{eq:OU_flow}) with the initial condition } H_0 = H.
		\end{equation}
		\begin{equation} \label{eq:zag_operator}
			\mathfrak{F}_{\mathrm{zag}}^t\bigl[ H  \bigr] := H^t, \quad \text{ where } H^t \text{ solves (\ref{eq:zag_flow}) with the initial condition } H^0 = H.
		\end{equation}
		The key relation between the flow maps $\mathfrak{F}_{\mathrm{zig}}^t$ and $\mathfrak{F}_{\mathrm{zag}}^t$ is captured by the following lemma.
		\begin{lemma} [Flow Distribution Surjectivity] \label{lemma:OUsurj}
			Let $H$ be a random matrix satisfying the fullness condition \eqref{eq:fullness} with a constant $0 < c < 1$, then there exists a random matrix $\mathfrak{H}_{c,t}(H)$ such that
			\begin{equation} \label{eq:init_cond_for_zig}
				\mathfrak{F}_{\mathrm{zig}}^t\bigl[\mathfrak{H}_{c,t}(H)\bigr] \,\overset{d}{=}\, \mathfrak{F}_{\mathrm{zag}}^{s(t)}\bigl[ H  \bigr], \quad 0 \le t \le -\log(1-c),
			\end{equation}
			
			where the function $s(t) \equiv s_c(t)$ is defined as 
			\begin{equation} \label{eq:init_cond_reverse}
				s(t) \equiv s_c(t) := \log c - \log\bigl(c - 1 + \ee^{-t}\bigr),
			\end{equation}
			and satisfies
			\begin{equation} \label{eq:s(t)_est}
				s(t) \le 2c^{-1} t,\quad 0 \le t \le c/2. 
			\end{equation}
		\end{lemma}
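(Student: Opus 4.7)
The plan is to solve both linear stochastic flows explicitly and then reverse-engineer $\mathfrak{H}_{c,t}(H)$ so that the two sides of \eqref{eq:init_cond_for_zig} match as distributions. Solving \eqref{eq:OU_flow} from an arbitrary initial condition $\tilde{H}$ gives
\begin{equation*}
\mathfrak{F}_{\mathrm{zig}}^t[\tilde H] \,=\, e^{-t/2}\,\tilde H + \mathcal{N}^{\mathrm{zig}}_t, \qquad \mathcal{N}^{\mathrm{zig}}_t := \int_0^t e^{-(t-r)/2}\, \rd \Brwn_r/\sqrt{N},
\end{equation*}
where $\mathcal{N}^{\mathrm{zig}}_t$ is a centered Gaussian random matrix in the appropriate symmetry class with covariance tensor $(1-e^{-t})\Sigma_{\mathrm{G}}$, independent of $\tilde H$. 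An analogous integration of \eqref{eq:zag_flow} (now around the preserved mean $\E H$) yields $\mathfrak{F}_{\mathrm{zag}}^s[H] = \E H + e^{-s/2}(H - \E H) + \mathcal{N}^{\mathrm{zag}}_s$, where $\mathcal{N}^{\mathrm{zag}}_s$ is a centered Gaussian with covariance tensor $(1-e^{-s})\Sigma_H$, independent of $H$.

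Motivated by these formulas, I would define
\begin{equation*}
\mathfrak{H}_{c,t}(H) := e^{t/2}\,\E H + e^{(t-s(t))/2}(H - \E H) + Y,
\end{equation*}
where $Y$ is a centered Gaussian random matrix, independent of $H$, taking values in the same symmetry class, with covariance tensor $\Sigma_Y := e^t\bigl[(1-e^{-s(t)})\Sigma_H - (1-e^{-t})\Sigma_{\mathrm{G}}\bigr]$. Substituting into the zig-flow gives
\begin{equation*}
\mathfrak{F}_{\mathrm{zig}}^t\bigl[\mathfrak{H}_{c,t}(H)\bigr] \,=\, \E H + e^{-s(t)/2}(H - \E H) + \bigl(e^{-t/2}Y + \mathcal{N}^{\mathrm{zig}}_t\bigr),
\end{equation*}
and the final parenthesized term is an independent centered Gaussian matrix with covariance $e^{-t}\Sigma_Y + (1-e^{-t})\Sigma_{\mathrm{G}} = (1-e^{-s(t)})\Sigma_H$, coinciding with the distribution of $\mathcal{N}^{\mathrm{zag}}_{s(t)}$. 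Since a centered Gaussian is determined by its covariance tensor, this yields the distributional identity \eqref{eq:init_cond_for_zig}.

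The hard part is to justify that $\Sigma_Y$ is a legitimate covariance tensor, i.e.~positive semidefinite as an operator on matrices of the given symmetry class (with respect to the Hilbert--Schmidt inner product), and this is precisely where \emph{fullness} enters. The bound \eqref{eq:fullness} translates directly into the operator inequality $\Sigma_H \ge c\,\Sigma_{\mathrm{G}}$, so $(1-e^{-s})\Sigma_H \ge (1-e^{-s})c\,\Sigma_{\mathrm{G}}$. Requiring the right-hand side to dominate $(1-e^{-t})\Sigma_{\mathrm{G}}$ gives the scalar constraint $(1-e^{-s})c \ge 1-e^{-t}$, whose smallest solution is exactly $s = \log c - \log(c-1+e^{-t}) = s(t)$, well-defined on the stated interval $0 \le t \le -\log(1-c)$. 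Finally, the estimate \eqref{eq:s(t)_est} is a one-line Taylor expansion: for $t \le c/2$ one has $u := (1-e^{-t})/c \le t/c \le 1/2$, and the elementary bound $-\log(1-u) \le 2u$ on $[0,1/2]$ yields $s(t) = -\log(1-u) \le 2u \le 2c^{-1}t$.
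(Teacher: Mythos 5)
Your proof is correct and follows essentially the same approach as the paper. The only cosmetic difference is that the paper introduces the auxiliary Gaussian via the decomposition $\Sigma_H^{1/2}[W_{\mathrm{G}}]\overset{d}{=}\widehat{W}+\sqrt{c}\,\other{W}_{\mathrm{G}}$ while you write out the covariance $\Sigma_Y$ of the added Gaussian explicitly; the two constructions are distributionally identical and both hinge on the same positivity condition $\Sigma_H \ge c\,\Sigma_{\mathrm{G}}$ solved by the same $s(t)$.
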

		We defer the proof of Lemma~\ref{lemma:OUsurj} to the Appendix~\ref{app:tech}.

		\subsubsection{Zigzag approach: Iterative application of the characteristic flow and GFT}

		We consider the time-dependent matrix Dyson equation (MDE), 
		\begin{equation} \label{eq:MDEt}
			-M_t(z)^{-1} = z - A_t + \mathcal{S}_t\bigl[M_t(z)\bigr], \quad z \in \mathbb{C}\backslash\mathbb{R},\quad   (\Im z) \Im M_t(z) > 0,
		\end{equation}
		where the data pair $(A_t, \mathcal{S}_t)$ is given as the unique solutions to the differential equations
		\begin{equation} \label{eq:ASevol}
			\rd A_t = - \frac{1}{2} A_t \rd t\,, \quad \rd \mathcal{S}_t = (- \mathcal{S}_t + \langle \cdot \rangle) \rd t \,.%
		\end{equation}
		with the {\it terminal conditions} $A_T = A =\E H$ and $\mathcal{S}_T = \mathcal{S} = \E [ (H-A) (\,\cdot\,) (H-A) ]$, respectively.  
		
		Given $M_t(z)$, we consider the {\it characteristic ODE} for the time dependent spectral parameter $z_t\in \C$ (see Figure~\ref{fig:zflow_fig}), 
		\begin{equation} \label{eq:z_flow}
			\rd z_t = -\frac{1}{2}z_t\rd t - \bigl\langle M_t(z_t) \bigr\rangle \rd t.
		\end{equation}
		\begin{figure}[H]
			\centering
			\includegraphics[width=.495\textwidth]{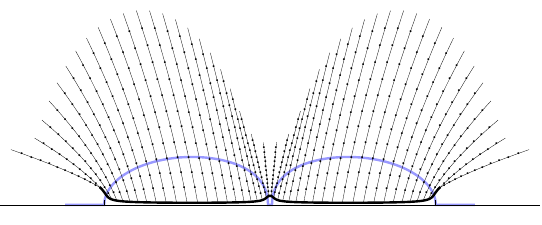} \hfill \includegraphics[width=.495\textwidth]{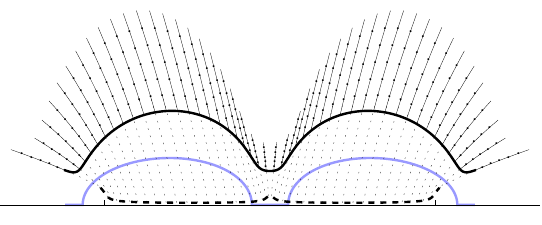}
			\caption{
				The left panel depicts several trajectories of the flow \eqref{eq:z_flow} that terminate at the \textit{scale curve} $\rho_T(z)N\Im z = c$ (solid black line), while the the graph of scDOS $\rho_T$ is superimposed in light blue. 
				The right panel depicts trajectories up to an intermediate time $t\in(0,T)$ with their continuations beyond $t$ shown as thin dotted lines. 
				The pre-image of the scale curve at the time $t$ is depicted as a solid black line, and the scale curve itself is depicted as a dashed black line. 
				The graph of scDOS $\rho_t$ is superimposed in light blue.
				In both panels, the black markers along the trajectories of \eqref{eq:z_flow} are evenly spaced in time.
			}
			\label{fig:zflow_fig}
		\end{figure}
		
		By trivial ODE arguments, for all $0 \le s \le t$, the corresponding (inverse) flow map $\varphi_{s,t} : \overline{\mathbb{H}}\to \overline{\mathbb{H}}$ is defined uniquely by 
		\begin{equation} \label{eq:phi_map}
			\varphi_{s,t}(z_t) := z_s, \quad \text{where } z_s \text{ solves (\ref{eq:z_flow})}.
		\end{equation}
		
		It can be directly checked that along the trajectories of \eqref{eq:z_flow}, the solution to the time-dependent MDE \eqref{eq:MDEt} satisfies
		\begin{equation} \label{eq:m_flow}
			\rd M_t(z_t) = \frac{1}{2}M_t(z_t) \rd t.
		\end{equation}
		
		\begin{lemma} [Time-Dependent Domains] \label{lemma:abvD_t} 
			There exist a constant $C' \sim 1$ such that for any constant $0  < c' \le \pi$ and any terminal time $0 < T \lesssim 1$, the time-dependent domains $\abvD_t$, $t\in [0,T]$, (see Figure~\ref{fig:abv_fig}), defined as	
			
			\begin{equation} \label{eq:abvD_t}
				\begin{split}
					\abvD_t \equiv \abvD_t(\scl, C_L, c',T) := \bigl\{ z := E + \ii \eta \in \mathbb{H} \,:\, ~&\rho_t(z)N\eta \ge N^{\scl}, \, |E| \vee \eta \le C_{L} + C'\cdot(T-t),\\
					&\rho_t(z)^{-1}\eta \ge c' \cdot\bigl(N^{-1+\scl} + T-t\bigr )   \bigr\},
				\end{split}
			\end{equation}
			satisfy $\varphi_{s,t}(\abvD_t) \subset \abvD_s$ for all $0 \le s \le t \le T$, where $\varphi_{s,t}$ is the flow map defined in \eqref{eq:phi_map}.
		\end{lemma}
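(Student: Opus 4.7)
The plan is to prove Lemma~\ref{lemma:abvD_t} by explicit integration of the characteristic ODE \eqref{eq:z_flow} along a trajectory $\{z_u = E_u + \ii \eta_u\}_{u \in [s,t]}$ with arbitrary terminal value $z_t \in \abvD_t$. From the evolution equation \eqref{eq:m_flow} I immediately obtain $M_u(z_u) = \ee^{(u-t)/2} M_t(z_t)$, and in particular $\rho_s(z_s) = \ee^{-(t-s)/2}\rho_t(z_t)$. Substituting this into the imaginary part of \eqref{eq:z_flow}, namely $\frac{\rd \eta_u}{\rd u} = -\tfrac{1}{2}\eta_u - \pi \rho_u(z_u)$, and solving the resulting linear ODE with the integrating factor $\ee^{u/2}$ yields the closed form
\begin{equation*}
\eta_s = \ee^{(t-s)/2}\, \eta_t + 2\pi\, \rho_t(z_t)\, \sinh\bigl((t-s)/2\bigr).
\end{equation*}
With these two explicit formulas the three defining conditions of $\abvD_s$ in \eqref{eq:abvD_t} can be checked by direct computation.

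For the first condition $\rho_s(z_s) N \eta_s \ge N^\scl$, I take the product:
\begin{equation*}
\rho_s(z_s)\, \eta_s \;=\; \rho_t(z_t)\,\eta_t + 2\pi\,\ee^{-(t-s)/2}\, \rho_t(z_t)^2\, \sinh\bigl((t-s)/2\bigr) \;\ge\; \rho_t(z_t)\,\eta_t \;\ge\; N^{-1+\scl}.
\end{equation*}
For the third condition I take instead the quotient:
\begin{equation*}
\frac{\eta_s}{\rho_s(z_s)} \;=\; \ee^{t-s}\,\frac{\eta_t}{\rho_t(z_t)} + 2\pi\,\ee^{(t-s)/2}\, \sinh\bigl((t-s)/2\bigr) \;\ge\; c'\bigl(N^{-1+\scl} + T-t\bigr) + \pi(t-s),
\end{equation*}
using $\ee^{t-s} \ge 1$ for the first term and the elementary estimate $\sinh(x) \ge x$ together with $\ee^{(t-s)/2} \ge 1$ for the second. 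The right-hand side exceeds $c'(N^{-1+\scl} + T-s)$ precisely when $c' \le \pi$, which is the sole origin of the upper bound on $c'$ stated in the lemma.

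For the remaining condition $|E_s| \vee \eta_s \le C_L + C'(T-s)$, I first need $\Vert M_u(z_u) \Vert \lesssim 1$ uniformly along the trajectory; this is a standard consequence of the MDE stability theory applied to the interpolated data pair $(A_u, \mathcal{S}_u)$ under Assumption~\ref{ass:Mbdd}, and consequently $\rho_u(z_u),\, |\langle M_u(z_u)\rangle| \lesssim 1$ throughout. Combining the elementary Taylor bounds $\ee^{(t-s)/2} - 1 \lesssim (t-s)$ and $\sinh((t-s)/2) \lesssim (t-s)$, valid because $t - s \le T \lesssim 1$, with the closed form above yields $\eta_s \le C_L + C'(T-t) + C_0 (t-s)$ for some $C_0 \sim 1$ depending only on model parameters. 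The analogous estimate for $|E_s|$ follows by the same procedure applied to $\frac{\rd E_u}{\rd u} = -\tfrac{1}{2}E_u - \Re\langle M_u(z_u)\rangle$. Fixing $C' := C_0$ closes both bounds at $\le C_L + C'(T-s)$, completing the proof. The argument is elementary ODE calculus; the only conceptual point is that the sharp constraint $c' \le \pi$ is dictated precisely by the slope of $\sinh$ at the origin, and there is no genuine obstacle beyond having in hand the uniform bound on $\Vert M_u\Vert$ along the flow.
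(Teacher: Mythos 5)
Your proof is correct and takes essentially the same approach as the paper: both track the evolution of $\eta_s\rho_s(z_s)$ and $\rho_s(z_s)^{-1}\eta_s$ along the characteristic flow. The paper writes the dynamics as the differential inequalities $\rd(\eta_s\rho_s(z_s)) = -\pi\rho_s(z_s)^2\,\rd s \le 0$ and $\rd(\rho_s(z_s)^{-1}\eta_s) = -(\rho_s(z_s)^{-1}\eta_s + \pi)\,\rd s \le -\pi\,\rd s$, which integrate to exactly your closed-form expressions $\eta_s\rho_s(z_s) = \eta_t\rho_t(z_t) + \pi\rho_t(z_t)^2(1 - \ee^{-(t-s)})$ and $\rho_s(z_s)^{-1}\eta_s = \ee^{t-s}\rho_t(z_t)^{-1}\eta_t + \pi(\ee^{t-s}-1)$; your presentation makes the sharpness of the constraint $c'\le\pi$ more transparent, but the content is identical.
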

		We defer the proof of Lemma~\ref{lemma:abvD_t} to Appendix~\ref{app:tech}.
		
		\begin{figure}[h]
			\centering
			\includegraphics[width=.33\textwidth]{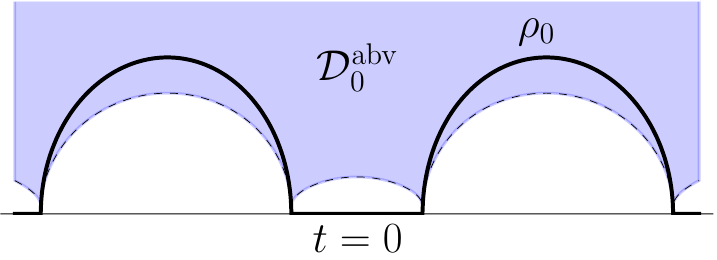}\hfill
			\includegraphics[width=.33\textwidth]{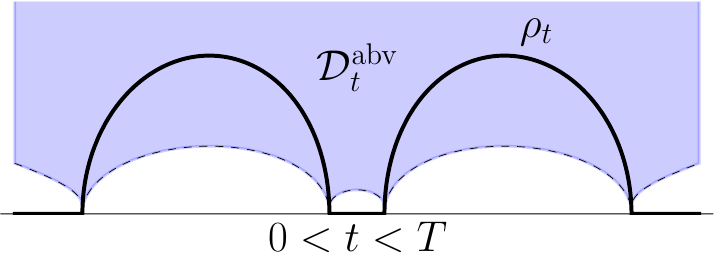}\hfill
			\includegraphics[width=.33\textwidth]{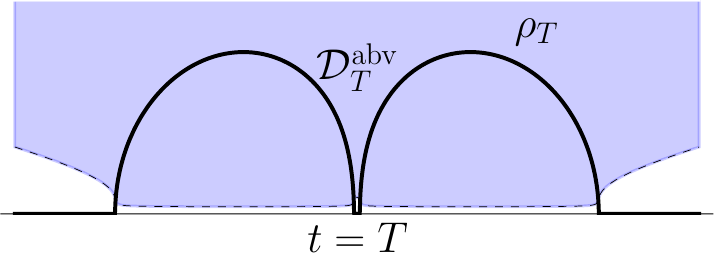}\hfill
			\caption{ The time-dependent domain $\abvD_{t}$, defined in \eqref{eq:abvD_t}, is illustrated in blue at three distinct times: the initial time $t = 0$ (left), an intermediate time $0 < t < T$ (center), and the terminal time $t=T$ (right). The graph of the scDOS $\rho_t$ is superimposed in black on each panel (not to scale).}
			\label{fig:abv_fig}
		\end{figure}

		As in \eqref{eq:globD_def}, the function $\rho_t(z)^{-1}\eta $ in the definition \eqref{eq:abvD_t} effectively controls the distance between $z$ and the support of $\rho_t$. Therefore the time-dependent family
		of domains $\abvD_t$ effectively interpolates between the global regime $\globD$ and the final target domain $\abvD$.
		Indeed,   since $\rho(z) \lesssim 1$, by choosing the constant $c' \sim 1 $ in \eqref{eq:abvD_t} small enough, we can guarantee that 
		$\abvD \subset \abvD_T$, where we recall that $\abvD$ is defined in \eqref{eq:abvD}. On the other hand, it follows from \eqref{eq:globD_def} that by choosing
		\begin{equation} \label{eq:term_time}
			T := C N^{-\tar/4},
		\end{equation} 
		with a sufficiently large constant $C \gtrsim 1$, we can guarantee that $\abvD_0 \subset \globD$, where $\globD$ is defined in \eqref{eq:globD_def}. 
		
		We conduct the proof inductively. Fix a tolerance exponent $ 0 < \tar \ll \scl$, a step size $ 0 < \step \ll \tar$ (recall \eqref{eq:exponents}). For the terminal time $T$ chosen as in \eqref{eq:term_time}, let $K$ be the smallest integer such that $N^{-K\step}T \le N^{-1+\scl}$,
		and define a sequence of times $\{t_k\}_{k=0}^K$ as
		\begin{equation} \label{eq:t_steps}
			t_0 := 0,\quad  t_k := T - N^{-k\step}T, \quad k \in \{1,\dots, K-1\}, \quad t_K := T.
		\end{equation}
		Let $\{\dift_k\}_{k=1}^K$ denote the difference sequence of $\{t_k\}_{k=0}^K$, that is
		\begin{equation}
			\dift_k := t_k - t_{k-1}, \quad k \in \{1,\dots, K\}.
		\end{equation}
		Let $\Sigma_t$ solve the equation \eqref{eq:Sigma_flow} with the terminal condition $\Sigma_T = \Sigma$, where $\Sigma$,
		defined via \eqref{eq:corr_def}, is the covariance tensor of the target matrix $H$, for which
		we eventually prove the local laws in Theorem~\ref{thm:main}.
		Observe that for all $0 \le t \le T$, the solution $\Sigma_t$ satisfies
		\begin{equation} \label{eq:fullness_alongflow}
			\Sigma_t \ge \other{c}\,\Sigma_{\mathrm{G}}, \quad \other{c} := \frac{c_{\mathrm{flat} }}{2} \wedge 1,
		\end{equation}
		where $c_{\mathrm{flat}}$ is the constant in Assumption~\ref{ass:full}.
		Given the target random matrix ensemble $H$, we construct two sequences of random matrices, $\{H_k\}_{k=0}^{K}$ and $\{H^k\}_{k=1}^K$ recursively by
		\begin{equation} \label{eq:H_k_defs}
			H_K := H, \quad H^k := \mathfrak{F}^{s(\dift_k)}_{\mathrm{zag}}\bigl[H_k\bigr], \quad H_{k-1} := \mathfrak{H}_{\other{c},\dift_k}\bigl(H^k\bigr), \quad k\in \{1,\dots, K\},
		\end{equation}
		where $s(t) := s_{\other{\alpha}}(t)$ and $\mathfrak{H}_{\other{c},\dift_k}$ are given by Lemma~\ref{lemma:OUsurj}, and $\other{c}$ is the constant in \eqref{eq:fullness_alongflow}. It follows by a simple 
		backward inductive argument starting at $k=K$ that the covariance tensor of both $H_k$ and $H^k$ is given by $\Sigma_{t_k}$, hence by \eqref{eq:fullness_alongflow}, $H_{k-1}$ is well-defined. 

		\begin{figure}[H]
			\centering
			\begin{minipage}{0.6\textwidth}
				\centering
				\includegraphics[width=\textwidth]{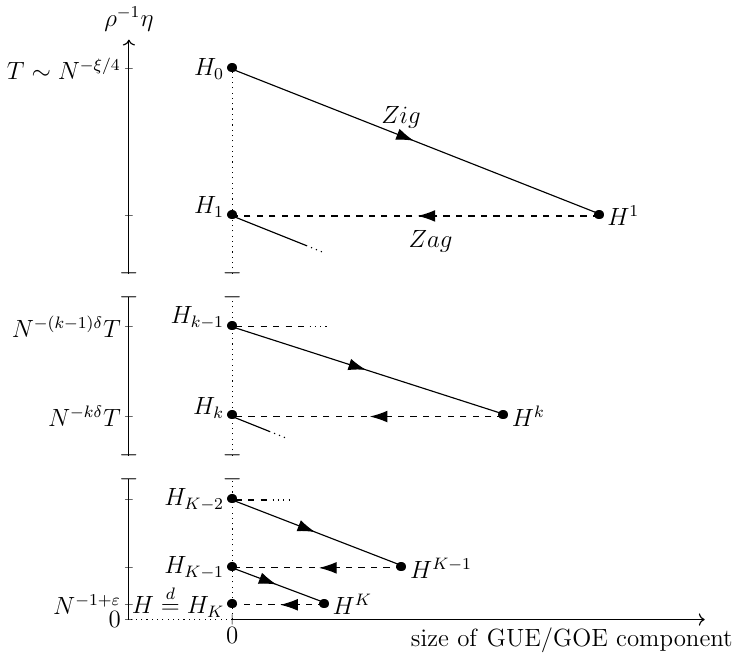}
			\end{minipage}%
			\hfill
			\begin{minipage}{0.4\textwidth}
				\centering
				\captionsetup{width=.85\textwidth, justification=justified}
				\caption[Zigzag caption]{
					Schematic representation of the Zigzag induction.
					The random matrices $H_k, H^k$, as defined in \eqref{eq:H_k_defs}, are situated within an abstract coordinate system. The horizontal axis represents the size of the Gaussian component, while the vertical axis indicates the lower bound on $\rho(z)^{-1}\eta$ in the domains, c.f. \eqref{eq:abvD_t}, where we prove the local laws \eqref{eq:ll_template}.
					Solid arrows denote applications of Proposition~\ref{prop:zig} (referred to as \textit{Zig} steps), and dashed {arrows\footnotemark} indicate applications of Proposition~\ref{prop:zag} (\textit{Zag} steps).
				}
				\label{fig:zigzag_fig}
			\end{minipage}
		\end{figure}
		
		\footnotetext{Note that the zag-flow \eqref{eq:zag_flow} acts in the direction opposite to the dashed arrows, i.e., the size of the Gaussian components increases along both zig- and zag- matrix flows, \eqref{eq:OU_flow} and \eqref{eq:zag_flow}, respectively.}
		
		\begin{proposition}[Zig Step] \label{prop:zig} Fix $k \in \{1, \dots, K\}$, and denote
			\begin{equation} \label{eq:G_t}
				G_{t}(z) := \bigl(\mathfrak{F}_{\mathrm{zig}}^{t-t_{k-1} }[H_{k-1}] - z\bigr)^{-1}, \quad t_{k-1} \le t \le t_k.
			\end{equation}
			Assume that for some $\tar, \arb > 0$ with $\tar + K \arb \ll \scl$ , and $\ell \le 2k$, the resolvent $G_{t}$ satisfies the local laws \eqref{eq:ll_template} with data $(\abvD_{t}, \tar+\ell\arb)$ at time $t = t_{k-1}$, then the resolvent $G_t$ satisfies the local laws \eqref{eq:ll_template} with data $(\abvD_{t}, \tar+(\ell+1)\arb)$ uniformly in $t \in [t_{k-1}, t_k]$.
		\end{proposition}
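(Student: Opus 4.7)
The plan is to apply Itô's formula along the coupled flow of $H_s$ (obeying the OU dynamics \eqref{eq:OU_flow}) and the spectral parameter $z_s$ (obeying the characteristic ODE \eqref{eq:z_flow}), and to use the MDE-driven evolution \eqref{eq:m_flow} of $M_s(z_s)$ to cancel the leading deterministic error, thereby propagating the inductive local law from time $t_{k-1}$ to time $t \in [t_{k-1}, t_k]$. Concretely, I fix a target $z \in \abvD_t$ and consider the backward characteristic trajectory $z_s := \varphi_{s,t}(z)$ for $s \in [t_{k-1}, t]$. By Lemma~\ref{lemma:abvD_t}, this trajectory remains inside $\abvD_s$ for every $s$, and in particular $z_{t_{k-1}} \in \abvD_{t_{k-1}}$, so the inductive hypothesis yields the local law \eqref{eq:ll_template} at the starting point with tolerance $\tar + \ell\arb$.

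Applying Itô's formula to $G_s(z_s) = (H_s - z_s)^{-1}$ and using the identity $H_s G_s = I + z_s G_s$ together with the characteristic ODE \eqref{eq:z_flow} and \eqref{eq:m_flow}, a direct computation shows that the $z_s$-dependent drifts cancel exactly and one obtains
\begin{equation*}
d(G_s - M_s)(z_s) = \tfrac{1}{2}(G_s - M_s)\, ds + \langle G_s - M_s\rangle\, G_s^2\, ds - G_s\, \tfrac{d\Brwn_s}{\sqrt{N}}\, G_s.
\end{equation*}
Testing against a deterministic matrix $B$ and extracting the matrix element with $\bm x, \bm y$ yields closed scalar SDEs for the averaged observable $X_s := \langle (G_s - M_s) B\rangle$ and the isotropic observable $Y_s := (G_s - M_s)_{\bm x \bm y}$, each of the schematic form \emph{Gronwall drift} $+$ \emph{self-improvement drift} $+$ \emph{martingale}.

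I would integrate each SDE on $[t_{k-1}, t]$ via Duhamel's formula and bound the three contributions separately. The initial value is controlled by the inductive hypothesis at $z_{t_{k-1}}$. The self-improvement drift $\langle G_s - M_s\rangle\, \langle G_s^2 B\rangle$ is handled by writing $\langle G_s^2 B\rangle = \partial_z \langle G_s B\rangle \big|_{z=z_s}$ and invoking Cauchy's integral formula on a small contour inside $\abvD_s$, so that this term inherits the inductive averaged bound at the cost of only a logarithmic factor absorbed into the $\arb$-budget; an analogous device controls $(G_s^2)_{\bm x \bm y}$. The martingale is estimated via the Burkholder--Davis--Gundy inequality applied with a high moment (permissible by Assumption~\ref{ass:moments}); its quadratic variation is
\begin{equation*}
d[\mathcal{M}^{(B)}]_s \sim \tfrac{1}{N^2}\, \langle G_s B G_s^* B^* \rangle\, ds \lesssim \tfrac{1}{N^2 \eta_s}\, \langle \Im G_s \cdot B G_s^* B^* \rangle\, ds \lesssim \tfrac{\rho_s(z_s)}{(N \eta_s)^2}\, \norm{B}_{\mathrm{hs}}^2\, ds,
\end{equation*}
where the Ward identity $G_s G_s^* = \Im G_s / \eta_s$ extracts the crucial $\rho_s$-factor required by \eqref{eq:ll_template}. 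The isotropic martingale $\mathcal{M}^{(\bm x, \bm y)}$ is treated identically, yielding a quadratic variation of order $\rho_s/(N^2 \eta_s)$.

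The main obstacle is the self-referential character of these estimates: controlling $(G_s^2)_{\bm x \bm y}$ and the isotropic quadratic variation requires an a-priori isotropic bound on $G_s$, which is precisely the quantity being propagated. This is resolved by running the averaged and isotropic inductive hypotheses simultaneously (as built into Definition~\ref{def:ll_template}) and by observing that $N \eta_s \rho_s \ge N^{\scl}$ holds along the entire trajectory, so that $\norm{G_s} \le \eta_s^{-1}$ is uniformly controlled and the inductive isotropic bound governs all the needed Cauchy-contour and Ward expressions. A routine net-plus-stochastic-continuity argument then upgrades the pointwise estimate to hold uniformly in $z \in \abvD_t$, completing the propagation with the enlarged exponent $\tar + (\ell+1)\arb$.
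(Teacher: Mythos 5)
Your overall framework — integrating Itô's formula along the coupled OU/characteristic flow, canceling the $z$-drift via \eqref{eq:m_flow}, Gronwalling the drift, and applying BDG to the martingale — is the same as the paper's. But there is a genuine gap in how you propose to control the self-improvement drift, and a technical error in the martingale computation.

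The gap concerns $\langle G_s^2 B\rangle$ and $(G_s^2)_{\bm x \bm y}$. You propose bounding $\langle G_s^2 B\rangle = \partial_z\langle G_s B\rangle|_{z=z_s}$ via Cauchy's formula on a contour of radius $\sim\eta_s$ inside $\abvD_s$. Applied to $\langle G_s B\rangle$ directly (as written), the integrand is $O(1)$ in absolute value (the leading piece is $\langle M B\rangle$, not the small fluctuation), so Cauchy gives only $|\langle G_s^2 B\rangle| \lesssim \eta_s^{-1}$. This is missing the crucial factor of $\rho_s(z_s)$: near an edge or a cusp one has $\rho_s \ll 1$, and the true size is $\rho_s/\eta_s$. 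Without this factor, the Gronwall integral $\int_{\tin}^{\tfin}\langle G_s - M_s\rangle\langle G_s^2 B\rangle\,\rd s$ does not close — converting to an $\eta$-integral via $\rd s \sim -\rd\eta_s/\rho_s$ you pick up an extra $\rho_s^{-1}$, which blows up precisely where the density vanishes, so the estimate fails exactly in the edge/cusp regime that this proposition is about. The paper instead extracts the $\rho$-factor pointwise via the Ward identity: $|\langle G_s^2\rangle| \le \langle \Im G_s\rangle/\eta_s$, and for general $B$ by a Schwarz step $|\langle G_s^2 B\rangle| \le \langle\Im G_s\rangle^{1/2}\langle\Im G_s BB^*\rangle^{1/2}/\eta_s$ combined with the isotropic law for the spectral decomposition of $BB^*$ (see \eqref{eq:propagator_bound} and \eqref{eq:B_forcing}--\eqref{eq:zig_isotrick}); the same Schwarz--Ward device gives $|(G_s^2)_{\bm u\bm v}| \lesssim \rho_s(z_s)$. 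One could salvage a Cauchy-type argument by first splitting $\langle G_s^2 B\rangle = \partial_z\langle(G_s - M_s)B\rangle + \langle \partial_z M_s\,B\rangle$ and invoking $\lVert\partial_z M\rVert \lesssim \rho/\eta$ from the stability-operator bound \eqref{eq:stability_factor}, but then the $\rho$-factor comes from the deterministic $\partial_z M$ term, not from the inductive local law as you claim, and this imports the MDE stability analysis that the paper's self-contained Ward approach deliberately avoids. The contour also needs care to stay in $\abvD_s$ near the boundary $\rho_s N\eta = N^\scl$.

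Secondly, your quadratic variation formula is incorrect: for the averaged observable it should be quartic in resolvents, not quadratic; the correct computation gives $\rd[\mathcal{M}^{(B)}]_s = N^{-2}\langle G_s B G_s\, G_s^* B^* G_s^*\rangle\,\rd s$, which via two Ward identities equals $(N\eta_s)^{-2}\langle\Im G_s\, B^*\,\Im G_s\, B\rangle\,\rd s$ and is bounded by $\rho_s/(N^2\eta_s^3)\lVert B\rVert_{\mathrm{hs}}^2$; the per-$\rd s$ bound you wrote, $\rho_s/(N\eta_s)^2\lVert B\rVert_{\mathrm{hs}}^2$, is too optimistic by a factor of $\eta_s$. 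This does not break the argument but signals an arithmetic slip. Finally, you gesture at the self-referential character without formalizing it; the paper implements a stopping time $\tau$ (see \eqref{eq:tau_1_def}) that freezes the process the first time either local law fails, which is the clean way to run the averaged and isotropic bounds simultaneously and then upgrade to $\tau = \tfin$ with very high probability.
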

		
		\begin{proposition}[Zag Step] \label{prop:zag} 
			Fix $k \in \{1, \dots, K\}$. Let $s_k := s(\dift_k)$ be the time defined in \eqref{eq:init_cond_reverse}, let $H_k$ be the random matrix defined in \eqref{eq:H_k_defs}, and denote
			\begin{equation} \label{eq:G^s}
				G^{s}(z) := \bigl(\mathfrak{F}_{\mathrm{zag}}^{s}[H_{k}] - z\bigr)^{-1}, \quad 0 \le s \le s_k.
			\end{equation}
			Assume that for some $\tar, \arb > 0$ with $\tar + K \arb \ll \scl$, and $\ell \le 2k$, the resolvent $G^s$ satisfies the local laws \eqref{eq:ll_template} with data $(\abvD_{t_k}, \tar+\ell\arb)$ at time $s = s_{k}$, then  $G^s$ satisfies the local laws \eqref{eq:ll_template} with data $(\abvD_{t_k}, \tar+(\ell+1)\arb)$ uniformly in $s \in  [0, s_k]$. 
		\end{proposition}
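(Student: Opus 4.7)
The plan is to establish the Zag step via a Green function comparison (GFT) argument running along the zag-flow \eqref{eq:zag_flow}, exploiting the fact that this flow preserves both $\E H^s$ and $\Sigma_{H^s}$ and hence leaves the solution $M = M_{t_k}$ of the MDE \eqref{eq:MDEt} invariant for all $s \in [0, s_k]$. Fix $z = E + \ii\eta \in \abvD_{t_k}$, a deterministic matrix $B$ with $\|B\|_{\mathrm{hs}} = 1$, and consider the high-moment observable $\Phi_s := \E\,|\langle (G^s(z) - M(z))B\rangle|^{2p}$ for a large integer $p$. The isotropic local law is handled analogously with $\Phi_s := \E\,|(G^s(z) - M(z))_{\bm x \bm y}|^{2p}$.

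The core computation is Itô's formula for $\Phi_s$, combined with a cumulant expansion against the single factor of $(H^s - \E H^s)$ arising from the drift in \eqref{eq:zag_flow}. The quadratic-variation Itô correction brings in a term involving $\Sigma_{H^0} = \Sigma_{H^s}$, which together with the Dyson equation \eqref{eq:MDEt} exposes the stability operator $\mathcal{L}$ of the MDE acting on $G^s - M$. After systematic use of the Ward identity $G^s(G^s)^* = \Im G^s/\eta$, this yields a differential inequality of the schematic form
\[
|\partial_s \Phi_s| \lesssim N^{\arb}\Bigl(\Phi_s + (\text{right-hand side of \eqref{eq:ll_template}})^{2p}\Bigr),
\]
where the only stability input required is the \emph{trivial} bound $\|\mathcal{L}^{-1}\| \lesssim \rho/\eta$ supplied directly by the Ward identity on $\abvD_{t_k}$. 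No fine analysis of the MDE, no second-order cancellation (no $\sigma$-cells), and no cusp fluctuation averaging is invoked. Since the time interval has length $s_k \le 2\other{c}^{-1}\dift_k \lesssim N^{-\step} T$, a Grönwall integration \emph{backward} from $s = s_k$ transports the hypothesized bound at the endpoint to uniform control on $[0, s_k]$, at the cost of a single additional multiplicative $N^{\arb}$ factor, matching the prescribed inflation from $\tar + \ell\arb$ to $\tar + (\ell+1)\arb$.

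The main obstacle is an intrinsic circularity in this computation: estimating the Itô and cumulant expansion terms to the precision demanded by \eqref{eq:ll_template} requires an a-priori entrywise (or operator-norm) control on $G^s$ uniformly in $s \in [0, s_k]$ and $z \in \abvD_{t_k}$, which is \emph{not} supplied by the hypothesis of Proposition~\ref{prop:zag} (an averaged/isotropic estimate at the single time $s = s_k$). We break this circularity by augmenting the GFT with an internal \emph{bootstrap in the imaginary part} $\eta$, reminiscent of \cite{knowles2017anisotropic}: starting at some initial scale $\eta_0 \sim 1$ where the deterministic bound $\|G^s\| \le \eta_0^{-1}$ holds for free, the local laws are propagated to the full target regime along a geometric grid $\eta_j := \eta_0 (1 - N^{-\step})^j$. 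At each step, the local law established at the previous (larger) scale $\eta_j$ provides the entrywise bound needed to close the Itô/cumulant computation at the smaller scale $\eta_{j+1}$; the $1/(N\eta)$-Lipschitz continuity of $G^s$ in $\eta$ ensures the transport survives the geometric contraction. Only $\mathcal{O}(N^{\step}\log N)$ bootstrap iterations are needed to reach the lower boundary of $\abvD_{t_k}$, contributing a total loss of $N^{\arb}$ which is absorbed by the exponent convention \eqref{eq:exponents}. This bootstrap, combined with the short-time backward GFT described above, completes the proof.
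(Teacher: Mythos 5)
Your proposal captures the correct architecture: propagate the endpoint hypothesis at $s = s_k$ backward in $s$ via a GFT based on It\^o plus cumulant expansion, close the resulting differential inequality with Gronwall over the short time interval, and break the circularity in the a-priori input by an internal bootstrap downward in $\eta$. This is indeed the structure of the paper's proof, which splits into Propositions~\ref{prop:gronwalliso}--\ref{prop:gronwallav} (Gronwall estimates) and Proposition~\ref{prop:zag_bootstrap} (the $\eta$-bootstrap).

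However, two points in your mechanism are not quite right. First, you assert that the It\^o quadratic-variation term, combined with the Dyson equation, ``exposes the stability operator $\mathcal{L}$ acting on $G^s-M$'' and that the argument requires the bound $\|\mathcal{L}^{-1}\| \lesssim \rho/\eta$. In fact, the stability operator never appears in the zag step. The crucial observation is that the quadratic-variation term is \emph{exactly cancelled} by the $k=1$ (second-cumulant) term of the cumulant expansion of the drift -- precisely because the zag-flow is engineered to preserve the covariance tensor. All that remains is the third- and higher-order cumulant sums, and those are bounded directly via the Ward identity, the monotonicity estimate of Lemma~\ref{lem:monotone}, and the cumulant-norm assumptions; the resulting Gronwall coefficient is of the form $\rho(E+\ii\eta_0)/\eta_0$ (or its square root, in the isotropic case). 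The phrase ``trivial stability input of the form $\rho/\eta$'' from the introduction refers to this coefficient, not to an inverted stability operator. If you literally tried to write the quadratic variation in terms of $\mathcal{B}^{-1}[G-M]$ and invert, you would reintroduce the $\rho^{-2}$ blow-up at the cusp -- the very difficulty the Zigzag strategy is designed to circumvent.

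Second, the geometric grid $\eta_j := \eta_0(1-N^{-\step})^j$ with $\mathcal{O}(N^\step \log N)$ iterations is workable but unnecessarily fine; the paper takes the much coarser truncations $\mathcal{D}_\gamma = \{ \eta \ge N^{-1+\gamma}\}$ with $\gamma$ decreasing by $\step$ at each of $\mathcal{O}(1/\step) \sim 1$ steps, accepting an $N^\step$ monotonicity loss per step and absorbing it via $\step \ll \tar$. Either grid works, but you must be explicit that the tolerance exponent does not accumulate across iterations: at each bootstrap step one re-uses the \emph{fixed} endpoint hypothesis at $s = s_k$ (with exponent $\tar + \ell\arb$) rather than the output of the previous step, so the output always carries the same $\tar + (\ell+1)\arb$ regardless of the number of iterations. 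Also, the ``$1/(N\eta)$-Lipschitz continuity'' you invoke is not what closes the step; what is actually used is the monotonicity $\eta \mapsto \eta(\Im G)_{\bm u \bm u}$ applied after integrating the bound $|\partial_\eta G_{\bm u \bm v}| \le \eta^{-1}\sqrt{(\Im G)_{\bm u \bm u}(\Im G)_{\bm v \bm v}}$, which is the content of Lemma~\ref{lem:monotone}.
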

		
		Having formulated the cardinal steps of the Zigzag strategy, we now put them together to prove our key theorem on the local laws above the scale. 
		Note that in the above the scale regime $\rho(z)N\eta \ge N^\varepsilon$, the term $1/(N\eta)$ in the isotropic bound is \eqref{eq:ll_template} is dominated by $\sqrt{\rho/(N\eta)}$, and hence will be ignored in Sections~\ref{sec:zig} and~\ref{sec:zag}.
		
		\begin{proof} [Proof of Theorem~\ref{th:Neta_local_laws}]
			Recall our choice of the constant $c' \sim 1 $ in \eqref{eq:abvD_t} and the terminal time $T \sim N^{-\tar/4} $ in \eqref{eq:term_time} that guarantees the inclusions  $\abvD_0 \subset \globD$ and $\abvD \subset \abvD_T$. Therefore, Proposition~\ref{prop:global} implies that the resolvent $G_0(z) := (H_0-z)^{-1}$ of a random matrix $H_0$, defined in \eqref{eq:H_k_defs}, satisfies the local laws \eqref{eq:ll_template} with data $(\abvD_0, \tar)$. Using Propositions~\ref{prop:zig} and~\ref{prop:zag}  in tandem  $K$ times,  we prove by forward induction on $k$ that for any $\arb > 0$, the resolvent $G_k(z) := (H_k-z)^{-1}$ satisfies the local laws \eqref{eq:ll_template} with data $(\abvD_{t_k}, \tar+2k\arb)$, for all $k \in \{1,\dots, K\}$. Since $H_K = H$ and $\abvD_{t_K} = \abvD_{T} \supset \abvD$, this concludes the proof of Theorem~\ref{th:Neta_local_laws}.
		\end{proof}

		\begin{remark} [On Locality of Assumption~\ref{ass:Mbdd}] \label{rem:non-local}
			In the case of a general set of admissible energies $\mathcal{I}$, defined in \eqref{eq:admE}, our proof holds verbatim, except the spectral domains $\globD, \abvD,\abvD_{t}$ used
			along the proof have to be restricted. More precisely, we need the following modifications: 
			\begin{itemize}
				\item[(i)] we restrict the domain $\abvD_{t}$, defined in  \eqref{eq:abvD_t}, by intersecting it with the region
				\begin{equation}
					\bddD_t := \bigl\{ z \in \mathbb{C} \,:\, \dist(\Re z, \mathcal{I}) \le c_M/2  + C'\cdot(T-t) \bigr\}, \quad 0 \le t \le T;
				\end{equation}
				\item[(ii)] we restrict the domain $\abvD$, defined in \eqref{eq:abvD}, by intersecting it with  $\bddD_T$;
				\item[(iii)] we restrict the global domain $\globD$, defined in \eqref{eq:globD_def} by intersecting it with $ \{ z \in \mathbb{C} \,:\, \dist(\Re z, \mathcal{I}) \le \tfrac{3}{4}c_M  \}$.
			\end{itemize}
		\end{remark}
		
		\subsection{Proofs of Corollaries~\ref{cor:deloc}--\ref{cor:rig} and Theorem~\ref{thm:univ}} \label{subsec:proofcor}

		In this section, we deduce eigenvector delocalization, band rigidity and eigenvalue rigidity, as well as cusp universality from the local law in Theorem~\ref{thm:main}. These arguments are essentially independent of the correlation structure of the random matrix, so we only refer to analogous proofs, which can easily be adjusted to our case with straightforward modifications. 
		
		\begin{proof}[Proof of Corollary~\ref{cor:deloc} on eigenvector delocalization]
			As usual, eigenvector delocalization is an immediate consequence of the optimal isotropic local law from Theorem~\ref{thm:main} for $\Im G$; see \cite[Proof of Corollary 2.4]{slowcorr} or \cite[Proof of Corollary~1.14]{univWigtype} for this argument. 
		\end{proof}
		
		\begin{proof}[Proof of Corollary~\ref{cor:rig} on band rigidity and eigenvalue rigidity]
			The proof of band rigidity was first done for correlated matrices in \cite[Proof of Corollary 2.5 in Section 5]{edgelocallaw} but with $\dist(e_0, \supp \rho) \gtrsim 1$. The adjustments for $\dist(e_0, \supp \rho) \ge N^{\excl} \etaf(e_0)$ are carried out in \cite[Proof of Corollary 2.6]{Cusp1} for the case of Wigner-type matrices (i.e.~without correlations). This argument immediately translates to our setting, hence we omit the details for brevity. 
			
			Armed with band rigidity as in \eqref{eq:intmass}, the proof of Corollary~\ref{cor:rig}~(b)  is conducted in the same way as in \cite[Proofs of Corollaries 1.10 and 1.11]{univWigtype} or \cite[Proof of Corollary 2.6]{Cusp1}. 
		\end{proof}
		
		\begin{proof}[Proof of Theorem~\ref{thm:univ} on cusp universality]
			Given the optimal local law in Theorem~\ref{thm:main}, universality at the cusp follows by the \emph{three-step strategy}. This has already been worked out in the general correlated case in both the complex Hermitian \cite{Cusp1} and real symmetric \cite{Cusp2} symmetry class. More precisely, in \cite[Section~3]{Cusp2} it is spelled out that only the local law in this paper (as the first step of the three-step strategy) required restricting to Wigner-type matrices with independent entries. Our Theorem~\ref{thm:main} provides the necessary local law for correlated matrices. 
		\end{proof}

		\section{Characteristic flow: Proof of Proposition~\ref{prop:zig}} \label{sec:zig}
		First, we collect the necessary properties of the solution $M_t$ to the time-dependent MDE \eqref{eq:MDEt}.
		\begin{lemma}[Preliminary bounds on $M_t$] \label{lemma:Mt} 
			Let $(A, \mathcal{S})$ be a data-pair satisfying the Assumptions~\ref{ass:boundedexp},~\ref{ass:full}, and~\ref{ass:Mbdd}. Then there exists a threshold $T_* \sim 1$ such that for any terminal time $0 < T < T_*$, the solution $M_t$ to the time-dependent MDE \eqref{eq:MDEt}, with the terminal condition on the data pair $(A_T, \mathcal{S}_T) = (A, \mathcal{S})$, satisfies
			\begin{equation} \label{eq:imM}
				\norm{M_t(z)} \lesssim 1, \quad c \rho_t(z) \le \Im M_t(z) \le C \rho_t(z),
			\end{equation}
			uniformly in $z$ with $\Re z \in \mathcal{I}$, where $\mathcal{I}$ is the set of admissible energies from \eqref{eq:admE}. Here the second inequality holds in the sense of quadratic forms, with $1 \lesssim c \le C \lesssim 1$.
		\end{lemma}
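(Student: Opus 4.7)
The plan is to leverage the characteristic identity
\begin{equation*}
M_t(z_t)\,=\,\ee^{-(T-t)/2}\,M_T(z_T), \qquad 0\le t\le T,
\end{equation*}
obtained by integrating \eqref{eq:m_flow} along any solution of \eqref{eq:z_flow}, in order to transport the known bounds on $M_T=M$ down to $M_t$. Since the scalar factor $\ee^{-(T-t)/2}$ lies in $[c,1]$ for $T\le T_*\sim 1$, and $\rho_t(z_t)=\ee^{-(T-t)/2}\rho_T(z_T)$ scales identically (by taking the normalized trace of $\Im M_t(z_t)$), both the norm bound and the two-sided $\Im M$ bound transfer between time $T$ and time $t$ without loss, provided the image point $z_T$ lies in the region where Assumption~\ref{ass:Mbdd} is applicable.

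As a preparatory step, I would solve \eqref{eq:ASevol} explicitly to obtain $A_t=\ee^{(T-t)/2}A$ and $\mathcal{S}_t=\ee^{T-t}\mathcal{S}+(1-\ee^{T-t})\langle\cdot\rangle$, which for $T\le T_*\sim 1$ is a structure-preserving $O(T_*)$ perturbation of $(A,\mathcal{S})$: the Hermiticity, boundedness, positivity preservation, and fullness of the data pair are all retained with constants deformed by $O(T_*)$. Hence \eqref{eq:MDEt} has a unique solution $M_t$ with $(\Im z)\Im M_t(z)>0$, and the standard MDE regularity estimate $c\rho_T(z)\le\Im M_T(z)\le C\rho_T(z)$ from \cite{AEK2020} is available for $M_T$ wherever $\|M_T\|\lesssim 1$; the latter is obtained by taking the imaginary part of the MDE to get $\Im M_T=(\Im z)M_T M_T^*+M_T\,\mathcal{S}[\Im M_T]\,M_T^*$ and invoking fullness of $\mathcal{S}$.

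The main obstacle is controlling the characteristic trajectory. Given $z=z_t\in\mathbb{H}$ with $\Re z\in\mathcal{I}$, I need the forward evolution of \eqref{eq:z_flow} on $[t,T]$ to stay within the $c_M$-neighborhood of $\mathcal{I}$ so that Assumption~\ref{ass:Mbdd} applies to $M_T(z_T)$. I would run a continuity bootstrap, defining
\begin{equation*}
s^*:=\sup\bigl\{\,s\in[t,T]\,:\,\|M_u(z_u)\|\le 2C_M\ \text{for all}\ u\in[t,s]\,\bigr\}.
\end{equation*}
On $[t,s^*]$, the drift $|\dot z_s|\le\tfrac12|z_s|+|\langle M_s(z_s)\rangle|$ is absolutely bounded, so $|z_s-z|\le CT_*$; picking $T_*$ below $c_M/C$ keeps $\Re z_s\in\mathcal{I}+[-c_M,c_M]$, which makes Assumption~\ref{ass:Mbdd} applicable and, via the characteristic identity, yields $\|M_s(z_s)\|\le\|M_T(z_T)\|\le C_M<2C_M$. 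This strict improvement of the bootstrap hypothesis forces $s^*=T$ by standard continuity. Similarly, $\Im\dot z_s=-\tfrac12\Im z_s-\pi\rho_s(z_s)$ is bounded in absolute value, so $\Im z_s$ remains positive along the trajectory for $T_*$ small enough, with the limiting case $\Im z\downarrow 0$ handled by analytic continuation since $M_T$ is by hypothesis bounded uniformly up to the real axis.

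Putting these pieces together, the norm bound $\|M_t(z)\|=\ee^{-(T-t)/2}\|M_T(z_T)\|\lesssim\langle z_T\rangle^{-1}\sim\langle z\rangle^{-1}\lesssim 1$ and the two-sided bound $c\rho_t(z)\le\Im M_t(z)\le C\rho_t(z)$ follow directly from the corresponding terminal estimates after cancelling the common scalar factor $\ee^{-(T-t)/2}$ on both sides.
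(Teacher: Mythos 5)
Your route via the characteristic identity $M_t(z_t)=\ee^{-(T-t)/2}M_T(z_T)$ is genuinely different from the paper's, which is purely perturbative: from your explicit formulas $A_t=\ee^{(T-t)/2}A$ and $\mathcal{S}_t=\ee^{T-t}\mathcal{S}+(1-\ee^{T-t})\langle\cdot\rangle$ one reads off $\norm{A_t-A}\lesssim T-t$ and $\norm{\mathcal{S}_t-\mathcal{S}}_{\norm{\cdot}\to\norm{\cdot}}\lesssim T-t$, and the paper simply cites the stability of the MDE under small perturbations of the data pair (Section~10 of \cite{AEK2020}) to get $\norm{M_t}\lesssim 1$, followed by Proposition~3.5 of \cite{AEK2020} to upgrade this to $\Im M_t\sim\rho_t$. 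No flow, no trajectory to control, and the bounds hold for all $z$ at once.

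Your flow-based argument, however, cannot reach all $z$ that the lemma covers. The characteristic identity transfers the terminal bound only onto the image $\varphi_{t,T}(\overline{\mathbb{H}})$ of the backward flow map, and this image is a proper subset of $\overline{\mathbb{H}}$. Indeed, along \eqref{eq:z_flow} one has $\rd\bigl(\rho_s(z_s)^{-1}\eta_s\bigr)=-\bigl(\rho_s(z_s)^{-1}\eta_s+\pi\bigr)\rd s$ (cf.~\eqref{eq:drho^-1eta}); integrating forward from $t$ to $T$ gives $\rho_T^{-1}\eta_T=\ee^{-(T-t)}\bigl(\rho_t^{-1}\eta_t+\pi\bigr)-\pi$, which would be negative whenever $\rho_t(z)^{-1}\Im z<\pi\bigl(\ee^{T-t}-1\bigr)$. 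Since that is impossible ($\rho_T,\eta_T\ge 0$), the forward trajectory launched from such a $z$ must hit the real axis at some $s^*<T$, and it lands \emph{inside} $\supp\rho_{s^*}$, where $\langle M_{s^*}\rangle$ has a branch cut and the drift is discontinuous. The ODE cannot be continued past the cut, so $z_T$ does not exist for these initial points; ``analytic continuation'' is of no help, because $M_T$ being bounded up to $\mathbb{R}$ is irrelevant if the trajectory never arrives at time $T$. Your bootstrap therefore exits not because $\norm{M_{s^*}(z_{s^*})}$ approaches $2C_M$, but because the trajectory leaves $\mathbb{H}$, and the conclusion $s^*=T$ is false. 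The uncovered region --- small $\rho_t^{-1}\Im z$, i.e.\ $z$ on or very near the support --- is precisely where the lemma is used in the zig step, so this gap is fatal rather than peripheral. (There is also a circularity in the bootstrap closure: concluding $\norm{M_{s^*}(z_{s^*})}\le\norm{M_T(z_T)}\le C_M$ presupposes that $z_T$ exists and lies in the admissible region, whereas the bootstrap hypothesis only controls $[t,s^*]$; but that is secondary to the geometric obstruction above.)
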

		Essentially, at the terminal time $t = T$, the bounds \eqref{eq:imM} follow from the assumptions of the lemma, while at all other times $0 \le t < T$, the equations \eqref{eq:ASevol} guarantee that the data pair $(A_t, \mathcal{S}_t)$ constitutes only a small perturbation around $(A_T, \mathcal{S}_T)$. We give a more detailed proof of Lemma~\ref{lemma:Mt} in Appendix~\ref{app:tech}.

		Equipped with Lemma~\ref{lemma:Mt}, we are ready to prove Proposition~\ref{prop:zig}. 
		We conduct the proof in the complex Hermitian case, the obvious modifications in the real symmetric case\footnote{For a detailed treatment of the real symmetric case in the setting of standard Wigner matrices, we refer the reader to Section 4 of \cite{Cipolloni2023Edge}. The modifications for more general ensembles
			are analogous.  } are left to the reader.
		Throughout the proof we consider the step index $k$ to be fixed, and hence omit it from the subscripts. 
		
		It suffices to prove that the resolvent $G_t$ satisfies the local laws \eqref{eq:ll_template} with data $(\abvD_t, \tar + (\ell + 1)\arb)$ for any fixed $t := \tfin \in [t_{k-1}, t_{k}]$
		and $z \in \abvD_{\tfin}$, since uniformity in $t$ and $z$ can be obtained by a simple grid argument\footnote{
			The grid argument relies on two straightforward observations: 
			First, the resolvent $G_t(z)$ with $|\Im z| \ge N^{-1}$ -- and, therefore, all quantities we consider -- are Lipschitz continuous with a Lipschitz constant $\lesssim N^C$ for some $C>0$ both in $z$ and in $t$. Second, for any $C>0$, the intersection of $N^C$-many very-high-probability events also occurs with very high probability. 
			Therefore, a uniform very-high probability bounds are first established over a sufficiently fine $N^{-C}$ grid in the domain of $z$ or $t$, and then extended to the entire domain by Lipschitz continuity.  }. 
		Let $\tin := t_{k-1}$, and for all $t \in [\tin, \tfin]$, let $z_t := \varphi_{t,\tfin}(z)$, where the map $\varphi$ is defined in \eqref{eq:phi_map}. It follows from Lemma~\ref{lemma:abvD_t} that  $z_t \in \abvD_t$ for all $t \in [\tin, \tfin]$. 
		We denote $G_t := (H_t - z_t)^{-1}$, and $M_t := M_t(z_t)$, where $M_t$ is the solution to \eqref{eq:MDEt}.
		
		Using It\^{o}'s formula, we deduce that for any deterministic $N\times N$ matrix $B$,
		\begin{equation} \label{eq:Gav_evol}
			\rd \bigl\langle (G_t - M_t)B \bigr\rangle = \biggl( \frac{1}{2}\bigl\langle (G_t - M_t)B \bigr\rangle + \bigl\langle G_t - M_t \bigr\rangle  \bigl\langle G_t^2 B \bigr\rangle \biggr) \rd t + \frac{1}{\sqrt{N}} \sum_{ab } \partial_{ab} \bigl\langle G_t B \bigr\rangle \rd \bigl(\Brwn_{t}\bigr)_{ab},
		\end{equation}
		where $\partial_{ab} := \partial_{H_{ab,t}}$ denotes the partial derivative with respect to the matrix entry $H_{ab,t}$.
		In particular, for a fixed pair of deterministic vectors $\bm x, \bm y \in \mathbb{C}^N$ with 
		$\norm{\bm x} = \norm{\bm y} = 1$, setting $B := N \bm{y}\bm{x}^*$ we obtain
		\begin{equation} \label{eq:Giso_evol}
			\rd \bigl(G_t - M_t\bigr)_{\bm x \bm y} = \biggl( \frac{1}{2}\bigl(G_t - M_t\bigr)_{\bm x \bm y} + \bigl\langle G_t - M_t \bigr\rangle \bigl(G_t^2\bigr)_{\bm x \bm y} \biggr) \rd t + \frac{1}{\sqrt{N}} \sum_{ab } \partial_{ab} \bigl(G_t\bigr)_{\bm x \bm y} \rd \bigl(\Brwn_{t}\bigr)_{ab}.
		\end{equation}

		First, we prove that the resolvent $G_{\tfin}$ satisfies the isotropic local law and averaged local law in \eqref{eq:ll_template} for $B:= I$ with data $(\abvD_{\tfin}, \tar + (\ell+\tfrac{1}{2})\arb)$. Define a set of deterministic vectors $\mathcal{V} := \{\bm x, \bm y\}$.
		Define the stopping time $\tau$
		\begin{equation} \label{eq:tau_1_def}
			\begin{split}
				\tau := &~ \sup\biggl\{ \tin \le t \le \tfin \, : \,  \sup_{\tin \le s \le t}\max_{\bm u, \bm v \in \mathcal{V}} \bigl\lvert \sqrt{\rho_s(z_s)^{-1}N \eta_s } \bigl(G_s-M_s\bigr)_{\bm u \bm v} \bigr\rvert \le N^{\tar + (\ell+\tfrac{1}{2})\arb}   \biggr\}\\
				&\wedge \sup\biggl\{ \tin \le t \le \tfin \, : \,  \sup_{\tin \le s \le t} \bigl\lvert N \eta_s  \langle G_s-M_s\rangle \bigr\rvert \le N^{3\tar + 3(\ell+\tfrac{1}{2})\arb}    \biggr\},
			\end{split}
		\end{equation}
		where we denote $\eta_t := \Im z_t > 0$.
		
		Computing the quadratic variation of the martingale term in \eqref{eq:Gav_evol}, we obtain
		\begin{equation} \label{eq:av_mart_compute}
			\begin{split}
				\biggl[\int_{\tin}^\cdot \frac{1}{\sqrt{N}} \sum_{ab } \partial_{ab} \bigl\langle G_s \bigr\rangle \rd \bigl(\Brwn_{s}\bigr)_{ab} \biggr]_{t\wedge\tau} 
				&\le \int_{\tin}^{t\wedge\tau} \frac{\bigl\langle (\Im G_s)^2\bigr\rangle}{N^2\eta_s^2} \rd s \le \int_{\tin}^{t\wedge\tau} \frac{\bigl\langle \Im G_s\bigr\rangle}{N^2\eta_s^3} \rd s  \\ 
				&\le \int_{\tin}^{t\wedge\tau} \frac{\bigl\langle \Im M_s\bigr\rangle + \tfrac{1}{2}\eta_s}{N^2\eta_s^3} \rd s  + \int_{\tin}^{t\wedge\tau} \frac{\bigl\langle \Im G_s - \Im M_s\bigr\rangle}{N^2\eta_s^3} \rd s,
			\end{split}
		\end{equation}
		where in the penultimate step we used the norm bound $\norm{\Im G_s} \le \eta_s^{-1}$, and  in the ultimate step we used the fact that $\eta_s > 0$ in $\abvD_s$. We now estimate the two integrals in the last line of \eqref{eq:av_mart_compute} separately. For the first integral, we use the imaginary part of \eqref{eq:z_flow} to obtain 
		\begin{equation}
			\int_{\tin}^{t\wedge\tau} \frac{\bigl\langle \Im M_s\bigr\rangle + \tfrac{1}{2}\eta_s}{N^2\eta_s^3} \rd s  = \int_{\tin}^{t\wedge\tau} \frac{-\rd\eta_s}{N^2\eta_s^3} \le \frac{1}{N^2\eta_{t\wedge\tau}^2}.
		\end{equation}
		For the second integral, we use the definition \eqref{eq:tau_1_def} of the stopping time $\tau$, and the imaginary part of \eqref{eq:m_flow} to deduce that
		\begin{equation}
			\biggl\lvert \int_{\tin}^{t\wedge\tau} \frac{\bigl\langle \Im G_s - \Im M_s\bigr\rangle}{N^2\eta_s^3} \rd s \biggr\rvert 
			\lesssim \biggl\lvert \int_{\tin}^{t\wedge\tau} \frac{N^{3\tar+ 3(\ell+\tfrac{1}{2}\arb)}}{N^3\eta_s^4} \rd s\biggr\rvert 
			\lesssim \biggl\lvert\int_{\tin}^{t\wedge\tau} \frac{N^{3\tar+ 3(\ell+\tfrac{1}{2}\arb)}}{ N^3\eta_s^4 \langle \Im M_{s}\rangle} \rd \eta_s\biggr\rvert \lesssim \frac{N^{-\scl+3\tar+ 3(\ell+\tfrac{1}{2}\arb)}}{N^2\eta_{t\wedge\tau}^2},
		\end{equation}
		where in the last inequality we used that $\langle \Im M_{s}\rangle N\eta_s \sim \rho_s(z_s)N\eta_s \gtrsim N^{\scl}$ for all $\tin \le s \le \tfin$ by \eqref{eq:abvD_t}. 
		
		Therefore, using the path-wise Burkholder-Davis-Gundy inequality (see Lemma 5.6 in \cite{cipolloni2023universality} and Appendix B.6, Eq. (18) in \cite{martingale}) and the fact that $\tar + K\arb \ll \scl$, we deduce that, with very high probability
		\begin{equation} \label{eq:av_mart_bound}
			\max_{\tin \le s \le t}\biggl\lvert \int_{\tin}^{s\wedge \tau} \frac{1}{\sqrt{N}} \sum_{ab } \partial_{ab} \bigl\langle G_s \bigr\rangle \rd \bigl(\Brwn_{s}\bigr)_{ab}  \biggr\rvert  \le \frac{N^{\arb}}{N\eta_{t\wedge\tau}}.
		\end{equation}
		
		Next, using the Ward identity and the definition \eqref{eq:tau_1_def} of the stopping time $\tau$, we obtain
		\begin{equation} \label{eq:propagator_bound}
			\biggl\lvert \frac{1}{2} + \bigl\langle G_s^2 \bigr\rangle \biggr\rvert 
			\le \frac{1}{2} + \frac{\bigl\langle \Im G_s \bigr\rangle}{\eta_s}
			\le  -\frac{1}{\eta_s}\frac{\rd \eta_s}{\rd s} + \frac{N^{3\tar + 3 (\ell+\tfrac{1}{2})\arb}}{N\eta_s^2}
			\le -\frac{1}{\eta_s}\frac{\rd \eta_s}{\rd s} \biggl(1 + C N^{-\scl + 3\tar + 3 (\ell+\tfrac{1}{2})\arb} \biggr),
		\end{equation}
		where in the last inequality we used the imaginary part of \eqref{eq:z_flow} and the bound $\langle \Im M_s \rangle N\eta_s \sim \rho_s(z_s)N\eta_s \gtrsim N^{\varepsilon}$ from \eqref{eq:abvD_t}. 
		By integrating \eqref{eq:Gav_evol}, it follows from the assumption of Proposition~\ref{prop:zig} at $t = t_{k-1} = \tin$  and \eqref{eq:av_mart_bound} that the bound
		\begin{equation}
			\bigl\lvert \bigl\langle G_{t\wedge\tau} - M_{t\wedge\tau}  \bigr\rangle \bigr\rvert \le -\biggl(1 + C N^{-\scl+\tar + (\ell+\tfrac{1}{2})\arb}\biggr)\biggl(\int_{\tin}^{t\wedge\tau}\frac{\bigl\lvert \bigl\langle G_s - M_s  \bigr\rangle \bigr\rvert}{\eta_s}\frac{\rd \eta_s}{\rd s} \rd s + \frac{N^{3\tar+3\ell\arb}}{N\eta_{t\wedge\tau}}\biggr),
		\end{equation}
		holds with very high probability. Here we used that $\tar \ll \scl$ from \eqref{eq:exponents}, and the assumption that $\ell\arb \le 2K\arb \ll \scl$. 	
		Applying the Gronwall inequality yields the very-high-probability bound,
		\begin{equation} \label{eq:stop_time_av_conclusion}
			\bigl\lvert \bigl\langle G_{t\wedge\tau} - M_{t\wedge\tau}  \bigr\rangle \bigr\rvert \le  \frac{N^{3\tar + 3(\ell+\tfrac{1}{4})\arb}}{N\eta_{t\wedge\tau}},
		\end{equation}
		uniformly in $\tin \le t \le \tfin$.

		Similarly, computing the quadratic variation of the martingale term in \eqref{eq:Giso_evol}, we obtain
		\begin{equation}
			\begin{split}
				\biggl[\int_{\tin}^\cdot \frac{1}{\sqrt{N}} \sum_{ab } \partial_{ab} \bigl (G_s \bigr)_{\bm u\bm v} \rd \bigl(\Brwn_{s}\bigr)_{ab} \biggr]_{t\wedge\tau} 
				&\le \int_{\tin}^{t\wedge\tau} \frac{\bigl(\Im G_s\bigr)_{\bm u \bm u}\bigl(\Im G_s\bigr)_{\bm v \bm v}}{N\eta_s^2} \rd s\\ 
				&\lesssim \int_{\tin}^{t\wedge\tau} \frac{\rho_s(z_s)^2}{N\eta_s^2} \biggl(1 + \frac{N^{\tar+(\ell+1)\arb} }{\sqrt{\rho_s(z_s)N\eta_s} }\biggr)^2 \rd s \lesssim \frac{\rho_{t\wedge\tau}(z_{t\wedge\tau})}{N\eta_{t\wedge\tau}},
			\end{split}
		\end{equation}
		where we used the imaginary part of \eqref{eq:m_flow} to obtain $\rho_{s}(z_s) \sim \rho_{t\wedge\tau}(z_{t\wedge\tau})$.
		Therefore, using the path-wise Burkholder-Davis-Gundy inequality, we deduce the very-high-probability bound
		\begin{equation} \label{eq:iso_mart_bound}
			\max_{\tin \le s \le t}\biggl\lvert \int_{\tin}^{s\wedge \tau} \frac{1}{\sqrt{N}} \sum_{ab } \partial_{ab} \bigl (G_s \bigr)_{\bm u\bm v} \rd \bigl(\Brwn_{s}\bigr)_{ab}   \biggr\rvert  \le N^\arb\sqrt{\frac{\rho_{t\wedge\tau}(z_{t\wedge\tau})}{N\eta_{t\wedge\tau}}} .
		\end{equation}
		Moreover, using the Ward identity within the Schwarz estimate 
		$|\bigl(G_s^2\bigr)_{\bm u \bm v}| \le \sqrt{(|G_s|^2)_{\bm u \bm u}(|G_s|^2)_{\bm v \bm v}}$, together with \eqref{eq:imM}, \eqref{eq:tau_1_def}, and the relations $\tar + K\arb \ll\scl$, we deduce that 
		\begin{equation}
			\bigl\lvert \bigl(G_s^2\bigr)_{\bm u \bm v} \bigr\rvert \lesssim \rho_s(z_s)\biggl(1  + \frac{N^{\tar + (\ell+\tfrac{1}{2})\arb}}{\sqrt{\rho_s(z_s)N\eta_s}}\biggr) \lesssim \rho_s(z_s).
		\end{equation}
		Therefore, from \eqref{eq:abvD_t} and the bound \eqref{eq:stop_time_av_conclusion}, we conclude that
		\begin{equation} \label{eq:iso_forcing_bound}
			\begin{split}
				\biggl\lvert \int_{\tin}^{t\wedge\tau} \bigl\langle G_s - M_s \bigr\rangle \bigl(G_s^2\bigr)_{\bm u \bm v}  \rd s \biggr\rvert 
				&\lesssim \int_{\tin}^{t\wedge\tau} \frac{N^{3\tar+3(\ell + \tfrac{1}{2})\arb}}{N\eta_s} \frac{\rho_s(z_s)}{\eta_s}\rd s \le \frac{N^{3\tar + 3(\ell+1)\arb}}{N^{\scl/2}}\sqrt{\frac{\rho_{t\wedge\tau} (z_{t\wedge\tau} )}{N\eta_{t\wedge\tau} }}.
			\end{split}
		\end{equation}
		Integrating \eqref{eq:Giso_evol}, and combining the assumption of Proposition~\ref{prop:zig} at time $t = t_{k-1} = \tin$,  \eqref{eq:iso_mart_bound} and \eqref{eq:iso_forcing_bound} yields
		\begin{equation} \label{eq:stop_time_iso_conclusion}
			\bigl\lvert \bigl( G_{t\wedge\tau} - M_{t\wedge\tau}  \bigr)_{\bm u \bm v} \bigr\rvert \le N^{\tar+(\ell+\tfrac{1}{2})\arb} \sqrt{\frac{\rho_{t\wedge\tau}(z)}{N\eta_{t\wedge\tau}}},
		\end{equation}
		uniformly in $\tin \le t \le \tfin$, with very high probability, for all $\bm u, \bm v \in \mathcal{V}$. Note that the term $\tfrac{1}{2}(G_t-M_t)_{\bm u \bm v}$ on the right-hand side of \eqref{eq:Giso_evol} and can be removed by differentiating $\ee^{-t/2}(G_t-M_t)_{\bm u \bm v}$ with the harmless prefactor $\ee^{-t/2} = 1 +\mathcal{O}(T)$.
		
		Hence, using \eqref{eq:stop_time_av_conclusion} and \eqref{eq:stop_time_iso_conclusion}, we conclude that $\tau = \tfin$ with very high probability, therefore establishing the isotropic local law and averaged local law in \eqref{eq:ll_template} for $B:= I$ with data $(\abvD_{\tfin}, \tar + (\ell+\tfrac{1}{2})\arb)$.
		
		For a general observable $B \in \mathbb{C}^{N\times N}$, we use the bound \eqref{eq:stop_time_av_conclusion} as input to obtain the very-high-probability estimate
		\begin{equation} \label{eq:B_forcing}
			\biggl\lvert	\bigl\langle G_s - M_s \bigr\rangle  \bigl\langle G_s^2 B \bigr\rangle \biggr\rvert \le \frac{N^{3\tar + 3(\ell+\tfrac{1}{4})\arb}}{N\eta_s} \frac{\bigl\langle \Im G_s \bigr\rangle^{1/2} \bigl\langle \Im G_s BB^* \bigr\rangle^{1/2}}{\eta_s} \lesssim \frac{N^{3\tar + 3(\ell+\tfrac{1}{4})\arb}}{N\eta_s} \frac{\rho_s(z_s)}{\eta_s} \norm{B}_{\mathrm{hs}}.
		\end{equation}
		uniformly in $\tin \le s \le \tfin$. Here, in the last step we used the isotropic bound \eqref{eq:stop_time_iso_conclusion} for the eigenvectors $\bm v_j$ of $BB^*$, corresponding to the eigenvalues $|\sigma_j|^2$, to conclude that, with very high probability,
		\begin{equation} \label{eq:zig_isotrick}
			\langle \Im G BB^* \rangle = \frac{1}{N}\sum_j |\sigma_j|^2  (\Im G)_{\bm v_j \bm v_j} \lesssim \rho_s(z_s) \Vert B \Vert_{\rm hs}^2. 
		\end{equation}
		Similarly, using \eqref{eq:zig_isotrick}, we estimate the quadratic variation of the corresponding martingale term in \eqref{eq:Gav_evol} for general $B$, 
		\begin{equation} \label{eq:B_mart}
			\biggl[\int_{\tin}^\cdot \frac{1}{\sqrt{N}} \sum_{ab } \partial_{ab} \bigl\langle G_s B \bigr\rangle \rd \bigl(\Brwn_{s}\bigr)_{ab} \biggr]_{t\wedge\tau} \lesssim \frac{N^\arb}{N^2\eta_{t\wedge\tau}^2}\norm{B}_{\mathrm{hs}}.
		\end{equation}
		Combining \eqref{eq:Gav_evol}, \eqref{eq:B_forcing}, and \eqref{eq:B_mart}, we conclude that the resolvent $G_{\tfin}$ satisfies the averaged local law in \eqref{eq:ll_template}  with data $(\abvD_{\tfin}, \tar + (\ell+1)\arb)$ for any $B \in \mathbb{C}^{N\times N}$. This concludes the proof of Proposition~\ref{prop:zig}. \qed

		\section{Green function comparison: Proof of Proposition~\ref{prop:zag}} \label{sec:zag}
		The goal of this section is to prove Proposition~\ref{prop:zag} and thereby conclude the argument for the \emph{zag} step of our proof. {For simplicity, we will carry out the proof only in the real symmetric case; the complex Hermitian case can be dealt with minor modifications and is thus omitted.} Moreover, since throughout the argument the time $t_k$ defined in \eqref{eq:t_steps} remains fixed, for the remainder of this section, we drop the superscript $t_k$ from $\abvD_{t_{k}}, \rho_{t_{k}}$, and $M_{t_k}$. To further condense the notation, we abbreviate $\mathcal{D}:= \abvD_{t_{k}}$ and $s_{\mathrm{final}} := s(\dift_k)$. %
		
		The proof will be conducted iteratively along vertical truncations of the domain $\mathcal{D}$, defined as
		\begin{equation}
			\mathcal{D}_{\gamma} \equiv \abvD_{t_{k}, \gamma} := \bigl\{ z := E+\ii\eta \in\mathcal{D} \equiv \abvD_{t_{k}}: \eta \ge N^{-1+\gamma} \}, \quad 0 < \gamma  \le 1.
		\end{equation} 
		This is formalized in the following proposition, which we prove in Section~\ref{subsec:gronwall}. 
		\begin{proposition} [Zag Bootstrap] \label{prop:zag_bootstrap}
			Fix a constant $0 < \gamma_0 \le 1$ and assume that the very-high-probability bounds
			on the matrix elements of the resolvent \eqref{eq:G^s} 
			\begin{equation} \label{eq:bootstrap_init}
				\bigl\lvert \bigl(G^s(z)\bigr)_{\bm u \bm v} \bigr\rvert \lesssim  1, \quad \bigl\lvert \bigl(\Im G^s(z)\bigr)_{\bm u \bm u} \bigr\rvert \lesssim  \rho(z),
			\end{equation}
			hold uniformly in $z \in \mathcal{D}_{\gamma_0}$ and $ s \in [0, s_{\mathrm{final}}]$, for any deterministic $\bm u, \bm v \in \mathbb{C}^N$ with $\norm{\bm u }=\norm{\bm v } =1$.
			
			Fix $\gamma_1 \ge \gamma_0 - \step$   with $\step < \mu$ satisfying $\step \ll \tar$,    %
			and assume that for some ${\arb} > 0$ and ${\ell} \in \mathbb{N}$, the resolvent $G^s$ satisfies the local laws \eqref{eq:ll_template} with data $(\mathcal{D}_{\gamma_1}, \tar+{\ell}{\arb})$ at time $s = s_{\rm final}$. Then the resolvent $G^s$ satisfies the local laws \eqref{eq:ll_template} with data $(\mathcal{D}_{\gamma_1}, \tar+({\ell}+1){\arb})$ uniformly in $s \in  [0, s_{\rm final}]$.
		\end{proposition}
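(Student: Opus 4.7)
\bigskip

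\noindent\textbf{Proof proposal for Proposition~\ref{prop:zag_bootstrap}.} The plan is to implement a standard Green function comparison (GFT) argument based on the zag-flow \eqref{eq:zag_flow}, exploiting the crucial fact that this flow preserves both $\E H^s$ and the covariance tensor $\Sigma_{H^s}$, so that the deterministic approximation $M$ is constant in $s$. Fix a large integer $p$, a spectral parameter $z \in \mathcal{D}_{\gamma_1}$, and consider the two test quantities $X^s_{\mathrm{av}}(B) := \langle (G^s(z)-M(z))B\rangle$ and $X^s_{\mathrm{iso}}(\bm x, \bm y) := ((G^s(z)-M(z)))_{\bm x\bm y}$ with $\lVert B\rVert_{\mathrm{hs}}=1$ and $\lVert\bm x\rVert=\lVert\bm y\rVert=1$. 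For each of them I would apply It\^{o}'s formula to $|X^s|^{2p}$, use a cumulant expansion (Assumption~\ref{ass:cumulants}~(i)) to rewrite the resulting drift coming from $\Sigma_{H^0}^{1/2}[\mathrm{d}\Brwn_s]$, and observe that the second-cumulant contribution exactly cancels the $-\tfrac12(H^s-\E H^s)$ drift in \eqref{eq:zag_flow} up to an $\mathcal{S}[G^s-M]$-type error that can be absorbed into the self-consistent equation. The finite-range assumption in Assumption~\ref{ass:cumulants}~(ii) guarantees that the cumulant expansion can be truncated at some finite order $L(p,D)$ depending only on the target probability exponent.

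The heart of the argument is a bootstrap in the vertical direction of the spectral domain. At $s=s_{\mathrm{final}}$ the local laws \eqref{eq:ll_template} on $\mathcal{D}_{\gamma_1}$ imply, together with $\lVert M(z)\rVert\lesssim 1$ and $\Im M(z)\lesssim\rho(z)$ from Lemma~\ref{lemma:Mt}, that the \emph{a priori} bounds \eqref{eq:bootstrap_init} hold on the larger domain $\mathcal{D}_{\gamma_1}$ at the terminal time. I would then define a stopping time $\tau\in[0,s_{\mathrm{final}}]$ as the largest $s$ such that \eqref{eq:bootstrap_init} holds on all of $\mathcal{D}_{\gamma_1}$ (with a slightly inflated implicit constant) for every $s'\in[s,s_{\mathrm{final}}]$, and show $\tau=0$ with very high probability. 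The hypothesized a priori bounds \eqref{eq:bootstrap_init} on $\mathcal{D}_{\gamma_0}$ -- combined with a grid argument and the trivial Lipschitz bound $N^{C}$ on $z$-derivatives -- control every resolvent factor appearing in the truncated cumulant expansion in the region $\mathcal{D}_{\gamma_0}$, while the stopping-time definition does the same on $\mathcal{D}_{\gamma_1}\setminus\mathcal{D}_{\gamma_0}$ up to $\tau$. Using the Ward identity inside Schwarz estimates $|G^s B|_{\bm u\bm u}\le (|G^s|^2)_{\bm u\bm u}^{1/2}\lVert B\rVert$ etc.\ together with the isotropic trick of \eqref{eq:zig_isotrick} (diagonalising $BB^*$ and summing the eigenvector contributions), one can extract a factor $\rho(z)/\eta$ from each $G^sG^{s*}$ pair. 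This yields, before the stopping time, a differential inequality of the schematic form $\partial_s\, \E|X^s|^{2p}\le C\E|X^s|^{2p} + R(s)$ where $R(s)$ is of the same size as the target local-law bound to the power $2p$. Gronwall's inequality propagates this from $s_{\mathrm{final}}$ back to $s=0$ without losing more than an $e^{C s_{\mathrm{final}}}=1+o(1)$ factor, thanks to $s_{\mathrm{final}}\le 2\tilde c^{-1}\dift_k\lesssim N^{-\step}T=o(1)$ by \eqref{eq:s(t)_est}. A Markov-inequality and a union bound over a fine $z$-grid in $\mathcal{D}_{\gamma_1}$ then converts the moment bounds into the very-high-probability local laws with the improved exponent $\tar+(\ell+1)\arb$.

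I expect the main technical obstacle to be the tight bookkeeping needed for the averaged law in Hilbert--Schmidt norm of $B$: the naive bound on the drift contains third-cumulant terms of the form $\sum \kappa(\alpha_1,\alpha_2,\alpha_3)\,(G^sB)_{\cdot}(G^s)_{\cdot}(G^s)_{\cdot}$ which, after Ward and a Schwarz estimate, must be reduced to at most one factor of $\lVert B\rVert_{\mathrm{hs}}$ rather than $\lVert B\rVert$. This is precisely where the new norm \eqref{eq:kappa_3_av_norm} in Assumption~\ref{ass:cumulants}~(i) enters: it is the quantitative input that allows one to sum the index structure against the Hilbert--Schmidt norm of $B$ without losing any factor of $N$. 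Higher-order cumulants are easier because they carry more $N^{-1/2}$ prefactors from \eqref{eq:cumulants} and can be bounded by straightforward combinations of the norms in \eqref{eq:summcum}. Once this third-cumulant bookkeeping is in place, the stopping-time/Gronwall machinery runs as above simultaneously for the averaged and isotropic laws, closing the bootstrap from $(\mathcal{D}_{\gamma_1},\tar+\ell\arb)$ at $s=s_{\mathrm{final}}$ to $(\mathcal{D}_{\gamma_1},\tar+(\ell+1)\arb)$ uniformly in $s\in[0,s_{\mathrm{final}}]$.
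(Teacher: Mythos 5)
Your outline gets the global shape of the argument right (Itô plus cumulant expansion, truncation using Assumption~\ref{ass:cumulants}~(ii), the essential role of $\vertiii{\kappa}_3^{\rm av}$ in preserving the Hilbert--Schmidt control, diagonalizing $BB^*$ as in \eqref{eq:zig_isotrick}, Gronwall over the short time $s_{\rm final}\lesssim N^{-\step}T$, and a grid plus Markov step at the end). But it misses the mechanism by which the assumed a priori bounds on $\mathcal{D}_{\gamma_0}$ (which, note, are already assumed to hold \emph{for all} $s\in[0,s_{\rm final}]$, not only at $s_{\rm final}$) are upgraded to bounds on $\mathcal{D}_{\gamma_1}$. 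The paper does this deterministically via the monotonicity estimate of Lemma~\ref{lem:monotone}: from $\eta_1\Im G(E+\ii\eta_1)\le\eta_0\Im G(E+\ii\eta_0)$ and the integrated Schwarz--Ward bound on $\partial_\eta G_{\bm u\bm v}$ one gets $|G(E+\ii\eta_1)_{\bm u\bm v}|\lesssim\eta_0/\eta_1$ and $|\Im G(E+\ii\eta_1)_{\bm u\bm u}|\lesssim(\eta_0/\eta_1)\rho(E+\ii\eta_0)$; choosing $\eta_0\ge N^{-1+\gamma_0}\vee\eta_1$ with $\eta_0/\eta_1\le N^\step$ (i.e.\ dropping one level $\gamma_0\to\gamma_1\ge\gamma_0-\step$) costs only an $N^\step$ factor, which is then swallowed by the Gronwall prefactor. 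A crude Lipschitz bound in $z$ of size $N^C$ cannot replace this: it loses a factor $\sim\eta_1^{-1}$ per step and does not close. This is the single new ingredient the proof needs and you do not supply it or an equivalent.

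The stopping-time substitute you propose instead is structurally incompatible with the zag step. The zag step is a Green-function comparison: the content is a differential inequality for the \emph{deterministic} quantity $s\mapsto\E|X^s|^{2p}$, obtained by applying the cumulant expansion of Proposition~\ref{prop:cumex} to the drift $-\tfrac12\E[w_\alpha(s)\partial_\alpha|X^s|^{2p}]$. That identity expresses $\E[w_\alpha f]$ as a sum of $\kappa(\alpha_0,\bm\alpha)\,\E[\partial_{\bm\alpha}f]$; it does not commute with restricting to a path-dependent event $\{s\ge\tau\}$, because $w_\alpha(s)$ is correlated with the indicator of such an event. So one cannot "stop" the GFT the way one stops the martingale in the zig step (where the pathwise Burkholder--Davis--Gundy inequality and the stopping time of \eqref{eq:tau_1_def} are the right tools). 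Concretely: inside the cumulant expansion you must have deterministic (or at least event-uniform very-high-probability) a priori control on $\mathcal{D}_{\gamma_1}$ for \emph{every} $s\in[0,s_{\rm final}]$ before the Gronwall inequality can even be written down, and this is precisely what Lemma~\ref{lem:monotone} delivers from the $\mathcal{D}_{\gamma_0}$ hypothesis. Once that is in place, the remainder of your sketch (the isotropic Gronwall estimate first, then feeding its conclusion \eqref{eq:avzaginput} into the averaged Gronwall estimate) matches the paper.
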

		Armed with Proposition~\ref{prop:zag_bootstrap}, we can easily conclude Proposition~\ref{prop:zag}.
		\begin{proof}[Proof of Proposition~\ref{prop:zag}] The proof goes
			via  induction in $\gamma(k) := 1 - k\step$ 
			by iteratively applying Proposition~\ref{prop:zag_bootstrap}. %
			As the base case, clearly, the estimates \eqref{eq:bootstrap_init} hold for $\gamma_0 = \gamma(0) = 1$ as a direct consequence of the bounds $\norm{G^s(E+\ii\eta)} \le \eta^{-1}$ and $\rho(E+\ii\eta) \sim 1$ for $\eta \sim 1$. Moreover, the resolvent $G^s$ satisfies the local laws \eqref{eq:ll_template} with data $(\mathcal{D}_{\gamma_1}, \tar+{\ell}{\arb})$ {with $\gamma_1 = \gamma(1)$} at time $s = s_{\rm final}$ by assumption. 
			Hence, the resolvent $G^s$ satisfies the local laws \eqref{eq:ll_template} with data $(\mathcal{D}_{\gamma(1)}, \tar + ({\ell} + 1) {\arb})$ uniformly in $s \in [0, s_{\rm final}]$ by Proposition~\ref{prop:zag_bootstrap}. As a consequence, since $\tar + (\ell +1) \arb \ll \scl$, we have that the resolvent $G^s$ satisfies the bounds \eqref{eq:bootstrap_init} uniformly in $z \in \mathcal{D}_{\gamma(1)}$ and $s \in [0, s_{\rm final}]$

			As the induction step, assume now that for an integer $k\ge 1$ the resolvent $G^s$ satisfies the bounds \eqref{eq:bootstrap_init} uniformly in $z \in \mathcal{D}_{\gamma(k)}$ and $s \in [0, s_{\rm final}]$. (Recall that, as above, $G^s$ satisfies the local laws \eqref{eq:ll_template} with data $(\mathcal{D}_{\gamma(k)}, \tar+{\ell}{\arb})$ at time $s = s_{\rm final}$ by assumption.) Therefore, the resolvent $G^s$ satisfies the local laws \eqref{eq:ll_template} with data $(\mathcal{D}_{\gamma(k+1)}, \tar + ({\ell} + 1) {\arb})$ uniformly in $s \in [0, s_{\rm final}]$ by Proposition~\ref{prop:zag_bootstrap}. Note that after $K':=  \lceil (1+ \scl)/\step \rceil  \sim 1$ steps, $\mathcal{D}_{\gamma(K')} = \mathcal{D}$ and we have hence proven Proposition~\ref{prop:zag}. %
		\end{proof}
		
		It thus remains to prove Proposition~\ref{prop:zag_bootstrap}. We begin by collecting several preliminaries in Section~\ref{subsec:zag_prelim}. Afterwards, in Section~\ref{subsec:gronwall} we give the proof of Proposition~\ref{prop:zag_bootstrap} based on average and isotropic \emph{Gronwall estimates}. These bounds are proven in Sections~\ref{subsec:zag_av} and~\ref{subsec:zag_iso}, respectively.

		\subsection{Preliminaries} \label{subsec:zag_prelim}
		In order to perform the GFT, i.e.,~compare initial and final $W$'s, given by $W^t = H^t- A$
		with $H^t$ being the solution to \eqref{eq:zag_flow},   we employ Itô's formula: For a $C^2$-function $f(W^t)$, it holds that%
		\begin{equation} \label{eq:cumexstart}
			\frac{\rd }{\rd t} \E f(W^t) = - \frac{1}{2}\E \sum_\alpha w_\alpha(t) (\partial_\alpha f)(W^t) + \frac{1}{2N} \sum_{\alpha, \beta}\kappa_t(\alpha, \beta) \E (\partial_\alpha \partial_\beta f)(W^t) \,,
		\end{equation}
		where $\kappa_t(\alpha, \beta)$ denotes the (normalized, recall \eqref{eq:cumulants}) second order cumulant of $w_\alpha(t)$ and $w_\beta(t)$, the matrix entries of $W^t$.    
		The first summand on the rhs.~of \eqref{eq:cumexstart} can now be further treated by cumulant expansion, which is first 
		key ingredient for our proof. 
		
		\begin{proposition}[Multivariate cumulant expansion; cf.~Proposition~3.2 in \cite{slowcorr} and Lemma 3.1 in \cite{HeKnowles}] \label{prop:cumex}
			Let  %
			$f : \R^{N \times N} \to \C$ be a $L$ times differentiable function with bounded derivatives. Let $W$ be a random matrix, whose normalized cumulants satisfy Assumption~\ref{ass:cumulants}. Then, for any index $\alpha_0 \in [N]^2$ it holds that %
			\begin{equation} \label{eq:cumulantexpansion}
				\E w_{\alpha_0} f(W) = \sum_{k=0}^{L-1} \sum_{\bm \alpha \in \mathcal{N}(\alpha_0)^k} \frac{\kappa(\alpha_0 , \bm \alpha) }{N^{(k+1)/2} k!} \E (\partial_{\bm \alpha} f)(W)+ \Omega_L(f, \alpha_0),
			\end{equation}
			where $\bm \alpha = (\alpha_1, ... , \alpha_k)$ and $\partial_{\bm \alpha} = \partial_{w_{\alpha_1}} ... \partial_{w_{\alpha_k}}$ for $k \ge 1$, and for $k=0$ 
			is %
			considered as the function $f$ itself. %
			Moreover, the error term in \eqref{eq:cumulantexpansion} satisfies
			\begin{equation} \label{eq:errorbound}
				\big|\Omega_L(f, \alpha_0)\big| \lesssim \frac{C_L}{N^{(L+1)/2}} \sum_{\bm \alpha \in \mathcal{N}(\alpha_0)^L} \sup_{ \lambda \in [0,1]} \left( \E \big| (\partial_{\bm \alpha}f)(\lambda W \vert_{\mathcal{N}(\alpha_0)} + W\vert_{[N]^2 \setminus \mathcal{N}(\alpha_0)}) \big|^2\right)^{1/2},
			\end{equation}
			for some constant $C_L > 0$ depending only on $L$. The notation $W\vert_{\mathcal{N}}$ for $\mathcal{N} \subset [N]^2$ in \eqref{eq:errorbound} refers to the matrix which equals $W$ at all entries $\alpha \in \mathcal{N}$ and is zero otherwise. 
		\end{proposition}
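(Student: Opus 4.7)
The proof of Proposition~\ref{prop:cumex} follows a standard route for multivariate cumulant expansions, with the locality input from Assumption~\ref{ass:cumulants}~(ii) serving only to truncate the expansion range to $\mathcal{N}(\alpha_0)$. The plan has three steps.

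First, I would exploit local independence. By Assumption~\ref{ass:cumulants}~(ii), $w_{\alpha_0}$ is independent of all $w_\beta$ with $\beta \notin \mathcal{N}(\alpha_0)$. Writing $W = W\vert_{\mathcal{N}(\alpha_0)} + W\vert_{[N]^2 \setminus \mathcal{N}(\alpha_0)}$ and conditioning on the second, independent piece reduces the problem to a finite-dimensional cumulant expansion for the random vector $\{w_\beta : \beta \in \mathcal{N}(\alpha_0)\}$ evaluated on the function obtained by freezing $W\vert_{[N]^2 \setminus \mathcal{N}(\alpha_0)}$ as a parameter in $f$.

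Second, to prove the finite-dimensional expansion, I would use the classical identity that relates $\E[X_0 g(\bm X)]$ to derivatives of $g$ weighted by joint cumulants of $\bm X$. This identity follows either from Taylor-expanding $\log \E \exp(\ii \bm t \cdot \bm X)$ and identifying coefficients via Fourier inversion, or more directly by applying Taylor's theorem with integral remainder to the interpolation $\phi(\lambda) := \E[X_0 g(\lambda \bm X)]$ at order $L$. Both derivations are carried out in detail in \cite[Proposition~3.2]{slowcorr} and \cite[Lemma~3.1]{HeKnowles}, and their arguments apply verbatim after the conditioning in the first step. The prefactor $N^{-(k+1)/2}$ appearing in the $k$-th term of \eqref{eq:cumulantexpansion} is a direct consequence of the $\sqrt{N}$-normalization convention for cumulants \eqref{eq:cumulants}: one factor arises from $w_{\alpha_0}$ itself, and $k$ further factors from the $k$ partial derivatives.

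Third, the error bound \eqref{eq:errorbound} comes from the Taylor remainder at order $L$ applied to $\phi(\lambda)$. Writing this remainder in integral form and applying Cauchy--Schwarz together with the moment bound from Assumption~\ref{ass:moments} produces the factor $\bigl(\E |(\partial_{\bm\alpha}f)(\lambda W\vert_{\mathcal{N}(\alpha_0)} + W\vert_{[N]^2\setminus\mathcal{N}(\alpha_0)})|^2\bigr)^{1/2}$, with the supremum over $\lambda \in [0,1]$ accounting for the integration variable. The indices $\bm\alpha$ range only over $\mathcal{N}(\alpha_0)^L$ since $w_{\alpha_0}$ is independent of $w_\beta$ for $\beta \notin \mathcal{N}(\alpha_0)$, so any mixed cumulant involving an index outside $\mathcal{N}(\alpha_0)$ vanishes. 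No step presents a genuine obstacle; the only point requiring careful bookkeeping is to confirm that, after conditioning on $W\vert_{[N]^2 \setminus \mathcal{N}(\alpha_0)}$, the partial derivatives $\partial_{\bm\alpha}$ in the reduced expansion correspond exactly to the matrix derivatives $\partial_{w_{\alpha_1}} \cdots \partial_{w_{\alpha_k}}$ on the original function $f$, uniformly in $\lambda \in [0,1]$.
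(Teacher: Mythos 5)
The paper itself does not prove Proposition~\ref{prop:cumex}; it states the result and defers to Proposition~3.2 of \cite{slowcorr} and Lemma~3.1 of \cite{HeKnowles}. Your proposal follows the same route in spirit — it defers to those references for the finite-dimensional cumulant expansion and identifies the locality from Assumption~\ref{ass:cumulants}~(ii) as the mechanism that truncates the sum to $\mathcal{N}(\alpha_0)$ — so there is no disagreement with the paper's treatment.

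Two technical imprecisions in your sketch are worth flagging. First, Taylor-expanding $\phi(\lambda) := \E[X_0\, g(\lambda\bm X)]$ in $\lambda$ does not, as you suggest, give the cumulant expansion directly: the $k$-th derivative $\phi^{(k)}(0)$ produces joint \emph{moments} $\E[X_0 X_{j_1}\cdots X_{j_k}]$ weighted against $(\partial_{\bm j} g)$ evaluated at the base point $\lambda=0$, whereas \eqref{eq:cumulantexpansion} features \emph{cumulants} weighted against $\partial_{\bm\alpha}f$ evaluated at the random argument $W$ itself. That structure emerges from the Fourier route you also mention, via the identity $\E[w_{\alpha_0}\ee^{\ii\langle t, W\rangle}] = \E[\ee^{\ii\langle t, W\rangle}]\cdot(-\ii)\,\partial_{t_{\alpha_0}}\log\E[\ee^{\ii\langle t, W\rangle}]$ and a Taylor expansion (with remainder) of the log-characteristic function; only this factorization separates the deterministic cumulant weights from $\E[\partial_{\bm\alpha}f(W)]$. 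Second, the restriction of the sum to $\bm\alpha\in\mathcal{N}(\alpha_0)^k$, which you attribute to vanishing of mixed cumulants, needs slightly more than the literal pairwise statement $w_{\alpha_0}\perp w_\beta$: the joint cumulant $\kappa(\alpha_0,\bm\alpha)$ vanishes only if $\{w_{\alpha_0},w_{\alpha_1},\dots,w_{\alpha_k}\}$ splits into two mutually independent blocks, which holds under a block-independence strengthening of Assumption~(ii) (the form \cite[Assumption~(D)]{slowcorr} is designed to supply), but is not automatic from pairwise independence alone. Neither issue affects the proposition as an import from \cite{slowcorr}, but your sketch should not present the $\phi(\lambda)$-interpolation and the pairwise-independence argument as sufficient derivations.
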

		Note that the $k=1$  term   %
		in the expansion of the first summand on the rhs.~of \eqref{eq:cumexstart}   exactly cancels the second summand on the rhs.~of \eqref{eq:cumexstart}. For Proposition~\ref{prop:cumex} being practically applicable we need to control (i) every order of the expansion, and (ii) the truncation term $\Omega$. These will be guaranteed by Assumption~\ref{ass:cumulants} above.

		The second key input required for the GFT argument %
		is the following monotonicitiy estimate on resolvents. 
		\begin{lemma}[Monotonicity estimate] \label{lem:monotone}
			Fix a constant $0 < \gamma_0 \le 1$ and assume that the very-high-probability bounds \eqref{eq:bootstrap_init} hold uniformly in $z \in \mathcal{D}_{\gamma_0}$ and $ s \in [0, s_{\mathrm{final}}]$, for any deterministic $\bm u, \bm v \in \mathbb{C}^N$ with $\norm{\bm u }=\norm{\bm v } =1$.
			
			Fix $\gamma_1 \ge \gamma_0 - \step$. Then, we have 
			\begin{equation} \label{eq:monotone}
				|G^s(E + \ii \eta_1)_{\bm u \bm v}| \lesssim \frac{\eta_0}{\eta_1}\,, \qquad |\Im G^s(E + \ii \eta_1)_{\bm u \bm u}| \lesssim \rho(E + \ii \eta_0) \frac{\eta_0}{\eta_1}\,, 
			\end{equation}
			with very high probability, 
			uniformly in $z:= E+ \ii \eta_1 \in \mathcal{D}_{\gamma_1}$ 	for any  $\eta_0 \ge N^{-1+\gamma_0} \vee \eta_1 $, time $ s \in [0, s_{\mathrm{final}}]$, and for any deterministic vectors $\bm u, \bm v \in \C^N$ with $\norm{\bm u} = \norm{\bm v} = 1$.  

		\end{lemma}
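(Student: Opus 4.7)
Once we restrict to the very-high-probability event on which \eqref{eq:bootstrap_init} holds uniformly on $\mathcal{D}_{\gamma_0}$, the bound \eqref{eq:monotone} is purely deterministic and will be extracted from two classical ingredients: the monotonicity of $\eta \mapsto \eta(\Im G(E+\ii\eta))_{\bm u \bm u}$ in $\eta > 0$, apparent from the spectral decomposition $H = \sum_i \lambda_i \bm\psi_i \bm\psi_i^*$ via
\begin{equation*}
\eta (\Im G(E+\ii\eta))_{\bm u \bm u} = \sum_i \frac{\eta^2 |\langle \bm u, \bm \psi_i\rangle|^2}{(\lambda_i - E)^2 + \eta^2},
\end{equation*}
together with the Ward identity $G^s(G^s)^* = (G^s)^* G^s = \Im G^s/\eta$.

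Given $E+\ii\eta_1 \in \mathcal{D}_{\gamma_1}$ and $\eta_0 \geq N^{-1+\gamma_0} \vee \eta_1$, I first verify $E+\ii\eta_0 \in \mathcal{D}_{\gamma_0}$, so that the assumption \eqref{eq:bootstrap_init} becomes available at $\eta_0$. The nontrivial constraints $\rho(E+\ii\eta_0)N\eta_0 \geq N^\scl$ and $\rho(E+\ii\eta_0)^{-1}\eta_0 \gtrsim N^{-1+\scl}$ are inherited from $E+\ii\eta_1$ by the standard monotonicities of the Herglotz function $\rho$ in $\eta$, namely that $\eta\rho$ is non-decreasing and $\rho/\eta$ is non-increasing, both direct consequences of the Poisson representation of $\langle \Im M\rangle$. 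The second inequality in \eqref{eq:monotone} is then immediate: the monotonicity above, applied to $(\Im G^s)_{\bm u \bm u}$, gives
\begin{equation*}
(\Im G^s(E+\ii\eta_1))_{\bm u \bm u} \leq \frac{\eta_0}{\eta_1}(\Im G^s(E+\ii\eta_0))_{\bm u \bm u} \lesssim \frac{\eta_0}{\eta_1}\,\rho(E+\ii\eta_0).
\end{equation*}

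For the first inequality in \eqref{eq:monotone}, I use the resolvent identity $G^s(E+\ii\eta_1) - G^s(E+\ii\eta_0) = -\ii(\eta_0-\eta_1)G^s(E+\ii\eta_1)G^s(E+\ii\eta_0)$, take the $(\bm u,\bm v)$ matrix element, and estimate the product on the right by Cauchy--Schwarz together with Ward,
\begin{equation*}
\bigl|\langle \bm u, G^s(E+\ii\eta_1)G^s(E+\ii\eta_0)\bm v\rangle\bigr| \leq \sqrt{\frac{(\Im G^s(E+\ii\eta_1))_{\bm u\bm u}}{\eta_1}\cdot \frac{(\Im G^s(E+\ii\eta_0))_{\bm v\bm v}}{\eta_0}} \lesssim \frac{\rho(E+\ii\eta_0)}{\eta_1},
\end{equation*}
where the first factor is controlled by the diagonal bound just obtained and the second by \eqref{eq:bootstrap_init} at $\eta_0$. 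Multiplying by $\eta_0 - \eta_1 \leq \eta_0$ and adding the assumed $|G^s(E+\ii\eta_0)_{\bm u \bm v}| \lesssim 1$ yields
\begin{equation*}
|G^s(E+\ii\eta_1)_{\bm u \bm v}| \lesssim 1 + \frac{\eta_0}{\eta_1}\rho(E+\ii\eta_0) \lesssim \frac{\eta_0}{\eta_1},
\end{equation*}
where the last step uses the universal bound $\rho \lesssim 1$ from Lemma~\ref{lemma:Mt}. There is no deep analytical obstacle: the argument is entirely deterministic on the good event where \eqref{eq:bootstrap_init} holds, and the only bookkeeping consists of the domain inclusion $E+\ii\eta_0 \in \mathcal{D}_{\gamma_0}$; the case $\eta_1 \geq N^{-1+\gamma_0}$ is trivial, as one may simply take $\eta_0 = \eta_1$ and the claim reduces to the assumption itself.
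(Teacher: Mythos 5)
Your proof is correct and follows essentially the same approach as the paper: the second estimate is proven by the identical spectral-decomposition monotonicity of $\eta \mapsto \eta(\Im G)_{\bm u\bm u}$, and for the first estimate you apply the resolvent identity between $\eta_1$ and $\eta_0$ with one Cauchy--Schwarz and Ward step, whereas the paper integrates the bound $|\tfrac{\rd}{\rd\eta}G_{\bm u\bm v}| \lesssim \eta^{-1}\sqrt{(\Im G)_{\bm u\bm u}(\Im G)_{\bm v\bm v}}$ from $\eta_1$ to $\eta_0$ -- two presentations of the same estimate. Your explicit remark about the domain inclusion $E+\ii\eta_0 \in \mathcal{D}_{\gamma_0}$ (via monotonicity of $\eta\rho$ and $\rho/\eta$) is a helpful bookkeeping step that the paper leaves implicit.
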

		We defer the proof of Lemma~\ref{lem:monotone} to Appendix~\ref{app:tech}.
		
		\subsection{Gronwall estimates: Proof of Proposition~\ref{prop:zag_bootstrap}} \label{subsec:gronwall}
		In this section, we provide the proof of Proposition~\ref{prop:zag_bootstrap} based on two Gronwall estimates, formulated in Propositions~\ref{prop:gronwalliso}--\ref{prop:gronwallav} below
		that will be proven in the next subsection.  The isotropic part of Proposition 
		\ref{prop:zag_bootstrap} will be concluded in a self contained way, based entirely on the \emph{isotropic Gronwall estimate} in Proposition~\ref{prop:gronwalliso}. %
		Its conclusion in \eqref{eq:avzaginput} then serves as an input for the \emph{average Gronwall estimate} in Proposition~\ref{prop:gronwallav}. 
		\begin{proposition}[Isotropic Gronwall estimate] \label{prop:gronwalliso} Assume the conditions of Proposition~\ref{prop:zag_bootstrap}. 
			Fix $\bm x, \bm y \in \C^N$ of bounded norm, $z := E + \ii \eta_1 \in \mathcal{D}_{\gamma_1}$ and $\eta_0 \ge N^{-1+\gamma_0} \vee \eta_1$ such that $\eta_0/\eta_1 \le N^\step$. For $s \in [0,s_{\rm final}]$, define
			\begin{equation}
				S_s := \big(G^s(E+ \ii \eta_1) - M(E+ \ii \eta_1)\big)_{\bm x \bm y} \,.
			\end{equation}
			Then, for any (large) even $p \in \N$, %
			it holds that 
			\begin{equation} \label{eq:Gronwalliso}
				\left| \frac{\dif }{\dif s} \E |S_s|^{p} \right| \lesssim \left(1 + N^{10 \step} \sqrt{\frac{\rho(E+\ii\eta_0)}{\eta_0}} \, \right) \,  \big[ \E |S_s|^{p} + \left(\Psi(\eta_1)\right)^{p} \big], 
			\end{equation}
			uniformly in  $ s \in [0, s_{\rm final}]$, bounded $\bm x, \bm y \in \C^N$, and $z \in \mathcal{D}_{\gamma_1}$. Here, for $\eta \in [\eta_0, \eta_1]$, we denoted 
			\begin{equation} \label{eq:Psiabvdef}
				\Psi(\eta) := \sqrt{\frac{\rho(E+ \ii \eta)}{N \eta}} \,. 
			\end{equation}
		\end{proposition}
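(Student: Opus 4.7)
The overall approach is a moment method. Since $M$ is preserved along the zag-flow \eqref{eq:zag_flow} (only the covariance and expectation enter the MDE data), only the random part $W^s = H^s - A$ drives the evolution of $S_s$. I would apply It\^o's formula \eqref{eq:cumexstart} to $f(W^s) = |S_s|^p = S_s^{p/2}\overline{S_s}^{p/2}$, which produces a first-order drift $-\frac{1}{2}\E\sum_\alpha w_\alpha^s(\partial_\alpha f)$ and a second-order correction $\frac{1}{2N}\sum_{\alpha\beta}\kappa(\alpha,\beta)\E(\partial_\alpha\partial_\beta f)$. The first-order drift I would expand via the multivariate cumulant expansion of Proposition~\ref{prop:cumex} up to some fixed large order $L$ depending on $p$. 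The $k=0$ term vanishes as $W^s$ is centered, and the $k=1$ term is, by construction, an exact copy of the It\^o correction with opposite sign. Hence all the leading instability cancels and one is left with terms indexed by $k\ge 2$, plus the remainder $\Omega_L$.

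Each surviving $k$-term has the schematic shape
\begin{equation*}
\frac{1}{N^{(k+1)/2}k!}\sum_{\alpha_0,\dots,\alpha_k}\kappa(\alpha_0,\dots,\alpha_k)\,\E\bigl[\partial_{\alpha_0}\cdots\partial_{\alpha_k} f\bigr],
\end{equation*}
and applying the derivatives distributes as a sum of products of resolvent entries and $S_s$-factors: each $\partial_\alpha$ acting on $G_{\bm u\bm v}$ yields $-G_{\bm u a}G_{b\bm v}$ (up to symmetrisation). After expansion, every resulting term has the form $S_s^{j_1}\overline{S_s}^{j_2}\prod_r G_{\cdot\cdot}$ with $j_1+j_2 = p - r$ and $r$ "free" resolvent entries evaluated at $z = E+\ii\eta_1$. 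I would then split the $S_s$ factors using a weighted Young inequality $|S_s|^{p-r}\cdot\mathcal{R} \le \varepsilon|S_s|^p + C_\varepsilon|\mathcal{R}|^{p/r}$, so that the task reduces to showing that the deterministic gain $\mathcal{R}$ from the $r$ free resolvents is bounded by $\Psi(\eta_1)^r$ times the promised prefactor $1+N^{10\step}\sqrt{\rho(\eta_0)/\eta_0}$.

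The bound on $\mathcal{R}$ rests on three ingredients working in tandem. First, the monotonicity estimate of Lemma~\ref{lem:monotone} transfers the a priori estimates \eqref{eq:bootstrap_init} from the safe scale $\eta_0$ down to the working scale $\eta_1$, at the controlled price $\eta_0/\eta_1 \le N^{\step}$. Second, every pair of resolvent entries coupled by a summation index is handled by the Ward identity $GG^* = \Im G/\eta$, which combined with monotonicity produces $(\Im G(\eta_0))_{\bm u\bm u}(\eta_0/\eta_1) \lesssim \rho(\eta_0)(\eta_0/\eta_1)$ and, via the basic $\sum_a|G_{\bm x a}|^2 = \eta_1^{-1}(\Im G)_{\bm x\bm x}$, yields the crucial $\rho(\eta_0)/\eta_0$ factor rather than a bare $1/\eta_1$. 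Third, the sums against $\kappa(\alpha_0,\dots,\alpha_k)$ are absorbed using the operator-norm bounds \eqref{eq:summcum}--\eqref{eq:kappa_3_av_norm}, exploiting the direct/cross decomposition \eqref{eq:summcum3c} to match each cumulant leg to either an isotropic $(G-M)_{\bm x a}$ or an averaged $\langle\cdot\rangle$ contraction. The truncation error $\Omega_L$ is rendered negligible by choosing $L = L(p)$ large, using Assumption~\ref{ass:moments} together with the norm bound $\|G(\eta_1)\|\le\eta_1^{-1}\le N$.

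The principal obstacle is the $k=2$ contribution, which carries only a $N^{-1/2}$ prefactor and is therefore the critical balance point; all $k\ge 3$ terms inherit additional $N^{-1/2}$ factors and are comparatively easy. At $k=2$ one must simultaneously perform three actions: exploit the cross/direct splitting of $\kappa$ to assign one cumulant leg to an $S_s$-type factor (generating $\E|S_s|^p$), use $\vertiii\kappa_3^{\mathrm{av}}$ and Ward to reduce two further legs to $\rho(\eta_0)/\eta_0$, and retain the last leg as an $\eta_0/\eta_1 \le N^\step$ monotonicity factor. The bookkeeping of these contractions---ensuring that no index pairing accidentally produces a $1/\eta_1$ instead of $\rho(\eta_0)/\eta_0$---is what dictates the $N^{10\step}$ slack in the final bound and is where most of the work lies.
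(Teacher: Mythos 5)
Your high-level plan — Itô's formula, the multivariate cumulant expansion of Proposition~\ref{prop:cumex}, cancellation of the $k=1$ term against the Itô correction, a weighted Young inequality to split the $|S_s|$ factors, and then Ward plus the monotonicity Lemma~\ref{lem:monotone} to control the free resolvent legs — is exactly the paper's strategy, and your identification of the truncation error $\Omega_L$ handled by Assumption~\ref{ass:cumulants}~(ii) is also right.

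However, your description of the critical $k=2$ step (third-order cumulants) is confused in a way that would derail the proof if carried out as stated. You invoke the cross/direct decomposition \eqref{eq:summcum3c} and the norm $\vertiii{\kappa}_3^{\mathrm{av}}$ from \eqref{eq:kappa_3_av_norm}, but neither is used in the isotropic zag estimate: the cross/direct splitting $\vertiii{\kappa}_2^{\mathrm{iso}}$ is defined only for second-order cumulants and plays a role only in the global law (Section~\ref{sec:stable}), while $\vertiii{\kappa}_3^{\mathrm{av}}$ is used only in the \emph{averaged} Gronwall estimate, specifically in \eqref{eq:k3_1chain}, where $B$ is measured in Hilbert--Schmidt norm. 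In the isotropic case the paper's estimates \eqref{eq:3cumk=1iso}--\eqref{eq:3cumk=3iso} rely only on the simpler summability norm $\vertiii{\kappa}_3$ from \eqref{eq:summcum}, combined with Schwarz, Ward, and monotonicity. Moreover you claim the $k=2$ step must "reduce two further legs to $\rho(\eta_0)/\eta_0$", but that \emph{unsquared} factor belongs to the averaged bound \eqref{eq:Gronwallav}, where it is compensated by $N^{-2\step}$; the isotropic Gronwall prefactor is $\sqrt{\rho(\eta_0)/\eta_0}$, precisely because — as the paper explicitly notes at the start of Section~\ref{subsec:zag_iso} — no term in the isotropic expansion produces the critical $\rho/\eta$ blow-up, only $\sqrt{\rho/\eta}$. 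Finally, the step of "assigning a cumulant leg to an $S_s$-type factor to generate $\E|S_s|^p$" does not correspond to any operation in the proof: the $|S_s|^{p-r}$ factors come directly from the Leibniz rule applied to $|S_s|^p$ and are converted to $\E|S_s|^p$ at the very end by Young's inequality, not by any manipulation of cumulant legs. The intended mechanism is that each free boundary resolvent leg $G_{\bm x a}$ or $G_{b\bm y}$ contributes, after Ward and monotonicity, a single $\Psi(\eta_1)$, while the summed cumulant indices absorb a $\sqrt{\rho(\eta_0)/\eta_0}$ via $\sum_a |G_{\bm x a}|^2 = \eta_1^{-1}(\Im G)_{\bm x\bm x}$; this is what yields \eqref{eq:3cumk=1iso}--\eqref{eq:3cumk=3iso} with the promised $N^{1/2+O(\step)}\Psi(\eta_0)$ prefactor.
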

		By Gronwall's lemma, uniformly in $s \in [0, s_{\rm final}]$,  from \eqref{eq:Gronwalliso} we find that 
		\begin{equation} \label{eq:isoint}
			\begin{split}
				\E |S_s|^p &\lesssim \exp\left(\left(1 + N^{10 \step} \sqrt{\frac{\rho(E+\ii\eta_0)}{\eta_0}} \, \right) \, (s_{\rm final} - s)\right) \, \big[\E |S_{s_{\rm final}}|^{p} + \left(\Psi(\eta_1)\right)^{p}\big] \\
				&\lesssim \exp(N^{-\tar/10}) \, \big[\E |S_{s_{\rm final}}|^{p} + \left(\Psi(\eta_1)\right)^{p}\big] \lesssim \E |S_{s_{\rm final}}|^{p} + \left(\Psi(\eta_1)\right)^{p}.
			\end{split}
		\end{equation}
		Here we used that $\rho(E + \ii \eta_0)/\eta_0 \lesssim N^{k \step}/T$ by \eqref{eq:abvD_t}, $s_{\rm final} \lesssim N^{-(k-1) \step}T$ by \eqref{eq:s(t)_est}, $T \sim N^{-\tar/4}$ from \eqref{eq:term_time}, and $\step \ll \tar$ by \eqref{eq:exponents}. 
		
		To estimate $ \E |S_{s_{\rm final}}|^{p} $, recall that the resolvent $G^s$ satisfies the isotropic local law in \eqref{eq:ll_template} with data $(\mathcal{D}_{\gamma_1}, \tar + {\ell} {\arb})$ at $s = s_{\rm final}$. Therefore, since $p$ in \eqref{eq:isoint} was arbitrary, we find that %
		\begin{equation} \label{eq:avzaginput}
			\left|\big(G^s(z) - M(z)\big)_{\bm x\bm y}\right| \le N^{\tar + ({\ell} + 1) {\arb}} \sqrt{\frac{\rho(z)}{N \eta_1}} ,
		\end{equation}
		uniformly in $z := E+ \ii \eta_1 \in \mathcal{D}_{\gamma_1}$, $s \in [0, s_{\rm final}]$, and bounded $\bm x, \bm y \in \C^N$, with very high probability. 
		
		This proves the isotropic part of Proposition~\ref{prop:zag_bootstrap} and we are left with the average part. 
		
		\begin{proposition}[Average Gronwall estimate] \label{prop:gronwallav} Fix $B \in \C^{N \times N}$ of bounded Hilbert--Schmidt norm, $\Vert B \Vert_{\rm hs} \le 1$, $z := E + \ii \eta \in \mathcal{D}_{\gamma_1}$, and $\eta_0 \ge N^{-1+\gamma_0} \vee \eta_1$ such that $\eta_0/\eta_1 \le N^\step$. For $s \in [0, s_{\rm final}]$, define 
			\begin{equation}
				R_s := \langle (G^s(E+ \ii \eta_1) - M(E+ \ii \eta_1))B \rangle \,. 
			\end{equation}
			Moreover, suppose that \eqref{eq:avzaginput} holds uniformly in $z := E+ \ii \eta_1 \in \mathcal{D}_{\gamma_1}$, $s \in [0, s_{\rm final}]$, and bounded $\bm x, \bm y \in \C^N$. %
			Then, for any (large) even $p \in \N$ it holds that 
			\begin{equation} \label{eq:Gronwallav}
				\left| \frac{\dif }{\dif s} \E |R_s|^{p} \right| \lesssim \left(1 + N^{-2\step } \frac{\rho(E + \ii \eta_0)}{\eta_0}\right) \, \left[ \E |R_s|^{p} + \left(\frac{N^{3\tar}}{N \eta_1}\right)^{p} \right] , 
			\end{equation}
			uniformly in  $ s \in [0, s_{\rm final}]$, bounded $B \in \C^{N\times N}$, and $z \in \mathcal{D}_{\gamma_1}$.
		\end{proposition}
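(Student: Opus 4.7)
The plan is to apply It\^o's formula to $\E |R_s|^p = \E (R_s \overline{R_s})^{p/2}$ along the zag-flow \eqref{eq:zag_flow}, then expand the resulting Ornstein--Uhlenbeck drift term via the multivariate cumulant expansion of Proposition~\ref{prop:cumex}. Each first-order differentiation with respect to a matrix entry $w_\alpha$ of $W^s$ produces a factor of the form $(G^s B G^s)_\alpha / N$ for $R_s$ and its conjugate version for $\bar R_s$. The $k=0$ term of the cumulant expansion vanishes because $\E W^s = 0$, and crucially the $k=1$ term -- carrying the second cumulant $\kappa(\alpha_0,\beta)$ -- cancels exactly with the It\^o correction from the Brownian increments, since the zag-flow \eqref{eq:zag_flow} preserves the covariance tensor $\Sigma$ of $H^s$. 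What remains is a sum over $k\ge 2$ of cumulant terms, together with a truncation remainder $\Omega_L$ that is immediately negligible thanks to the short-range correlation structure from Assumption~\ref{ass:cumulants}~(ii) and can be rendered smaller than any inverse power of $N$ by choosing $L$ large.

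For each $k\ge 2$ term, $k+1$ derivatives act on $(R_s \bar R_s)^{p/2}$ and are distributed among the $R$-factors in all possible ways, producing a generic term of the schematic shape $|R_s|^{p-q} \cdot \mathcal{E}_{k,q}$, where $q$ is the number of $R$-factors actually differentiated and $\mathcal{E}_{k,q}$ is a product of $q$ traces built from $G^s$, $M$, $B$, weighted by the $(k+1)$-st cumulant and summed over indices. To control each $\mathcal{E}_{k,q}$ I would combine: (i) the bootstrap hypothesis \eqref{eq:bootstrap_init}; (ii) the monotonicity transfer from the comfortable scale $\eta_0$ down to $\eta_1$ via Lemma~\ref{lem:monotone}; (iii) the Ward identity, which converts $(G^s (G^s)^*)_{\bm u \bm u}$ into $(\Im G^s)_{\bm u\bm u}/\eta_1$ and therefore, by monotonicity, into at most $\rho(E+\ii\eta_0)\eta_0/\eta_1^2$; and (iv) the isotropic bound \eqref{eq:avzaginput}, which is the output of the isotropic Gronwall estimate Proposition~\ref{prop:gronwalliso} and is treated as an input here. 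For the $k=2$ term (third-order cumulant), where one trace always carries the Hilbert--Schmidt factor $B$, the dedicated norm $\vertiii{\kappa}_3^{\rm av}$ from \eqref{eq:kappa_3_av_norm} yields directly the desired bound proportional to $\Vert B\Vert_{\mathrm{hs}}$. For $k\ge 3$ the operator-norm style estimate \eqref{eq:summcum} suffices, because the extra $G$ factors produce additional $1/(N\eta_1)$-smallness.

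After these estimates, each term reduces to $|R_s|^{p-q}$ times a deterministic quantity of size at most $(1+\rho/\eta_0)\cdot (N^{3\tar}/N\eta_1)^q$, where the $(1+\rho/\eta_0)$ prefactor records the gain obtained through Ward and monotonicity and the $N^{3\tar}$ absorbs the $N^{\tar+(\ell+1)\arb}$-losses carried by the input \eqref{eq:avzaginput}. Young's inequality $a^{p-q}b^q\lesssim a^p+b^p$ then splits each such term into a contribution to $\E|R_s|^p$ and to the forcing $(N^{3\tar}/(N\eta_1))^p$, producing exactly the Gronwall structure of \eqref{eq:Gronwallav}. Finally, uniformity in $s\in[0,s_{\mathrm{final}}]$ and $z\in \mathcal{D}_{\gamma_1}$ follows by the standard Lipschitz-grid argument already used in the zig step.

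The main obstacle is the third-cumulant contribution in the presence of an \emph{Hilbert--Schmidt}-bounded observable $B$: in previous single-resolvent local laws the observable was measured in operator norm and the ordinary bound \eqref{eq:summcum} applied to $\vertiii{\kappa}_3$ was enough, but here routing the HS-norm factor $B$ through the three-index cumulant sum without losing any $N$-power forces the use of the tailored norm \eqref{eq:kappa_3_av_norm}. The secondary delicate point is to track the powers of $\rho(E+\ii\eta_0)/\eta_0$ accumulating from Ward identities and the monotonicity transfer -- one needs to verify that the $\eta_0/\eta_1\le N^{\step}$ losses together with the $\tar$-losses from \eqref{eq:avzaginput} land inside the factor $1+N^{-2\step}\rho(E+\ii\eta_0)/\eta_0$ on the right-hand side of \eqref{eq:Gronwallav}, which is what eventually allows the exponential integration in \eqref{eq:isoint} to close without producing any net $N^\arb$-loss.
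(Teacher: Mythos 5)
Your proposal follows the same strategy as the paper's proof: It\^o's formula along the zag-flow, cumulant expansion via Proposition~\ref{prop:cumex}, cancellation of the $k=0,1$ terms with the It\^o correction, estimation of the truncation $\Omega_L$ using Assumption~\ref{ass:cumulants}~(ii), Ward identities and Lemma~\ref{lem:monotone} for the $\eta_0\to\eta_1$ transfer, the tailored norm $\vertiii{\kappa}_3^{\rm av}$ for the Hilbert--Schmidt observable, and finally Young's inequality to reach the Gronwall form \eqref{eq:Gronwallav}.

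One point worth sharpening: $\vertiii{\kappa}_3^{\rm av}$ is not applied directly to the whole $k=2$ contribution. The norm \eqref{eq:kappa_3_av_norm} requires two of the three slots to be \emph{operator-norm}-bounded matrices, and the resolvent $G$ only has bounded \emph{entries} under \eqref{eq:bootstrap_init}, with operator norm as large as $\eta_1^{-1}$. The paper therefore first decomposes the internal resolvent entries in the $(\partial^3 R)\,|R|^{p-1}$ term as $G_{ba} = M_{ba} + (G-M)_{ba}$: for the $(G-M)$ parts the small factor $\Psi(\eta_1)$ from the isotropic input \eqref{eq:avzaginput} is exploited together with the ordinary norm $\vertiii{\kappa}_3$, Schwarz, Ward, and the spectral-decomposition trick \eqref{eq:isotrick}; it is only for the leftover deterministic $M_{b_1a_2}M_{b_2a_3}$ sub-term, where $\lVert M\rVert\lesssim 1$, that $\vertiii{\kappa}_3^{\rm av}$ is invoked, contracted against $\lVert GBG\rVert_{\mathrm{hs}}$, as in \eqref{eq:k3_1chain}. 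The other third-cumulant sub-terms $(\partial R)(\partial^2 R)$ and $(\partial R)^3$, and the higher-order cumulants, are all handled with $\vertiii{\kappa}_n$ from \eqref{eq:summcum}; for $q\ge 2$ the issue is that each trace carries its own $B$, and the paper resolves this by bounding some of the $(GBG)_{ab}$ factors with the operator norm $\lVert B\rVert\le\sqrt N\lVert B\rVert_{\mathrm{hs}}$, a conversion made affordable by the accumulated $N^{-1/2}$ prefactors. These are the concrete places where your sketch glosses over the mechanism; none of them changes the overall plan, but they are precisely where the proof becomes nontrivial.
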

		Analogously to \eqref{eq:isoint}, by Gronwall's lemma, uniformly in $s \in [0, s_{\rm final}]$, we find that 
		\begin{equation} \label{eq:avint}
			\begin{split}
				\E |R_s|^p &\lesssim \exp\left( \left(1 + N^{-2\step } \frac{\rho(E + \ii \eta_0)}{\eta_0}\right)  \, (s_{\rm final} - s)\right) \, \big[\E |R_{s_{\rm final}}|^{p} + \left(\frac{N^{3 \tar} }{N \eta_1}\right)^{p}\big] \\
				&\lesssim \exp(N^{-\step}) \, \big[\E |R_{s_{\rm final}}|^{p} + \left(\Psi(\eta_1)\right)^{p}\big] \lesssim \E |R_{s_{\rm final}}|^{p} + \left(\frac{N^{3 \tar} }{N \eta_1}\right)^{p} .
			\end{split}
		\end{equation}
		Here we used that $\rho(E + \ii \eta_0)/\eta_0 \lesssim N^{k \step}/T$ by \eqref{eq:abvD_t}, $s_{\rm final} \lesssim N^{-(k-1) \step}T$ by \eqref{eq:s(t)_est}, $T \sim N^{-\tar/4}$ by \eqref{eq:term_time}, and $\step \ll \tar$ by \eqref{eq:exponents}. 
		Note that the small prefactor $N^{-2\delta}$ in \eqref{eq:Gronwallav} is absolutely essential, unlike in the
		isotropic case \eqref{eq:isoint}, where a large prefactor $N^{10\delta}$ is affordable thanks to the square root. The linear appearance of $\rho/\eta$ in \eqref{eq:Gronwallav} is only due to fact that we estimate $B$ in terms of its Hilbert--Schmidt norm $\Vert B \Vert_{\rm hs}$; cf.~the estimate in \eqref{eq:k3_1chain}. For observables with $\Vert B \Vert \sim \Vert B \Vert_{\rm hs}$, such as the identity matrix $B = \mathbf{1}$,  the linear dependence on $\rho/\eta$ can be improved to a $\sqrt{\rho/\eta}$. We exploit this fact in \eqref{eq:avimprove} below.  

		Recall that the resolvent $G^s$ satisfies the average local law in \eqref{eq:ll_template} with data $(\mathcal{D}_{\gamma_1}, \tar + {\ell} {\arb})$ at $s = s_{\rm final}$. Therefore, since $p$ in \eqref{eq:avint} was arbitrary, we find that %
		\begin{equation*} 
			\left|\big\langle \big(G^s(z) - M(z)\big)B \big\rangle\right| \le \frac{N^{3(\tar + ({\ell} + 1) {\arb})}}{N \eta_1} ,
		\end{equation*}
		uniformly in $z := E+ \ii \eta_1 \in \mathcal{D}_{\gamma_1}$, $s \in [0, s_{\rm final}]$, and $ B \in \C^{N \times N}$ with $\Vert B \Vert_{\rm hs} \le 1$, with very high probability. 
		
		This concludes the proof of Proposition~\ref{prop:zag_bootstrap}. \qed

		\subsection{Cumulant expansion: Proofs of Propositions~\ref{prop:gronwallav} and~\ref{prop:gronwalliso}} The proofs of Propositions~\ref{prop:gronwalliso}--\ref{prop:gronwallav} are based on the multivariate cumulant expansion from Proposition~\ref{prop:cumex} and the monotonicity estimate from Lemma~\ref{lem:monotone}. 
		We begin by proving the average Gronwall estimate in Proposition~\ref{prop:gronwallav}. Moreover, we will henceforth omit the superscript $s$ from the resolvent $G^s$.

		\subsubsection{Average case: Proof of Proposition~\ref{prop:gronwallav}} \label{subsec:zag_av}
		Throughout the proof, we will assume that $\Vert B \Vert_{\rm hs} \lesssim 1$. 	By \eqref{eq:cumexstart} for $R_s$ we have 
		\begin{equation} \label{eq:GFTstart}
			\frac{\dif}{\dif s} \E |R_s|^{p} = - \frac{1}{2} \E \sum_{\alpha_1} w_{\alpha_1}(s) (\partial_{\alpha_1} |R_s|^{p}) + \frac{1}{2} \sum_{\alpha_1, \alpha_2} \kappa_s(\alpha_1, \alpha_2) \E \bigl[\partial_{\alpha_1} \partial_{\alpha_2} |R_s|^{p}\bigr],
		\end{equation}
		where $w_{\alpha_i}(s)$ is the $\alpha_i$-th entry of $W_s$, $\kappa_s(\alpha_1, \alpha_2, ...)$ is a joint normalized cumulant of $w_{\alpha_1}(s), w_{\alpha_2}(s), ...$ and $\partial_{\alpha_i} = \partial_{w_{\alpha_i}(s)}$ denotes the partial derivative in the direction of $w_{\alpha_i}(s)$.  
		
		The first term on the rhs.~of \eqref{eq:GFTstart} can now be expanded by means of Proposition~\ref{prop:cumex}:
		\begin{equation} \label{eq:cumexres}
			\E  \bigl[ w_{\alpha_1}(s) (\partial_{\alpha_1} |R_s|^{p}) \bigr]  = \sum_{k=0}^{L-1} \sum_{\bm \alpha \in \mathcal{N}(\alpha_1)^k}\frac{\kappa_s(\alpha_1, \bm \alpha)}{N^{(k+1)/2} \, k!} \E \bigl[\partial_{\alpha_1} \partial_{\bm \alpha} |R_s|^{p}\bigr] + \Omega_L \,. 
		\end{equation}
		Since $L$ derivatives of $|R_s|^{p}$ create $L$ additional resolvent matrix elements (where each of them is bounded with the aid of Lemma~\ref{lem:monotone}) and using that $|\mathcal{N}(\alpha_1)| \lesssim N^{1/2-\mu}$ by Assumption~\ref{ass:cumulants}~(ii), the error term $\Omega_L$ can be estimated as\footnote{To be precise, note some of the
			$p+L$ resolvents in the error term $\Omega_L$ are actually resolvents of the random matrix $W^{(\lambda)} := \lambda W \vert_{\mathcal{N}(\alpha_0)} + W\vert_{[N]^2 \setminus \mathcal{N}(\alpha_0)}$ (recall \eqref{eq:errorbound})
			and we need to guarantee their boundedness as well, uniformly in $\lambda\in[0,1]$. We  
			perform a resolvent expansion of $G^{(\lambda)} := (A + W^{(\lambda)} - z)^{-1}$ up to some order $\tilde{m}\in \N$
			around  $G^{(1)}$
			whose boundedness is known.  
			For each $G^{(\lambda)}$, the $m^{\rm th}$ order term in this expansion can be bounded by $N^{\step} N^{m(\step - \mu + \arb)}$ with the aid of Lemma~\ref{lem:monotone} (to bound $G^{(1)}$ isotropically) and using the norm estimate $\Vert W\vert_{\mathcal{N}(\alpha_0)} \Vert \le N^{-\mu + \arb}$, w.v.h.p.~for any $\arb > 0$, which is a consequence of Assumption~\ref{ass:cumulants}~(ii). By a simple norm bound $\Vert G^{(\lambda)} \Vert \le \eta^{-1}$, the last   truncation term in the resolvent expansion admits the bound $N^{\step} N^{\tilde{m}(\step - \mu + \arb)} \eta^{-1}$.   Therefore, since $\eta$ depends at most polynomially on $N$ and $\mu > \step + \arb$ for some $\arb> 0$ small enough, the resolvent expansion can be truncated at finite order, leaving us with the bound $N^{\step}$ for every matrix element of  $G^{(\lambda)}$ employed in \eqref{eq:Omegaest}, uniformly in $\lambda \in [0,1]$. %
		}
		\begin{equation} \label{eq:Omegaest}
			|\Omega_L| \lesssim N^{-\frac{L+1}{2}} N^{L(1/2 - \mu)} N^{(p+L)\step} \lesssim N^{2p\step + L(\step- \mu)}. 
		\end{equation} 
		Using the relation $\mu > \step$ from \eqref{eq:exponents}
		and $L := \lceil((1+\step)p+2)/(\mu - \step) \rceil$, we see that
		$|\Omega_L| \le N^{-2}(N \eta_1)^{-p}$ (the factor $N^{-2}$ is needed to bound the summation over $ \alpha_1$ in \eqref{eq:GFTstart}).  
		With this choice of $L$, the error term $\Omega_L$ will henceforth be ignored. 
		
		Plugging \eqref{eq:cumexres} into \eqref{eq:GFTstart} and using that
		the $k=0$ term is zero by  $\kappa_s(\alpha_1) = \E w_{\alpha_1}(s)= 0$,  and that   the $k=1$ term in \eqref{eq:cumexres} cancels the second term on the rhs.~of \eqref{eq:GFTstart}, we obtain
		\begin{equation} \label{eq:GFTcumulants}
			\left| \frac{\dif}{\dif s} \E |R_s|^{p}  \right| \lesssim \left|  \sum_{k=2}^{L-1} \sum_{\alpha_1} \sum_{\bm \alpha \in \mathcal{N}(\alpha_1)^k}\frac{\kappa_s(\alpha_1, \bm \alpha)}{N^{(k+1)/2} \, k!} \E (\partial_{\alpha_1} \partial_{\bm \alpha} |R_s|^{p})  \right| + \left(\frac{1}{N \eta_1}\right)^{p} \,. 
		\end{equation}
		
		We will now first estimate the third order cumulant terms (i.e.~those with $k=2$ in \eqref{eq:GFTcumulants}), as these are the most delicate, and afterwards turn to the higher order ones that can be handled by simple power counting
		with a little twist due to the Hilbert--Schmidt norm of the observable $B$. 
		Moreover, we drop the time dependence of $R_s$ and $\kappa_s$ whenever it does not lead to confusion. We point out that Assumption~\ref{ass:cumulants} also holds for $W^s$ from \eqref{eq:zag_flow}, uniformly in %
		$s \in [0, \infty)$.   Indeed, adding an independent Gaussian random matrix to $W_0$ has no effect on cumulants of order $k \ge3$ (by Gaussianity) and leaves the first two joint moments as well as the independence property of Assumption~\ref{ass:cumulants}~(ii) invariant (the covariance tensor $\Sigma$ is trivial beyond the
		range $\mathcal{N}(\alpha_1)$)   by construction \eqref{eq:zag_flow}. In particular, we can freely extend the summation over $\bm \alpha \in \mathcal{N}(\alpha_1)^k$ in \eqref{eq:GFTcumulants} to $\bm \alpha \in ([N]^2)^k$ and combine the latter two summations in \eqref{eq:GFTcumulants} into $\sum_{\alpha_1, \bm \alpha}$.

		Now, for the third order cumulant terms, we aim to control 
		\begin{equation*}
			\left| N^{-3/2}\sum_{\alpha_1, \alpha_2, \alpha_3} \kappa(\alpha_1, \alpha_2, \alpha_3) \E (\partial_{\alpha_1} \partial_{\alpha_2} \partial_{\alpha_3} |R|^p) \right|,
		\end{equation*}
		which, after employing the Leibniz rule, can be broken up into terms of the form $(\partial_\alpha^3 R) |R|^{p-1}$, $(\partial_\alpha R) (\partial_\alpha^2 R) |R|^{p-2}$, and $(\partial_\alpha R)^3 |R|^{p-3}$. To further ease the notation, here and in the following, we neglect the difference between $R$ and $\overline{R}$, as these will be estimated in a completely analogous way. %
		
		We begin with the terms of the form $(\partial_\alpha^3 R)|R|^{p-1}$, which requires the bound  \eqref{eq:kappa_3_av_norm} in Assumption~\ref{ass:cumulants}~(i). Writing $\langle G B\rangle = N^{-1} \sum_j (GB)_{jj}$ and identifying $\alpha_i \equiv (a_i, b_i) \in [N]^2$, we aim to estimate (ignoring the $|R|^{p-1}$-factor)  
		\begin{equation*}	
			N^{-5/2} \left| \sum_{j, \alpha_1, \alpha_2, \alpha_3}  \kappa(\alpha_1, \alpha_2, \alpha_3)G_{ja_1} G_{b_1a_2} G_{b_2a_3} (GB)_{b_3j} \right|  = N^{-5/2} \left| \sum_{\alpha_1, \alpha_2, \alpha_3}  \kappa(\alpha_1, \alpha_2, \alpha_3)
			G_{b_1a_2} G_{b_2a_3} (GBG)_{b_3a_1} \right| \,.   
		\end{equation*}
		For both $G_{b_1a_2}$ and $G_{b_2a_3}$ we write $G_{ba} = M_{ba} + (G-M)_{ba}$ and use $\Vert M \Vert \lesssim 1$ for the $M$-term and the bound \eqref{eq:avzaginput} for the $(G-M)$-term. 
		In particular (recalling the notation \eqref{eq:Psiabvdef}),
		\begin{equation} \label{eq:3cumk=1}
			\begin{split}
				&N^{-5/2} \left| \sum_{\alpha_1, \alpha_2, \alpha_3}  \kappa(\alpha_1, \alpha_2, \alpha_3) M_{b_1a_2} (G-M)_{b_2a_3}  (GBG)_{b_3a_1}   \right| \\
				&\quad\lesssim   N^{-5/2} \, N^{\tar + ({\ell} + 1) \arb} \, \Psi(\eta_1)  \sum_{ \alpha_1, \alpha_2, \alpha_3}  |\kappa(\alpha_1, \alpha_2, \alpha_3)| \,   |(GBG)_{b_3 a_1}| \\
				&\quad\lesssim N^{-5/2} \,N^{\tar + ({\ell} + 1) {\arb}} \,  \Psi(\eta_1) \sum_{ \alpha_1, \alpha_2, \alpha_3}  |\kappa(\alpha_1, \alpha_2, \alpha_3)| \, |(GBB^*G^*)_{b_3b_3}|^{1/2} |(GG^*)_{a_1 a_1}|^{1/2}  \\
				&\quad\lesssim  N^{\tar + ({\ell} + 1) {\arb}} \, \Psi(\eta_1)^2 \left\Vert \sum_{\alpha_2} |\kappa(*, \alpha_2, *)|\right\Vert \, \langle GG^* BB^* \rangle^{1/2} \lesssim  N^{-\step} \sqrt{\frac{\rho(E + \ii \eta_0)}{\eta_0}} \, \frac{N^{3 \tar}}{N \eta_1}\,,
			\end{split}
		\end{equation}
		with very high probability.
		In the second step, we used the Schwarz inequality. In the ultimate step, similarly to \eqref{eq:zig_isotrick}, we used \eqref{eq:summcum}, the Ward identity, the spectral decomposition of $BB^*$, and \eqref{eq:avzaginput} together with $(\Im M)_{\bm v \bm v}\lesssim
		\rho(E + \ii \eta_1)$ by \eqref{eq:imM}, to obtain
		\begin{equation} \label{eq:isotrick}
			\langle \Im G BB^* \rangle = \frac{1}{N}\sum_j |\sigma_j|^2  (\Im G)_{\bm v_j \bm v_j} \lesssim \rho(E + \ii \eta_1) \Vert B \Vert_{\rm hs}^2 \,,
		\end{equation}
		and used $\step \ll \tar$ by \eqref{eq:exponents},  %
		and the fact that ${\arb} > 0$ is arbitrarily small. Note that the small factor $N^{-\step}$ in the last line of \eqref{eq:3cumk=1} is balanced by an additional $N^\tar$.   
		The terms with $(G-M)_{b_1 a_2} G_{b_2 a_3}$ and  $(G-M)_{b_1 a_2} (G-M)_{b_2 a_3}$ are treated analogously and we are thus left with the $M_{b_1 a_2} M_{b_2 a_3}$-term. Here, using the $\vertiii{\kappa}_3^{\rm av}$ norm from \eqref{eq:kappa_3_av_norm}, we estimate
		\begin{equation} \label{eq:k3_1chain}
			\begin{split}
				N^{-5/2} &\left| \sum_{ \alpha_1, \alpha_2, \alpha_3}  \kappa(\alpha_1, \alpha_2, \alpha_3) M_{b_1a_2} M_{b_2a_3}  (GBG)_{b_3a_1}   \right| \\
				&\le N^{-1} \vertiii{\kappa}_3^{\rm av}\Vert M \Vert^2 \Vert GBG \Vert_{\rm hs} \lesssim \eta_1^{-1/2} \frac{1}{N \eta_1} \langle \Im G BB^* \rangle^{1/2} \lesssim N^{-\step} \sqrt{\frac{\rho(E + \ii \eta_0)}{\eta_0}} \,  \frac{N^{3 \tar}}{N \eta_1} \,,
			\end{split}
		\end{equation}
		with very high probability. In the penultimate step we used the definition of $\Vert \cdot \Vert_{\rm hs}$ together with a Ward identity and the trivial bound $\Vert G \Vert \le \eta_1^{-1}$; in the last step we employed \eqref{eq:isotrick} and $\eta_0/\eta_1 \le N^\step$  together with monotonicity of $\eta \mapsto \eta \rho(E+ \ii \eta)$ and $\step \ll \tar$. Hence, by two Young inequalities , we thus find  %
		\begin{equation} \label{eq:3cumk=1final}
			\left| N^{-3/2} \sum_{ \alpha_1, \alpha_2, \alpha_3}  \kappa(\alpha_1, \alpha_2, \alpha_3)\E \bigl[ (\partial_{\alpha_1}\partial_{\alpha_2} \partial_{\alpha_3} R) |R|^{p-1} \bigr] \right| \lesssim \left(1 + N^{-2\step} \frac{\rho(E + \ii \eta_0)}{\eta_0} \right)\,  \left[ \E |R|^p +  \left(\frac{N^{3 \tar}}{N \eta_1}\right)^p \right] \,,
		\end{equation}
		where we overestimated $N^{-\step} \sqrt{\rho/\eta_0} \lesssim 1 + N^{-2\step} \rho/\eta_0$. 
		
		Next, we turn to terms of the form $(\partial_\alpha R) (\partial_\alpha^2 R) |R|^{p-2}$. Similarly to \eqref{eq:3cumk=1}, using \eqref{eq:summcum} for $k=3$, we find
		\begin{equation} \label{eq:3cumk=2}
			\begin{split}
				&N^{-7/2} \left| \sum_{j,k, \alpha_1, \alpha_2, \alpha_3}  \kappa(\alpha_1, \alpha_2, \alpha_3)G_{ja_1} G_{b_1a_2} (GB)_{b_2j} G_{k a_3}(GB)_{b_3k} \right| \\
				&\quad \lesssim  N^{-7/2}  \sum_{ \alpha_1, \alpha_2, \alpha_3} |\kappa(\alpha_1, \alpha_2, \alpha_3)| \, |(GBG)_{b_3a_3}| \, |(GBG)_{b_2a_1}|\\
				&\quad\lesssim  N^{-7/2} \sqrt{\frac{\rho(E+ \ii \eta_1)}{\eta_1}} \sum_{ \alpha_1, \alpha_2, \alpha_3} |\kappa(\alpha_1, \alpha_2, \alpha_3)| \, |(GBG)_{b_3a_3}| \, \sqrt{(GBB^*G^*)_{b_2b_2}} \\
				&\quad\lesssim  N^{-7/2} \sqrt{ \frac{\rho(E + \ii \eta_1)}{\eta_1}} \,  \vertiii{\kappa}_3 \sqrt{\sum_{b_3, a_3} |(GBG)_{b_3a_3}|^2} \sqrt{\sum_{b_2 a_2} (GBB^*G^*)_{b_2 b_2}} \\
				&\quad\lesssim  \frac{\rho(E + \ii \eta_1)^{1/2}}{N^2\eta_1^{2}} \langle \Im GB\Im G B^* \rangle^{1/2} \langle \Im GBB^* \rangle^{1/2}			\lesssim  N^{- \step}\sqrt{\frac{\rho(E + \ii \eta_0)}{\eta_0}} \left(\frac{N^{3 \tar}}{N \eta_1}\right)^2 \,, 
			\end{split}
		\end{equation}
		with very high probability. %
		To go to the third line, we used a Schwarz inequality and the estimate $(GG^*)_{a_1 a_1} \lesssim \rho/\eta_1$ w.v.h.p.~(as follows by a Ward identity and \eqref{eq:avzaginput}). In the penultimate step, we again used several Ward identities. In the last step we used $\langle \Im GB\Im G B^* \rangle \le \langle \Im GBB^* \rangle/\eta_1$ and \eqref{eq:isotrick} together with $\eta_0/\eta_1 \le N^\step$, monotonicity of $\eta \mapsto \eta \rho(E+ \ii \eta)$, and $\step \ll \tar$ by \eqref{eq:exponents}.  Hence, again by Young's inequality and overestimating $N^{-\step} \sqrt{\rho/\eta_0} \lesssim 1 + N^{-2\step} \rho/\eta_0$,   we find  %
		\begin{equation} \label{eq:3cumk=2final}
			\left| N^{-3/2} \hspace{-2mm}\sum_{ \alpha_1, \alpha_2, \alpha_3} \hspace{-2mm} \kappa(\alpha_1, \alpha_2, \alpha_3)\E \bigl[ (\partial_{\alpha_1}\partial_{\alpha_2}  R) (\partial_{\alpha_3} R)|R|^{p-2}\bigr] \right| \lesssim \left(1 + N^{-2\step} \frac{\rho(E + \ii \eta_0)}{\eta_0} \right) \left[ \E |R|^p + N^\tar \left(\frac{N^\step }{N \eta_1}\right)^p\right]\,. 
		\end{equation}
		
		Finally, we estimate terms of the form $(\partial_\alpha R)^3 |R|^{p-3}$, which are the most critical ones, since they necessarily contribute the $N^{-2 \step} \rho/\eta$ factor as we estimate $B$ by its Hilbert--Schmidt norm $\Vert B \Vert_{\rm hs}$ . %
		For terms of the form $(\partial_\alpha R)^3 |R|^{p-3}$, similarly to \eqref{eq:3cumk=1} and \eqref{eq:3cumk=2}, we find
		\begin{equation} \label{eq:3cumk=3}
			\begin{split}
				&N^{-9/2} \left| \sum_{j,k,\ell, \alpha_1, \alpha_2, \alpha_3}  \kappa(\alpha_1, \alpha_2, \alpha_3)G_{ja_1} (GB)_{b_1j} G_{ka_2} (GB)_{b_2k} G_{\ell a_3}(GB)_{b_3\ell} \right| \\
				&\quad\lesssim N^{-9/2} \frac{\rho(E+ \ii \eta_1)}{\eta_1} \Vert B \Vert  \sum_{ \alpha_1, \alpha_2, \alpha_3}  |\kappa(\alpha_1, \alpha_2, \alpha_3)| \,   \big|(GBG)_{b_2a_2} \big| \,  \big|(GBG)_{b_3a_3} \big| \\
				&\quad\lesssim N^{-9/2} \frac{\rho(E+ \ii \eta_1)}{\eta_1} \Vert B \Vert   \vertiii{\kappa}_3 \sum_{a,b}\big|(GBG)_{ab} \big|^2 
				\lesssim \, N^{-7/2} \frac{\rho(E+ \ii \eta_1)^2}{\eta_1^4} \Vert B \Vert  \Vert B \Vert_{\rm hs}^2 \\
				&\quad\lesssim N^{- 2\step}{\frac{\rho(E + \ii \eta_0)}{\eta_0}}\left(\frac{N^{3 \tar}}{N \eta_1}\right)^3 \,. 
			\end{split}
		\end{equation}
		To go to the second line, we used that 
		\begin{equation} \label{eq:entrywise}
			|(GBG)_{ab}| \le \Vert B \Vert \sqrt{(GG^*)_{aa} (GG^*)_{bb}} \lesssim \Vert B \Vert \frac{\rho(E+ \ii \eta_1)}{\eta_1}\,,
		\end{equation}
		by a Schwarz inequality, a Ward identity and \eqref{eq:avzaginput}. In the third line we estimated 
		\begin{equation} \label{eq:absumm}
			\sum_{a,b}\big|(GBG)_{ab} \big|^2 = \frac{N}{\eta_1^2} \langle \Im G B \Im G B^* \rangle \lesssim \frac{N \rho(E+ \ii \eta_1)}{\eta_1^3} \Vert B \Vert^2_{\rm hs}\,,
		\end{equation}
		with very high probability,  by means of Ward identities and \eqref{eq:isotrick}. To go to the fourth line, we used $\Vert B \Vert \le \sqrt{N} \Vert B \Vert_{\rm hs}$ and the fact that $\step \ll \tar$ by \eqref{eq:exponents}, together with $\eta_0/\eta_1 \le N^\step$ and monotonicity of $\eta \mapsto \eta \rho(E+ \ii \eta)$. 
		Hence, \eqref{eq:3cumk=3} together with Young's inequality implies that %
		\begin{equation} \label{eq:3cumk=3final}
			N^{-3/2}	\left| \sum_{ \alpha_1, \alpha_2, \alpha_3} \E \bigl[ (\partial_{\alpha_1}  R)(\partial_{\alpha_2} R) (\partial_{\alpha_3} R)|R|^{p-3}\bigr] \right| \lesssim \left(1 + N^{-2\step} \frac{\rho(E + \ii \eta_0)}{\eta_0} \right) \left[ \E |R|^p +  \left(\frac{N^{3\tar}}{N \eta_1}\right)^p\right]\,. 
		\end{equation}

		For the higher order terms in \eqref{eq:GFTcumulants} with $n = k+1 \ge 4$ %
		we aim to estimate  %
		\begin{equation*}
			\left| N^{-n/2}\sum_{\alpha_1, ..., \alpha_n} \kappa(\alpha_1, ..., \alpha_n) \E \bigl[\partial_{\alpha_1} ... \partial_{\alpha_n} |R|^p\bigr] \right| \,. 
		\end{equation*}
		In case that the $n$ derivatives are distributed on $k \in  [n]$ factors of $R$, we find that, for $n_\ell \in \N$ with $\sum_{\ell = 1}^k n_\ell = n$ and identifying $(\alpha_i)_{i \in [n]} \equiv \big((a_{\ell_i}, b_{\ell_i})\big)_{i \in [n_\ell], \ell \in [k]}$, 
		\begin{equation} \label{eq:higherorderav}
			\begin{split}
				&\left| N^{-n/2} N^{-k} \sum_{j_1, ... ,j_k} \sum_{\alpha_1, ... , \alpha_n} \kappa(\alpha_1, ... , \alpha_n) \prod_{\ell = 1}^{k} \big( G_{j_\ell a_{\ell_1}} G_{b_{\ell_1}a_{\ell_2}}  ... G_{b_{\ell_{n_\ell-1}} a_{\ell_{n_\ell}}} (GB)_{b_{\ell_{n_\ell}}j_\ell}  \big) \right|  \\
				&\quad\lesssim  N^{-n/2} N^{-k} \sum_{\alpha_1, ... , \alpha_n} |\kappa(\alpha_1, ... , \alpha_n)|  \prod_{\ell = 1}^{k} \big| (GBG)_{b_{\ell_{n_\ell}}a_{\ell_1}} \big| \\
				&\quad\lesssim  N^{-n/2} N^{-k} \left(\frac{\rho(E+ \ii \eta_1)}{\eta_1}\right)^{k-2} \,\Vert B \Vert^{k-2} \sum_{\alpha_1, ..., \alpha_n} |\kappa(\alpha_1, ... , \alpha_n)| \, \big| (GBG)_{\tilde{b}_1 \tilde{a}_1} \big| \, \big| (GBG)_{\tilde{b}_2 \tilde{a}_2} \big|%
			\end{split}
		\end{equation}
		To go to the second line, we performed all the $j$ summations and estimated all the other resolvents without a $j$ index by \eqref{eq:avzaginput}; to go to the third line, we used \eqref{eq:entrywise} for $k-2$ of the $k$ factors and used a simplified notation for the indices $\tilde{a}, \tilde{b}$, which agree with some $a_{\ell_i}, b_{\ell_j}$. The two factors of $GBG$ are kept separately, since we aim for an estimate in terms of Hilbert--Schmidt norm $\Vert B \Vert_{\rm hs}$ of the observable $B$; otherwise the whole argument for the higher order terms would be a simple power counting. 
		However, now we distinguish two cases: (i) $k \le n-2$, and (ii) $k \in \{n-1,n\}$. 
		
		In the less critical case (i),   we use a Schwarz inequality to estimate $\big| (GBG)_{\tilde{b} \tilde{a}} \big| \lesssim \sqrt{(GBB^* G^*)_{\tilde{b} \tilde{b}}} \, \sqrt{\rho/\eta_1}$, similarly to \eqref{eq:entrywise}. Then, we continue to estimate \eqref{eq:higherorderav} as
		\begin{equation} \label{eq:avcase2}
			\begin{split}
				&N^{-n/2} N^{-k} \left(\frac{\rho(E+ \ii \eta_1)}{\eta_1}\right)^{k-1} \,\Vert B \Vert^{k-2} \sum_{\alpha_1, ..., \alpha_n} |\kappa(\alpha_1, ... , \alpha_n)| \,  \sqrt{(GBB^*G^*)_{\tilde{b}_1 \tilde{b}_1}}  \, \sqrt{(GBB^*G^*)_{\tilde{b}_2 \tilde{b}_2}} \\[2mm]
				&\quad\le  N^{-n/2} N^{-k} \left(\frac{\rho(E+ \ii \eta_1)}{\eta_1}\right)^{k-1} \,\Vert B \Vert^{k-2} \vertiii{\kappa}_n \sum_{ab} (GBB^*G^*)_{a a} \\
				&\quad\lesssim N^{2-n/2}  \left(\frac{\rho(E+ \ii \eta_1)}{N\eta_1}\right)^{k} \,\Vert B \Vert^{k-2} \Vert B \Vert_{\rm hs}^2 \lesssim \left(\frac{\rho(E+ \ii \eta_1)}{N\eta_1}\right)^{k} \,. 
			\end{split}
		\end{equation}
		While in the second step, we used \eqref{eq:isotrick}, the final step follows from $\Vert B \Vert \le \sqrt{N}\Vert B \Vert_{\rm hs} \lesssim \sqrt{N}$ and $k\le n-2$. 
		
		For case (ii), we first note that necessarily $(\tilde{a}_1, \tilde{b}_1) = (a_1, b_1) = \alpha_1$, and similarly for index $2$, up to permutation of the arguments of $\kappa$ in \eqref{eq:higherorderav}. This simply follows, since $n \ge 4$ derivatives hitting each of $k \in \{n-1, n\}$ factors at least once, means that at least two of them are hit exactly once. Therefore, we can continue estimating \eqref{eq:higherorderav} as 
		\begin{equation} \label{eq:avcase1}
			\begin{split}
				&N^{-n/2} N^{-k} \left(\frac{\rho(E+ \ii \eta_1)}{\eta_1}\right)^{k-2} \,\Vert B \Vert^{k-2} \sum_{\alpha_1, ..., \alpha_n} |\kappa(\alpha_1, ... , \alpha_n)| \, \big| (GBG)_{{b}_1 {a}_1} \big| \, \big| (GBG)_{{b}_2 {a}_2} \big| \\
				&\quad\le  N^{-n/2} N^{-k} \left(\frac{\rho(E + \ii \eta_1)}{\eta_1}\right)^{k-2} \,\Vert B \Vert^{k-2} \vertiii{\kappa}_n \sum_{ab} \big|(GBG)_{ba}\big|^2 \\
				&\quad\lesssim N^{(k-n)/2}\left(\frac{1}{N \eta_1}\right)^k \frac{\rho(E+ \ii \eta_1)}{\eta_1} \lesssim N^{-2 \delta} \frac{\rho(E + \ii \eta_0)}{\eta_0} \left(\frac{N^{3 \tar}}{N \eta_1}\right)^k \,. 
			\end{split}
		\end{equation}
		Note that for $k=n$ this estimate truly contributes the critical $N^{-2 \step} \rho(E + \ii \eta_0)/\eta_0$ factor.  
		Here, in the second step, we used \eqref{eq:absumm} together with $\Vert B \Vert \le \sqrt{N}\Vert B \Vert_{\rm hs} \lesssim \sqrt{N}$; the final step follows from $\eta_0/\eta_1 \le N^\step$ together with monotonicity of $\eta \mapsto \eta \rho(E+ \ii \eta)$ and $\step \ll \tar$ by \eqref{eq:exponents}.

		Hence, by Young's inequality we deduce
		\begin{equation} \label{eq:4cumfinal}
			\left| N^{-n/2}\sum_{\alpha_1, ..., \alpha_n} \kappa(\alpha_1, ..., \alpha_n) \E (\partial_{\alpha_1} ... \partial_{\alpha_n} |R|^p) \right| \lesssim  \left(1 + N^{-2\step} \frac{\rho(E + \ii \eta_0)}{\eta_0} \right)\,  \left[ \E |R|^p + \left(\frac{N^{3\tar}}{N \eta_1}\right)^p\right]\,. 
		\end{equation}
		
		Therefore, combining \eqref{eq:GFTcumulants} with \eqref{eq:3cumk=1final}, \eqref{eq:3cumk=2final}, \eqref{eq:3cumk=3final}, and \eqref{eq:4cumfinal}, we obtain \eqref{eq:Gronwallav}. This finishes the proof of Proposition~\ref{prop:gronwallav}. \qed

		\subsubsection{Isotropic case: Proof of Proposition~\ref{prop:gronwalliso}} \label{subsec:zag_iso}
		Similarly to the proof of Proposition~\ref{prop:gronwallav}, after applying Itô's Lemma and a cumulant expansion, we find
		\begin{equation} \label{eq:GFTcumulantsiso}
			\left| \frac{\dif}{\dif s} \E |S_s|^{p}  \right| \lesssim \left|  \sum_{k=2}^{L-1} \sum_{\alpha_1} \sum_{\bm \alpha \in \mathcal{N}(\alpha_1)^k}\frac{\kappa_s(\alpha_1, \bm \alpha)}{N^{(k+1)/2} \, k!} \E \bigl[\partial_{\alpha_1} \partial_{\bm \alpha} |S_s|^{p}\bigr] \right| + \Psi(\eta_1)^{p} \,. 
		\end{equation}
		for some large enough $L$. 
		
		Employing the same notational simplifications as explained below \eqref{eq:GFTcumulants}, we again first estimate the third order cumulant terms, given by
		\begin{equation*}
			\left| N^{-3/2}\sum_{\alpha_1, \alpha_2, \alpha_3} \kappa(\alpha_1, \alpha_2, \alpha_3) \E \bigl[\partial_{\alpha_1} \partial_{\alpha_2} \partial_{\alpha_3} |S|^p \bigr] \right| \,. 
		\end{equation*}
		Distributing the derivatives according to the Leibniz rule, we need to estimate various terms of the forms $(\partial_\alpha^3 S) |S|^{p-1}$, $(\partial_\alpha S) (\partial_\alpha^2 S) |S|^{p-2}$, and $(\partial_\alpha S)^3 |S|^{p-3}$.  In contrast to the average case treated in the proof of Proposition~\ref{prop:gronwallav}, there is no term in the cumulant expansion producing the most critical $N^{-2 \step} \rho/\eta$ factor; instead we get $N^{8 \step} \sqrt{\rho/\eta} = N^{1/2 + 8 \step} \Psi$.  
		
		We start with estimating the first type of terms.   %
		In this case, identifying $\alpha_i \equiv (a_i, b_i) \in [N]^2$ and using Lemma~\ref{lem:monotone} together with a Ward identity and Assumption~\ref{ass:cumulants}~(i), we find
		\begin{equation} \label{eq:3cumk=1iso}
			\begin{split}
				&N^{-3/2} \left| \sum_{\alpha_1, \alpha_2, \alpha_3}  \kappa(\alpha_1, \alpha_2, \alpha_3)G_{\bm x a_1} G_{b_1a_2} G_{b_2a_3} G_{b_3\bm y} \right| \\
				&\quad\lesssim N^{-3/2} N^{2 \step}\sum_{ \alpha_1, \alpha_2, \alpha_3}  |\kappa(\alpha_1, \alpha_2, \alpha_3)| \,  |G_{\bm x a_1}| \,  |G_{b_3\bm y}| \\
				&\quad\lesssim N^{-3/2} N^{2\step} \, \vertiii{\kappa}_3 \, \bigg(\sum_{ a_1, b_1} |G_{\bm x a_1}|^2\bigg)^{1/2} \, \bigg(\sum_{ a_3, b_3} |G_{b_3\bm y}|^2\bigg)^{1/2} 		\lesssim \, N^{1/2+3 \step} \frac{\rho(E + \ii \eta_0)}{N\eta_1}  
			\end{split}
		\end{equation}
		with very high probability. 
		Completely analogously we obtain
		\begin{equation} \label{eq:3cumk=2iso}
			N^{-3/2} \left| \sum_{\alpha_1, \alpha_2, \alpha_3}  \kappa(\alpha_1, \alpha_2, \alpha_3)G_{\bm x a_1} G_{b_1\bm y} G_{\bm x a_2} G_{b_2a_3} G_{b_3\bm y} \right| \lesssim N^{1/2 + 4 \step} \left(\frac{\rho(E+ \ii \eta_0)}{N \eta_1}\right)^{3/2}
		\end{equation}
		and 
		\begin{equation} \label{eq:3cumk=3iso}
			N^{-3/2} \left| \sum_{\alpha_1, \alpha_2, \alpha_3}  \kappa(\alpha_1, \alpha_2, \alpha_3)G_{\bm x a_1} G_{b_1\bm y} G_{\bm x a_2} G_{b_2\bm y} G_{\bm x a_3} G_{b_3\bm y} \right| \lesssim N^{1/2 + 4 \step} \left(\frac{\rho(E + \ii \eta_0)}{N \eta_1}\right)^{2} \,, 
		\end{equation}
		again 	with very high probability. 
		Hence, combining \eqref{eq:3cumk=1iso}, \eqref{eq:3cumk=2iso}, and \eqref{eq:3cumk=3iso} with Young's inequality and additionally using that $\eta \mapsto \rho(E+ \ii \eta) /\eta$ is monotonically decreasing, we infer 
		\begin{equation*}
			\left| N^{-3/2}\sum_{\alpha_1, \alpha_2, \alpha_3} \kappa(\alpha_1, \alpha_2, \alpha_3) \E \bigl[\partial_{\alpha_1} \partial_{\alpha_2} \partial_{\alpha_3} |S|^p \bigr] \right| \lesssim N^{1/2 + 8 \step} \Psi(\eta_0) \big[ \E |S|^p  + \Psi(\eta_1)^p\big] \qquad \text{w.v.h.p.}
		\end{equation*}
		
		Next, we turn to the higher order terms, where we aim to estimate 
		\begin{equation} \label{eq:GFThigherorderiso}
			\left| N^{-n/2}\sum_{\alpha_1, ..., \alpha_n} \kappa(\alpha_1, ..., \alpha_n) \E \bigl[\partial_{\alpha_1} ... \partial_{\alpha_n} |S|^p \bigr] \right| \,. 
		\end{equation}
		Distributing the $n$ derivatives on $k \in  [n]$ factors of $S$, we find that, for $n_\ell \in \N$ with $\sum_{\ell = 1}^k n_\ell = n$ and (w.l.o.g.) $n_1 \le n_2 \le ... \le n_k$, and identifying $(\alpha_i)_{i \in [n]} \equiv \big((a_{\ell_i}, b_{\ell_i})\big)_{i \in [n_\ell], \ell \in [k]}$, \eqref{eq:GFThigherorderiso} can be rewritten as (ignoring the factor $|S|^{p-k}$)
		\begin{equation} \label{eq:GFThighbound}
			\left| N^{-n/2}  \sum_{\alpha_1, ... , \alpha_n} \kappa(\alpha_1, ... , \alpha_n) \prod_{\ell = 1}^{k} \big( G_{\bm x a_{\ell_1}} G_{b_{\ell_1}a_{\ell_2}}  ... G_{b_{\ell_{n_\ell-1}} a_{\ell_{n_\ell}}} G_{b_{\ell_{n_\ell}}\bm y}  \big) \right| \,. 
		\end{equation}
		If $n_2 = 1$, since there are now at least two factors of $S$ hit by a single derivative, we find that (similarly to \eqref{eq:avcase1} in the proof of Proposition~\ref{prop:gronwallav}, cf.~also \cite[Eqs.~(8.82)--(8.85)]{edgeiid}) 
		\begin{equation*}
			\begin{split}
				\eqref{eq:GFThighbound} &\lesssim N^{-n/2} N^{(n+k-4)\step} \vertiii{\kappa}_n \sum_{a,b} |G_{\bm x a} G_{b \bm y}|^2 \\
				& \lesssim N^{2-n/2} N^{(n+k-2)\step} \Psi(\eta_1)^4 \le \big[N^{1/2 + 8 \step} \Psi(\eta_0)\big] \Psi(\eta_1)^k,
			\end{split}
		\end{equation*}
		with very high probability. 
		If $n_2 > 1$, we find, analogously to \eqref{eq:avcase2} in the proof of Proposition~\ref{prop:gronwallav} (cf.~also \cite[Eqs.~(8.86)--(8.87)]{edgeiid})   %
		\begin{equation*}
			\begin{split}
				\eqref{eq:GFThighbound} &\lesssim N^{-n/2} N^{(n+k-2)\step} \vertiii{\kappa}_n \sum_{a,b} |G_{\bm x a}|^2 \lesssim N^{2-n/2} N^{(n+k)\step} \Psi(\eta_1)^2 \\
				&\lesssim N^{1/2} N^{(n+k)\step - (n-4)\tar/2} \Psi(\eta_0)\Psi(\eta_1)^{n-2} \lesssim \big[N^{1/2 + 8 \step} \Psi(\eta_0)\big] \Psi(\eta_1)^{k} ,
			\end{split}
		\end{equation*}
		with very high probability. 
		Here, to go to the second line, we used that $N^{-1/2 + \tar /2} \le \Psi(\eta_0) \le \Psi(\eta_1)$. In the ultimate step, we used $\Psi(\eta_1) \le 1$ and that, since $n_2 > 1$ and $n_1 \le n_2 \le ... \le n_k$, we have $n \ge k+2$. 
		Therefore, using Young's inequality, we infer %
		\begin{equation} \label{eq:4cumfinaliso} 
			\left| N^{-n/2}\sum_{\alpha_1, ..., \alpha_n} \kappa(\alpha_1, ..., \alpha_n) \E \bigl[\partial_{\alpha_1} ... \partial_{\alpha_n} |S|^p \bigr] \right| \lesssim N^{1/2 + 8 \step} \Psi(\eta_0) \big[\E |S|^p + \Psi(\eta_1)^{p}\big] ,
		\end{equation} 
		with very high probability, 
		and thus, combining \eqref{eq:3cumk=1iso}, \eqref{eq:3cumk=2iso}, and \eqref{eq:3cumk=3iso} with \eqref{eq:4cumfinaliso}, and including the $\Psi(\eta_1)^p$ term from \eqref{eq:GFTcumulantsiso}, we obtain \eqref{eq:Gronwalliso}. This finishes the proof of Proposition~\ref{prop:gronwalliso}. \qed

		\section{Local law outside the support of the scDOS} \label{sec:outside}		
		In this section, we prove  Theorem~\ref{thm:noeig}, that is, the absence of spectrum inside the gaps in the support of $\rho_T$ of size $\Delta_T \ge N^{-3/4 + 5\scl}$, where $\scl > 0$ is the exponent from \eqref{eq:abvD}.  
		Recall our choice of the terminal time $T\sim N^{-\tar/4}$ from \eqref{eq:term_time}.
		
		The characteristic flow was used to exclude outliers near a regular square-root edge for Dyson Brownian motion with general $\beta$ and potential in \cite[Section 4]{adhikari20}. In \cite[Section 8.1]{edgeiid}, the approach was used at the edge of non-Hermitian i.i.d. matrices, which corresponds to a cusp-like singularity of the hermitization. We present a modified version of the proof that allows us to avoid moment-matching arguments, used in \cite{edgeiid} to remove the order one Gaussian component.
		
		\subsection{Time-Evolution of the Gaps}
		
		First, we analyze the dynamics of the gaps in the support the scDOS corresponding to the time-dependent MDE \eqref{eq:MDEt}.
		For all $t\in [0,T]$, define the density $\rho_t : \mathbb{R} \to \mathbb{R}_+$ via the Stieltjes inversion formula, $\rho_t(x) := \pi^{-1}\lim\limits_{\eta \to +0}\langle\Im M_t(x+\ii\eta) \rangle$.  
		\begin{definition}[Endpoints of a Gap]
			For a continuous probability density function $\rho$ on $\mathbb{R}$, we say that $\mathfrak{e}^-, \mathfrak{e}^+$ 
			are left and right \textit{end-points of a gap in the support of $\rho$} if and only if $\mathfrak{e}^-_{}, \mathfrak{e}^+ \in \partial \{x \in \mathbb{R} : \rho(x) > 0\} $ and $\rho(x) = 0$ for all $x \in [\mathfrak{e}^-_{}, \mathfrak{e}^+]$.
		\end{definition}
		
		Once Theorem~\ref{th:Neta_local_laws} is established, the proof of Theorems~\ref{thm:main} and~\ref{thm:noeig} reduces to considering gaps in the support of $\rho_T$ with at least one end point satisfying $\dist(\mathfrak{e}_T, \mathcal{I}) \le c_M/4$, where $\mathfrak{e}_T \in \{\mathfrak{e}^-_T, \mathfrak{e}^+_T\}$, $\mathcal{I}$ is the set of admissible energies defined in \eqref{eq:admE}, and $c_M > 0$ is the constant from Assumption~\ref{ass:Mbdd}. We then distinguish between two relevant cases:
		\begin{enumerate}
			\item[(i)] The final gap size $\Delta_T := \mathfrak{e}^+_T - \mathfrak{e}^-_T \le  c_M/4$,
			\item[(ii)] $\Delta_T  >  c_M/4$.
		\end{enumerate}
		We focus on the more challenging case  (i), which, in particular, includes all cusp-like singularities in the set of admissible energies. In this case, by Lemma~\ref{lemma:Mt}, the solution $M_t(z)$ remains bounded in and around the gap for all times $0 \le t \le T$. 
		
		In the simpler case (ii), it is straightforward to verify that the singularity at the endpoint $\mathfrak{e}_t := \varphi_{t,T}(\mathfrak{e}_T)$ is a regular edge-point for all $0 \le t \le T$, where $\varphi_{t,T}$ is the flow map defined in \eqref{eq:phi_map}.   Consequently, there is no need to track the precise behavior of the opposite endpoint of the gap, and the analysis in Section~\ref{sec:outside} holds with $\Delta_t$ replaced by $1$. The definition of the sub-scale domain $\subD_t$ (see \eqref{eq:below_Dt} below) must be adjusted by the condition $\kapd_t(z) := \dist(\mathfrak{e}_t, z ) \le c_M/8 + C'(T-t)$, where $C' \sim 1$ is an appropriate constant (e.g., from Lemma~\ref{lemma:abvD_t}). The rest of the proof then follows verbatim. Therefore, for the remainder of this section, we assume that $\Delta_T \le c_M/4$.

		For any $t \in [0,T]$ and any $z := E+\ii\eta$ with  $E$ lying inside the gap $[\mathfrak{e}^-_{t}, \mathfrak{e}^+_{t}]$ in the support of $\rho_t$, the scDOS $\rho_t(z)$ satisfies (see Remark 7.3 in \cite{AEK2020})
		\begin{equation} \label{eq:rho_comp}
			\rho_t(z) \sim \frac{\eta}{(\kapd_t(z)+\eta)^{1/2}(\Delta_t+\kapd_t(z)+\eta)^{1/6}}, \quad 
			\kapd_t(z) := \dist(E, \mathfrak{e}^\pm_t).
		\end{equation}
		
		In the following lemma, we collect the necessary properties of the quantities $\mathfrak{e}^\pm_t$, $\Delta_t$, $\kapd_t(z_t)$ along the flow \eqref{eq:z_flow}, that we later use in the proof of Proposition~\ref{prop:zig_below}. Recall that  the terminal time is small,  $T\sim N^{-\xi/4}\ll 1$ by \eqref{eq:term_time}, and the final gap is also sufficiently small $\Delta_T\le c_M/4$.
		\begin{lemma}[Characteristic Flow near Small Gaps] \label{lemma:rho_t_props}
			For any time $0 \le t \le T$, let $\mathfrak{e}^-_{t}, \mathfrak{e}^+_{t}$ be the left and right end-points of a gap in the support of $\rho_t$ with size $0 < \Delta_t\lesssim 1$, then for any $0 \le s\le t$, there exist a gap in the support of $\rho_s$ with endpoints $ \mathfrak{e}^-_{s}, \mathfrak{e}^+_{s}$ and width $\Delta_s : = \mathfrak{e}^+_{s} -\mathfrak{e}^-_{s} $, 
			that satisfy
			\begin{equation} \label{eq:Delta_s_comp}
				\Delta_s \sim \Delta_t + (t-s)^{3/2},
			\end{equation}
			\begin{equation} \label{eq:end_s_flow}
				\rd \mathfrak{e}^\pm_{s}  = -\frac{1}{2} \mathfrak{e}^\pm_{s} \rd s - \langle M_s(\mathfrak{e}^\pm_{s}) \rangle \rd s.
			\end{equation}

			Pick an $E_t \in (\mathfrak{e}^-_{t}, \mathfrak{e}^+_{t})$ and $\eta_t \lesssim N^{-\arb}\Delta_t\,$ for some $\arb > 0$. Let $z_s   = E_s+\ii\eta_s   := \varphi_{s,t}(E_t + \ii \eta_t)$, as defined in \eqref{eq:phi_map}, then
			\begin{equation} \label{eq:eta_ll_Delta}
				\eta_s \lesssim N^{-\arb/2}\Delta_s, \quad E_s \in (\mathfrak{e}^-_{s}, \mathfrak{e}^+_s), \quad 0 \le s \le t.
			\end{equation}
			Moreover, for any $0 \le s \le t$, recall $\kapd_s(z) := \dist(\Re z, \mathfrak{e}^\pm_s)$,   
			and assume that $\kapd_t(z_t) \gtrsim N^{\arb}\eta_t\,$, then
			\begin{equation} \label{eq:eta_ll_d_preserved}
				\eta_s^{-1}\kapd_s(z_s)  \gtrsim \eta_t^{-1}\kapd_t(z_t), \quad 0 \le s \le t.
			\end{equation} 
			Finally, there exists a constant $\mathfrak{c}  > 0$, such that for any $0 \le t \le T$, if $E_t \in (\mathfrak{e}^-_t, \mathfrak{e}^+_t)$ and $\eta_t \lesssim N^{-\arb}\kapd_t$, then $z_s := \varphi_{s,t}(E_t + \ii\eta_t)$ satisfies
			\begin{equation} \label{eq:kapd_s_bound}
				\sqrt{\kapd_s(z_s)} \ge \sqrt{\kapd_t(z_t)} + \mathfrak{c}  (t-s) \Delta_s^{-1/6}.
			\end{equation} 
		\end{lemma}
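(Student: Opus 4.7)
The plan is to reduce every assertion in the lemma to an ODE analysis along the characteristic flow \eqref{eq:z_flow}, with the required analytic input supplied by the near-cusp shape functions for $M_s$ and $\rho_s$ worked out in \cite{AEK2020} (cf.~\cite[Eqs.~(2.4a)--(2.4e)]{Cusp1}), together with the already available bound \eqref{eq:rho_comp}. I would begin by producing the continuous family $\mathfrak{e}^\pm_s$ and establishing \eqref{eq:end_s_flow}. Along a trajectory of \eqref{eq:z_flow}, \eqref{eq:m_flow} shows that $M_s(z_s)$ only gets multiplied by a smooth factor, so the property ``$\Im M$ vanishes in a real neighborhood of $\mathfrak{e}^\pm_t$'' is propagated to $\mathfrak{e}^\pm_s := \varphi_{s,t}(\mathfrak{e}^\pm_t)$ by extending the flow continuously to the real axis (these are exactly the boundary points of $\supp \rho_s$). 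Restricting \eqref{eq:z_flow} to these real orbits directly gives the ODE \eqref{eq:end_s_flow}.

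For \eqref{eq:Delta_s_comp} I would subtract the two equations in \eqref{eq:end_s_flow} to get
\begin{equation*}
\frac{\rd \Delta_s}{\rd s} = -\frac{1}{2}\Delta_s - \big[\langle M_s(\mathfrak{e}^+_s)\rangle - \langle M_s(\mathfrak{e}^-_s)\rangle\big].
\end{equation*}
The key analytic input is the near-cusp expansion of the real-valued function $x \mapsto \langle M_s(x)\rangle$ on the gap $[\mathfrak{e}^-_s,\mathfrak{e}^+_s]$, which yields $\langle M_s(\mathfrak{e}^+_s)\rangle - \langle M_s(\mathfrak{e}^-_s)\rangle \sim \Delta_s^{1/3}$, with a sign such that the forward flow shrinks the gap. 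Dropping the subleading $-\tfrac{1}{2}\Delta_s$ contribution, one arrives at $\rd \Delta_s/\rd s \asymp -\Delta_s^{1/3}$; integrating this in the variable $\Delta_s^{2/3}$ produces $\Delta_s^{2/3} \sim \Delta_t^{2/3} + (t-s)$, which is equivalent to \eqref{eq:Delta_s_comp}. The assertions in \eqref{eq:eta_ll_Delta} then follow by splitting \eqref{eq:z_flow} into real and imaginary parts: for the imaginary part, $\rd \eta_s/\rd s = -\tfrac12 \eta_s - \pi\rho_s(z_s)$, and the bound \eqref{eq:rho_comp} shows $\pi\rho_s(z_s) \ll \eta_s$ as long as $z_s$ stays in the gap at height $\eta_s \ll \Delta_s^{1-o(1)}$, so $\eta_s$ is essentially conserved, and since $\Delta_s \ge \Delta_t$ the comparison $\eta_s \lesssim N^{-\arb/2}\Delta_s$ holds. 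The inclusion $E_s \in (\mathfrak{e}^-_s,\mathfrak{e}^+_s)$ is a short bootstrap: while $E_s$ lies in the open gap, comparing the real part of \eqref{eq:z_flow} with \eqref{eq:end_s_flow} rules out a crossing of either endpoint over the short time window $[0,T]$ of length $\sim N^{-\tar/4}$.

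The main part, and principal obstacle, is \eqref{eq:kapd_s_bound} (from which \eqref{eq:eta_ll_d_preserved} follows immediately by combining with the $\eta_s \sim \eta_t$ estimate above). Taking without loss of generality $\kapd_s(z_s) = \mathfrak{e}^+_s - E_s$, combining the real part of \eqref{eq:z_flow} with \eqref{eq:end_s_flow} yields
\begin{equation*}
\frac{\rd \kapd_s}{\rd s} = -\frac{1}{2}\kapd_s - \big[\langle M_s(\mathfrak{e}^+_s)\rangle - \langle \Re M_s(z_s)\rangle\big].
\end{equation*}
The universal profile of $\langle M_s(\cdot)\rangle$ on the gap, applied to $z_s$ at distance $\kapd_s$ from the edge and with $\eta_s \ll \kapd_s$, gives
\begin{equation*}
\langle M_s(\mathfrak{e}^+_s)\rangle - \langle \Re M_s(z_s)\rangle \sim \frac{\sqrt{\kapd_s}}{(\Delta_s + \kapd_s)^{1/6}},
\end{equation*}
again by the quantitative cusp expansions of \cite{AEK2020}. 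Dividing by $2\sqrt{\kapd_s}$ and absorbing the smaller $-\tfrac12 \kapd_s$ term produces $\rd \sqrt{\kapd_s}/\rd s \lesssim -\Delta_s^{-1/6}$. Integrating backwards from $\tau = t$ to $\tau = s$ and using the forward monotonicity $\Delta_\tau \le \Delta_s$ for $\tau \ge s$ (a consequence of \eqref{eq:Delta_s_comp}), the estimate $\sqrt{\kapd_s} - \sqrt{\kapd_t} \ge \mathfrak{c}(t-s)\Delta_s^{-1/6}$ follows. The main technical difficulty will be keeping the shape-function expansions uniform in the gap width $\Delta_s$, which ranges down to $N^{-3/4+5\scl}$, as well as uniform in the interpolation $s \mapsto (A_s,\mathcal{S}_s)$ of the data pair \eqref{eq:ASevol}; both are handled by the quantitative near-cusp analysis of \cite{AEK2020}, which is valid for all sufficiently small $\Delta_s$ and depends continuously on the data pair.
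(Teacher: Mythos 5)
The core of your argument for \eqref{eq:eta_ll_Delta} and \eqref{eq:eta_ll_d_preserved} contains an inequality going the wrong way. You claim that ``\eqref{eq:rho_comp} shows $\pi\rho_s(z_s)\ll\eta_s$, so $\eta_s$ is essentially conserved.'' But for $z_s$ inside the gap with $\kapd_s,\eta_s,\Delta_s\ll 1$, the comparison \eqref{eq:rho_comp} gives $\rho_s(z_s)/\eta_s \sim \bigl[(\kapd_s+\eta_s)^{1/2}(\Delta_s+\kapd_s+\eta_s)^{1/6}\bigr]^{-1}\gg 1$, so $\rho_s(z_s)\gg\eta_s$ — the opposite of what you asserted. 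Consequently, along the backward flow one has $\eta_s\sim\eta_t+\rho_t\,(t-s)$, and the second term is typically much larger than $\eta_t$; the height $\eta_s$ is \emph{not} essentially conserved. The correct mechanism behind $\eta_s\lesssim N^{-\arb/2}\Delta_s$ is that $\Delta_s$ grows like $(t-s)^{3/2}$ while $\eta_s$ grows only linearly, with the race controlled by the bound $\rho_t\lesssim\eta_t^{1/2}\Delta_t^{-1/6}$ coming from $\eta_t\lesssim N^{-\arb}\Delta_t$; this is precisely the computation the paper carries out and your proposal does not replace. The same error propagates into your derivation of \eqref{eq:eta_ll_d_preserved}: you obtain it from \eqref{eq:kapd_s_bound} ``combined with $\eta_s\sim\eta_t$,'' which collapses once $\eta_s\sim\eta_t$ fails. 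The paper instead derives \eqref{eq:eta_ll_d_preserved} directly from the comparison relations $\eta_s\sim\eta_t+\rho_t(t-s)$, $\rho_s\sim\rho_t$, together with \eqref{eq:rho_comp} and \eqref{eq:Delta_s_comp}, independently of \eqref{eq:kapd_s_bound}.

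Two smaller points. Your argument for $E_s\in(\mathfrak{e}^-_s,\mathfrak{e}^+_s)$ invokes the shortness of the time window, but that alone is not sufficient since $\kapd$ and $\eta$ can themselves be as small as the window; the paper proves the sharper repulsion statement \eqref{eq:above_gap_flows_out} that the drift in the real direction points outward from the gap near each endpoint, and this needs an explicit Stieltjes-representation computation. And your derivation of \eqref{eq:end_s_flow} -- arguing that the flow ``propagates'' the vanishing of $\Im M$ -- is heuristic: $\rho_s$ is only $1/3$-Hölder at edges, and identifying $\varphi_{s,t}(\mathfrak{e}^\pm_t)$ with the new edge points requires care. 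The paper avoids this by a mass-conservation argument via the quantile $\gamma_s(r)$ inside the adjacent band and taking $r\to 0$, which sidesteps regularity issues at the edge itself. Your alternative derivation of \eqref{eq:Delta_s_comp} via the ODE $\rd\Delta_s/\rd s\asymp -\Delta_s^{1/3}$ is workable and genuinely different from the paper's (which uses the $1/3$-Hölder continuity of $\sigma_s$ and of $h_s(x):=\lim_{\eta\to 0}\rho_s(x+\ii\eta)^{-1}\eta$), but it relies on the claimed expansion $\langle M_s(\mathfrak{e}^+_s)\rangle-\langle M_s(\mathfrak{e}^-_s)\rangle\sim\Delta_s^{1/3}$, which still requires justification from the shape-function analysis.
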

		We defer the proof of Lemma~\ref{lemma:rho_t_props} to Appendix~\ref{app:tech}.
		
		\subsection{Absence of Spectrum inside Small Gaps. Proof of Theorem~\ref{thm:noeig}}
		In the sequel, we always assume that the final gap satisfies
		$\Delta_T \ge N^{-3/4 + 5\scl}$.
		Recall the constant  $\scl$ from \eqref{eq:abvD}, and define the function $f \equiv f_{\scl}$ by
		\begin{equation} \label{eq:front_func}
			f(t) \equiv f_{\scl}(t) := \biggl[  \frac{N^{-1+\scl} + \mathfrak{r} (T-t)}{2\Delta_t^{1/6} } \vee N^{\scl}\sqrt{\eta_{\mathfrak{f},t}} \biggr]^2, \quad \eta_{\mathfrak{f},t} := N^{-2/3}\Delta_t^{1/9}, \quad t\in [0,T],
		\end{equation}
		where we chose the constant  $\mathfrak{r} $ satisfying  $1 \lesssim \mathfrak{r} \le \mathfrak{c}$ (where $\mathfrak{c}$ is the constant from \eqref{eq:kapd_s_bound}) to be sufficiently small such that $f(t) \le \tfrac{1}{4}\Delta_t$. This is indeed possible, since it follows from \eqref{eq:Delta_s_comp} that $\Delta_t^{2/3} \gtrsim \Delta_T^{2/3} + (T-t)$, and $\Delta_T^{2/3} \gg N^{-1/2}$ by assumption on the final gap size.

		Fix a tolerance exponent $0 < \sscl < \tfrac{1}{100}\tar$, where $\tar$ is the exponent from \eqref{eq:term_time}, and define the time-dependent sub-scale domain $\subD_t$ (see Figure~\ref{fig:sub_fig}) by
		\begin{equation} \label{eq:below_Dt}
			\subD_t \eqcirc \subD_t(\scl, \sscl) := \bigl\{ z:=E+\ii\eta \in \mathbb{H} : \kapd_t(z) \ge f(t), \, N^{-\sscl/2} \le \rho_t(z)N\eta \le N^{\scl}  \bigr\},
		\end{equation}
		where we recall $\kapd_t(z) = \dist(\Re z, \mathfrak{e}^\pm_t)$.   In the sequel, we omit the arguments $\scl, \zeta$ of the domain $\subD_t$ from the notation.  
		
		\begin{figure}
			\centering
			\includegraphics[width=.33\textwidth]{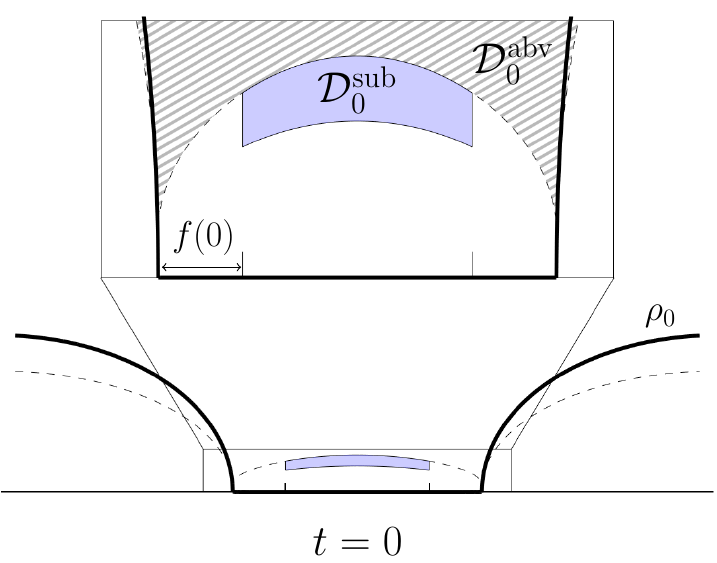}\hfill
			\includegraphics[width=.33\textwidth]{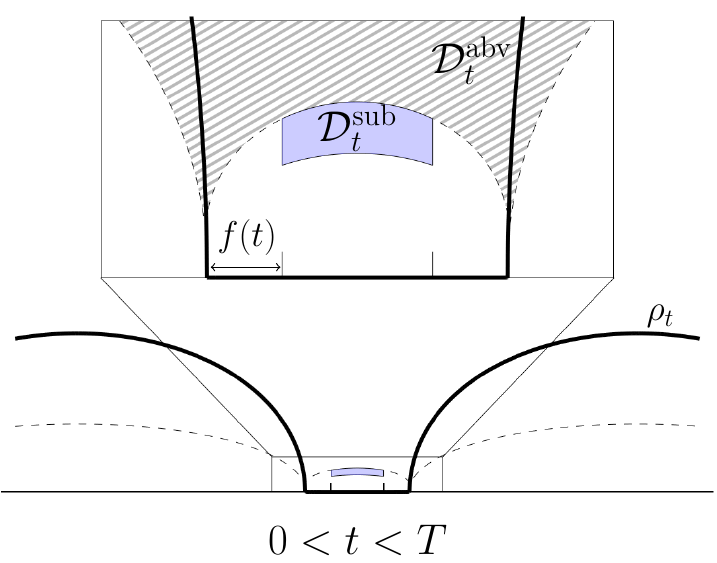}\hfill
			\includegraphics[width=.33\textwidth]{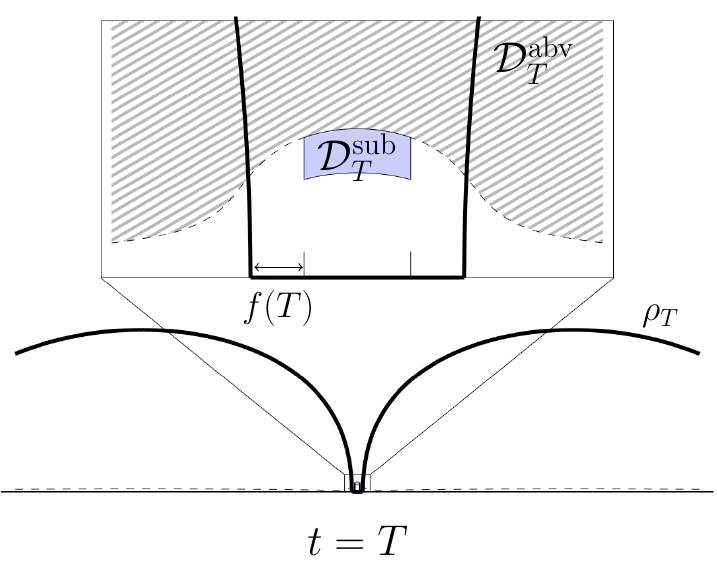}\hfill
			\caption{Shaded in blue is the illustration of the time-dependent domain $\subD_{t}$, defined in \eqref{eq:below_Dt}, at three distinct times: the initial time $t = 0$ (left), an intermediate time $0 < t < T$ (center), and the terminal time $t=T$ (right). The domain $\abvD_{t}$ at the corresponding time $t$ is indicated with crosshatching in the zoomed-in insert, with its boundary indicated by a dashed line in the main plot. The zoomed-in insert also depicts the distance $f(t)$, defined in \eqref{eq:front_func}, between the edge of the support of $\rho_t$ and the corresponding horizontal cut-off of the domain $\subD_{t}$. The graph of the scDOS $\rho_t$ is superimposed in black on each panel (not to scale). }
			\label{fig:sub_fig}
		\end{figure}
		
		\begin{definition}[Exclusion Estimate]
			Let $H_u$ be a random matrix depending on some parameter\footnote{
				As in Definition~\ref{def:ll_template}, the parameter $u$ will typically be  time and the set $ \mathcal{U}$ will be a 
				bounded subinterval of $\R$.} 
			$u \in \mathcal{U}$, and let $M_u$ be the solution to the MDE \eqref{eq:MDE} with the data pair $(\E H_u, \mathcal{S}_{u})$, where $\mathcal{S}_{u}$ is the self-energy operator corresponding to $H_u$ via \eqref{eq:self_energy_def}.
			For all $u \in \mathcal{U}$, let $\mathcal{D}_u$ be a subset of $\mathbb{C}$, and let $\sscl > 0$.
			We say that the resolvent $G_u(z) := (H_u-z)^{-1}$ satisfies the   \emph{exclusion estimate}, with data $(\mathcal{D}_u, \sscl, \Omega)$ uniformly in $u \in \mathcal{U}$, if and only if the bound
			\begin{equation} \label{eq:excl_template}
				\biggl\lvert \bigl\langle G_u(z) - M_{u}(z)\bigr\rangle  \biggr\rvert \le \frac{N^{-\sscl}}{N |\Im z|},
			\end{equation}
			holds uniformly in $z \in \mathcal{D}_u$ and in $u \in \mathcal{U}$, on the event $\Omega$.
		\end{definition}
		
		The goal of the present subsection is to deduce the following claim.
		\begin{claim} \label{claim:sub_goal}
			If a random matrix $H$  satisfies
			the assumptions of Theorem~\ref{thm:main}, then for any $0<\sscl<\tfrac{1}{100}\tar$, 
			the resolvent $G(z) := (H-z)^{-1}$ satisfies the exclusion estimate \eqref{eq:excl_template} with data $(\subD_T, 2\sscl, \Omega)$ for some very-high-probability event $\Omega$. 
		\end{claim}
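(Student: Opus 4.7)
The plan is to prove Claim~\ref{claim:sub_goal} by running a characteristic flow argument analogous to the zig step of Proposition~\ref{prop:zig}, but now propagating a \emph{sub-scale} (strictly decaying) bound $N^{-\sscl}/(N\eta)$ rather than a polynomially large one. This reflects the fact that inside a gap of the self-consistent spectrum the resolvent is genuinely smaller than $1/(N\eta)$. The input of the argument will be the local law of Theorem~\ref{th:Neta_local_laws} at the initial time $t = 0$, combined with the geometric flow estimates collected in Lemma~\ref{lemma:rho_t_props}.

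The first step is to verify time-dependent stability of the sub-scale family: the inverse characteristic flow satisfies $\varphi_{s,t}(\subD_t) \subset \subD_s \cup \abvD_s$ for all $0 \le s \le t \le T$. The estimate \eqref{eq:kapd_s_bound} propagates the lower bound $\kapd_s(z_s) \ge f(s)$ backward in time (the correction $\mathfrak{r}(T-t)$ built into $f$ in \eqref{eq:front_func} is calibrated precisely for this purpose), while \eqref{eq:eta_ll_d_preserved} keeps $\rho_s(z_s)N\eta_s$ under control. Whenever $\rho_s(z_s)N\eta_s$ would exceed $N^\scl$, the trajectory has crossed into $\abvD_s$, where Theorem~\ref{th:Neta_local_laws} already applies.

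For a fixed terminal $z_T \in \subD_T$, set $z_t := \varphi_{t,T}(z_T)$ and apply It\^o's formula to $R_t := \langle G_t(z_t) - M_t(z_t)\rangle$ as in \eqref{eq:Gav_evol}. The deterministic part of the drift can be recast, as in \eqref{eq:propagator_bound}, as $-\eta_t^{-1}(\rd\eta_t/\rd t)\,R_t$ up to a small correction, so integrating the linear flow multiplies $R_0$ by the contractive factor $\eta_0/\eta_T \ll 1$. Introducing a stopping time $\tau$ at which the intermediate bound $|R_s| \le N^{-\sscl/2}/(N\eta_s)$ first fails, we bound the quadratic variation of the martingale part $\mathcal{M}_t$ by $\int_0^T \rho_t(z_t)/(N^2\eta_t^3)\,\rd t$ using the Ward identity; by \eqref{eq:rho_comp}, the smallness of $\rho_t(z_t)\sim \eta_t/(\kapd_t^{1/2}\Delta_t^{1/6})$ inside the gap renders this integral strictly smaller than $(N^{-\sscl}/(N\eta_T))^2$. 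The path-wise Burkholder--Davis--Gundy inequality combined with Gronwall then upgrades the intermediate bound to the target exclusion estimate with exponent $2\sscl$ on a very-high-probability event $\Omega$, and shows that $\tau$ is vacuous.

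The main technical obstacle will be the bookkeeping in the Gronwall step: ensuring that every error term --- in particular the one arising from the correction $\langle \Im G_s - \Im M_s\rangle$ in the propagator identity \eqref{eq:propagator_bound}, and the nonlinear term $R_s \langle G_s^2\rangle$ --- fits inside the much tighter budget imposed by the sub-scale target. This requires choosing $\sscl \ll \tar \ll \scl$ consistent with \eqref{eq:exponents}, so that the gain from the smallness of $\rho_s/\eta_s$ inside the gap overwhelms all $N^\arb$-losses incurred along the flow, and so that the initial datum $|R_0|$, controlled by Theorem~\ref{th:Neta_local_laws} up to $N^{2\tar}/(N\eta_0)$, also gets absorbed after multiplication by the contractive factor $\eta_0/\eta_T$. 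Once the Gronwall inequality closes, Theorem~\ref{thm:noeig} follows by the standard argument: an eigenvalue within $N^{\excl_0}\etaf(e_0)$ of $e_0$ would force $\langle \Im G(e_0 + \ii\etaf)\rangle$ to exceed $\langle \Im M\rangle + N^{-\sscl}/(N\etaf)$, contradicting \eqref{eq:excl_template}.
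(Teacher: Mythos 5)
Your proposal only implements the \emph{zig} (characteristic flow) half of the argument and omits the \emph{zag} (Green function comparison) steps entirely, which is a genuine gap. Running the Ornstein--Uhlenbeck flow \eqref{eq:OU_flow} from $t=0$ to $t=T$ produces the matrix $\mathfrak{F}^T_{\mathrm{zig}}[H_0]$; while the construction in \eqref{eq:ASevol} ensures its first two moments match those of $H$, its cumulants of order $\ge 3$ are shrunk by factors $\ee^{-kT/2}$ and therefore do \emph{not} coincide with those of the target $H$. A pure characteristic-flow argument thus proves the exclusion estimate only for this Gaussian-regularized ensemble, not for $H$ itself. The paper's actual proof of Claim~\ref{claim:sub_goal} closes this gap by iterating the tandem of Proposition~\ref{prop:zig_below} \emph{and} Proposition~\ref{prop:zag_below} along the discretization $\{t_k\}$ of \eqref{eq:t_steps}, exactly mirroring the above-the-scale Zigzag induction: each short zig step is followed by a GFT step which compares the zig-evolved matrix $H^k = \mathfrak{F}^{s(\dift_k)}_{\mathrm{zag}}[H_k]$ back to $H_k$, so that the estimate eventually transfers to $H_K = H$. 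Your proposal also gets the initial input wrong: the paper does not invoke Theorem~\ref{th:Neta_local_laws} at $t=0$ (that theorem is for the target $H$, not $H_0$), but rather the global law of Proposition~\ref{prop:global} applied to $H_0$, observing that $N\eta \gtrsim N^{1/4+\varepsilon-\zeta/4}$ on $\subD_0$ already forces the global law's right-hand side below $N^{-\zeta}/(N\eta)$. Your description of the zig-step dynamics --- the contractive factor $\eta_0/\eta_T$, the BDG/Gronwall mechanics, and the smallness of $\rho_t(z_t)$ inside the gap --- is accurate and is indeed the content of Proposition~\ref{prop:zig_below}, but it is only half of what is needed.
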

		
		Then, using Claim~\ref{claim:sub_goal} as an input, we conclude \eqref{eq:noeig} using the following lemma.
		\begin{lemma}[Eigenvalue Exclusion] \label{lemma:exclusion}
			Fix a time  $t\in [0, T]$, with the terminal time $T$ as in \eqref{eq:term_time}, 
			and let $H$ be a random matrix satisfying $\E H = A_t$ and $\mathcal{S}_{H} = \mathcal{S}_t$, where $\mathcal{S}_{H}$ is the self-energy corresponding to $H$ via \eqref{eq:self_energy_def}.
			Assume that for some tolerance exponent $\arb > 0$ and $\ell\in\mathbb{N}$ with $\ell\arb \ll \sscl$, the resolvent $G(z) := (H-z)^{-1}$ satisfies the exclusion estimate \eqref{eq:excl_template} with data $(\subD_{t} ,\sscl -\ell\arb,\Omega)$, then 
			\begin{equation} \label{eq:no-spec}
				\spec(H)\, \cap \, [\mathfrak{e}^-_t + f(t), \mathfrak{e}^+_t - f(t)] = \emptyset \quad \text{on }\Omega.
		\end{equation}
	\end{lemma}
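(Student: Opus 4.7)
The plan is a direct proof by contradiction exploiting the trivial lower bound on $\langle \Im G \rangle$ near an eigenvalue. Suppose, towards a contradiction, that on the event $\Omega$ there exists $\lambda \in \spec(H) \cap [\mathfrak{e}^-_t + f(t), \mathfrak{e}^+_t - f(t)]$. For any $\eta > 0$, the spectral decomposition yields the Lorentzian lower bound
\begin{equation*}
\langle \Im G(\lambda + \ii \eta) \rangle \;=\; \frac{1}{N}\sum_{j} \frac{\eta}{(\lambda_j - \lambda)^2 + \eta^2} \;\ge\; \frac{1}{N\eta},
\end{equation*}
and the strategy is to compare this with $\pi \rho_t(\lambda + \ii\eta) = \langle \Im M_t(\lambda + \ii\eta) \rangle$ via the hypothesized exclusion estimate at a carefully chosen spectral parameter $z_* = \lambda + \ii \eta_*$.

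The choice of $\eta_*$ is dictated by pushing into the lower boundary of the sub-scale domain $\subD_t$. Observe that the map $\eta \mapsto \rho_t(\lambda + \ii\eta) N\eta$ is continuous on $(0,\infty)$, tends to $0$ as $\eta \downarrow 0$ (by the asymptotics \eqref{eq:rho_comp}, using that $\kapd_t(\lambda) \ge f(t) > 0$ so that the numerator is linear in $\eta$ while the denominator stays bounded below), and reaches values of order $N$ when $\eta \sim 1$ (since $\rho_t \sim 1$ there and the scDOS is uniformly bounded by Lemma~\ref{lemma:Mt}). Hence, by the intermediate value theorem, we may fix $\eta_* \in (0,1)$ such that
\begin{equation*}
\rho_t(z_*)\, N \eta_* \;=\; N^{-\sscl/2}, \qquad z_* := \lambda + \ii \eta_*.
\end{equation*}
Then $z_* \in \subD_t$: the distance condition $\kapd_t(z_*) \ge f(t)$ is immediate from the location of $\lambda$ inside $[\mathfrak{e}^-_t + f(t), \mathfrak{e}^+_t - f(t)]$, and both density inequalities in the definition \eqref{eq:below_Dt} hold by construction.

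Now the exclusion estimate \eqref{eq:excl_template} with data $(\subD_t, \sscl - \ell\arb, \Omega)$ applied at $z_*$ gives, on $\Omega$,
\begin{equation*}
\bigl| \langle G(z_*) - M_t(z_*)\rangle \bigr| \;\le\; \frac{N^{-(\sscl - \ell\arb)}}{N\eta_*}.
\end{equation*}
Taking imaginary parts and combining with the Lorentzian bound yields
\begin{equation*}
\pi\, \rho_t(z_*)\, N\eta_* \;\ge\; 1 - N^{-(\sscl - \ell\arb)} \;\ge\; \tfrac{1}{2}
\end{equation*}
for $N$ large, since $\ell\arb \ll \sscl$ by assumption. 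This directly contradicts our construction $\rho_t(z_*) N\eta_* = N^{-\sscl/2} \to 0$ as $N \to \infty$, establishing \eqref{eq:no-spec}. The argument is elementary once the domain $\subD_t$ is set up correctly; the only subtlety — and arguably the main conceptual point — is that the lower cutoff $\rho_t N\eta \ge N^{-\sscl/2}$ in \eqref{eq:below_Dt} is chosen precisely small enough that the intermediate value $\eta_*$ can be realized strictly inside the gap while the density remains negligible compared to $1/(N\eta_*)$.
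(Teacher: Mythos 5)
Your proof is correct and follows essentially the same route as the paper's own argument: contradiction via the Lorentzian lower bound $\langle \Im G(\lambda + \ii\eta)\rangle \ge 1/(N\eta)$, with $\eta_*$ chosen implicitly so that $\rho_t(\lambda+\ii\eta_*)N\eta_* = N^{-\sscl/2}$, whereupon $z_* = \lambda+\ii\eta_*$ lands in $\subD_t$ and the assumed exclusion estimate gives $\langle\Im G(z_*)\rangle \lesssim (\rho_t(z_*) N\eta_* + N^{-(\sscl-\ell\arb)})/(N\eta_*) = o(1)/(N\eta_*)$, a contradiction. Your write-up actually spells out two details that the paper asserts without comment — the intermediate-value argument guaranteeing the existence of $\eta_*$, and the explicit check that $z_*\in\subD_t$ — so it is, if anything, slightly more self-contained than the published version.
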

	We defer the proof of Lemma~\ref{lemma:exclusion} to Appendix~\ref{app:tech}.
	
	\begin{proof} [Proof of Theorem~\ref{thm:noeig}]
		Choose $\scl := \tfrac{1}{5}\theta_0$, $ \tar  := \tfrac{1}{10}\scl$ and $\sscl < \tfrac{1}{100}\tar$. It follows from Claim~\ref{claim:sub_goal} that $G(z):= (H-z)^{-1}$ satisfies the exclusion estimate \eqref{eq:excl_template} with data $(\subD_T, 2\sscl, \Omega)$ for some very-high-probability event $\Omega$, where $\subD_T := \subD_{T}(\scl, \tar)$ is defined in \eqref{eq:below_Dt}. Hence, \eqref{eq:noeig} follows immediately from \eqref{eq:no-spec} of Lemma~\ref{lemma:exclusion}, since $f(T) := f_\scl(T) \ge N^{2\scl}\etaf(e_0)$ by definition \eqref{eq:front_func}.  This concludes the proof of Theorem~\ref{thm:noeig}.
	\end{proof}

	To prove Claim~\ref{claim:sub_goal}, we augment the Zigzag induction of Section~\ref{sec:zigzag} with the following propositions. Recall that the relations \eqref{eq:exponents} between the fixed tolerance exponents $\sscl, \tar, \scl$ from \eqref{eq:abvD}, \eqref{eq:term_time} and \eqref{eq:below_Dt}, respectively.
	\begin{proposition} [Zig Step below the Scale] \label{prop:zig_below}
		Fix $k \in \{1, \dots, K\}$, and recall the definition of $t_k$ from \eqref{eq:t_steps}. Let $G_{t}(z)$ be the time-dependent resolvent defined in \eqref{eq:G_t}.  
		Assume that for some $\arb > 0$ and $\ell \in \mathbb{N}$ with $\ell\arb \ll \zeta$, 
		the resolvent $G_{t}$ satisfies the exclusion estimate \eqref{eq:excl_template} with data $(\subD_{t}, \sscl-\ell\arb, \Omega)$ at time $t = t_{k-1}$, for some very-high-probability event $\Omega$. Then the resolvent $G_t$ satisfies the exclusion estimate \eqref{eq:excl_template} with data  
		$(\subD_{t}, \sscl-(\ell+1)\arb, \Omega')$ uniformly in $t \in [t_{k-1}, t_k]$, for some very-high-probability event $\Omega'\subset \Omega$.
	\end{proposition}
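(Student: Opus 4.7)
The strategy mirrors the characteristic-flow argument from the proof of Proposition~\ref{prop:zig}, but adapted to the sub-scale regime where we must propagate the enhanced exclusion estimate $|\langle G - M \rangle| \le N^{-\sscl}/(N\eta)$ rather than a standard local law. Fix a target time $\tfin \in [t_{k-1}, t_k]$ and $z \in \subD_\tfin$, set $\tin := t_{k-1}$, and introduce the backward characteristic $z_s := \varphi_{s, \tfin}(z)$ with $\eta_s := \Im z_s$ for $s \in [\tin, \tfin]$. The first task is to verify that $z_s \in \subD_s$ throughout this interval by means of Lemma~\ref{lemma:rho_t_props}: relation \eqref{eq:eta_ll_Delta} guarantees that $\Re z_s$ remains inside the relevant gap, \eqref{eq:eta_ll_d_preserved} propagates the distance-to-gap lower bound, and \eqref{eq:kapd_s_bound}, combined with the tight choice of $\mathfrak{r}$ in the definition \eqref{eq:front_func} of $f$, ensures $\kapd_s(z_s) \ge f(s)$. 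Should the backward trajectory ever exit $\subD_s$ upward into $\abvD_s$ (because $\rho_s N\eta_s$ exceeds $N^\scl$), the already-established local laws from Proposition~\ref{prop:zig} apply on that portion, so we may henceforth assume $z_s \in \subD_s$ throughout.

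Next, apply It\^o's formula to $\langle G_s(z_s) - M_s(z_s)\rangle$ along the characteristic, producing the scalar version ($B = I$) of the SDE \eqref{eq:Gav_evol}. Introduce the stopping time
\begin{equation*}
\tau := \sup\Bigl\{ s \in [\tin, \tfin] \,:\, \sup_{\tin \le r \le s} N\eta_r\,\bigl|\langle G_r - M_r \rangle\bigr| \le N^{-\sscl + (\ell + \tfrac{1}{2})\arb}\Bigr\},
\end{equation*}
which, on the input event $\Omega$, satisfies $\tau > \tin$ thanks to the hypothesized exclusion estimate at $t = t_{k-1}$. Mimicking the computation \eqref{eq:av_mart_compute}--\eqref{eq:av_mart_bound}, the path-wise Burkholder--Davis--Gundy inequality yields a very-high-probability bound of order $N^{-\sscl + \arb}/(N\eta_{t\wedge\tau})$ on the martingale increment, where one exploits the sub-scale condition $\rho_s N\eta_s \le N^\scl$ and the definition of $\tau$ to estimate $\langle \Im G_s \rangle \lesssim \rho_s(z_s) + N^{-\sscl + \ell\arb}/(N\eta_s)$. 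The deterministic propagator $\tfrac{1}{2} + \langle G_s^2\rangle$ is controlled via the Ward identity and the imaginary part of \eqref{eq:z_flow} exactly as in \eqref{eq:propagator_bound}, producing $\bigl|\tfrac{1}{2} + \langle G_s^2\rangle\bigr| \le -\tfrac{1}{\eta_s}\tfrac{\rd \eta_s}{\rd s}(1 + o(1))$.

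Integrating the SDE against this propagator and applying Gronwall's inequality as in \eqref{eq:stop_time_av_conclusion} then yields $|\langle G_{t\wedge\tau} - M_{t\wedge\tau}\rangle| \le N^{-\sscl + (\ell + \tfrac{3}{4})\arb}/(N\eta_{t\wedge\tau})$ with very high probability, strictly improving the bound at the stopping time and hence forcing $\tau = \tfin$ on a very-high-probability event $\Omega' \subset \Omega$. This establishes the claimed exclusion estimate with tolerance $\sscl - (\ell + 1)\arb$ uniformly on $\subD_t$ and $t \in [t_{k-1}, t_k]$ via a standard grid argument. The main obstacle is the domain-inclusion step: one must check that the monotonicity \eqref{eq:kapd_s_bound} of $\sqrt{\kapd_s(z_s)}$ is sharp enough to match the threshold $f(s)$ for \emph{every} intermediate $s$, and that the upper bound $\rho_s(z_s) N\eta_s \le N^\scl$ is not violated along the backward trajectory. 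The tight calibration of the constant $\mathfrak{r}$ in \eqref{eq:front_func}, together with the seamless handover to $\abvD_s$ whenever the trajectory leaves $\subD_s$ upward, is what makes this bookkeeping succeed.
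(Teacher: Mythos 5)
Your outline captures the correct structural skeleton---characteristic flow, stopping time, BDG, Gronwall---and the domain-tracking concerns you raise about $z_s$ possibly exiting $\subD_s$ into $\abvD_s$ are reasonable, but you have missed the single ingredient that makes the sub-scale argument work, and without it the martingale bound you claim is false.

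The crucial point is that in $\subD_t$ the trivial norm bound $\norm{\Im G_s}\le\eta_s^{-1}$, which underlies the estimate \eqref{eq:av_mart_compute}, is no longer good enough. Mimicking that computation gives
\[
\int_{\tin}^{t\wedge\tau}\frac{\langle(\Im G_s)^2\rangle}{N^2\eta_s^2}\,\rd s
\le \int_{\tin}^{t\wedge\tau}\frac{\langle\Im G_s\rangle}{N^2\eta_s^3}\,\rd s
\lesssim \int_{\tin}^{t\wedge\tau}\frac{N^{\scl}}{N^3\eta_s^4}\,\rd s
\lesssim \frac{N^{\scl}}{N^3\eta_{t\wedge\tau}^4},
\]
where the lower bound $\rho_s N\eta_s\gtrsim N^{\scl}$ used in \eqref{eq:av_mart_compute} is replaced by the \emph{opposite} sub-scale constraint $\rho_s N\eta_s\le N^{\scl}$. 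BDG then yields a martingale of order $N^{\scl/2}/(N^{3/2}\eta^2)$, and for $\eta\sim N^{-3/4}$ (the bottom of $\subD_t$) this is $\sim N^{\scl/2}$, which is enormous compared to the target $N^{-\sscl}/(N\eta)\sim N^{-\sscl-1/4}$. Your claimed bound of order $N^{-\sscl+\arb}/(N\eta_{t\wedge\tau})$ therefore does not follow; the propagated $N^{-\sscl}$ smallness of $\langle G-M\rangle$ alone gives you nothing on the operator norm of $\Im G$, which is what the quadratic variation actually requires.

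The paper's fix is to define the stopping time $\tau$ as a supremum over \emph{all} $z\in\subD_t$, not for a single spectral parameter. On the event $\{t\le\tau\}$ the exclusion estimate then holds uniformly on $\subD_t$, so Lemma~\ref{lemma:exclusion} applies and yields $\spec(H_t)\cap[\mathfrak{e}^-_t+f(t),\mathfrak{e}^+_t-f(t)]=\emptyset$, whence the \emph{enhanced} norm bound
\[
\norm{G_t(z)}\le\frac{\Im z}{(\kapd_t(z)-f(t))^2+(\Im z)^2}.
\]
Combined with the quantitative lower bound \eqref{eq:kapd-f}, namely $\kapd_s(z_s)-f(s)\gtrsim\sqrt{\kapd_s}\,(t-s)/\Delta_s^{1/6}$ from Lemma~\ref{lemma:rho_t_props}, this turns the quadratic variation integrand into a Lorentzian in $t-s$ that integrates to $\lesssim N^{-\scl}/(N^2\eta_{t\wedge\tau}^2)$, and only then does BDG give a martingale small enough to close the Gronwall loop. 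You never invoke Lemma~\ref{lemma:exclusion}, you define the stopping time pointwise in $z$ (so you could not invoke it even if you wanted), and you never use \eqref{eq:kapd_s_bound} quantitatively in the martingale estimate---only qualitatively for the domain inclusion. These three omissions together constitute the missing idea.
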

	
	\begin{proposition} [Zag Step below the Scale] \label{prop:zag_below}
		Fix $k \in \{1, \dots, K\}$, and let $G^s(z)$ be the time-dependent resolvent defined in \eqref{eq:G^s}, and let $s_k := s(\dift_k)$ be as defined in \eqref{eq:init_cond_reverse}.
		Assume that for some $\arb > 0$ and $\ell \in \mathbb{N}$ with $\ell \arb \ll \sscl$, the resolvent $G^s(z)$ satisfies the exclusion estimate \eqref{eq:excl_template} with data $(\subD_{t_k}, \sscl-\ell\arb, \Omega)$ at time $s = s_k$, for some very-high-probability event $\Omega$, and  the isotropic local law in \eqref{eq:ll_template} with data  $(\abvD_{t_{k}}, \tar+\ell\arb)$ uniformly in time $s\in [0, s_k]$.
		Then the bound $G^s(z)$ satisfies the exclusion estimate \eqref{eq:excl_template} with data $(\subD_{t_k}, \sscl-(\ell+1)\arb, \Omega')$ uniformly in time $s\in [0, s_k]$, for some very-high-probability event $\Omega'\subset \Omega$.
	\end{proposition}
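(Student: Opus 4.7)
The plan is to adapt the moment-based Gronwall argument of Proposition~\ref{prop:gronwallav} to the sub-scale averaged quantity $R_s := \langle G^s(z) - M(z)\rangle$ at a fixed $z \in \subD_{t_k}$. Since the zag-flow \eqref{eq:zag_flow} preserves the data pair $(\E H, \mathcal{S})$, the solution $M$ of the MDE, and hence the spectral domain $\subD_{t_k}$, remain static along the flow; in particular, no characteristic evolution of the spectral parameter is required, in contrast to the zig step. Applying It\^o's formula followed by the multivariate cumulant expansion from Proposition~\ref{prop:cumex} to $\E |R_s|^p$ for large even $p$, we aim to derive a Gronwall estimate of the schematic form
\begin{equation*}
\left|\tfrac{\dif}{\dif s}\E|R_s|^p\right| \lesssim C(z)\left[\E|R_s|^p + \left(\tfrac{N^{-\sscl + (\ell + 1)\arb}}{N \Im z}\right)^{\!p}\right],
\end{equation*}
with a prefactor $C(z)$ small enough that integration over $s \in [0, s_k]$, where $s_k \lesssim N^{-(k-1)\step}T$ by \eqref{eq:s(t)_est} and \eqref{eq:term_time}, yields a multiplicative factor close to one. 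Combining this with the assumed exclusion bound at $s = s_k$, Markov's inequality with arbitrarily large $p$, and a standard grid-plus-Lipschitz argument in $z$, we then obtain the desired exclusion estimate on some very-high-probability event $\Omega_{\rm new}$, and set $\Omega' := \Omega \cap \Omega_{\rm new}$.

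The central new ingredient, compared to the above-the-scale zag analysis, is the a priori control on individual resolvent matrix elements at sub-scale spectral parameters. For $z = E + \ii\eta_1 \in \subD_{t_k}$, we select a companion spectral parameter $z_0 := E + \ii\eta_0$ sitting just above the scale curve $\rho N\eta = N^{\scl}$, and combine the hypothesized isotropic local law on $\abvD_{t_k}$ (uniform in $s \in [0, s_k]$) with the monotonicity estimate of Lemma~\ref{lem:monotone} to deduce that
\begin{equation*}
|G^s(z)_{\bm u \bm v}| \lesssim \frac{\eta_0}{\eta_1}, \qquad |(\Im G^s(z))_{\bm u \bm u}| \lesssim \rho(z_0)\,\frac{\eta_0}{\eta_1}
\end{equation*}
hold with very high probability, uniformly in $s \in [0, s_k]$. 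By the gap asymptotics \eqref{eq:rho_comp}, the ratio $\eta_0/\eta_1$ is at most a sub-polynomial factor $N^{\mathcal{O}(\scl + \sscl)}$ throughout $\subD_{t_k}$, which can be harmlessly absorbed into the $\arb$-tolerance budget.

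With these a priori bounds, the cumulant expansion terms are estimated by essentially the same power-counting scheme as in Section~\ref{subsec:zag_av}: third-order contributions of the forms $(\partial_\alpha^3 R)|R|^{p-1}$, $(\partial_\alpha R)(\partial_\alpha^2 R)|R|^{p-2}$, and $(\partial_\alpha R)^3|R|^{p-3}$ are controlled via Ward identities, the norms $\vertiii{\kappa}_k$ and $\vertiii{\kappa}_3^{\rm av}$ from Assumption~\ref{ass:cumulants}~(i), and the Hilbert--Schmidt-to-$\Im G$ reduction \eqref{eq:isotrick} specialized to $B = I$. Higher-order cumulants yield a convergent series by crude power counting, with the expansion truncated at a finite order via the near-independence property of Assumption~\ref{ass:cumulants}~(ii), exactly as in \eqref{eq:Omegaest}.

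The principal obstacle we anticipate is the bookkeeping required to verify that the Gronwall prefactor $C(z)$ satisfies $C(z)\cdot s_k \le N^{-\arb/2}$, i.e., that no uncontrolled power of $\rho(z_0)/\eta_0$ or $\eta_0/\eta_1$ persists in the critical third-order terms analogous to \eqref{eq:3cumk=1}--\eqref{eq:3cumk=3}. This should ultimately follow from the mild loss $\eta_0/\eta_1 = N^{o(1)}$ on $\subD_{t_k}$ combined with the smallness of $s_k$ guaranteed by the Zigzag time partition~\eqref{eq:t_steps}, which together close the Gronwall estimate and thereby Proposition~\ref{prop:zag_below}.
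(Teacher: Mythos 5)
Your high-level architecture — It\^o plus cumulant expansion on $\E|R_s|^p$ for $R_s=\langle G^s-M\rangle$, with a priori resolvent bounds at sub-scale spectral parameters obtained by monotonicity from the above-scale isotropic local law — does track the paper's outline, and you correctly note that the zag-flow freezes $M$ and the domain. But the quantitative claim that the monotonicity loss is ``a sub-polynomial factor $N^{\mathcal{O}(\scl+\sscl)}$'' that ``can be harmlessly absorbed into the $\arb$-tolerance budget'' is where the argument fails. The exponent $\scl$ is the \emph{largest} of the tolerance exponents in \eqref{eq:exponents} ($\step\ll\tar\ll\scl$, $\sscl\ll\tar$, $\arb\ll\step\wedge\sscl$), while the only gain available to the Gronwall lemma over $s\in[0,s_k]$ is $\sqrt{s_k}\lesssim\sqrt{T}\sim N^{-\tar/8}$. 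Concretely, taking $\eta_0$ on the scale curve $\rho N\eta_0=N^{\scl}$ and $\eta_1$ at the bottom of $\subD_{t_k}$ where $\rho N\eta_1\gtrsim N^{-\sscl/2}$, the comparison \eqref{eq:rho_comp} gives $\rho N\eta\sim N\eta^2\kapd^{-1/2}\Delta^{-1/6}$, hence $\eta_0/\eta_1\sim N^{\scl/2+\sscl/4}$. Your entry-wise monotonicity bound $|(\Im G^s)_{aa}|\lesssim\rho(z_0)\eta_0/\eta_1$ then yields only $\langle\Im G^s(z)\rangle\lesssim(\eta_0/\eta_1)^2\rho(z)\sim N^{\scl+\sscl/2}\rho(z)$. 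Plugging this into the critical third-order contribution (the $B=\mathbf{1}$ analogue of \eqref{eq:k3_1chain}), namely $N^{-1}\vertiii{\kappa}_3^{\rm av}\norm{M}^2\norm{G^sG^s}_{\rm hs}\lesssim\langle\Im G^s\rangle^{1/2}/(N\eta^{3/2})$, and using $\rho^{1/2}/\eta^{1/2}\sim\kapd^{-1/4}\Delta^{-1/12}\lesssim(T-t_k)^{-1/2}$ (from $\kapd\ge f(t_k)$ and \eqref{eq:front_func}), the integrated Gronwall contribution is of order $N^{\scl/2+\sscl/4+\step/2}\sqrt{T}\sim N^{\scl/2-\tar/8+\mathcal{O}(\sscl+\step)}$, which \emph{diverges} since $\tar\ll\scl$. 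The Gronwall estimate cannot close with this input.

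What is missing is the internal vertical bootstrap inside the sub-scale domain itself. The paper does not control $\langle\Im G^s\rangle$ by monotonicity all the way from the scale curve; it iterates over the truncations $\subD_{t_k,\gamma}$ descending by only $\sscl\wedge\tfrac{1}{2}\mu$ per step. In each step the inductive hypothesis $\langle\Im G^s\rangle\lesssim\rho$ on $\subD_{\gamma_0}\cup\abvD$ (itself a direct consequence of the previously-established exclusion estimate, since $\rho N\eta\ge N^{-\sscl/2}$ there implies $N^{-\sscl}/(N\eta)\lesssim\rho$) combines with the monotonicity of $\eta\mapsto\eta\langle\Im G^s(E+\ii\eta)\rangle$ to give $\langle\Im G^s(z)\rangle\lesssim N^{2\sscl}\rho(z)$ on $\subD_{\gamma_1}$ — a loss of only $N^{2\sscl}$, which is $\ll N^{\tar/8}$ because $\sscl\ll\tar$. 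This is exactly what makes the estimate \eqref{eq:avimprove} close. Your coarser entry-wise bounds (the analogue of \eqref{eq:G_bounds_below}, with the $N^{\scl}/(N\eta)$ loss) are still fine where they are used — in the non-critical power-counting — but the most dangerous term requires the sharper trace control from the $\gamma$-bootstrap, and without it your proposal does not go through.
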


	\begin{proof} [Proof of  Claim~\ref{claim:sub_goal}]
		Claim~\ref{claim:sub_goal} follows by induction in $k$ as in Section~\ref{sec:zigzag} using the tandem of Propositions~\ref{prop:zig_below} and~\ref{prop:zag_below}, and using the global law of Proposition~\ref{prop:global} for $H_0$ as the initial estimate at step $k=0$. This is indeed sufficient, since for all $z := E+\ii\eta \in \subD_0$,   $N\eta \gtrsim N^{1/4 + \varepsilon - \zeta/4}$,   hence
		the right-hand side of \eqref{eq:global_av} satisfies
		\begin{equation}
			N^{3\tar}\Psi(z)\sqrt{\frac{\langle z \rangle}{N\eta}} \lesssim \frac{N^{3\tar}}{N\eta}\sqrt{\frac{1+\rho_0(z)N\eta}{N\eta}} \lesssim \frac{N^{-1/8+3\tar+\zeta/8}}{N\eta} \le \frac{N^{-\zeta}}{N\eta}.
		\end{equation}
		This concludes the proof of Claim~\ref{claim:sub_goal}.
	\end{proof}

	\begin{proof} [Proof of Proposition~\ref{prop:zig_below}]
		The proof is essentially analogous to that in Sections~\ref{sec:zig}, hence we only outline the key differences.
		
		It follows from \eqref{eq:z_flow} that $z_s := \varphi_{s,t}(z_t) \in \subD_s$ for all $z_t \in \subD_t$ and all $0 \le s \le t$.
		Moreover, using \eqref{eq:rho_comp}, we conclude that 
		\begin{equation}
			\frac{\Im z}{\kapd_s(z)} \lesssim \biggl(\frac{\eta_{\mathfrak{f},s}}{\kapd_s(z)} \biggr)^{3/4}\sqrt{\rho_s(z)N\Im z} \lesssim N^{-\scl}, \quad z \in \subD_s.
		\end{equation}
		Therefore,  it follows from \eqref{eq:kapd_s_bound} that for all $z_t \in \subD_t$, the trajectory $z_s := \varphi_{s,t}(z_t)$ satisfies 
		\begin{equation} \label{eq:kapd-f}
			\kapd_s(z_s) - f(s) = \biggl(\sqrt{\kapd_s(z_s)} + \sqrt{f(s)}\biggr)\biggl(\sqrt{\kapd_s(z_s)} - \sqrt{f(s)}\biggr) \gtrsim \sqrt{\kapd_s(z_s)}\, \frac{t-s}{\Delta_s^{1/6}}, \quad 0 \le s \le t,
		\end{equation}
		where, in the second step, we used \eqref{eq:kapd_s_bound} and \eqref{eq:front_func} to estimate $\sqrt{\kapd_s(z_s)} - \sqrt{f(s)}$.
		
		Let $\tin := t_{k-1}$ and $\tfin:=t_{k}$. Define the stopping time $\tau$ by
		\begin{equation} \label{eq:tau_outside}
			\tau := \inf\biggl\{ \tin < t \le \tfin : \sup_{\tin \le s \le t} \sup\limits_{z \in \subD_t} \bigl \lvert N\eta_s \langle G_s(z) - M_s(z) \rangle \rvert \le N^{
				-\zeta + (\ell+1)\arb} \biggr\}.
		\end{equation}
		Statement \eqref{eq:no-spec} of Lemma~\ref{lemma:exclusion} then implies that on the event $\Omega := \{t \le \tau\}$, the resolvent $G_t$ satisfies the norm bound
		\begin{equation}
			\norm{G_t(z)} \le \frac{\Im z}{\bigl(\kapd_t(z) - f(t)\bigr)^2 + (\Im z)^2}, \quad z \in \subD_t.
		\end{equation}
		Therefore, computing the quadratic variation of the   martingale    term in \eqref{eq:Gav_evol} with $B=\bm 1$ similarly to \eqref{eq:av_mart_compute} yields 
		\begin{equation} \label{eq:av_mart_below}
			\begin{split}
				\biggl[\int_{\tin}^\cdot \frac{1}{\sqrt{N}} \sum_{ab } \partial_{ab} \bigl\langle G_s \bigr\rangle \rd \bigl(\Brwn_{s}\bigr)_{ab} \biggr]_{t\wedge\tau} 
				&\le \int_{\tin}^{t\wedge\tau} \frac{\bigl\langle (\Im G_s)^2\bigr\rangle}{N^2\eta_s^2} \rd s \le \int_{\tin}^{t\wedge\tau} \frac{\bigl\langle \Im G_s\bigr\rangle}{N^2\eta_s^2}\frac{\eta_s}{\bigl(\kapd_s - f(s)\bigr)^2 + \eta_s^2} \rd s  \\ 
				&\lesssim  \int_{\tin}^{t\wedge\tau} \frac{1}{N^2 \kapd_s^{3/2}\Delta_s^{-1/6}\bigl( (t\wedge\tau-s)^2 + \kapd_s^{-1}\Delta_s^{1/3}\eta_s^2\bigr)} \rd s \\
				&\lesssim \frac{1}{N^2\eta_{t\wedge\tau}^2} \frac{\eta_{t\wedge\tau}}{\kapd_{t\wedge\tau}}  \lesssim \frac{N^{-\scl} }{N^2\eta_{t\wedge\tau}^2},
			\end{split}
		\end{equation}
		abbreviating $G_s := G_s(z_s)$, $\eta_s:= \Im z_s$, and $\kapd_s := \kapd_s(z_s)$. In \eqref{eq:av_mart_below}, in the second step we used \eqref{eq:rho_comp}, \eqref{eq:kapd-f} and \eqref{eq:tau_outside}, while in the last line we used the fact that $\kapd_s \gtrsim \kapd_t$, $\Delta_s \gtrsim \Delta_t$ and $\kapd_s^{1/2}\Delta_s^{-1/6} \gtrsim \kapd_t^{1/2}\Delta_t^{-1/6}$ for all $ s \le t$, that follows from \eqref{eq:rho_comp}, \eqref{eq:Delta_s_comp}, \eqref{eq:eta_ll_d_preserved} and \eqref{eq:kapd_s_bound}.

		The remainder of the proof follows analogously to Section~\ref{sec:zig}.
	\end{proof}

	\begin{proof} [Proof of Proposition~\ref{prop:zag_below}]
		
		Note that by choosing the constant $c' \sim 1 $ in \eqref{eq:abvD_t} small enough, we can guarantee that for any $t \in [0,T]$ and any $z := E + \ii\eta \in \outD_t$, the point $E + \ii \eta(E) $ lies in $\abvD_t$, where $\eta(E)$ is defined implicitly via $\eta(E) \rho_t(E+\ii\eta(E)) = N^{-1+\scl}$. Indeed, we only need to check that $\rho_t(E+\ii\eta(E))^{-1}\eta(E) \ge c'(N^{-1+\scl}+T-t)$. 
		However, it follows from \eqref{eq:rho_comp} and the definition of $f(t)$ in \eqref{eq:front_func} that $\rho_t(E+\ii\eta(E))^{-1}\eta(E) \gtrsim N^{\scl}\eta_{\mathfrak{f},t}^{1/2}\Delta_t^{1/6} + T-t$. Together with 
		$\Delta_t \gtrsim   \Delta_T \gtrsim  N^{-3/4+5\scl}$, this  immediately implies that the inclusion  $E + \ii \eta(E) \in \abvD$ for sufficiently small $c'\sim1$.

		Since throughout the proof the time $t_k$ remains fixed, for the remainder of this section, we drop the superscript $t_k$ from $\abvD_{t_{k}}, \subD_{t_{k}}, \rho_{t_{k}}, \kapd_{t_k}, \Delta_{t_k}$, and $M_{t_k}$. 

		First, using a monotonicity estimate analogous to Lemma~\ref{lem:monotone} (see \eqref{eq:monot_im_below} and \eqref{eq:monot_iso} in Remark~\ref{rem:monot_below}), we conclude from the isotropic local law in \eqref{eq:ll_template} for $G^{s}(z)$ that, uniformly in $z \in \subD$, in $a,b \in [N]$ and in $s\in [0, s_k]$, 
		\begin{equation} \label{eq:G_bounds_below}
			\bigl\lvert (\Im G^s)_{aa} \bigr\rvert \lesssim \frac{N^{\scl}}{N\eta}, \quad  \bigl\lvert (G^s - M)_{ab} \bigr\rvert \lesssim \frac{N^{\scl}}{N\eta}, \quad \bigl\lvert (G^s)_{ab} \bigr\rvert \lesssim 1, \quad \text{w.v.h.p.}
		\end{equation}
		Moreover, note that for all $z := E+\ii\eta \in \subD$, we have the estimates
		\begin{equation}
			\kapd(z) \Delta^{1/3} \gtrsim N^{-1+4\scl},\quad N\eta \sim N^{1/2}\kapd(z)^{1/4}\Delta^{1/12} \sqrt{\rho(z) N \eta} \gtrsim N^{1/4+\scl - \zeta/4}.
		\end{equation}
		
		As in Section~\ref{sec:zag}, we conduct the proof along the vertical truncations of the domain $\subD$, defined as 
		\begin{equation} 
			\subD_\gamma \equiv \subD_{t_k, \gamma} := \bigl\{ z \in \subD \eqcirc\subD_{t_k} \, : \, \Im z \ge N^{-1+ \gamma} \bigr\}, \quad 0 < \gamma \le 1.
		\end{equation}
		In particular, we assert that if for some constant $\gamma_0 > 0$, the resolvent $G^s$ satisfies the estimate 
		\begin{equation} \label{eq:TrImG_below}
			\bigl\langle \Im G^s(z) \bigr\rangle \lesssim \rho(z),
		\end{equation}
		with very   high    probability uniformly in $z \in \subD_{\gamma_0} \cup \abvD$ and in time $s \in [0, s_k]$, then the estimate \eqref{eq:excl_template} holds uniformly in $z \in \subD_{\gamma_1}$ for any fixed $\gamma_1 \le \gamma_0 - (\zeta\wedge \tfrac{1}{2}\mu)$, and uniformly in time $s \in [0, s_k]$ with very high probability. 
		
		To this end, we show that the quantity $ R_s(z) := \langle G^s(z) - M(z) \rangle$ satisfies
		\begin{equation} \label{eq:zag_below_R}
			\biggl\lvert \frac{\rd }{\rd s} \E |R_s(z)|^p \biggr\rvert \lesssim \biggl(1+\frac{N^{3\zeta}}{\sqrt{\dift_k}}\biggr)\biggl[\E |R_s(z)|^p + \biggl(\frac{N^{-\zeta}}{N|\Im z|}\biggr)^p\,\biggr], \quad z \in \subD_{\gamma_1},
		\end{equation}
		where $\dift_k := t_k - t_{k-1}$ and $t_k$ are defined in \eqref{eq:t_steps}. Note that $N^{3\zeta}\sqrt{\dift_k} \le N^{3\zeta}T^{1/2} \lesssim N^{-\zeta}$, using that $T\sim N^{-\tar/4}$ from \eqref{eq:term_time}.
		The proof of \eqref{eq:zag_below_R} is analogous to that of Proposition~\ref{prop:gronwallav}. The main difference is that for the   most critical   term \eqref{eq:k3_1chain}, we use the bound 
		\begin{equation} \label{eq:avimprove}
			\begin{split}
				N^{-5/2} &\left| \sum_{\alpha_1, \alpha_2, \alpha_3}  \kappa_s(\alpha_1, \alpha_2, \alpha_3)M_{b_1a_2} M_{b_2a_3} (G^sG^s)_{b_3a_1} \right| 
				\le N^{-1} \vertiii{\kappa}_3^{\rm av}\Vert M \Vert^2 \Vert G^sG^s \Vert_{\rm hs}\\
				&\lesssim \frac{\langle \Im G^s\rangle^{1/2}}{N\eta^{3/2}} \lesssim \frac{N^{-\zeta}}{N\eta} \frac{N^{2\zeta}}{\kapd^{1/4}\Delta^{1/12}} \lesssim \frac{N^{-\zeta}}{N\eta} \frac{N^{2\zeta}}{\sqrt{T - t_k}} \lesssim \frac{N^{-\zeta}}{N\eta} \frac{N^{\tfrac{5}{2}\zeta}}{\sqrt{\dift_k}},
			\end{split}
		\end{equation}
		where we used \eqref{eq:TrImG_below} together with the monotonicity of the map $\eta \mapsto \eta \langle\Im G^s(E+\ii\eta) \rangle$ for any fixed $E \in \mathbb{R}$ to assert that $\langle\Im G^s(z) \rangle \lesssim N^{2\zeta}\rho(z)$ with very high probability, uniformly in $z \in \subD_{\gamma_1}$.
		
		The remainder of the proof follows analogously to Section~\ref{sec:zag} using the estimates \eqref{eq:G_bounds_below} instead of the respective bounds in \eqref{eq:bootstrap_init} and \eqref{eq:avzaginput}.
	\end{proof}

	\subsection{Improved Local Laws away from the Spectrum. Proof of Theorem~\ref{thm:main}} \label{sec:main_proof}
	Let $\scl := \min\{\tfrac{1}{5}\scl_0, \tfrac{1}{2}\tar_0\}$ and $\tar := \tfrac{1}{10}\scl$.
	Let $z \in \mathbb{C}$ be a spectral parameter satisfying $ N^{\scl_0}\etaf(E) \le \dist(z, \supp\rho) \le N^D$. Without loss of generality, we assume that $\norm{\bm x} = \norm{\bm y} = \norm{B}_{\mathrm{hs}} = 1$, and that $ z: = E + \ii\eta$  with $\eta \ge 0$.
	
	First, consider the case $\dist(z, \supp\rho) \le 2\eta$, then it is straightforward to check using the universal shape of the density $\rho$ (see, e.g., Remark 7.3 in \cite{AEK2020})
	that $\rho(z)N\eta \gtrsim N^{\scl}$. Therefore, in this regime, Theorem~\ref{thm:main} follows from 
	Theorem~\ref{th:Neta_local_laws} and Proposition~\ref{prop:global}.  
	
	It remains to consider the regime $\dist(z, \supp\rho) \ge 2\eta$. Clearly, $E$ lies outside of the support of $\rho$.
	Let $\mathfrak{e}^-$ and $\mathfrak{e}^+$  be the left and right end-points of the gap that contains $E$. The assumption $\dist(z, \supp\rho) \ge 2\eta$ implies that $\kapd := \dist(E, \mathfrak{e}^\pm) \gtrsim \eta$, hence $\Delta:= \mathfrak{e}^+ - \mathfrak{e}^- \ge \kapd \gtrsim N^{\scl_0}\eta_{\mathfrak{f}}(E) = N^{-2/3+\scl_0}\Delta^{1/9}$, 
	and thus $\Delta \ge N^{-3/4+9\scl_0/8}$.
	
	Define a local domain $\outD\equiv \outD(E)$ as 
	\begin{equation}
		\outD \equiv \outD(E) := \{z' \in \mathbb{C}\, :\, |\Re z' - E| \le \frac{1}{2}\kapd, \, |\Im z'| \le \kapd\}, \quad \kapd := \dist(E, \mathfrak{e}^\pm),
	\end{equation}
	and observe that $z \in \outD$. Moreover, by Theorem~\ref{thm:noeig} with $\excl_0 := \tfrac{1}{2}\scl_0$, there exists a very-high-probability event $\Omega$, such that $\spec(H)\cap \outD = \emptyset$ on $\Omega$.
	
	Therefore, on the very-high-probability event $\Omega$, the matrix-valued map $z' \mapsto G(z')-M(z')$ is analytic in the interior of $\outD$. Using the Cauchy formula, we obtain the contour integral representation
	\begin{equation} \label{eq:contour_int_Gamma}
		G(z) - M(z) = \frac{1}{2\pi\ii}\oint_\Gamma \frac{G(z')-M(z')}{z-z'}\rd z',
	\end{equation}
	where $\Gamma \subset \outD$ is the contour tracing the boundary of a rectangle centered at $z$ with width $\tfrac{1}{4}\kapd$ and height $\tfrac{3}{4} \kapd$. Note that $|z'-z| \gtrsim \kapd$ for all $z' \in \Gamma$. Using a 
	monotonicity estimate
	analogous to Lemma~\ref{lem:monotone} (see \eqref{eq:monot_iso}, \eqref{eq:monot_hs} in Remark~\ref{rem:monot_below}), we conclude from Proposition~\ref{prop:global} and Theorem~\ref{th:Neta_local_laws} that on a very-high-probability event $\Omega' \subset \Omega$, the resolvent $G(z')$ satisfies
	\begin{equation} \label{eq:bounds_on_Gamma}
		\biggl\lvert \bigl\langle \bigl(G(z')-M(z')\bigr) B \bigr\rangle \biggr\rvert \lesssim \frac{N^{\scl}}{N|\Im z'|} \wedge \frac{1}{\kapd}, \quad \biggl\lvert \bigl( G(z')-M(z') \bigr)_{\bm x \bm y} \biggr\rvert \lesssim N^{\scl}\sqrt{\frac{\rho(z')}{N|\Im z'|}} + \frac{N^{\scl}}{N|\Im z'|}\wedge \frac{1}{\kapd},
	\end{equation}
	uniformly in $z' \in \Gamma$, where the alternative $\kapd^{-1}$ bound follows from the norm-bound on $\lVert G(z')\rVert$ and \eqref{eq:noeig}.
	
	Plugging the bounds \eqref{eq:bounds_on_Gamma} into the representation \eqref{eq:contour_int_Gamma} and using the comparison relation \eqref{eq:rho_comp}, we obtain \eqref{eq:AVLL} and \eqref{eq:ISOLL} at the point $z$.
	Here we used \eqref{eq:rho_comp} and $\kapd \ge N^{\scl_0}\etaf(E)$ to assert that
	\begin{equation}
		\sqrt{\frac{\rho(z)}{N\eta}} \sim \sqrt{\frac{1}{N\kapd^{1/2}\Delta^{1/6}}} \gtrsim \frac{1}{N\kapd}.
	\end{equation}
	This concludes the proof of Theorem~\ref{thm:main}. \qed
	
	\begin{remark} [Faraway Regime] \label{rem:far_away}
		Similarly to the away from-the-spectrum part of the proof of Theorem~\ref{thm:main} in Section~\ref{sec:main_proof} above, the global law \eqref{eq:global_av}, together with the contour integration, can be used  to obtain the faraway laws
		\begin{equation} \label{eq:farawaylaws}
			\biggl\lvert \bigl\langle \bigl(G(z) - M(z)\bigr) B \bigr\rangle \biggr\rvert \lesssim \frac{N^{\tar_0} }{N\langle z \rangle^2}\norm{B}_{\mathrm{hs}}, \quad \biggl\lvert \bigl( G(z) - M(z) \bigr)_{\bm x \bm y} \biggr\rvert \lesssim  \frac{N^{\tar_0}}{\sqrt{N}\langle z \rangle^2}\norm{\bm x} \norm{\bm y},
		\end{equation}
		in the regime $\dist(z, \supp \rho) \in [C, N^D]$ for some sufficiently large positive $C \sim 1$.
		The proof requires only the global laws of Proposition~\ref{prop:global} as an input, and is conducted without the use of the Zigzag dynamics. 
	\end{remark}

	\section{Global Laws: Proof of Proposition~\ref{prop:global}} \label{sec:stable}
	We prove Proposition~\ref{prop:global} in two steps. First, in Section ~\ref{sec:glob_iso}, we prove the isotropic local law \eqref{eq:global_iso}. Then, in Section~\ref{sec:glob_av}, we conclude the proof of Proposition~\ref{prop:global} by proving the averaged law \eqref{eq:global_av}, using the isotropic law \eqref{eq:global_iso} as an input.
	Before proceeding with the proof, we collect some preliminary bounds on the stability operator and define the appropriate norm for proving the isotropic local law.
	\subsection{Preliminaries for the Global Law}

	First, for any $z \in \mathbb{C}$, the \textit{stability operator} $\mathcal{B}(z) : \mathbb{C}^{N\times N} \to \mathbb{C}^{N\times N}$ is defined by its action on $X \in \mathbb{C}^{N\times N}$,
	\begin{equation} \label{eq:stab_def}
		\mathcal{B}(z)[X] := X - M(z)\mathcal{S}[X]M(z).
	\end{equation}
	We control the inverse of the stability operator $\mathcal{B}$ using the following lemma.
	\begin{lemma}(Proposition 4.4 in \cite{edgelocallaw}) \label{lemma:stab_op}
		Let $M(z)$ be the solution to the MDE \eqref{eq:MDE}, and let $\mathcal{I}$ be the set of admissible energies defined in \eqref{eq:admE}. Then the stability operator $\mathcal{B}(z)$, defined in \eqref{eq:stab_def} satisfies, for all $z\in \mathbb{C}$ with $\dist(\Re z, \mathcal{I}) \le \tfrac{3}{4}c_M$,
		\begin{equation} \label{eq:stability_factor}
			\norm{\mathcal{B}^{-1}(z)}_{\mathrm{hs}\to \mathrm{hs}} + \norm{\mathcal{B}^{-1}(z)}_{\norm{\cdot}\to \norm{\cdot}} \lesssim 1 + \beta(z)^{-1}, \quad \beta(z) := \rho(z)^2 + \rho(z) |\sigma(z)| + \rho(z)^{-1}|\Im z|,
		\end{equation}
		where the function\footnote{ Roughly speaking, 
			the quantity $|\sigma(z)|$ measures how close $z$ is to a possible almost cusp, in
			particular, if $x$ is an exact cusp of the density $\rho(x)$, then $\sigma(x)=0$.}  $\sigma(z)$ is defined as 
		\begin{equation} \label{eq:sigma}
			\sigma(z) := \biggl\langle \sign\bigl(\Re  U (z)\bigr) \bigl(\rho(z)^{-1}\Im U (z)\bigr)^3 \biggr\rangle, \quad \quad  U  := \frac{(\Im M)^{-1/2}(\Re M)(\Im M)^{-1/2} + \ii}{\bigl\lvert (\Im M)^{-1/2}(\Re M)(\Im M)^{-1/2} + \ii \bigr\rvert}
			,\quad z \in \mathbb{H}.
		\end{equation}
	\end{lemma}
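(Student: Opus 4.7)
The plan is to reduce the stability bound to a spectral analysis of a suitably symmetrized auxiliary operator. First, I would introduce the self-adjoint super-operator
\begin{equation*}
\mathcal{F}(z)[X] := (\Im M(z))^{1/2} \mathcal{S}\bigl[(\Im M(z))^{1/2} X (\Im M(z))^{1/2}\bigr](\Im M(z))^{1/2},
\end{equation*}
which is bounded and positivity-preserving on the Hilbert--Schmidt space. The key algebraic fact, obtained by taking the imaginary part of the MDE \eqref{eq:MDE} and using $\Im z > 0$, is that $\Im M / \lVert \Im M\rVert_{\mathrm{hs}}$ is an $\mathcal{O}(\rho^{-1}\Im z)$-near-eigenvector of $\mathcal{F}$ with eigenvalue approaching $1$ as $z$ approaches the support of $\rho$; the fullness Assumption~\ref{ass:full} ensures that all other eigenvalues of $\mathcal{F}$ remain uniformly bounded away from $1$. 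After conjugating $\mathcal{B}(z)$ by $M(z)^{\pm 1}$ (which are uniformly bounded by Assumption~\ref{ass:Mbdd} on the admissible neighborhood), one reduces $\mathcal{B}$ to $1 - \mathcal{F}$ up to a similarity with bounded Jacobian.

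Next, I would perform a spectral decomposition. Let $f$ denote the Perron--Frobenius eigenvector of $\mathcal{F}$ and $\lambda_{\mathrm{PF}}(z)$ its eigenvalue, and set $P := \langle f, \cdot \rangle f$ and $Q := 1 - P$. One then writes $(1-\mathcal{F})^{-1} = (1-\lambda_{\mathrm{PF}})^{-1} P + Q(1-\mathcal{F})^{-1}Q$; the off-critical part is bounded by an absolute constant thanks to the spectral gap guaranteed by fullness, so only the critical scalar $(1-\lambda_{\mathrm{PF}})^{-1}$ needs attention. The critical step is to quantify the smallness of $1 - \lambda_{\mathrm{PF}}(\mathcal{F}(z))$: here I would invoke the cubic-shape analysis from \cite{AEK2020}, in which perturbing the MDE in the $f$-direction and matching imaginary parts yields a cubic equation for a small scalar parameter along the critical mode whose coefficients are precisely $\rho^2$, $\rho|\sigma|$ and $\rho^{-1}\Im z$. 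Rearranging this cubic identity produces the bound $1 - \lambda_{\mathrm{PF}}(z) \gtrsim \beta(z)$, giving the Hilbert--Schmidt estimate.

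The main obstacle will be transferring this to the operator-norm estimate $\lVert \mathcal{B}^{-1}\rVert_{\lVert\cdot\rVert \to \lVert\cdot\rVert}$, because the spectral decomposition naturally lives in the Hilbert--Schmidt geometry. I would handle this in two pieces following the strategy of \cite{edgelocallaw}: for the rank-one critical part, I use that the Perron--Frobenius eigenvector $f$ is comparable to $\Im M$ in the operator norm by Assumption~\ref{ass:Mbdd}, so the projection $P$ transfers to an operator-norm bound at the cost of an absolute constant, with the same scalar prefactor $(1-\lambda_{\mathrm{PF}})^{-1}$. For the off-critical part $Q(1-\mathcal{F})^{-1}Q$, one uses that $\mathcal{S}$ maps operator-norm-bounded matrices to operator-norm-bounded matrices (a consequence of the $\vertiii{\kappa}_2$ bound in Assumption~\ref{ass:cumulants}~(i)), then argues via a Neumann-type expansion around the already-controlled Hilbert--Schmidt inverse on the non-critical subspace. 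Combining the two pieces yields the bound $\lVert\mathcal{B}^{-1}\rVert_{\lVert\cdot\rVert\to\lVert\cdot\rVert} \lesssim 1 + \beta(z)^{-1}$.
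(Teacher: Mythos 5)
The paper does not prove this lemma itself; it is quoted verbatim from Proposition~4.4 of \cite{edgelocallaw}, whose argument hinges on the \emph{balanced polar decomposition} $M = Q^* U Q$ with $Q$ positive definite and $U$ the unitary appearing in \eqref{eq:sigma}. Your proposal replaces this with a reduction of $\mathcal{B}$ to the self-adjoint saturated self-energy operator $1-\mathcal{F}$, and that step is incorrect: conjugating $\mathcal{B} = 1 - \mathcal{C}_M\mathcal{S}$ by powers of $(\Im M)$ (or of $M$) does not eliminate the unitary $U$. The correct reduced operator is of the form $1 - \mathcal{C}_U\mathcal{F}$, which is genuinely non-normal, and dropping $U$ discards exactly the mechanism that generates the $\rho^2$ and $\rho|\sigma|$ contributions to $\beta(z)$.

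Concretely, $\mathcal{F}$ is self-adjoint and positivity-preserving with a Perron--Frobenius gap (this much of your argument is sound), and taking imaginary parts of the MDE shows $1 - \lambda_{\mathrm{PF}}(\mathcal{F}) \sim \rho(z)^{-1}\Im z$; this expression contains \emph{no} $\rho^2$ or $\rho|\sigma|$ term, so the cubic identity you want to rearrange cannot be extracted from $\lambda_{\mathrm{PF}}(\mathcal{F})$ alone. At a genuine cusp on the real line ($\Im z \to 0$, $\sigma \to 0$, $\rho \to 0$) your bound would degenerate to zero, whereas the true stability factor is $\sim \rho^2$. The quantities $\rho^2$ and $\rho|\sigma|$ arise from the angle between the top eigenvector $f$ of $\mathcal{F}$ and its image under $U$: one must expand $(1-\mathcal{C}_U\mathcal{F})^{-1}$ on the two-dimensional space spanned by $\Re(Uf)$ and $\Im(Uf)$, and the cubic-shape analysis of \cite{AEK2020} lives precisely in this two-dimensional reduction, not in the one-dimensional $P$-block of $\mathcal{F}$ as in your decomposition. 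Your transfer from Hilbert--Schmidt to operator norm (using the rank-one critical block plus a Neumann expansion on the complement) is structurally in the spirit of \cite{edgelocallaw}, but it inherits the same defect since the critical subspace is two-dimensional once $U$ is retained.
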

	
	Note that by definition of $\,\globD$ in \eqref{eq:globD_def}, the stability \emph{factor satisfies} $\beta(z) \ge N^{-\tar/4}$ for all $z \in \globD$.
	\begin{remark} [Local Laws in the Stable Domain]  
		In Section~\ref{sec:stable} we only use the bound $\beta(z) \ge \rho(z)^{-1}|\Im z|$. However, by Remark 10.4 in \cite{AEK2020}, there exists a function $\other{\beta}(z)$ satisfying $\beta(z) \lesssim \other{\beta}(z) \le \beta(z)$, such that the map $\eta \mapsto \other{\beta}(E+\ii\eta)$ is non-decreasing in $\eta > 0$ for any fixed $E$. Therefore, the global domain, defined in \eqref{eq:globD_def}, can be replaced by the \emph{stable domain}, defined as 
		\begin{equation}
			\mathcal{D}^{\mathrm{stab}}  := \bigl\{ z := E+\ii\eta\in \mathbb{H}\,:\, |E| \le N^D,\, N^{-1+\scl} \le \eta \le N^D,\, \other{\beta}(z) \ge N^{-\tar/4} \bigr\},
		\end{equation} 
		with our proof of Proposition~\ref{prop:global} naturally extending to the larger \emph{stable domain}.
		In particular, the stable domain extends down to the level $\eta \ge N^{-1+\scl}$ in the bulk of spectrum, where $\rho(E) \gtrsim 1$. Therefore, we provide an independent proof of the local laws in Theorems 2.1 and 2.2 of \cite{slowcorr} under the Assumptions~\ref{ass:boundedexp}--\ref{ass:Mbdd} without the complicated graphical expansion machinery.
	\end{remark}
	
	Next, for a fixed spectral parameter $z \in \globD(\tar,D)$, and a fixed pair of vectors $\bm x$, $\bm y \in \mathbb{C}^N$, define a family of sets of vectors,
	\begin{equation}
		\begin{split}
			\mathcal{V}_{0} \equiv \mathcal{V}_{0}(z) &:= \bigl\{ \bm e_a\bigr\}_{a=1}^N \cup \{\bm x, \bm y\},\\ 
			\mathcal{V}_{j} \equiv \mathcal{V}_{j}(z) &:= \mathcal{V}_{j-1} \cup \bigl\{ M\bm u,   \kappa_{\mathrm{c}}\bigl((M\bm u)a, \cdot b\bigr), \kappa_{\mathrm{d}}\bigl((M\bm u)a, b\cdot \bigr) : \bm u \in \mathcal{V}_{j-1},\, a,b \in [N] \bigr\}, \quad j \in \{1,\dots, J\},
		\end{split}
	\end{equation}
	where $M := M(z)$, and $J$ is an integer satisfying $J  \ge 2/\tar$.
	We use the corresponding isotropic norm (Section 5.1 in \cite{slowcorr})
	\begin{equation} \label{eq:star_norm}
		\lVert X\rVert_* \equiv \lVert X\rVert_*^{\bm x, \bm y, J, z} := \sum_{j=0}^J N^{-\frac{j}{2J}} \lVert X \rVert_{(j)} + N^{-1/2} \max_{\bm v \in \mathcal{V}_J} \frac{\norm{X_{\cdot\bm v}}}{\norm{\bm v}},
		\quad \lVert X \rVert_{(j)} := \max_{\bm u, \bm v \in \mathcal{V}_j} \frac{\bigl\lvert X_{\bm u \bm v}\bigr\rvert }{\lVert \bm u \rVert\lVert \bm v\rVert}.
	\end{equation}
	Note that the cardinality of the sets $\mathcal{V}_j$ is bounded by $N^{CJ}$,   hence we can take the maximum of very-high-probability bounds over these sets.  
	
	Finally, recall that for all $z$ with $\Re z$ in the set  of admissible energies $\mathcal{I}$ from Assumption~\ref{ass:Mbdd}, $M(z)$ satisfies the bound
	\begin{equation} \label{eq:M_bound}
		\norm{M(z)} \lesssim \langle z \rangle^{-1}.
	\end{equation}
	
	\subsection{Proof of the Isotropic Bound in Proposition~\ref{prop:global}} \label{sec:glob_iso}
	\begin{proof}[Proof of the isotropic law in \eqref{eq:global_iso}]
		Recall the definition of the domain $\globD$ from \eqref{eq:globD_def}.   We conduct the proof iteratively along vertical truncations $\globD_{\gamma}$ of the domain $\globD$, defined as
		\begin{equation}
			\globD_{\gamma} := \bigl\{ z := E+\ii\eta \in\globD%
			\, :  \,\eta \ge N^{-1+\gamma} \}, \quad \gamma > 0.
		\end{equation}
		Once the local law \eqref{eq:global_iso} is established in the domain $\globD_{\gamma_0}$ for some $\gamma_0 \ge 0$, a simple monotonicity argument analogous to Lemma~\ref{lem:monotone} (see the proof of Lemma~\ref{lem:monotone} in Appendix~\ref{app:tech}) implies that the following bounds on the resolvent $G(z)$,
		\begin{equation} \label{eq:G_bounds}
			\bigl\lvert G(z)_{\bm u \bm v} \bigr\rvert \lesssim N^\step\langle z\rangle^{-1}, \quad 
			\bigl\lvert \bigl(\Im G(z)\bigr)_{\bm u \bm u}\bigr\rvert \lesssim N^{\tar+\step}\biggl(\rho(z) + \frac{1}{N\eta}
			\biggr) , \quad 
			\text{w.v.h.p.,}
		\end{equation}
		hold uniformly in $z \in \globD_{\gamma_1}$ for any $\gamma_1 \ge \gamma_0 - \step$ with $\step \le \tfrac{1}{20}\tar$, and for any deterministic $\bm u, \bm v$ with $\norm{\bm u }=\norm{\bm v } =1$. 
		Therefore, the key step in the iteration is going from estimates on the resolvent $G(z)$ to a bound on $(G(z)-M(z))_{\bm x \bm y}$, that is, using the bounds \eqref{eq:G_bounds} as an input to prove the isotropic local law \eqref{eq:global_iso} in the domain $\globD_{\gamma_1}$.
		This crucial step is based on  the following gap in the possible values of $\norm{G-M}_*$. 
		\begin{lemma}[Gap in the Values of $G-M$] \label{lemma:Gap}
			Fix a spectral parameter $z \in \globD_{\gamma_1}$, with some $\gamma_1 > 0$ such that \eqref{eq:G_bounds} holds on $\globD_{\gamma_1}$,  then
			\begin{equation} \label{eq:gap_global_iso}
				\norm{G(z) - M(z)}_* \lesssim N^{-\tar}\,\,\text{w.v.h.p.}\quad \Longrightarrow \quad	\norm{G(z) - M(z)}_* \lesssim N^{\tar}\Psi(z) \,\,\text{w.v.h.p.}
			\end{equation}
		\end{lemma}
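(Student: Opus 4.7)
The plan is to derive a perturbed quadratic self-consistent equation for $G-M$ in the $\norm{\cdot}_*$-norm, invert the stability operator using Lemma~\ref{lemma:stab_op}, and then use the quadratic term together with the a-priori assumption $\norm{G-M}_*\lesssim N^{-\tar}$ to create the gap.

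First, from the resolvent identity $HG = I+zG$ and the MDE \eqref{eq:MDE}, a standard manipulation yields the identity
\begin{equation*}
G-M + M\mathcal{S}[G-M]G = -M\,\underline{WG},
\end{equation*}
where $\underline{WG} := WG + \mathcal{S}[G]G$ is the ``renormalized'' noise. Reorganizing and using $M\mathcal{S}[\cdot]M = I - \mathcal{B}$, this can be rewritten as
\begin{equation*}
\mathcal{B}[G-M] = -M\,\underline{WG}\,G^{-1}M + M\mathcal{S}[G-M](G-M),
\end{equation*}
or, after applying $\mathcal{B}^{-1}$ (which exists and obeys the bound \eqref{eq:stability_factor} on $\globD$, since $\beta(z)\ge N^{-\tar/4}$),
\begin{equation*}
G-M = \mathcal{B}^{-1}\bigl[\,\mathcal{E} + M\mathcal{S}[G-M](G-M)\,\bigr],
\end{equation*}
where $\mathcal{E}$ denotes the linearized fluctuation term. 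The next step is to estimate $\norm{\mathcal{E}}_*$, which is done entrywise on the set of test vectors $\mathcal{V}_J$ appearing in \eqref{eq:star_norm}: for each $\bm u,\bm v\in\mathcal{V}_J$ we expand $\E|\mathcal{E}_{\bm u\bm v}|^{2p}$ via the multivariate cumulant expansion of Proposition~\ref{prop:cumex}, truncating at order $L$ using Assumption~\ref{ass:cumulants}~(ii). The individual terms are controlled by the cumulant norms $\vertiii{\kappa}_k$, $\vertiii{\kappa}_2^{\rm iso}$ from Assumption~\ref{ass:cumulants}~(i), the preliminary a-priori bounds \eqref{eq:G_bounds}, the Ward identity $GG^* = \Im G/\eta$, and the closure property of $\mathcal{V}_J$ under the operations $\kappa_{\rm c}, \kappa_{\rm d}$ and multiplication by $M$. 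The outcome, by Markov's inequality and a union bound over $|\mathcal{V}_J|\le N^{CJ}$ pairs, is the high-probability estimate
\begin{equation*}
\norm{\mathcal{E}}_* \le N^{\tar/2}\,\Psi(z)\quad \text{w.v.h.p.,}
\end{equation*}
exactly as in the isotropic fluctuation bound of \cite[Section 5]{slowcorr}.

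Plugging this into the inverted equation, using $\norm{\mathcal{B}^{-1}}\lesssim N^{\tar/4}$ from Lemma~\ref{lemma:stab_op}, the boundedness of $M$ and $\mathcal{S}$, and the algebra property $\norm{XY}_*\lesssim \norm{X}_*\norm{Y}_*$ that follows from the construction of $\mathcal{V}_j$ (as $\mathcal{V}_{j+1}$ contains the images of $\mathcal{V}_j$ under $M$ and the cumulant operators, allowing resolution of identities), we obtain
\begin{equation*}
\norm{G-M}_* \le N^{\tar/4}\bigl(\norm{\mathcal{E}}_* + C\norm{G-M}_*^2\bigr) \le N^{3\tar/4}\Psi(z) + N^{\tar/4} C\,\norm{G-M}_*^2.
\end{equation*}
Invoking the hypothesis $\norm{G-M}_*\le N^{-\tar}$ on the right-hand side, the quadratic term satisfies $N^{\tar/4}C\norm{G-M}_*^2 \le CN^{-3\tar/4}\norm{G-M}_*$, which can be absorbed into the left-hand side. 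Solving for $\norm{G-M}_*$ yields $\norm{G-M}_*\lesssim N^{\tar}\Psi(z)$, which is \eqref{eq:gap_global_iso}.

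The main obstacle is the probabilistic step: obtaining the bound $\norm{\mathcal{E}}_*\le N^{\tar/2}\Psi$ w.v.h.p.\ requires a careful high-moment cumulant expansion that respects the isotropic norm, the correlation decay, and the closure of the test-vector sets $\mathcal{V}_j$ under $M$ and the operators $\kappa_{\rm c}, \kappa_{\rm d}$; this is precisely the content of Assumption~\ref{ass:cumulants} and the reason for the specific choice of $\norm{\cdot}_*$ in \eqref{eq:star_norm}. Every other step is a routine consequence of the MDE formalism and the stability estimate \eqref{eq:stability_factor}.
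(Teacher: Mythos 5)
Your overall strategy matches the paper's: derive the self-consistent equation for $G-M$, invert the stability operator $\mathcal{B}$ on $\globD$ using $\beta\ge N^{-\tar/4}$, treat the renormalized noise $M\underline{WG}$ as a fluctuation term, and absorb the quadratic term $M\mathcal{S}[G-M](G-M)$ using the a priori smallness $\norm{G-M}_*\lesssim N^{-\tar}$. However, there are three concrete problems.

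First, the algebra is off: from $G-M = -M\underline{WG} + M\mathcal{S}[G-M]G$ and $G = M + (G-M)$ one gets $\mathcal{B}[G-M] = -M\underline{WG} + M\mathcal{S}[G-M](G-M)$, so the noise term is $-M\underline{WG}$; the extra factor $G^{-1}M$ in your $\mathcal{E}$ is spurious. Second, the claimed algebra property $\norm{XY}_*\lesssim\norm{X}_*\norm{Y}_*$ does not hold for the hierarchical norm \eqref{eq:star_norm}. What actually holds is the shifted bound $\norm{M\mathcal{S}[X]X}_{(j)}\lesssim\min\{\norm{X}_{(j+1)},\sqrt{N}\norm{X}_{(0)}\}\norm{X}_*$, which after summing in $j$ gives $\norm{M\mathcal{S}[X]X}_*\lesssim N^{1/(2J)}\norm{X}_*^2$; the factor $N^{1/(2J)}\le N^{\tar/4}$ is affordable precisely because $J\ge 2/\tar$, but it must be tracked explicitly — an unqualified submultiplicativity claim would also break the treatment of the last term of $\norm{\cdot}_*$, the piece $N^{-1/2}\max_{\bm v}\norm{X_{\cdot\bm v}}/\norm{\bm v}$, which the paper handles by a separate moment computation for $S=N^{-1}\norm{(G-M)_{\cdot\bm v}}^2$ and which your sketch does not address.

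Third, and most substantively, you outsource the bound $\norm{\mathcal{E}}_*\le N^{\tar/2}\Psi$ to the isotropic fluctuation estimate of \cite[Section 5]{slowcorr}, which is based on a full Feynman-diagram expansion. This is a valid but genuinely different route; the paper deliberately replaces it with a \emph{minimalistic cumulant expansion}: the $p$-th moment $\E|S_j|^p$ is computed with a \emph{single} factor of $S_j$ replaced via \eqref{eq:1G_expand}, so that derivatives $\partial_{\alpha}\{\overline{S_j}|S_j|^{p-2}\}$ reproduce $|S_j|$ factors that are fed back via a Young inequality, and the resulting self-improving scheme ($\Psi^{-1}\norm{G-M}_*\lesssim\psi \Rightarrow \psi\mapsto N^{\tar/2+4\step}(1+\beta^{-1}) + N^{-\step}\psi$) avoids the combinatorics of \cite{slowcorr} entirely. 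Your route is sound in principle and would give the lemma, but it is not self-contained within the present paper's framework and it loses the main methodological gain of Section~\ref{sec:stable}.
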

		
		We initialize the iteration in the domain $\globD_{2+\step}$. Indeed, owing to the very high probability bound $|H_{\bm u \bm v}| \lesssim N^{1/2+\arb}$ for any $\arb > 0$, we have, for any deterministic $\bm u, \bm v$ with $\norm{\bm u} = \norm{\bm v} = 1$,
		\begin{equation}
			\norm{G(z)} \lesssim \langle z \rangle^{-1}, \quad \bigl\lvert \bigl(\Im G(z)\bigr)_{\bm u \bm u}\bigr\rvert \lesssim \frac{\eta}{\langle z \rangle^2} \sim \rho(z), \quad z \in \globD_{2+\step}, \quad \text{w.v.h.p.}
		\end{equation}
		Note that the bound $\norm{G(z) - M(z)}_* \le N^{-\tar} $ holds trivially for all $z$ with $\Im z \ge N^\tar$.
		After Lemma~\ref{lemma:Gap} is established, the proof of \eqref{eq:global_iso} follows the standard continuity argument on a fine grid (see Section 5.4 in \cite{slowcorr}).	This concludes the proof of the isotropic law in \eqref{eq:global_iso}.
	\end{proof}
	
	The remainder of this subsection is devoted to the proof of  Lemma~\ref{lemma:Gap}. A local law for random matrices with slow correlation decay away from the cusps was already proved in \cite{slowcorr} and \cite{edgelocallaw}. We present an independent proof under the Assumptions~\ref{ass:boundedexp}--\ref{ass:Mbdd}. We utilize the \textit{minimalistic cumulant expansion}, that was used previously in \cite{lee18} and \cite{multiG}. This allows us to avoid the complicated graphical expansions.  
	\begin{proof}[Proof of Lemma~\ref{lemma:Gap}]
		Since $z := E+\ii \eta$ is fixed, we omit the argument of $G, M, \Psi, %
		\rho, \beta$, and $\mathcal{B}$.
		Assume the very-high-probability bound
		\begin{equation} \label{eq:glob_assume}
			\norm{G - M}_* \lesssim N^{-\tar}.
		\end{equation}
		It suffices to show that $\norm{G - M}_* \le N^{\tar} \Psi$   with very high probability.
		Assume that for a deterministic control parameter $\psi$, the quantity $\Psi^{-1}\norm{G - M}_*$ satisfies
		\begin{equation} \label{eq:Psi_assume}
			\Psi^{-1}\norm{G - M}_* \lesssim \psi, \quad\text{w.v.h.p}.
		\end{equation}
		
		By definition of the resolvent $G := (H-z)^{-1}$ and the MDE \eqref{eq:MDE}, we difference $G-M$ satisfies
		\begin{equation} \label{eq:G-M_equation1}
			G-M = -M\underline{WG} + M\mathcal{S}[G-M]G,
		\end{equation}
		where the matrix\footnote{  The underline $\underline{WG}$ is a renormalization of $WG$; for renormalization of general products $f(W) W g(W)$, see Section 4 in \cite{ETHpaper}.} $\underline{WG}$ is defined as
		\begin{equation}
			\underline{WG} := WG + \mathcal{S}[G]G.
		\end{equation}
		Therefore, subtracting $M\mathcal{S}[G-M]M$ from both sides and the inverse of the stability operator $\mathcal{B}$, defined in \eqref{eq:stab_def}, yields the equation  
		\begin{equation} \label{eq:1G_expand}
			G-M = - \mathcal{B}^{-1}\bigl[M \underline{WG}\bigr] + \mathcal{B}^{-1}\bigl[M \mathcal{S}[G-M] (G-M) \bigr],
		\end{equation}
		
		Observe, that for any $X\in \mathbb{C}^{N\times N}$, (Eq. (5.4c) in \cite{slowcorr})
		\begin{equation} \label{eq:stab*_est}
			\begin{split}
				\norm{\mathcal{B}^{-1}[X]}_{(j)} &\le \norm{X}_{(j)}  + \biggl(\norm{M}^2 \vertiii{\mathcal{S}} + \norm{M}^4 \vertiii{\mathcal{S}}^2 \norm{\mathcal{B}^{-1}}_{\mathrm{hs}\to\mathrm{hs}}\biggr)\norm{X}_{\max}\\
				&\lesssim \norm{X}_{(j)}  +   \bigl(1+\beta^{-1}\bigr)   \norm{X}_{(0)},
			\end{split}
		\end{equation}
		where in the last step we used \eqref{eq:stability_factor}. Here we denote
		\begin{equation}
			\vertiii{\mathcal{S}}  :=  \norm{\mathcal{S}}_{\max \to \lVert\cdot\rVert} \vee \norm{\mathcal{S}}_{\mathrm{hs} \to \lVert\cdot\rVert}.
		\end{equation}
		To control the norm $\lVert G-M \rVert_*$, we first bound the $\norm{\cdot}_{(j)}$ individually, and then estimate the contribution coming from the last summand in \eqref{eq:star_norm} later. 
		Fix an index $j \in \{0,\dots, J\}$ and fix a pair of vectors $\bm u, \bm v \in \mathcal{V}_{j}$. We compute the $p$-th (for even $p$) moment of
		\begin{equation} \label{eq:globS_def}
			S_j \equiv S_j^{\bm u \bm v} := N^{\tfrac{-j}{2J}}(G-M)_{\bm u\bm v},
		\end{equation} 
		using the equation \eqref{eq:1G_expand} for a single factor,
		\begin{equation} \label{eq:glob_iso_moment}
			\E\bigl[ |S_j|^{p}\bigr] 
			\le \E\bigl[ N^{\tfrac{-j}{2J}}\bigl(\mathcal{B}^{-1}[M\underline{WG}]\bigr)_{\bm u \bm v} \overline{S_j}|S_j|^{p-2}\,\bigr]
			+ \E\bigl[ N^{\tfrac{-j}{2J}}\bigl(\mathcal{B}^{-1}[M \mathcal{S}[G-M] (G-M)]\bigr)_{\bm u \bm v} \overline{S_j}|S_j|^{p-2}\,\bigr].
		\end{equation}
		
		First, we estimate the size of the second term on the right-hand side of \eqref{eq:glob_iso_moment}. We observe that ( Eq. (5.5a), (5.5b) in \cite{slowcorr})
		\begin{equation} \label{eq:MSXX_est}
			\begin{split}
				\norm{M\mathcal{S}[X]X}_{(j)} \lesssim \vertiii{\kappa}_2^{\mathrm{iso}}\norm{M}
				\min\biggl\{&\norm{X}_{(j+1)},\sqrt{N}\norm{X}_{(0)} \biggr\}\norm{X}_{*}.
			\end{split}
		\end{equation}
		We only use the second mode of the $\min$ bound when $j=J$.
		Combining \eqref{eq:glob_assume}, \eqref{eq:stab*_est} and \eqref{eq:MSXX_est}, we deduce that $Q_j := N^{\tfrac{-j}{2J}}\bigl(\mathcal{B}^{-1}[M \mathcal{S}[G-M] (G-M)]\bigr)_{\bm u \bm v}$ satisfies
		\begin{equation} \label{eq:Q_j_norm}
			\begin{split}
				\norm{Q_j}_{(j)} 
				\lesssim&~ \frac{\norm{G-M}_*}{\langle z \rangle N^{\tfrac{j}{2J}}}\biggl( \norm{G-M}_{(j+1)} \mathbf{1}_{j < J}+  \sqrt{N}\norm{G-M}_{(0)} \mathbf{1}_{j = J} +   \bigl(1+ \beta^{-1}\bigr)   \norm{G-M}_{(1)}\biggr)\\
				\lesssim&~ N^{\tfrac{1}{2J}-\tar}  \langle z \rangle^{-1}  \bigl(1+{\beta}^{-1}\bigr) \psi \Psi, \quad\text{w.v.h.p.}, 
			\end{split}
		\end{equation}
		where in the last step we used the estimate \eqref{eq:stability_factor}, the definition of $\,\globD$ in \eqref{eq:globD_def},  assumptions (\ref{eq:glob_assume}--\ref{eq:Psi_assume}),  and the bound $ \norm{X}_{(j)} \le N^{\tfrac{j}{2J}}\norm{X}_*$ that follows from the definition of $\norm{\cdot}_*$ in \eqref{eq:star_norm}.
		
		Next, we estimate the first term in \eqref{eq:glob_iso_moment}. For any $j \in \{0, \dots, J\}$ and any $\bm u, \bm v \in \mathcal{V}_j$, using the multivariate cumulant expansion formula from Proposition~\ref{prop:cumex}, we obtain 
		\begin{equation} \label{eq:glob_iso_cumulant}
			\begin{split}
				\biggl\lvert\,\E\bigl[(M\underline{WG})_{\bm u\bm v}\overline{S_j}|S_j|^{p-2}\,\bigr]\,\biggl\lvert \le&~ \biggl\lvert\,\E\biggl[\frac{1}{N}\sum_{a b} \sum_{\alpha_1} M_{\bm u a}G_{b \bm v}\kappa(ab, \alpha_1)\partial_{\alpha_1} \bigl\{\overline{S_j}|S_j|^{p-2}\bigr\}\,\biggr]\,\biggr\rvert\\
				&+  \sum_{k=2}^{L-1}\biggl\lvert\E\biggl[ \sum_{a b} \sum_{\bm \alpha \in \mathcal{N}(ab)^k} M_{\bm u a}\frac{\kappa(ab, \bm \alpha)}{N^{(k+1)/2} k!}\partial_{\bm \alpha} \bigl\{G_{b \bm v} \overline{S_j}|S_j|^{p-2}\bigr\}\,\biggr]\biggr\rvert\\
				&+ N^{\tfrac{j}{2J}}\bigl\lvert\Omega_{j,L}^{\bm u \bm v}\bigr\rvert.
			\end{split}
		\end{equation}
		Similarly to \eqref{eq:Omegaest}, we can choose $L$ large enough such that $|\Omega_{j,L}| \lesssim \bigl(\Psi \norm{\bm u}\norm{\bm v} \bigr)^{p}$. We note that the $N^{\tfrac{-j}{2J}}$ factors in \eqref{eq:globS_def} are only relevant for the quadratic term $Q_j$ estimated above, therefore, we do not follow it in the sequel. Moreover, we drop the norms $\lVert \bm u \rVert$ and $\lVert \bm v\rVert$ for brevity.
		
		First, we estimate the term involving second-order cumulants on the right-hand side of \eqref{eq:glob_iso_cumulant}. Here we estimate the contribution coming from the cross part of the second cumulants $\kappa_{\mathrm{c}}$, the estimate for the direct part $\kappa_\mathrm{d}$ is completely analogous. Ignoring the difference between $S_j$ and $\overline{S_j}$, and dropping the overall $|S_j|^{p-2}$ factor, we obtain the bound 
		\begin{equation} \label{eq:glob_iso_k2}
			\begin{split}
				\biggl\lvert \frac{1}{N}\sum_{a b} \sum_{\alpha_1} \kappa_{\mathrm{c}}(ab, \alpha_1) M_{\bm u a}G_{b \bm v}\partial_{\alpha_1}S_j \biggr\rvert 
				\lesssim&~  \frac{1}{N}\sum_{b b_1}  \biggl\lvert \sum_{a_1}\kappa_{\mathrm{c}}\bigl( (M \bm u)\,b, a_1b_1\bigr) G_{\bm u a_1}\biggr\rvert 	\bigl\lvert G_{b \bm v} G_{b_1 \bm v}\bigr\rvert  \\
				\lesssim &~ N^{-1+\step}{\langle z \rangle}^{-1} \bigl\lVert \norm{\kappa_{\mathrm{c}} ( (M \bm u)\ast, \cdot \ast )} \bigr\rVert \norm{G_{\cdot \bm v}}^2 \\ %
				\lesssim &~  \vertiii{\kappa}_{2}^\mathrm{iso}N^{\tar+2\step}\Psi^2%
				, \quad\text{w.v.h.p.}
			\end{split}
		\end{equation}
		In the ultimate step, we used \eqref{eq:M_bound} and \eqref{eq:G_bounds} to assert that, with very high probability,
		\begin{equation} \label{eq:G_psi_bound}
			\frac{1}{\langle z \rangle \sqrt{N}}\norm{G_{\cdot\bm v}} = \sqrt{\frac{(\Im G)_{\bm v \bm v}}{\langle z \rangle^2 N\eta}} \lesssim N^{\tfrac{\tar+\step}{2}}\Psi. %
		\end{equation}
		
		Next, we bound term involving third and higher order cumulants   in \eqref{eq:glob_iso_cumulant}.  
		Consider, for example,
		\begin{equation} \label{eq:global_iso_k3_example}
			\begin{split}
				\biggl\lvert \sum_{a b} \sum_{\alpha_1,\alpha_2} &\frac{\kappa(ab, \alpha_1, \alpha_2)}{N^{3/2}} M_{\bm u a} G_{b \bm v} (\partial_{\alpha_1}S_j) (\partial_{\alpha_2}S_j) \biggr\rvert \\
				\lesssim&~  N^{-3/2}\biggl\lvert \sum_{a b} \sum_{a_1 b_1 a_2 b_2}  \kappa(ab, a_1b_1, a_2b_2) M_{\bm u a} G_{b \bm v} G_{\bm u  a_1} G_{b_1\bm v}G_{\bm u  a_2} G_{b_2\bm v}    \biggr\rvert\\
				\lesssim &~  N^{3\tar/2 + 7\step/2} \Psi^{3}  \vertiii{\kappa}_3%
				, \quad\text{w.v.h.p}. 
			\end{split}
		\end{equation}
		Note that the structure of the term \eqref{eq:global_iso_k3_example} is identical to that of \eqref{eq:3cumk=3iso}. Indeed, the only difference is that the resolvent $G_{\bm{x}a}$ is replaced by the deterministic approximation $M_{\bm u a}$ ($\bm u$ and $\bm v$ in \eqref{eq:global_iso_k3_example} play the role of $\bm x$ and $\bm y$ in \eqref{eq:3cumk=3iso}). Consequently, the summation over $a$ is bounded using
		\begin{equation} \label{eq:rhoeta_savings}
			\biggl(\sum_a |M_{\bm u a}|^2\biggr)^{1/2} \le \norm{M}%
			\lesssim \frac{1}{\langle z \rangle}%
			\quad \text{instead of} \quad \biggl(\sum_a |G_{\bm u a}|^2\biggr)^{1/2} \lesssim N^{\frac{\tar+\step}{2}}\sqrt{\frac{\rho+
					\frac{1}{N\eta}
				}{\eta}}%
			,
		\end{equation}
		yielding a saving %
		of a  $\sqrt{\rho/\eta}$ factor
		in terms of the $(\rho/\eta)$-power on the right-hand side of \eqref{eq:global_iso_k3_example} compared to the bound in \eqref{eq:3cumk=3iso}. All other terms in \eqref{eq:glob_iso_cumulant} with cumulant of order three and higher   are bounded analogously to their counterparts in the proof of Proposition~\ref{prop:gronwalliso}, with the additional saving of $\sqrt{\rho/\eta}$ coming from \eqref{eq:rhoeta_savings}.
		
		Therefore, using a weighted Young inequality to handle the separated $|S_j|^{p-k}$ terms, 	we deduce that for all $j \in \{0,\dots, J\}$,
		\begin{equation} \label{eq:iso_underline_uv}
			\E\bigl[ N^{\tfrac{-j}{2J}}\bigl(\mathcal{B}^{-1}[M\underline{WG}]\bigr)_{\bm u \bm v} \overline{S_j}|S_j|^{p-2}\,\bigr] \le \bigl(N^{\tar/2 + 4\step }  (1+{\beta}^{-1})  \Psi%
			\bigr)^{p} + N^{-p\step}\E\bigl[ |S_j|^{p}\bigr].
		\end{equation}
		It follows from \eqref{eq:glob_iso_moment}, \eqref{eq:Q_j_norm},
		\eqref{eq:glob_iso_cumulant},  and \eqref{eq:iso_underline_uv} that
		\begin{equation}
			\E\bigl[ |S_j|^{p}\bigr] \lesssim \bigl(\Psi %
			\bigr)^p
			\biggl(N^{\tar/2+4\step} (1+{\beta}^{-1})  + N^{-\step}\psi +  N^{\tfrac{1}{2J}-\tar+\step}\langle z\rangle^{-1}   \bigl(1+{\beta}^{-1}\bigr) \psi\biggr)^{p}.
		\end{equation}
		Since $J \ge 2/\tar$,  and $\step \le \tar/20$,	 we have $ (1+{\beta}^{-1})  N^{\tfrac{1}{2J}-\tar + \step} \le N^{-\step}$, and we conclude that
		\begin{equation} \label{eq:S_j_prec}
			|S_j| \lesssim N^\arb\Psi%
			\bigl(N^{3\tar/4 + 4 \step } (1+{\beta}^{-1})  + N^{-\step}\psi \bigr) , \quad\text{w.v.h.p.}
		\end{equation}
		
		Next, we estimate the contribution of the last summand in \eqref{eq:star_norm} to $\lVert G-M\rVert_*$. We fix a vector $\bm v \in \mathcal{V}_J$ and compute a the $p$-th (for even $p$) moment of 
		\begin{equation}
			S \equiv S^{\bm v} :=  N^{-1}\norm{(G-M)_{\cdot\bm v}}^2 = N^{-1}\bigl((G-M)^*(G-M)\bigr)_{\bm v\bm v}.
		\end{equation} 
		Using the equation \eqref{eq:G-M_equation1} for a single $S$ factor, we obtain
		\begin{equation}
			\begin{split} \label{eq:glob_iso_moment_v}
				\E\bigl[ |S|^p \bigr] \le&~ N^{-1}\bigl\lvert\,\E\bigl[ \bigl((G-M)^* M \underline{WG}\bigr)_{\bm v\bm v} \overline{S}|S|^{p-2} \bigr]\,\bigr\rvert \\
				&+ N^{-1}\bigl\lvert\,\E\bigl[ \bigl((G-M)^*  M \mathcal{S}[G-M] G\bigr)_{\bm v\bm v} \overline{S}|S|^{p-2} \bigr]\,\bigr\rvert.
			\end{split}
		\end{equation}
		To estimate the term in the second line of \eqref{eq:glob_iso_moment_v}, we note the following bound,
		\begin{equation} \label{eq:quadS_est}
			\bigl\lvert \bigl(X^*M\mathcal{S}[X]Y\bigr)_{\bm v \bm v} \bigr\rvert \le \norm{X_{\cdot\bm v}}\norm{M}\norm{\mathcal{S}}_{\max \to \norm{\cdot}} \norm{X}_{(0)}\norm{Y_{\cdot\bm v}}.
		\end{equation}
		Therefore, using \eqref{eq:star_norm}, \eqref{eq:glob_assume}, \eqref{eq:Psi_assume}, \eqref{eq:G_psi_bound},  and \eqref{eq:quadS_est}, we obtain the very-high-probability bound
		\begin{equation} \label{eq:v_quad_term_bound}
			\begin{split}
				\frac{1}{N}\bigl\lvert \bigl((G-M)^* M\mathcal{S}[G-M]G\bigr)_{\bm v \bm v}\bigr\rvert &\lesssim \frac{1%
				}{\langle z \rangle\sqrt{N}} \norm{G-M}_{*}^2\norm{G_{\cdot\bm v}} \lesssim N^{-\tar+\step}\Psi^2\psi.%
			\end{split}
		\end{equation}
		Next, we turn to estimating the first term on the right-hand side of \eqref{eq:glob_iso_moment_v} using the multivariate cumulant expansion formula,
		\begin{equation} \label{eq:v_cumulant_expand}
			\begin{split}
				\frac{1}{N}\bigl\lvert\,\E\bigl[ \bigl((G-M)^* M \underline{WG}\bigr)_{\bm v\bm v}& \overline{S}|S|^{p-2} \bigr]\,\bigr\rvert 
				\lesssim \frac{1}{N^2}\biggl\lvert \sum_{a b c}\sum_{\alpha_1}  \kappa(ab,\alpha_1)G_{b\bm v} M_{ca} \partial_{\alpha_1}\bigl\{(G-M)^*_{\bm v c}\overline{S}|S|^{p-2}\bigr\} \biggr\rvert\\
				&+ \frac{1}{N}\sum_{k=2}^L \biggl\lvert \sum_{a b c}\sum_{\bm\alpha \in \mathcal{N}(ab)^k}\frac{\kappa(ab,\bm\alpha)}{N^{(k+1)/2}k!} M_{ca} \partial_{\bm\alpha}\bigl\{G_{b\bm v}(G-M)^*_{\bm v c}\overline{S}|S|^{p-2}\bigr\} \biggr\rvert\\
				&+\Omega_{L}^{\bm{v}},
			\end{split}
		\end{equation}
		where for sufficiently large integer $L$, the error term $\Omega_{L}^{\bm{v}}$ admits the bound $\Omega_{L}^{\bm{v}} \lesssim \Psi^{2p}$%
		, and is therefore negligible.
		
		We bound the term involving the second cumulants in \eqref{eq:v_cumulant_expand}. First, for the term containing $\partial_{\alpha_1} (G-M)^*_{\bm v c}$, completely analogously to \eqref{eq:glob_iso_k2}, we obtain
		\begin{equation}
			\frac{1}{N^2}\sum_{c}\biggl\lvert \sum_{b a_1 b_1} \kappa\bigl((M\bm{e}_c) b,a_1b_1\bigr)G_{b\bm v} G^*_{\bm v a_1}G^*_{b_1 c}\biggr\rvert \lesssim \norm{\kappa}_2^\mathrm{iso} N^{\tar+2\step}\Psi^2%
			, \quad\text{w.v.h.p.},
		\end{equation}
		where the additional summation over the index $c$ is compensated by the $N^{-1}$ prefactor. Next, we estimate the terms arising from $\partial_{\alpha_1}S$. We focus on the term containing $((G-M)^*\partial_{\alpha_1}G)_{\bm v\bm v}$, other terms are estimated similarly. For the cross part	 $\kappa_\mathrm{c}$, we obtain (ignoring the factor $|S|^{p-2}$ temporarily)
		\begin{equation}
			\begin{split}
				\frac{1}{N^3} \biggl\lvert &\sum_{c b} \sum_{\alpha_1} \kappa_{\mathrm{c}}\bigl((M \bm{e}_c) b,\alpha_1\bigr)G_{b\bm v} (G-M)^*_{ \bm v c} \bigl((G-M)^*\partial_{\alpha_1}G\bigr)_{\bm v \bm v} \biggr\rvert\\
				&\lesssim \frac{1}{N^2} \sum_{c d} \bigl\lvert (G-M)^*_{ \bm v c } (G-M)^*_{\bm v d} \bigr\rvert \frac{1}{N}\sum_{b  b_1} \biggl\lvert\sum_{a_1}\kappa_{\mathrm{c}}\bigl((M \bm{e}_c) b,a_1b_1\bigr)G_{d a_1}\biggr\rvert \bigl\lvert G_{b\bm v}G_{b_1\bm v} \bigr\rvert \\
				&\lesssim \vertiii{\kappa}_{2}^\mathrm{iso} N^{\tar+2\step}\Psi^2 %
				\frac{1}{N^2} \sum_{c d}\bigl\lvert (G-M)^*_{ \bm v c } (G-M)^*_{\bm v d} \bigr\rvert\\
				&\lesssim \vertiii{\kappa}_{2}^\mathrm{iso} N^{\tar+2\step} \Psi^2   \norm{G-M}_*^2 %
				\lesssim \norm{\kappa}_2^\mathrm{iso}  N^{\tar+2\step} \Psi^4 %
				\psi^2, \quad\text{w.v.h.p.},
			\end{split}
		\end{equation}
		where in the second step we used the bound analogous to \eqref{eq:glob_iso_k2} for each $c,d$, and in the last step we used \eqref{eq:Psi_assume}.
		
		Similar estimates hold for terms involving higher order cumulants in \eqref{eq:v_cumulant_expand}.  
		For example, identifying $\alpha_i := (a_i, b_i)$, 
		\begin{equation}
			\begin{split}
				N^{-7/2}\biggl\lvert \sum_{abc}&\sum_{\alpha_1,\alpha_2}\kappa(ab,\alpha_1,\alpha_2) M_{ca}(G-M)^*_{\bm v c} \bigl(\partial_{\alpha_1}G\bigr)_{b\bm v} \bigl((G-M)^*\partial_{\alpha_2}G\bigr)_{\bm v\bm v}  \biggr\rvert \\
				&\lesssim N^{-7/2}\sum_{cd}\bigl\lvert (G-M)^*_{\bm v d} (G-M)^*_{\bm v c}\bigr\rvert \biggl\lvert \sum_{a b}\sum_{\alpha_1, \alpha_2}\kappa(ab,\alpha_1,\alpha_2) M_{ca} G_{b a_1} G_{b_1\bm v} G_{d a_2} G_{b_2\bm v}  \biggr\rvert\\
				&\lesssim \vertiii{\kappa}_3\norm{G-M}_*^2  N^{\tar+3\step} \Psi^2%
				\lesssim   \vertiii{\kappa}_3 N^{\tar+3\step} \langle z \rangle^{-1}\Psi^4%
				\psi^2, \quad\text{w.v.h.p}.
			\end{split}
		\end{equation}
		Therefore, we obtain, using the very-high-probability bound $S \lesssim \psi\Psi$ by \eqref{eq:Psi_assume}, 
		\begin{equation}
			\frac{1}{N}\bigl\lvert\,\E\bigl[ \bigl((G-M)^* M \underline{WG}\bigr)_{\bm v\bm v} \overline{S}|S|^{p-2} \bigr]\,\bigr\rvert \lesssim \bigl(\Psi%
			\bigr)^{2p} \biggl(N^{\tar}N^{8\step} + N^{-\step}\psi^2 \biggr)^p,
		\end{equation}
		hence, using \eqref{eq:glob_iso_moment_v} and \eqref{eq:v_quad_term_bound}, we deduce that with very high probability,
		\begin{equation} \label{eq:S_prec}
			\sqrt{S} \lesssim N^\arb\Psi%
			\bigl(N^{\tar/2+4\step} + N^{-\step/2}\psi\bigr).
		\end{equation}
		It follows from \eqref{eq:star_norm}, \eqref{eq:iso_psi_def},  \eqref{eq:S_j_prec} and \eqref{eq:S_prec}, that 
		\begin{equation}
			\Psi^{-1}\norm{G - M}_* \lesssim \psi \,\,\text{w.v.h.p.} \Longrightarrow \Psi \lesssim N^{\tar/2+4\step+\arb} (1+{\beta}^{-1})  + N^{-\step/2+\arb}\psi \,\,\text{w.v.h.p}.
		\end{equation}
		By iteration, this implies that  $\Psi^{-1}\norm{G - M}_* \lesssim N^{\tar/2+4\step+\arb} (1+{\beta}^{-1})  \lesssim N^{3\tar/4 + 4\step +\arb}$ with very high probability, since $\beta \ge N^{-\tar/4}$ in $\globD$. This concludes the proof of Lemma~\ref{lemma:Gap}.	
	\end{proof}
	
	\subsection{Proof of the Averaged Bound in Proposition~\ref{prop:global}} \label{sec:glob_av}
	We conclude this section by proving the averaged law in Proposition~\ref{prop:global} using the isotropic law
	\eqref{eq:global_iso}, proved in Section~\ref{sec:glob_iso} above, as an input.

	\begin{proof}[Proof of the averaged law in \eqref{eq:global_av}]
		Fix a deterministic matrix $B$ and a spectral parameter $z \in \globD$, and let $R := \langle (G-M)B \rangle$. Using the equation \eqref{eq:1G_expand}, we compute the $p$-th (for even $p$) moment of $R$,
		\begin{equation} \label{eq:global_av_moment}
			\E\bigl[|R|^{p}\bigr] 
			\le \bigl\lvert\E\bigl[\bigl\langle M\mathcal{S}[G-M](G-M) \other{B} \bigr\rangle\overline{R}|R|^{p-2}\bigr]\bigr\rvert 
			+ \bigl\lvert\E\bigl[\bigl\langle  M\underline{WG}\other{B}\bigr\rangle\overline{R}|R|^{p-2}\bigr]\bigr\rvert,
		\end{equation}
		where we denote $\other{B} := \bigl((\mathcal{B}^{-1})^*[B^*]\bigr)^*$. By \eqref{eq:stability_factor} and Lemma~\ref{lemma:stab_op}, the observable $\other{B}$ satisfies
		\begin{equation} \label{eq:obs_hs_norm}
			\bigl\lVert\other{B}\bigr\rVert_{\mathrm{hs}} \lesssim  \bigl(1+{\beta}^{-1}\bigr) \norm{B}_{\mathrm{hs}}.
		\end{equation}

		To bound the first term on the right-hand side of \eqref{eq:global_av_moment}, we employ the polar decomposition 
		$\other{B} = \sum_j \sigma_j \bm v_j \bm u_j^*$, where $\sigma_j := \sigma_j(\other{B})$ and $\bm u_j := \bm u_j(\other{B}), \bm v_j := \bm v_j(\other{B})$ are the singular values and corresponding left and right, respectively, singular vectors of $\other{B}$. It follows from  \eqref{eq:global_iso}, \eqref{eq:MSXX_est}, and \eqref{eq:obs_hs_norm}, that with very high probability,
		\begin{equation} \label{eq:global_av_quad_term}
			\bigl\lvert \bigl\langle M\mathcal{S}[G-M](G-M) \other{B} \bigr\rangle \bigr\rvert \le \frac{1}{N}\sum_{j} |\sigma_j| \bigl\langle \bigl(M\mathcal{S}[G-M](G-M)\bigr)_{\bm u_j \bm v_j} \bigr\rangle \bigr\rvert \lesssim N^{2\tar} \bigl(1+{\beta}^{-1}\bigr) \Psi^2\norm{B}_{\mathrm{hs}},
		\end{equation}
		where $\Psi %
		:=\Psi(z)$ is
		defined in \eqref{eq:iso_psi_def}.

		Next, we bound the second term on the right-hand side of \eqref{eq:global_av_moment} using the multivariate cumulant expansion formula from Proposition~\ref{prop:cumex}, 
		\begin{equation} \label{eq:global_av_cumul_expand}
			\begin{split}
				\biggl\lvert\E\bigl[\bigl\langle  M\underline{WG}\other{B}\bigr\rangle\overline{R}|R|^{p-2}\bigr]\biggr\rvert 
				\le &~ \biggl\lvert\E\biggl[ \frac{1}{N^2}\sum_{ab}\sum_{\alpha_1} \kappa(ab, \alpha_1) \bigl(G\other{B}M\bigr)_{ba} \partial_{\alpha_1}\bigl\{\overline{R}|R|^{p-2}\bigr\}\biggr]\biggr\rvert\\
				&+ \sum_{k = 2}^L\biggl\lvert\E\biggl[ \frac{1}{N}\sum_{ab}\sum_{\bm\alpha \in \mathcal{N}(ab)^k} \frac{\kappa(ab, \bm\alpha)}{N^{(k+1)/2}k!}  \partial_{\bm\alpha}\bigl\{\bigl(G\other{B}M\bigr)_{ba}\overline{R}|R|^{p-2}\bigr\}\biggr]\biggr\rvert\\
				&+\bigl\lvert\Omega^B_{L}\bigr\rvert.
			\end{split}
		\end{equation}
		Here, once again $\Omega^B_{L}$ is an error term satisfying $|\Omega^B_{L}| \lesssim (\sqrt{\langle z \rangle/(N\eta)}\Psi\norm{B}_{\mathrm{hs}})^p$ for large enough $L$, controlled similarly to \eqref{eq:Omegaest}.
		The terms involving second order cumulants admit the bound    (ignoring the common $|R|^{p-2}$ factor)  
		\begin{equation} \label{1}
			\begin{split}
				\biggl\lvert \frac{1}{N^2}\sum_{ab}\sum_{\alpha_1} \kappa(ab, \alpha_1) \bigl(G\other{B}M\bigr)_{ba} \partial_{\alpha_1}R \biggr\rvert &\le \biggl\lvert \frac{1}{N^3}\sum_{ab}\sum_{a_1b_1} \kappa(ab, a_1b_1) \bigl(G\other{B}M\bigr)_{ba} \bigl(G B G\bigr)_{b_1a_1} \biggr\rvert \\ 
				&\le \frac{1}{\langle z \rangle N^2\eta^2} \bigl\lVert|\kappa(*,*)|\bigr\rVert \bigl\langle \other{B}\other{B}^*\Im G\bigr\rangle^{1/2} \bigl\langle BB^*\Im G\bigr\rangle^{1/2}\\ %
				&\lesssim N^\tar \bigl(1+{\beta}^{-1} \bigr) \vertiii{\kappa}_2 \frac{\langle z \rangle}{N\eta}\Psi^2\norm{B}_{\mathrm{hs}}^2, \quad\text{w.v.h.p.}, %
			\end{split}
		\end{equation}
		where in the second step we used the norm bound \eqref{eq:M_bound}. Here, in the last step, we used the established isotropic law \eqref{eq:global_iso}, the spectral decomposition of $\other{B}\other{B}^*$ and \eqref{eq:obs_hs_norm} to assert that, with very high probability,
		\begin{equation} \label{eq:TrBBG_HS_bound}
			\frac{\bigl\langle \other{B}\other{B}^*\Im G\bigr\rangle}{N\eta} = \frac{1}{N^2\eta}\sum_{j} |\sigma_j|^2 \bigl(\Im G\bigr)_{\bm u_j\bm u_j} \lesssim \frac{N^\tar}{N^2\eta}\sum_j |\sigma_j|^2 \biggl(\rho+\sqrt{\frac{\rho}{N\eta}} +
			\frac{1}{N\eta}\biggr)  
			\lesssim N^\tar   \bigl(1+{\beta}^{-1}\bigr)^2  \langle z \rangle^2 \Psi^2  \lVert B\rVert_{\mathrm{hs}}^2,
		\end{equation}
		where $\sigma_j$ and $\bm u_j$ are the singular values and left singular vectors of $\other{B}$. Similar bound without the factor $  (1+{\beta}^{-1})^2 $ holds for $B$ instead of $\other{B}$.
		Note that, unlike for the isotropic law \eqref{eq:global_iso}, for the current proof of the average law there is no need to split the second order cumulant into direct and cross terms,
		the simpler bound \eqref{eq:summcum} suffices.

		Next, we estimate the terms in \eqref{eq:global_av_cumul_expand} involving third order cumulants.
		Consider the term containing a single $(\partial R)$. Dropping $|R|^{p-2}$, we obtain
		\begin{equation} \label{2}
			\begin{split}
				\biggl\lvert N^{-5/2}\sum_{ab}\sum_{\alpha_1, \alpha_2}  &\kappa(ab, \alpha_1, \alpha_2) \bigl(\partial_{\alpha_1}G\other{B}M\bigr)_{ba} (\partial_{\alpha_2}R) \biggr\rvert\\
				&\lesssim N^{-7/2} \norm{M} \max_{cd}|G_{cd}|\sum_{ab}\sum_{\alpha_1\alpha_2}  \bigl\lvert \kappa(ab, \alpha_1,\alpha_2) \bigr\rvert   \bigl\lvert\bigl(G\other{B}\other{B}^*G^*\bigr)_{b_1b_1}\bigr\rvert^{1/2} \bigl(GBG\bigr)_{\alpha_2} \\
				&\lesssim \langle z \rangle^{-2} \biggl\lVert \sum_{ab}|\kappa(ab, *,*)|\biggr\rVert  \sqrt{\frac{\langle \other{B}\other{B}^*\Im G \rangle}{N\eta} } \sqrt{\frac{\langle  BB^* \Im G  \rangle }{N^3\eta^3}}  %
				\lesssim %
				N^\tar  \bigl(1+{\beta}^{-1}\bigr)  \frac{\vertiii{\kappa}_3}{N\eta}\Psi^2\norm{B}_{\mathrm{hs}}^2 %
				,
			\end{split}
		\end{equation}	
		with very high probability, where we used \eqref{eq:global_iso}, \eqref{eq:TrBBG_HS_bound}, and the bounds
		\begin{equation}
			\bigl\lvert (G\other{B}M)_{ab}\bigr\rvert \le \norm{M}\bigl\lvert\bigl(G\other{B}\other{B}^*G^*\bigr)_{aa}\bigr\rvert^{1/2}, \quad 
			\frac{1}{N}\sum_{ab} |(GBG)_{ab}|^2 %
			\le \frac{1}{\eta^3}\langle BB^* \Im G \rangle.  %
		\end{equation}
		The term containing $(\partial^2R )$ admits a completely analogous estimate.  
		
		For the term containing $(\partial R)^2$, we obtain, dropping $|R|^{p-3}$,
		\begin{equation} \label{3}
			\begin{split}
				\biggl\lvert N^{-5/2}\sum_{ab}\sum_{\alpha_1, \alpha_2}  &\kappa(ab, \alpha_1, \alpha_2) \bigl(G\other{B}M\bigr)_{ba} (\partial_{\alpha_1}R)(\partial_{\alpha_2}R) \biggr\rvert\\
				&\lesssim N^{-9/2} \max_{\alpha}\bigl\lvert (GBG)_{\alpha}\bigr\rvert \sum_{ab,\alpha_2}\sum_{\alpha_1}  \bigl\lvert \kappa(ab, \alpha_1,\alpha_2)\bigr\rvert \bigl\lvert\bigl(G\other{B}M\bigr)_{ba}  \bigl(GBG\bigr)_{\alpha_2}\bigr\rvert \\ 
				&\lesssim N^{\tar}\langle z \rangle^2 \Psi^2\norm{B}_{\mathrm{hs}}  \biggl\lVert \sum_{\alpha_1} |\kappa(*, \alpha_1,*)|\biggr\rVert \sqrt{\frac{\langle \other{B}MM^*\other{B}^* \Im G\rangle }{N\eta}} \sqrt{\frac{\langle B B^* \Im G \rangle}{N^3\eta^3}} \\   %
				&\lesssim %
				N^{2\tar}  \bigl(1+{\beta}^{-1}\bigr) \frac{\vertiii{\kappa}_3}{N\eta }    \langle z \rangle^3 \Psi^4 \lVert B\rVert_{\mathrm{hs}}^3, \quad \text{w.v.h.p.}, %
			\end{split}
		\end{equation}
		where we used the local law \eqref{eq:global_iso} to assert that, with very high probability,
		\begin{equation}
			\frac{1}{N^{3/2} }\bigl\lvert (GBG)_{ab}\bigr\rvert \lesssim \frac{\norm{B}}{\sqrt{N} } \frac{\sqrt{\bigl(\Im G\bigr)_{aa}\bigl(\Im G\bigr)_{bb}}}{N\eta}\lesssim N^{\tar}\langle z \rangle^2 \Psi^2\norm{B}_{\mathrm{hs}}.
		\end{equation}
		Note that in estimating $\max_\alpha\lvert (GBG)_{\alpha}\rvert$,
		we need to use the operator norm $\norm{B}$ since no summation on indices is available.
		We convert it into $\norm{B}_{\mathrm{hs}}$ at a costs of an extra $\sqrt{N}$ factor, as $\norm{B} \le \sqrt{N}\norm{B}_{\mathrm{hs}}$, but this is affordable since we collected sufficiently many powers of $N^{-1/2}$ in the third cumulant term.
		
		Finally, we estimate the term with no $(\partial R)$, namely, dropping $|R|^{p-1}$
		\begin{equation}
			\biggl\lvert N^{-5/2}\sum_{ab}\sum_{\alpha_1, \alpha_2}  \kappa(ab, \alpha_1, \alpha_2) G_{ba_1} G_{b_1a_2}\bigl(G\other{B}M\bigr)_{b_2a} \biggr\rvert.
		\end{equation}
		For both $G_{ba_1}$ and $G_{b_1a_2}$, we write $G_{ab} = M_{ab} + (G-M)_{ab}$ and use the bound $|M_{ab}|\lesssim \langle z \rangle^{-1}$, $| (G-M)_{ab}| \lesssim N^{\tar}\Psi$, w.v.h.p., that follow  from \eqref{eq:M_bound} and \eqref{eq:global_iso}, respectively,  to estimate the contributions coming from the deterministic and the fluctuating part separately. In particular, we obtain the very-high-probability bound,
		\begin{equation} \label{4}
			\begin{split}
				\biggl\lvert N^{-5/2}\sum_{ab}\sum_{\alpha_1, \alpha_2}  &\kappa(ab, \alpha_1, \alpha_2) (G-M)_{ba_1} M_{b_1a_2}\bigl(G\other{B}M\bigr)_{b_2a} \biggr\rvert\\
				&\lesssim N^{-1/2+\tar} \Psi \langle z \rangle^{-2}  \biggl\lVert\sum_{\alpha_1}  |\kappa(*, \alpha_1, *)|\biggr\rVert  \bigl\langle G\other{B}\other{B}^*G^* \bigr\rangle^{1/2}\\
				&\lesssim \langle z \rangle^{-1} N^{2\tar} \bigl(1+{\beta}^{-1}\bigr) \vertiii{\kappa}_3 \Psi^2\norm{B}_{\mathrm{hs}}.
			\end{split}
		\end{equation}
		The contributions coming from $M_{ba_1} (G-M)_{b_1a_2}$ and $(G-M)_{ba_1} (G-M)_{b_1a_2}$ admit analogous estimates. Therefore, it remains to bound the contribution coming from $M_{ba_1} M_{b_1a_2}$. Using \eqref{eq:kappa_3_av_norm}, we estimate
		\begin{equation} \label{5}
			\begin{split}
				\biggl\lvert N^{-5/2}\sum_{ab}\sum_{\alpha_1, \alpha_2}  \kappa(ab, \alpha_1, \alpha_2) M_{ba_1} M_{b_1a_2}\bigl(G\other{B}M\bigr)_{b_2a} \biggr\rvert &\le N^{-1}\vertiii{\kappa}_3^\mathrm{av}\norm{M}^2 \bigl\lVert G\other{B}M \bigr\rVert_{\mathrm{hs}}\\
				&\lesssim N^{\tar/2} \bigl(1+{\beta}^{-1}\bigr)  \langle z \rangle^{-2} N^{-1/2}\Psi \norm{B}_{\mathrm{hs}}, \quad\text{w.v.h.p.}
			\end{split}
		\end{equation}
		Putting back the dropped $|R|$ factors into the estimates \eqref{1}, \eqref{2}, \eqref{3}, \eqref{4} and \eqref{5}
		and using the Young’s inequality to separate these factors into an additive $|R|^p$ term
		with a small multiplicative constant, we see 
		that the second and third order cumulant terms in \eqref{eq:global_av_cumul_expand} can be estimated
		by $\bigl(N^{3\tar/2}  \langle z \rangle^{1/2}  (N\eta)^{-1/2}\Psi \norm{B}_{\mathrm{hs}}\bigr)^p + N^{-p\tar/4} |R|^{p}$. Here we used $\beta \ge N^{-\tar/4}$ from \eqref{eq:globD_def}.
		
		Estimating the terms involving forth and higher order cumulants using simple power counting, similarly to \eqref{eq:avcase2}--\eqref{eq:avcase1}, we deduce that
		\begin{equation}
			\E\bigl[ |R|^{p} \bigr] \lesssim \biggl(N^{3\tar/2}  \langle z \rangle^{1/2}  (N\eta)^{-1/2}\Psi \norm{B}_{\mathrm{hs}}\biggr)^p + N^{-p\tar/4}\E\bigl[ |R|^{p} \bigr].
		\end{equation}
		This concludes the proof of \eqref{eq:global_av}.
	\end{proof}

	\appendix

	\section{Technical Lemmas} \label{app:tech}
	In this appendix, we collect the proofs of several technical lemmas used throughout this paper. 
	\begin{proof} [Proof of Lemma~\ref{lemma:OUsurj}]
		Observe that for any $s\ge 0$ and any initial condition $H$, the distribution of the random matrix $\mathfrak{F}_{\mathrm{zag}}^s [H]$ satisfies
		\begin{equation}
			\mathfrak{F}_{\mathrm{zag}}^s\bigl[ H  \bigr] \overset{d}{=} \E [H]  + \ee^{-s/2}\bigl( H  - \E  H \bigr) + \sqrt{1 - \ee^{-s}}\, \Sigma_H^{1/2}\bigl[W_{\mathrm{G}}\bigr],
		\end{equation}
		where $W_{\mathrm{G}}$ is a standard GUE/GOE random matrix (in the same symmetry class as $H$) independent of $H$. Moreover, if $\Sigma_H \ge c\Sigma_{\mathrm{G}}$ for some constant $0 < c < 1$, then there exists a random matrix $\widehat{W}$ with $\E \widehat{W} = 0$, such that
		\begin{equation} \label{eq:Gauss_component}
			\Sigma_H^{1/2}\bigl[W_{\mathrm{G}}\bigr] \overset{d}{=} \widehat{W} + \sqrt{c}\,\other{W}_{\mathrm{G}},
		\end{equation}
		where $\other{W}_{\mathrm{G}}$ is a GUE/GOE matrix independent of $\widehat{W}$. Therefore, 
		\begin{equation} \label{eq:zag_explicit}
			\mathfrak{F}_{\mathrm{zag}}^s\bigl[H \bigr] \overset{d}{=} \widehat{H}^s + \sqrt{c}\sqrt{1 - \ee^{-s}}\, \other{W}_{\mathrm{G}}, \quad 
			\widehat{H}^s \overset{d}{:=} \E [H] + \ee^{-s/2}\bigl(H - \E H\bigr) + \sqrt{1 - \ee^{-s}} \, \widehat{W},
		\end{equation}
		where $\other{W}_{\mathrm{G}}$ is independent of $\widehat{H}^s$. 
		Hence, \eqref{eq:init_cond_for_zig} follows immediately from \eqref{eq:OU_flow} and \eqref{eq:zag_explicit} for $\mathfrak{H}_{c,t}(H) $  defined as
		\begin{equation} 
			\mathfrak{H}_{c,t}(H) := \ee^{t/2}\biggl(\E H + \ee^{-s(t)/2}\bigl(H - \E H\bigr) + \sqrt{1 - \ee^{-s(t)}} \widehat{W} \biggr),
		\end{equation}
		where the random matrix $\widehat{W}$ independent of $H$ satisfies \eqref{eq:Gauss_component}, and $s(t) \equiv s_c(t)$ is defined in \eqref{eq:init_cond_reverse}. 
		
		The estimate \eqref{eq:s(t)_est} is a direct consequence of \eqref{eq:init_cond_reverse}. This concludes the proof of Lemma~\ref{lemma:OUsurj}.
	\end{proof}
	
	\begin{proof} [Proof of Lemma~\ref{lemma:abvD_t}]
		Fix a time $0 \le t \le T$ and let $z_s$ denote the solution of \eqref{eq:z_flow} that satisfies $z_t \in \abvD_{t}$.
		It follows from \eqref{eq:z_flow} and \eqref{eq:m_flow} that for all $s \in [0,t]$,
		\begin{equation} \label{eq:drhoeta}
			\rd \bigl(\eta_s\rho_s(z_s)\bigr) =   - \pi \rho_s(z_s)^2   \rd s \le 0, 
		\end{equation}
		where we denote $\eta_s := \Im z_s$. A similar computation reveals that
		\begin{equation} \label{eq:drho^-1eta}
			\rd \bigl(\rho_s(z_s)^{-1}\eta_s\bigr) =  -\bigr(\rho_s(z_s)^{-1} \eta_s + \pi\bigl) \rd s \le -\pi \rd s,
		\end{equation}
		since $\rho_s^{-1}(z)\Im z \ge 0 $ for all $z \in \mathbb{C}$.
		Moreover, it follows from \eqref{ass:Mbdd} that $|\rd z_s / \rd s| \lesssim C'$ for all $0 \le s \le T$ and all $z_t \in \abvD_t$, hence, using the estimates \eqref{eq:drhoeta} and \eqref{eq:drho^-1eta}, we deduce that $z_s \in \abvD_{s}$ for all $s \in [0,t]$. This concludes the proof of Lemma~\ref{lemma:abvD_t}.
	\end{proof}
	
	\begin{proof} [Proof of Lemma~\ref{lemma:Mt}]
		Clearly, for terminal times $0 \le T\lesssim 1$, the solutions to \eqref{eq:ASevol} satisfy $\norm{A_t - A_T} \lesssim T-t$ and $\norm{\mathcal{S}_t - \mathcal{S}_T}_{\norm{\cdot} \to \norm{\cdot}} \lesssim T-t$, for all $0 \le t \le T$. Therefore, for some sufficiently small threshold $T_*\sim 1$, the first bound in \eqref{eq:imM} follows immediately from  Assumption~\ref{ass:Mbdd} and the stability of the MDE against small perturbations of the data pair, see Section 10 in \cite{AEK2020}. 
		Moreover, it follows from the fullness Assumption~\ref{ass:full}, that, by possibly shrinking the threshold $T_*$, we can guarantee that $\mathcal{S}_t[X] \sim \langle X \rangle$ for any Hermitian matrix $X\ge 0$.
		Hence, the second bound in \eqref{eq:imM} follows from Proposition 3.5 in \cite{AEK2020} and the first bound in \eqref{eq:imM}. This concludes the proof of Lemma~\ref{lemma:Mt}.
	\end{proof}
	
	\begin{proof} [Proof of Lemma~\ref{lem:monotone}]
		Throughout the proof, we consider the time $s\in [0,s_{\mathrm{final}}]$ to be fixed, and drop it from the superscript of $G^s$. The uniformity of all estimates in $s$ follows trivially from the assumptions of Lemma~\ref{lem:monotone}.
		
		First, we prove the second estimate in \eqref{eq:monotone}. 
		The map $\eta \mapsto \eta^2/(x^2 + \eta^2)$ is increasing in $\eta > 0$ for any $x\in\mathbb{R}$, hence it follows by spectral decomposition of $\Im G$ that
		\begin{equation} \label{eq:ImG_monot}
			\eta_1 \Im G(E+\ii\eta_1) \le \eta_0 \Im G(E+\ii\eta_0),
		\end{equation}
		in the sense of quadratic forms. Therefore, the second estimate in \eqref{eq:monotone} follows immediately from \eqref{eq:bootstrap_init}. 
		
		Next, we prove the first estimate in \eqref{eq:monotone}. Using the Schwarz inequality and the Ward identity,  we deduce that for all $0 < \eta < \eta_0$,
		\begin{equation} \label{eq:Gdeta}
			\biggl\lvert\frac{\rd  }{\rd \eta}\bigl(G(E+\ii\eta)\bigr)_{\bm u \bm v}\biggr\rvert \lesssim \frac{\bigl\lvert \bigl(\Im G(E+\ii\eta)\bigr)_{\bm u \bm u} \bigl(\Im G(E+\ii\eta)\bigr)_{\bm v \bm v}\bigr\rvert^{1/2}%
			}{\eta} \lesssim \frac{\eta_0}{\eta^2}\rho(E+\ii\eta_0),
		\end{equation}
		where in the second step we used the monotonicity of the maps $\eta \mapsto \eta \Im G (E+\ii\eta)$ and $\eta \mapsto \eta \rho(E+\ii\eta)$, and the second bound in \eqref{eq:monotone} established above. Integrating the bound \eqref{eq:Gdeta} from $\eta_1$ to $\eta_0$, we obtain 
		\begin{equation} \label{eq:G_monot}
			\bigl\lvert \bigl(G(E+\ii\eta_1)\bigr)_{\bm u \bm v} \bigr\rvert \lesssim \bigl\lvert \bigl(G(E+\ii\eta_0)\bigr)_{\bm u \bm v} \bigr\rvert + \frac{\eta_0}{\eta_1}\rho(E+\ii\eta_0).
		\end{equation}
		Since $\rho(E+\ii\eta_0) \lesssim 1$, the first estimate in \eqref{eq:monotone} follows immediately from \eqref{eq:bootstrap_init} and \eqref{eq:G_monot}. This concludes the proof of Lemma~\ref{lem:monotone}.
	\end{proof}
	
	\begin{remark}[Local Laws below the Scale] \label{rem:monot_below}
		Assume that $\rho(E+\ii\eta_1)N\eta_1 \le N^{\scl}$ and $\rho(E+\ii\eta_0)N\eta_0 = N^{\scl}$,
		in particular $\eta_1\le \eta_0$. Using \eqref{eq:ImG_monot} with \eqref{eq:ll_template} at $z:=E+\ii\eta_0$ as an input, we obtain the very-high-probability bound
		\begin{equation} \label{eq:monot_im_below}
			\bigl(\Im G(E+\ii\eta_1)\bigr)_{\bm u \bm u} \lesssim \frac{\eta_0}{\eta_1}\rho(E+\ii\eta_0) \lesssim \frac{\rho(E+\ii\eta_0) N \eta_0}{N\eta_1} \lesssim \frac{N^\scl}{N\eta_1}.
		\end{equation}
		Using Lemma~\ref{lemma:stab_op} and the identity $\rd M(z)/\rd z = \mathcal{B}^{-1}(z)[M(z)^2]$, that follows by taking the $z$-derivative of \eqref{eq:MDE}, we conclude that $\norm{\rd M(z)/\rd z} \lesssim |\rho(z)/\Im z|$. Hence, differentiating $(G(E+\ii\eta)- M(E+\ii\eta))_{\bm u \bm v}$ with respect to $\eta$, similarly to \eqref{eq:Gdeta}, we can deduce that 
		\begin{equation} \label{eq:monot_iso}
			\bigl\lvert\bigl(G(E+\ii\eta_1)- M(E+\ii\eta_1)\bigr)_{\bm u \bm v}\bigr\rvert \lesssim \frac{N^{\scl}}{N\eta_1}, \quad \text{w.v.h.p}.
		\end{equation} 
		Analogous reasoning also applies to averaged bounds. Indeed,
		\begin{equation} \label{eq:av_deriv}
			\biggl\lvert \frac{\rd}{\rd \eta} \bigl\langle\bigl(G(E+\ii\eta)- M(E+\ii\eta)\bigr)B\bigr\rangle \biggr\rvert \lesssim \frac{\bigl\lvert \bigl\langle\Im G(E+\ii\eta)\bigr\rangle \bigl\langle \Im G(E+\ii\eta)BB^*\bigr\rangle\bigr\rvert^{1/2} + \rho(E+\ii\eta)\norm{B}_{\mathrm{hs}}}{\eta}.
		\end{equation}
		Therefore, by integrating \eqref{eq:av_deriv} in $\eta$ and using \eqref{eq:ll_template}, we can deduce that 
		\begin{equation} \label{eq:monot_hs}
			\bigl\lvert \bigl\langle \bigl(G(E+\ii\eta_1)- M(E+\ii\eta_1)\bigr)B \bigr\rangle \bigr\rvert \lesssim \frac{N^{\scl}}{N\eta_1}\norm{B}_{\mathrm{hs}}, \quad \text{w.v.h.p}.
		\end{equation}
		These results show that the local laws \eqref{eq:ll_template}  hold at $z= E+\ii\eta_1$,  for any $0 < \eta_1 \le \eta_0$, once they hold at $E+\ii\eta_0$ with $\eta_0$ satisfying $\rho(E+\ii\eta_0) N \eta_0= N^{\scl}$.
	\end{remark}
	
	\begin{proof} [Proof of Lemma~\ref{lemma:rho_t_props}]
		First, we prove \eqref{eq:Delta_s_comp}.
		Let $\sigma_t$ be function defined in \eqref{eq:sigma}, corresponding to the solution $M_t$ of the time-dependent MDE \eqref{eq:MDEt}.
		It follows from Lemma 5.5 in \cite{AEK2020} that $\sigma_t$ admits a uniformly $1/3$-H\"{o}lder regular extension $\overline{\mathbb{H}}$. Moreover, it follows from Lemma 7.16 in \cite{AEK2020} that $|\sigma_t(\mathfrak{e}^-_{t})| \sim |\sigma_t(\mathfrak{e}^+_{t})| \sim \Delta_t^{1/3}$ and it follows from Theorem 7.7 (ii.b) in \cite{AEK2020} that	$\sigma_t(\mathfrak{e}^-_{t}) <0$ and $\sigma_t(\mathfrak{e}^+_{t}) > 0$. Therefore, there exists a point $x_t \in (\sigma_t(\mathfrak{e}^-_{t}), \sigma_t(\mathfrak{e}^+_{t}))$ satisfying $\sigma_t(x_t) = 0$. For any $0 \le s \le t$, let $x_s := \varphi_{s,t}(x_t)$ as defined in \eqref{eq:phi_map}. It follows from \eqref{eq:m_flow} that $\sigma_s(x_s) = 0$ for any $s\in [0,t]$.
		
		Furthermore, $1/3$-H\"{o}lder regularity of $\rho_t$ in $t$ implies that there exists $c \sim 1$ such that for times $s$ satisfying $0 \le t-s \le c\Delta_t^{1/3}$, the density $\rho_s$ has a gap in the support around $x_s$ of size $\Delta_s > 0$, let $\mathfrak{e}^-_{s}$ and $\mathfrak{e}^+_{s}$ denote its endpoints. From $1/3$-H\"{o}lder regularity of $\sigma_s$, we infer that $\dist(x_s, \mathfrak{e}^{\pm}_{s}) \sim \Delta_s$. 
		
		On the other hand, the map $h_s : x \mapsto \lim_{\eta\to+0}\rho_s(x+\ii\eta)^{-1}\eta$ is also $1/3$-H\"{o}lder regular, uniformly in $s$, hence $h_s(x_s) \sim \dist(x_s, \mathfrak{e}^{\pm}_{s})^{1/2}\Delta_s^{1/6} \sim \Delta_s^{2/3}$ by \eqref{eq:rho_comp}. Along the trajectories of \eqref{eq:z_flow}, $h_s(x_s) \sim h_t(x_t) + (t-s)$ for all $s$ satisfying $0 \le t-s \le c\Delta_t^{1/3}$, therefore \eqref{eq:Delta_s_comp} holds for all $0 \le t-s \le c\Delta_t^{1/3}$.
		In particular, $\Delta_{t - c \Delta^{1/3}} \gtrsim \Delta_t + \Delta_t^{1/2} $, which implies that \eqref{eq:Delta_s_comp} holds for all $0 \le s \le t$. This concludes the proof of \eqref{eq:Delta_s_comp}.
		
		Next, we prove \eqref{eq:end_s_flow}. A similar relation for the evolution of the gaps under the free semicircular flow was studied in Section 5.1 of \cite{Cusp1}. To keep the present paper reasonably self-contained, we present a complete proof for the evolution under the characteristic flow \eqref{eq:z_flow}. 
		Observe that it follows immediately from \eqref{eq:MDEt} that the density $\rho_s(x)$ satisfies
		\begin{equation} \label{eq:rho_evol}
			\frac{\rd}{\rd x} \rho_s(x) = \frac{\rd }{\rd x} \biggl( \frac{x}{2} \rho_s(x)  + \langle \Re M_s(x)\rangle \rho_s(x) \biggr), \quad x \in \mathbb{R}, \quad 0 \le s \le t.
		\end{equation}
		Consider the mass of $\rho_s$ that lies to the left of the point $x_s$. Equation \eqref{eq:rho_evol} implies that
		\begin{equation}
			\frac{\rd}{\rd s}\int_{-\infty}^{x_s} \rho_s(x)\rd x = 0, \quad 0 \le s \le t,
		\end{equation}
		where we used that $\rho_s(x_s) = 0$. Therefore, the mass of the band of $\rho_s$ to the left of $\mathfrak{e}^-_{s}$ is constant $0 \le s \le t$. For any $ r > 0$, define $\gamma_s(r)$ implicitly by
		\begin{equation}
			\int_{-\infty}^{\gamma_s(r)}\rho_s(x)\rd x = \int_{-\infty}^{x_s} \rho_s(x)\rd x - r.
		\end{equation}
		Note that by the definition of the edge point $\mathfrak{e}^-_{s}$ and the structure theorem \cite[Theorem 7.2 (ii)]{AEK2020} for $\rho_s$, there exists a constant $\other{c} > 0$ such that $\rho_s(\gamma_s(r)) > 0$ for all $0 \le r \le \other{c}$ and all $0 \le s \le t$. Moreover, $\gamma_s(r) < \mathfrak{e}^-_s \le x_s$. Therefore, it follows from \eqref{eq:rho_evol} that (a similar equation for the free semicircular flow was obtained in Section 4.1 of \cite{Cusp2})
		\begin{equation} \label{eq:gamma_s_evol}
			\frac{\rd }{\rd s} \gamma_s(r) = -\frac{1}{2} \gamma_s(r) - \bigl\langle \Re M_s\bigl(\gamma_s(r)\bigr) \bigr\rangle, \quad 
			0 \le r \le \other{c}, \quad 0 \le s \le t.
		\end{equation}
		The evolution equation \eqref{eq:end_s_flow} for $\mathfrak{e}^-_s$ follows by taking the limit $r\to 0$ in \eqref{eq:gamma_s_evol}. An analogous argument that considers the mass of $\rho_s$ to the right of $x_s$ implies \eqref{eq:end_s_flow} for $\mathfrak{e}^+_s$. 
		
		Next, we prove the first estimate in \eqref{eq:eta_ll_Delta}. By taking the imaginary parts of \eqref{eq:z_flow} and \eqref{eq:m_flow}, we obtain
		\begin{equation} \label{eq:eta_speed}
			\eta_s \sim \eta_t + \rho_t(t-s), \quad \rho_s \sim \rho_t, \quad 0 \le s \le t.
		\end{equation}
		Moreover, it follows form the comparison relation for $\rho_t$ from \eqref{eq:rho_comp}, that
		\begin{equation} \label{eq:rhot_size}
			\rho_t \sim \eta_t (\kapd_t + \eta_t)^{-1/2} (\Delta_t + \kapd_t + \eta_t)^{-1/6} \lesssim \eta_t^{1/2} \Delta_t^{-1/6},
		\end{equation}
		where we used $0 \le \kapd_t \le \Delta_t$ and the assumption that $\eta_t \lesssim N^{-\arb}\Delta_t$.
		Therefore, using \eqref{eq:Delta_s_comp}, \eqref{eq:eta_speed} and \eqref{eq:rhot_size}, we obtain
		\begin{equation}
			\eta_s \lesssim \eta_t^{1/2}\Delta_t^{-1/6}\bigl(\Delta_t + (t-s)^{3/2}\bigr)^{2/3} \lesssim N^{-\arb/2}\Delta_t^{1/3}\Delta_s^{2/3} \lesssim N^{-\arb/2}\Delta_s, \quad 0 \le s \le t,
		\end{equation}
		hence the first bound in \eqref{eq:eta_ll_Delta} is established. To prove the second relation
		in \eqref{eq:eta_ll_Delta}, observe that it suffices to show that for all $0 \le s \le t$ and all $\eta \lesssim N^{-\arb/2}\Delta_s$, we have the bound
		\begin{equation} \label{eq:above_gap_flows_out}
			\pm\Re\bigl[F^{\pm}_s(\eta) - F^{\pm}_s(+0)\bigr] > 0, \quad F^{\pm}_s(\eta) := -\frac{1}{2}(\mathfrak{e}^{\pm}_s + \ii\eta) - \langle M_s(\mathfrak{e}^{\pm}_s + \ii\eta) \rangle.
		\end{equation}
		Indeed, \eqref{eq:above_gap_flows_out} implies that along \eqref{eq:z_flow}, for all points at level $\eta \lesssim N^{-\arb/2}\Delta_s$ above the ends $\mathfrak{e}^\pm_s$ of the gap, their projection onto the real line moves away from the gap, for all times $0 \le s \le t$. Hence no trajectory $z_s = E_s + \ii\eta_s$ satisfying $E_t \in (\mathfrak{e}^-_t, \mathfrak{e}^+_t)$ and $\eta_t \lesssim N^{-\arb}\Delta_t$ can violate \eqref{eq:eta_ll_Delta}.
		To see that \eqref{eq:above_gap_flows_out} holds, note that the Stieltjes representation for $\langle M_s\rangle$ and the universal shape (see, e.g., \cite[Eqs.~(2.4a)--(2.4e)]{Cusp1} for precise formulas)
		of the density $\rho_s$ in the vicinity of its singularities $\mathfrak{e}^\pm_s$ yields
		\begin{equation}
			\begin{split}
				-\Re\bigl[F^{-}_s(\eta) - F^{-}_s(+0)\bigr] &= \frac{1}{\pi}\int_{\mathbb{R}}\frac{-\eta^2 }{x(x^2 + \eta^2)}   \rho_s(\mathfrak{e}^-_s + x)\rd x \ge C\Delta_s^{-1/6}\eta^{1/2}  + \mathcal{O}\bigl(\Delta_s^{-5/3}\eta^2\bigr) + \mathcal{O}(\eta^2) > 0,
			\end{split}		
		\end{equation}
		where in the last line we used the assumption $\eta \lesssim N^{-\arb/2}\Delta_s \lesssim N^{-\arb/2}$. The computation  for $F^{+}_s$ is completely analogous. This concludes the proof of \eqref{eq:eta_ll_Delta}.
		
		Next, we prove \eqref{eq:eta_ll_d_preserved}. Using the comparison relations \eqref{eq:eta_speed}
		for $\rho_s$ and $\rho^{-1}_s\eta_s$, together with the bound   $\eta_s \lesssim N^{-\arb/2} \Delta_s$, and the assumption 
		$\kapd_t(z_t) \gtrsim N^{\arb}\eta_t\,$, we deduce that
		\begin{equation}
			1 + \frac{\kapd_s}{\eta_s} \sim \frac{\rho_t^{-2}\eta_t + (t-s)}{\Delta_t^{1/3} + (t-s)^{1/2}} \gtrsim \min\biggl\{ 1+ \frac{\kapd_t }{\eta_t}, \frac{\kapd_t^{1/2}\Delta_t^{1/2}}{\eta_t} \biggr\} \gtrsim \frac{\kapd_t}{\eta_t},
		\end{equation}
		which implies \eqref{eq:eta_ll_d_preserved} immediately.

		Finally, we prove \eqref{eq:kapd_s_bound}. 	
		Without loss of generality, we can assume $z_s = \mathfrak{e}^-_s  + y_s + \ii \eta_s$ with $0 \le y_s \le (1-C_1)\Delta_s$ for some $ 1 \lesssim C_1 < 3/4$. Considering the difference of the real part of \eqref{eq:z_flow} and \eqref{eq:end_s_flow}, we obtain
		\begin{equation} \label{eq:d_w_s}
			\begin{split}
				\frac{\rd}{\rd s} y_s + \frac{1}{2}y_s  &=  \Re\bigl\langle M_s(\mathfrak{e}^-_s) - M_s(\mathfrak{e}^-_s+y_s) + M_s(\mathfrak{e}^-_s+y_s)  - M_s(z_s) \bigr\rangle \\
				&= -y_s \frac{1}{\pi}  \int_{\mathbb{ R} }\frac{\rho_s(\mathfrak{e}^-_s+x)\rd x}{ x (x-y_s)} - \eta_s^2 \frac{1}{\pi}  \int_{\mathbb{ R} }\frac{\rho_s(\mathfrak{e}^-_s+x)\rd x}{(y_s -x )\big( (y_s-x)^2 +\eta_s^2\bigr)},
			\end{split}
		\end{equation}
		where in the second line we used the Stieltjes representation for $\langle M_s \rangle$. Using the universal shape of the density $\rho_s$ near the singularities $\mathfrak{e}^\pm_s$, we conclude that uniformly in $0\le s\le t \le T$,
		\begin{equation} \label{eq:w_int_comps}
			y_s \int_{\mathbb{ R} }\frac{\rho_s(\mathfrak{e}^-_s+x)\rd x}{ x (x-y_s)} \gtrsim \frac{y_s^{1/2}}{\Delta_s^{1/6}}, \quad 
			\eta_s^2 \biggl\lvert \int_{\mathbb{ R} }\frac{\rho_s(\mathfrak{e}^-_s+x)\rd x}{(y_s-x)\big( (x-y_s)^2 +\eta_s^2\bigr)} \biggr\rvert \lesssim \frac{\eta_s^2}{\Delta_s^{1/6} (y_s +\eta_s)^{3/2}}. 
		\end{equation}
		Since $\eta_s \lesssim N^{-\arb/2}\kapd_s$ and $\kapd_s \le y_s$, we conclude from \eqref{eq:d_w_s} and \eqref{eq:w_int_comps} that 
		\begin{equation}
			\frac{\rd}{\rd s} y_s \lesssim - \frac{y_s^{1/2}}{\Delta_s^{1/6} },
		\end{equation}
		which, together with \eqref{eq:Delta_s_comp}, implies \eqref{eq:kapd_s_bound} for some constant 
		$\mathfrak{c}>0$.
		This concludes the proof of Lemma~\ref{lemma:rho_t_props}.
	\end{proof}
	
	\begin{proof}[Proof of Lemma~\ref{lemma:exclusion}]
		We restrict our considerations to the %
		event $\Omega$ on which the assumed bound \eqref{eq:excl_template} holds.
		
		First, we prove \eqref{eq:no-spec}. 
		Assume, to the contrary, that there is an eigenvalue $\lambda$ of $H$ such that $\lambda \in [\mathfrak{e}^{-}_t+f(t), \mathfrak{e}^+_t-f(t)]$, then 
		\begin{equation}
			\bigl\langle \Im G(\lambda + \ii \eta) \bigr\rangle \ge \frac{1}{N\eta}, \quad \eta > 0.
		\end{equation}	
		On the other hand, choosing $\eta$ implicitly such that $\rho_t(\lambda+\ii\eta) N \eta = N^{-\zeta/2}$, implies that  $\lambda + \ii\eta \in \subD_t$ and  $\eta \sim \kapd_t(\lambda)^{1/4} \eta_{\mathfrak{f},t}^{3/4}N^{-{\zeta}/4}$ . Therefore, using the assumed bound \eqref{eq:excl_template} with data $(\subD_{t}, \sscl+\ell\arb, \Omega)$ yields
		\begin{equation}
			\bigl\langle\Im G(\lambda + \ii \eta) \bigr\rangle \lesssim \frac{\rho_t(\lambda + \ii \eta )N\eta + N^{-\zeta} (\log N)^{\gamma} }{N\eta} \lesssim \frac{ N^{-\zeta/2}}{N\eta} .
		\end{equation}
		Therefore, we conclude by contradiction that \eqref{eq:no-spec} holds on $\Omega$. This concludes the proof of Lemma~\ref{lemma:exclusion}.	
	\end{proof}

	\section{Polynomially Decaying Metric Correlation Structure} \label{app:tree}
	In this section, we verify the last condition in Assumption~\ref{ass:cumulants}~(i) for the ensemble in Example~\ref{ex:tree}.
	More precisely, we show that \eqref{eq:kappa_3_av_norm} holds 
	under the assumption that (recall \eqref{eq:tree_kappak} from Example~\ref{ex:tree})
	\begin{equation} \label{eq:tree_decay}
		\bigl\lvert \kappa(\alpha_1, \alpha_2, \alpha_3) \bigr\rvert \le C_3 
		\prod_{e \in \mathfrak{T}_{\mathrm{min}} } \frac{1}{1 + d(e)^s} ,
	\end{equation}
	for some\footnote{  The estimate \eqref{eq:tree_target} below can be proved under the relaxed summability condition $s > 3/2$. However, $s > 2$ in \eqref{eq:tree_kappak} is still necessary for \eqref{eq:summcum}--\eqref{eq:summcum3c}. }  $s > 2$, where $\mathfrak{T}_{\mathrm{min}}$ is a minimal spanning tree
	in a complete graph with vertices $\alpha_1, \alpha_2, \alpha_3$ and edge weights induced by distance $d$, defined in \eqref{eq:label_dist}.
	That is, out goal is to show that, for all $X,Y,Z \in \mathbb{C}^{N\times N}$, the estimate
	\begin{equation} \label{eq:tree_target}
		N^{-3/2}\sum_{\alpha_1, \alpha_2, \alpha_3} \bigl\lvert \kappa(\alpha_1, \alpha_2, \alpha_3) \bigr\rvert |X_{b_1a_2}|\,|Y_{b_2a_3}|\,|Z_{b_3a_1}| \lesssim C_3 \norm{X}\norm{Y}\norm{Z}_{\mathrm{hs}},  \qquad \alpha_j := (a_j,b_j),
	\end{equation}
	holds for some absolute implicit constant, where $C_3$ is the constant from \eqref{eq:tree_decay}.

	We estimate the contribution of the case when $(\alpha_1,\alpha_3) \in \mathfrak{T}_{\mathrm{min}}$ and $d(\alpha_1,\alpha_3) = |a_1-b_3|+|b_1-a_3|$ in full detail. It is straightforward to check that in all other cases, using the trivial bounds $|X_{b_1a_2}| \le \norm{X}$, $|Y_{b_2a_3}| \le \norm{Y}$ is sufficient. 
	Indeed, since $s > 2$, the indices $b_1, a_2, b_2, a_3$ can be summed up after using the norm bounds on $X$ and $Y$; then for the remaining $(a_1,b_3)$ sum, we use $N^{-3/2}\sum_{a_1,b_3} |Z_{a_1b_3}| \lesssim \norm{Z}_{\mathrm{hs}}$.
	
	Therefore, it suffices to bound
	\begin{equation} \label{eq:X_tree}
		\mathfrak{X} \equiv\mathfrak{X}(X,Y,Z) :=	C_3N^{-3/2}\sum_{\alpha_1, \alpha_3}\frac{|Z_{b_3a_1}| }{1 + \bigl(|a_1-b_3|+|b_1-a_3|\bigr)^s}\, \sum_{\alpha_2}\frac{|X_{b_1a_2}|\,|Y_{b_2a_3}|}{1 + \bigl(|a_1-a_2|+|b_1-b_2|\bigr)^s},
	\end{equation}
	where we assumed for concreteness that $\mathfrak{T}_{\mathrm{min}} = \{(\alpha_1,\alpha_2), (\alpha_1,\alpha_3)\}$ and $d(\alpha_1,\alpha_3) = |a_1-a_2|+|b_1-b_2|$ (other cases are identical).
	First, we use the Schwarz inequality in the $b_2$ summation, to obtain
	\begin{equation} \label{eq:b2sum}
		\sum_{b_2}\frac{|Y_{b_2a_3}|}{1 + \bigl(|a_1-a_2|+|b_1-b_2|\bigr)^s} \lesssim \frac{1}{1+|a_1-a_2|^{s-1/2}}\sqrt{\sum_{b_2}|Y_{b_2a_3}|^2} \lesssim \frac{\norm{Y}}{1+|a_1-a_2|^{s-1/2}}. 
	\end{equation}
	Plugging \eqref{eq:b2sum} into the expression for $\mathfrak{X}$ in \eqref{eq:X_tree} and performing the summation in $a_3$, we obtain the estimates
	\begin{equation}
		\begin{split}
			\mathfrak{X} &\lesssim C_3\norm{Y}N^{-3/2}\sum_{a_1, b_3}\frac{|Z_{b_3a_1}| }{1+|b_3-a_1|^{s-1}}\, \sum_{a_2}\frac{1}{1+|a_1-a_2|^{s-1/2}}\,\sum_{b_1}|X_{b_1a_2}| \\
			&\lesssim C_3\norm{X}\norm{Y}N^{-1}\sum_{a_1, b_3}\frac{|Z_{b_3a_1}| }{1+|b_3-a_1|^{s-1}}\, \sum_{a_2}\frac{1}{1+|a_1-a_2|^{s-1/2}} \lesssim C_3\norm{X}\norm{Y}\norm{Z}_{\mathrm{hs}},
		\end{split}
	\end{equation}
	where in the second step we used Schwarz inequality in $b_1$, and in the ultimate step we use the fact that $s>2$ to first sum the convergent series in $a_2$, and then apply Schwarz in $(a_1,b_3)$. This yields the desired \eqref{eq:tree_target}.
	
	\vspace{5pt}
	{\bf Acknowledgment.} We thank Giorgio Cipolloni for many productive discussions.
	
	\vspace{5pt}
	{\bf Data availability.} There is no data associated  to this work.
	
	\vspace{5pt}
	{\bf Competing interests.} The authors have no conflict of interest to disclose.
	
	\printbibliography
\end{document}